\documentclass[11pt,reqno]{amsart}
\usepackage[T1]{fontenc}
\usepackage[utf8]{inputenc}
\usepackage{lmodern}
\usepackage{amsmath,amssymb,amsthm,mathtools}
\usepackage{booktabs,array,longtable}
\usepackage[margin=1.1in]{geometry}
\usepackage{microtype}
\usepackage{xcolor}
\definecolor{linkcol}{rgb}{0.10,0.20,0.45}
\usepackage[colorlinks=true,linkcolor=linkcol,citecolor=linkcol,
            urlcolor=linkcol]{hyperref}

\pdftrailerid{}

\theoremstyle{plain}
\newtheorem{thm}{Theorem}[section]
\newtheorem{lem}[thm]{Lemma}
\newtheorem{prop}[thm]{Proposition}
\newtheorem{cor}[thm]{Corollary}
\theoremstyle{definition}
\newtheorem{dfn}[thm]{Definition}
\theoremstyle{remark}
\newtheorem{rem}[thm]{Remark}
\theoremstyle{plain}
\newtheorem*{thmA}{Theorem A}
\newtheorem*{thmB}{Theorem B}
\newtheorem*{thmC}{Theorem C}
\newtheorem*{thmD}{Theorem D}
\theoremstyle{definition}
\theoremstyle{plain}
\newtheorem*{lembdyprime}{Lemma 3.1$'$}
\newtheorem*{lemlinkprime}{Lemma 8.3$'$}

\newcommand{\IS}{\mathrm{IS}}
\newcommand{\MS}{\mathrm{MS}}
\DeclareMathOperator{\Int}{Int}
\DeclareMathOperator{\Area}{Area}
\DeclareMathOperator{\dist}{dist}
\DeclareMathOperator{\lk}{lk}

\DeclareMathOperator{\Hess}{Hess}
\newcommand{\bd}{\partial}
\newcommand{\R}{\mathbb{R}}
\newcommand{\Z}{\mathbb{Z}}
\newcommand{\N}{\mathbb{N}}
\newcommand{\cF}{\mathcal{F}}

\newcommand{\cD}{\mathcal{D}}

\newcommand{\cN}{\mathcal{N}}

\newcommand{\cI}{\mathcal{I}}
\newcommand{\cM}{\mathcal{M}}
\newcommand{\cG}{\mathcal{G}}
\newcommand{\cB}{\mathcal{B}}
\newcommand{\cA}{\mathcal{A}}
\newcommand{\cK}{\mathcal{K}}

\allowdisplaybreaks
\title[Exchange complexes and contractibility]
      {Exchange complexes and contractibility\\
       of the complex of incompressible Seifert surfaces}
\author[G. Pan]{Guancheng Pan}
\author[C. You]{Chengsong You}
\author[J. Zhou]{Junwei Zhou}
\author[Y. Chen]{Yongchao Chen}

\newcommand{\afn}[1]{\textsuperscript{\normalfont #1}}
\makeatletter
\renewcommand{\@setauthors}{%
  \begin{center}
    \normalsize
    \begin{tabular}{c@{\hspace{2.2em}}c}
      \large Guancheng Pan\afn{1,2} & \large Chengsong You\afn{1,3}
    \end{tabular}\\[0.5ex]
    \begin{tabular}{c@{\hspace{2.2em}}c}
      \large Junwei Zhou\afn{4,\dag} & \large Yongchao Chen\afn{1,5,\dag}
    \end{tabular}\\[1.4ex]
    \small
    \textsuperscript{1}Apex Intelligence \quad
    \textsuperscript{2}Chu Kochen Honors College, Zhejiang University\\
    \textsuperscript{3}East China Normal University \quad
    \textsuperscript{4}Independent Researcher \quad
    \textsuperscript{5}Tsinghua University\\[0.8ex]
    \texttt{3250101086@zju.edu.cn}, \texttt{youchengsong3@gmail.com}\\
    \texttt{zjw330501@gmail.com}, \texttt{cyc@apexin.ai}\\[0.6ex]
    \textsuperscript{\dag}Corresponding authors
  \end{center}}
\makeatother

\date{12 September 2026}

\begin{document}

\begin{abstract}
Let $K\subset S^3$ be a non-trivial knot and let $\IS(K)$ be the simplicial
complex whose vertices are the ambient isotopy classes of incompressible
Seifert surfaces in the exterior $E(K)$, a finite set of distinct vertices
spanning a simplex exactly when its classes admit simultaneously pairwise
disjoint representatives.  Kakimizu proved that $\IS(K)$ is connected; whether
it is contractible was asked by Przytycki and Schultens, who also identified
the obstruction, namely that the projection used in the connectedness proof is
not known to be well defined on isotopy classes.  We prove that $\IS(K)$ is
contractible.  We also prove contractibility of every genus truncation
$\IS_\ell(K)$ with $\ell\ge g(K)$ and every non-empty lexicographic complexity
sublevel.  The argument factors through an unconditional combinatorial
theorem: every non-empty connected flag
\emph{exchange complex} is
contractible, where an exchange complex carries a complexity function subject
to two axioms which require only that an exchanging vertex \emph{exist}, never
that a coherent selection rule be supplied.  The combinatorial proof uses
hereditary descending links and transfinite induction.  We compare its
attachment step with existing Morse criteria, including a formulation
applicable to every countable exchange complex, and distinguish the given
exchange order from a dismantling order (\S\ref{ssec:F-discussion}).  The
geometric argument also applies to links satisfying a linking condition that
forces every spanning surface to be connected.  For the geometric input we
prove fixed-boundary area attainment among smooth neat embeddings, using
smooth convex replacement domains, area-controlled disc cleaning and
boundary-preserving smoothing.  All area complexities use this smooth
competing class.  No interpretation of an undefined piecewise smooth
existence class is needed.
\end{abstract}

\maketitle

\begin{center}\small
\emph{2020 Mathematics Subject Classification.}\ Primary 57K10;
Secondary 57M50, 53A10, 35J15.\\
\emph{Key words and phrases.}\ Kakimizu complex; incompressible Seifert
surface; contractibility; flag complex; exchange complex; least-area surface;
descending link.
\end{center}

\section{Introduction}\label{sec:intro}

\subsection{The problem}

Let $K\subset S^3$ be a knot with exterior $E(K)=S^3\setminus\Int N(K)$.  A
\emph{Seifert surface} for $K$ is a compact connected orientable surface
properly embedded in $E(K)$ whose boundary is a longitude of $K$; it is
\emph{incompressible} if the inclusion induces an injection on fundamental
groups.  Kakimizu \cite{Kak92} introduced two simplicial complexes attached to
$K$: the complex $\IS(K)$ whose vertices are the ambient isotopy classes of
incompressible Seifert surfaces, and the full subcomplex $\MS(K)$ on the
classes of minimal genus.  In both, a finite set of distinct vertices spans a
simplex exactly when its classes admit simultaneously pairwise disjoint
representatives.

Kakimizu proved that these complexes are connected.  We shall use his statement
in the form in which he printed it \cite[Theorem A, p.~226]{Kak92}:

\begin{quote}
\emph{Let $L$ be a non-split oriented link.  Then both $\IS(L)$ and $\MS(L)$
are connected.}
\end{quote}

The two cases are not proved by a common argument: the proof of the underlying
engine \cite[Theorem 2.1, p.~228]{Kak92} splits on p.~229 into a minimal-genus
case and an incompressible case, the latter run through McMillan's simple
moves.  This matters here, because it is the incompressible half that we need
and it is the half that is easy to lose sight of when the result is quoted for
$\MS$ alone.  We also use Kakimizu's sharper statement
\cite[Proposition 3.1(1), p.~231]{Kak92} that the edge-path metric of $\IS(L)$
coincides with the distance $d$ defined through the infinite cyclic cover.

Whether $\IS(K)$ is contractible was asked by Przytycki and Schultens
\cite{PS12}.  We attribute the question to them rather than to
\cite{Kak92}, where it does not appear.  Przytycki and Schultens proved that
$\MS(K)$ is contractible, and in the same paper isolated the reason their
method does not reach $\IS(K)$: their projection is defined by choosing, for a
surface $S$ and a vertex $v$, a distinguished representative of $v$ disjoint
from $S$, and for incompressible surfaces of unbounded genus that choice is
not known to descend to isotopy classes.

\subsection{Results}

All area infima in the main results are taken over smooth neat embeddings
in a smooth ambient isotopy class.  Here neat means transverse to the
ambient boundary.  Theorem~\ref{thm:fixed-smooth} proves fixed-boundary
attainment in this class for the metric chosen in \S\ref{ssec:data}.
Kapovich's compactness theorem then gives attainment when the boundary
slides in the longitudinal foliation.  Piecewise smooth surfaces occur as
intermediate cut-and-paste objects; their smoothing is verified before an
area comparison is made.  We do not identify the smooth infimum with the
possibly smaller infimum over all surfaces of Definition~\ref{conv:ps}.

\begin{thmA}\phantomsection\label{thm:A}
Let $K\subset S^3$ be a non-trivial knot.  Then $\IS(K)$ is contractible.
\end{thmA}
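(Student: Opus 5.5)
The plan is to factor Theorem~A through the combinatorial theorem announced in the abstract: every non-empty connected flag exchange complex is contractible. Three things then have to be checked for $\IS(K)$. It must be non-empty, which is immediate because a minimal genus Seifert surface is incompressible. It must be connected, which is Kakimizu's Theorem~A in its incompressible half \cite[Theorem A]{Kak92}. And it must be flag and carry a complexity function satisfying the two exchange axioms. Flagness will come from realising pairwise disjoint classes simultaneously disjointly. We fix the metric of \S\ref{ssec:data} and take least-area representatives in the smooth neat class: Theorem~\ref{thm:fixed-smooth} supplies them for fixed boundary, and Kapovich's compactness theorem supplies them when the boundary slides in the longitudinal foliation. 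We then apply the Freedman--Hass--Scott type argument that least-area incompressible surfaces which are isotopically disjoint are either disjoint or coincide. Any two distinct vertices in a pairwise-adjacent set then have disjoint minimisers, so the whole set is a simplex.

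For the complexity we take, relative to a fixed base vertex $v_0$ with least-area representative $S_0$, a lexicographic function of the vertex $v$. Its entries, in order, are:
\begin{itemize}
\item Kakimizu's distance $d(v_0,v)$, which agrees with the edge-path metric by \cite[Proposition 3.1(1)]{Kak92};
\item the genus;
\item the least area in the smooth class.
\end{itemize}
The exchange axioms only ask that, for a vertex $v\neq v_0$, there \emph{exist} a lower-complexity vertex that is adjacent to $v$ and to every lower-complexity neighbour of $v$ in the relevant link. Mere existence is all we need, and it sidesteps the Przytycki--Schultens well-definedness problem because no projection has to be chosen coherently.

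To produce the exchanging vertex we follow Kakimizu's cut-and-paste. We lift to the infinite cyclic cover and let $S$ be a least-area representative of $v$. We perform the double-curve sum of $S$ with the translate of $S_0$ that realises the distance, and take the piece lying strictly closer to $v_0$. This piece is incompressible and disjoint from $S$ after a small push. Its least-area representative then decreases $d(v_0,\cdot)$. When there is a genus or area tie, the same construction decreases area, and area-controlled disc cleaning together with boundary-preserving smoothing keeps the surface in the smooth class. This is where the link condition forcing connectedness of spanning surfaces is used for links.

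Compatibility with lower neighbours $w$ of $v$ follows because the least-area representatives of $w$ are disjoint from $S$ and $S_0$-side cut-and-paste does not increase their intersection. An exchange argument with area comparison shows that the new surface can be isotoped off each such $w$. The combinatorial theorem, proved with hereditary descending links and transfinite induction over the well-ordered complexity, then yields contractibility.

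The main obstacle is the exchange step. We must show that the constructed surface is simultaneously disjoint from all lower neighbours, not merely from each one, and that the area comparisons survive smoothing. We would attempt this by working throughout with one fixed family of least-area representatives, so that pairwise disjointness is automatic and flagness upgrades it to a simplex.
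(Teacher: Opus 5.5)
There is a genuine gap, and it is the step you yourself flag as the main obstacle. The condition you set out to verify is not the paper's exchange axiom. Axioms \textup{(N1)} and \textup{(N2)} concern a two-interval $\cI(u,w)$ with $\dist(u,w)=2$. They ask for a common neighbour that is adjacent to every other common neighbour of $u$ and $w$ and lies below $\max\{c(u),c(w)\}$. You instead require, for each $v\neq v_0$, one lower vertex adjacent to $v$ and to \emph{every} lower neighbour of $v$, i.e.\ that every descending link be a cone. That is strictly stronger: Proposition~\ref{prop:not-cone} gives an exchange complex whose descending link is not a cone. For an order by distance from a basepoint it amounts to a Przytycki--Schultens-type projection, which is the route the paper says is not known to work for $\IS(K)$. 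Asking only for existence does not remove the difficulty, because the problem is not selecting $x$ but proving that one $x$ is adjacent to all lower neighbours.

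The paper avoids this by never using a basepoint. For $\dist(u,w)=2$ and a common neighbour $x$, it fixes minimisers $S\in x$, $S_+\in u$, $S_-\in w$. It then builds the frontier from pieces of $S_+$ and $S_-$ alone, in the cyclic cover cut along $S$ (Lemma~\ref{lem:frontier}). Any $z$ adjacent to both $u$ and $w$ has a minimiser disjoint from $S_+\cup S_-$ by Theorem~\ref{thm:U}, hence from sufficiently small roundings and push-offs, so the output is automatically an apex. Descent is in $(g,A)$, via the Euler count and the rounding gain. The combinatorial theorem then needs descending links only to be exchange complexes again (Lemma~\ref{lem:heredity}), hence contractible by induction; they need not be cones.

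Your adjacency step, by contrast, has no support. Your surface is assembled from pieces of $\widetilde S$ and of a translate of $\widetilde S_0$; in effect it is the lower envelope of $\widetilde S$ and $\tau^{n-1}\widetilde S_0$. A lower neighbour $w$ has a minimiser $W$ disjoint from $S$, but $W$ generally meets $S_0$, so nothing keeps it off the $S_0$-pieces. The phrase ``cut-and-paste does not increase their intersection'' is not an argument.

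Worse, because distance is your first coordinate, the descending link of $v$ also contains neighbours at the same distance $n$ that are lower only in $(g,A)$. Suppose such a $W$ has a lift lying below $\widetilde S$ that crosses $\tau^{n-1}\widetilde S_0$. Then every crossing point lies below $\widetilde S$ and hence on your envelope. So the disjointness you hope to inherit fails, and nothing else in your argument relates the class of $x$ to $w$.

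Two further steps are also missing. For non-minimal genus the cut-and-paste piece need not be incompressible, so you must compress and then track adjacency, as in Corollary~\ref{cor:robust}. And the attained values of $(d,g,A)$ are well-ordered only because there are finitely many vertices of bounded genus and area (Lemma~\ref{lem:wf}, via Kapovich's compactness theorem). You never establish this, yet your transfinite induction rests on it.
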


The non-triviality hypothesis excludes a case that is settled and trivial:
$\IS(\mathrm{unknot})$ is a single point, by Proposition~\ref{prop:connected},
hence contractible.  We nevertheless keep the exclusion, because every
statement of \S\ref{sec:exchange-lemma} is proved for a non-trivial knot and
we prefer not to carry a hypothesis in one place and drop it in another.

The proof factors through a combinatorial statement that involves no
three-dimensional topology.

\begin{dfn}[Exchange complex]\label{dfn:intro-exchange}
Let $X$ be a connected flag simplicial complex, let $W$ be a well-ordered set,
and let $c\colon X^{(0)}\to W$.  For vertices $u,w$ at distance $2$ in the
$1$-skeleton write $\cI(u,w)$ for the set of common neighbours of $u$ and $w$.
Call $x\in\cI(u,w)$ an \emph{apex} if $x$ is equal or adjacent to every member
of $\cI(u,w)$; equivalently, the full subcomplex on $\cI(u,w)$ is a cone with
apex $x$.  The pair $(X,c)$ is an \emph{exchange complex} if
\begin{itemize}
\item[\textup{(N1)}] every two-interval has an apex; and
\item[\textup{(N2)}] whenever $\dist(u,w)=2$, \emph{some} apex $x$ of
      $\cI(u,w)$ satisfies $c(x)<\max\{c(u),c(w)\}$.
\end{itemize}
This is Definition~\ref{dfn:exchange}, repeated verbatim.  Note that
$\cI(u,w)\neq\emptyset$ is automatic when $\dist(u,w)=2$ and is therefore not
an axiom; what (N1) asks is that the interval be a cone.
\end{dfn}

\begin{thmB}\phantomsection\label{thm:B}
Every non-empty connected flag exchange complex is contractible.
\end{thmB}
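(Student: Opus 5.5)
The plan is to build $X$ as a transfinite increasing union of full subcomplexes. Each vertex is attached along a contractible subcomplex, and Whitehead's theorem then gives contractibility. Since $X$ need not be locally finite, I would work with homotopy groups. Every map $S^k\to X$ has compact image, so it lies in a finite full subcomplex. It therefore suffices to prove that every stage of the filtration is contractible and that the stages exhaust $X$.

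\emph{Step 1: choose the order.} Refine $c$ to a well-order $\prec$ on $X^{(0)}$: take $u\prec v$ if $c(u)<c(v)$, and break ties by an arbitrary well-order inside each fibre $c^{-1}(w)$. This lexicographic product is well-ordered because $W$ is. For a vertex $v$ write $X_{\prec v}$ for the full subcomplex on $\{u: u\prec v\}$, and $X_{\preceq v}$ for the full subcomplex on $\{u: u\preceq v\}$. Define the descending link $L(v)=\lk(v)\cap X_{\prec v}$. Since $X$ is flag, $X_{\preceq v}=X_{\prec v}\cup_{L(v)}(v*L(v))$. If $L(v)$ is contractible, then $X_{\prec v}\hookrightarrow X_{\preceq v}$ is a homotopy equivalence. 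Transfinite induction gives the result at successor stages; at limit stages use the union, since a sphere in the union lies in some earlier stage.

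\emph{Step 2: stages are connected and descending links are non-empty.} Suppose $v$ is not the $\prec$-least vertex and $X_{\prec v}$ is connected. Choose a vertex $u\prec v$ closest to $v$. If $\dist(u,v)\ge 2$, apply (N2) to $v$ and a vertex at distance $2$ on a geodesic from $v$ towards $u$. Iterating this, with descent in $c$ keeping the procedure inside the well-order, should produce a neighbour of $v$ below $v$. Hence $L(v)\ne\emptyset$. The case $\dist(u,v)=2$ is exactly (N2), and longer distances reduce to it by an induction on the distance.

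\emph{Step 3: descending links are contractible.} I expect this to be the main obstacle. The idea is to show that $L(v)$, with the restricted complexity, is itself a connected flag exchange complex of the same kind. Then an inner transfinite induction (on the order type of the vertices of $L(v)$, which lie strictly below $v$) proves it contractible; this is the "hereditary" feature. Flagness of $L(v)$ is automatic because it is a full subcomplex of a link in a flag complex.

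For (N1) and (N2) inside $L(v)$: let $a,b\in L(v)$ be non-adjacent. Then $\dist_X(a,b)=2$ and $v\in\cI(a,b)$. Since $a,b\prec v$ we have $\max\{c(a),c(b)\}\le c(v)$. By (N2) there is an apex $x$ of $\cI(a,b)$ with $c(x)<\max\{c(a),c(b)\}$, and in particular $x\prec v$. Because $x$ is an apex and $v\in\cI(a,b)$, the vertex $x$ is equal or adjacent to $v$, and $x\ne v$ by its complexity. So $x\in L(v)$, which gives connectivity of $L(v)$ at distance $2$ and shows that $x$ is a candidate apex there. It remains to check that $\cI_{L(v)}(a,b)=\cI(a,b)\cap L(v)$ still has $x$ as apex and that distance-$2$ pairs in $L(v)$ are distance-$2$ pairs in $X$; both follow from $L(v)$ being full in $X$.

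Thus $L(v)$ is a connected flag exchange complex whose vertices all precede $v$. An induction on $v$ in $\prec$, applied simultaneously to all such complexes (formally, induction on the supremum of $c$ over the vertex set), yields contractibility. The delicate point I would check carefully is the base and limit bookkeeping of this doubly transfinite induction. In particular, ties in $c$ inside $L(v)$ are handled by the inherited tie-break, so the inner order is a restriction of $\prec$.
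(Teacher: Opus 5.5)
Your architecture is the paper's, and most of it is correct:
\begin{itemize}
\item tie-breaking the complexity;
\item the decomposition $X_{\preceq v}=X_{\prec v}\cup_{L(v)}(v*L(v))$, with flagness giving $v*L(v)\subseteq X$;
\item heredity of descending links via the strict inequality in (N2) and the apex condition at $v\in\cI(a,b)$, which is exactly Lemma~\ref{lem:heredity}(2);
\item a universally quantified transfinite induction, closed off at limits by compactness and Whitehead.
\end{itemize}
Your gluing of a cone along a contractible subcomplex is an acceptable substitute for the paper's quotient argument.

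The genuine gap is Step 2, the non-emptiness of $L(v)$. It is load-bearing: an empty descending link disconnects the stage $X_{\preceq v}$, and the induction collapses. Your $d=2$ case is right, but the reduction of $d\ge3$ to it does not work as described. By the choice of $u$, every vertex at distance less than $d$ from $v$ lies above $v$. So for $d\ge3$ the vertex $v_2$ is above $v$, and (N2) for the pair $(v,v_2)$ only yields an apex $x$ with $c(x)<c(v_2)$. Being a neighbour of $v$, that $x$ is automatically above $v$. Your iteration never identifies a quantity that decreases. ``Induction on the distance'' has no mechanism behind it, because exchanges performed near $v$ keep producing vertices above $v$.

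The missing idea is to exchange at the top of the path, not at its start.
\begin{itemize}
\item Take any $u\prec v$. Among all geodesics $v=v_0,\dots,v_d=u$ with $d\ge2$, choose one whose $\prec$-largest interior vertex $v_i$ is least; the well-order makes this possible.
\item Suppose $v_i\succ v$. Then both $v_{i\pm1}$ lie below $v_i$: the endpoints because $u\prec v\prec v_i$, interior ones by maximality and injectivity.
\item The vertices $v_{i-1},v_{i+1}$ are at distance $2$ because the path is geodesic. So (N2) gives a common neighbour $x$ with $c(x)<\max\{c(v_{i-1}),c(v_{i+1})\}\le c(v_i)$.
\item Replacing $v_i$ by $x$ gives a walk of length $d$, hence again a geodesic, with strictly smaller maximum. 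This contradicts the choice of geodesic.
\end{itemize}
Hence every interior vertex is below $v$, and $v_1\in L(v)$. This is the paper's Corollary~\ref{cor:initial-isometric}.

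Inside descending links, which have diameter at most $2$ by your Step 3, your $d=2$ argument does suffice. The minimax argument is indispensable only for complexes of larger diameter, such as $X$ itself.

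A smaller bookkeeping point: ``induction on the supremum of $c$'' need not strictly descend when there are ties. $L(v)$ may contain a neighbour $u\prec v$ with $c(u)=c(v)$, so its supremum can equal $c(v)$. Induct instead on the order type of the injective tie-broken order, as in the paper's $(\ast_\alpha)$. This requires checking that the refinement still satisfies (N2), which is Lemma~\ref{lem:ties}.
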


The axioms require the existence of a lower exchanging vertex without
a coherent global rule for selecting one.  This avoids the need for the
projection whose well-definedness on $\IS(K)$ is not known in
\cite{PS12}.  An equivariant version would require additional geometric
and fixed-point arguments, discussed in \S\ref{ssec:equivariant}; it is
not established here.

Two further results follow.  For the first, use the metric and longitudinal
foliation chosen in \S\ref{ssec:data}.

\begin{thmC}\phantomsection\label{thm:C}
Let $K\subset S^3$ be a non-trivial knot, and use the metric and longitudinal
foliation of \S\ref{ssec:data} to define relative area.  For every integer
$\ell\ge g(K)$ the truncation
$\IS_\ell(K)$, the full subcomplex of $\IS(K)$ on the vertices of genus at
most $\ell$, is isometrically embedded in $\IS(K)$ and is contractible.  The
same holds, whenever it is non-empty, for the full subcomplex on
\[
 D_{\ell,a_0}:=\{v:g(v)<\ell\}\ \cup
                 \{v:g(v)=\ell,\ A(v)\le a_0\},\qquad a_0>0.
\]
Here $A(v)$ denotes relative area as in Definition~\ref{dfn:complexity}.
\end{thmC}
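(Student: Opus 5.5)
The plan is to deduce Theorem~C from Theorem~B, applied to full subcomplexes of $\IS(K)$, by checking that the exchange structure used for Theorem~A restricts to them. Throughout I use the lexicographic complexity $c(v)=(g(v),A(v))$ of Definition~\ref{dfn:complexity}. The order is first by genus, then by relative area. The relative area is taken over smooth neat embeddings and is attained by Theorem~\ref{thm:fixed-smooth} together with Kapovich compactness.

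\textbf{Step 1 (flagness and connectedness).} A full subcomplex of a flag complex is flag, so $\IS_\ell(K)$ and the complex on $D_{\ell,a_0}$ are flag. They are non-empty: for $\IS_\ell(K)$ because $\ell\ge g(K)$ and a minimal genus surface exists, and for $D_{\ell,a_0}$ by hypothesis.

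\textbf{Step 2 (the key downward-closedness property).} The content needed from the geometric part of the paper is the exchange statement used in the proof of Theorem~A, presumably in \S\ref{sec:exchange-lemma}. For $\dist(u,w)=2$, the set $\cI(u,w)$ has an apex $x$ with $c(x)<\max\{c(u),c(w)\}$. I will use this in a slightly sharper form: the apex is obtained by cut-and-paste of least-area representatives, so its genus is at most $\max\{g(u),g(w)\}$. If the genera are equal, the strict drop occurs in the second coordinate. The sharper form is exactly what lexicographic order yields when the comparison is componentwise. Hence each sublevel set $\{v: c(v)\le \gamma\}$ is closed under taking the exchanging apex of any two of its points at distance $2$. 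Both $\IS_\ell(K)$ and $D_{\ell,a_0}$ are such sublevel sets. For $\IS_\ell(K)$ take $\gamma=(\ell,\infty)$; for $D_{\ell,a_0}$ take $\gamma=(\ell,a_0)$.

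\textbf{Step 3 (isometric embedding and connectedness).} Let $u,w$ lie in a sublevel set $Y$, and take a geodesic $u=v_0,\dots,v_n=w$ in $\IS(K)$; by Proposition~\ref{prop:connected} and Kakimizu's metric identification, such a geodesic exists. I argue by induction on the pair (maximal complexity of an interior vertex outside $Y$, number of vertices attaining it) that the geodesic can be replaced by one of the same length inside $Y$. Suppose $v_i$ has maximal complexity among the interior vertices. Then $v_{i-1}$ and $v_{i+1}$ are at distance $2$, since the path is a geodesic. The apex $x$ of $\cI(v_{i-1},v_{i+1})$ from (N2) satisfies $c(x)<\max\{c(v_{i-1}),c(v_{i+1})\}\le c(v_i)$. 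Replacing $v_i$ by $x$ gives a geodesic of the same length whose lexicographic profile is strictly lower. Well-ordering of the complexity values guarantees termination: genera are integers, and areas are attained minima, so one must check that the descending chain of areas stops. This needs a finiteness or discreteness input. The simplest route is to note that the process terminates once all interior complexities are at most $\max\{c(u),c(w)\}\le\gamma$, since a vertex exceeding $\gamma$ can always be lowered. The final geodesic lies in $Y$, so $Y$ is isometrically embedded and in particular connected.

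\textbf{Step 4 (conclusion).} Let $X=Y$ with $c$ restricted. Axiom (N1) is inherited because, by the isometric embedding, intervals in $Y$ are intervals of $\IS(K)$ intersected with $Y$. One must check that the apex lies in $Y$ and is still adjacent to everything in the intersection; the apex from Step 2 does lie in $Y$. Axiom (N2) is inherited by Step 2. Theorem~B then gives contractibility.

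The main obstacle is Step 2 together with the (N1) check in Step 4: confirming that the apex produced by the exchange argument never increases genus, and that the cone property survives intersection with $Y$. I would first try to read off the genus bound directly from the cut-and-paste construction in the proof of Theorem~A, where Euler characteristic is additive.
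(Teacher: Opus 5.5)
Your proposal is correct and follows essentially the paper's route. The genus and area sublevels are downward closed for the lexicographic complexity; the geodesic-exchange argument (the paper's Corollary~\ref{cor:initial-isometric}) gives the isometric embedding; a downward-closed full subcomplex inherits (N1) and (N2) (Lemma~\ref{lem:heredity}(1)); and Theorem~B finishes.

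The points you flag as obstacles are immediate. The genus bound is just (N2) read lexicographically. The apex remains a cone point because $\cI_Y(u,w)=\cI(u,w)\cap Y$ for a full subcomplex $Y$. Termination of your replacement process comes from the well-ordering of the attained values $W$ (Lemma~\ref{lem:wf}(iii)); your ``simplest route'' sentence is not itself a termination argument.
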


Taking $\ell=g(K)$ gives the contractibility of
$\MS(K)$, already proved by \cite[Theorem 1.1, p.~1490]{PS12},
a statement made there for spanning surfaces of minimal
Thurston norm, which for a knot in $S^3$ is the minimal-genus condition.  The
second family in Theorem~\hyperref[thm:C]{C} refines the genus filtration
using relative area.  The connectedness of $\IS_\ell(K)$ is part
of Theorem~\hyperref[thm:C]{C}, obtained there from
Corollary~\ref{cor:initial-isometric}; it has an independent source in Chen and
Shen \cite[Proposition 5.15, p.~9 of v4]{ChenShen}, which we do not use.

The second extension concerns links, and requires a condition.  Call a link
$L=L_1\cup\dots\cup L_n$ \emph{linking-indecomposable} if for every partition
$\{1,\dots,n\}=I\sqcup I^c$ into two non-empty parts there is an index $m$ with
$\lk(L_m,L_J)\neq 0$, where $L_J$ is the sublink indexed by the part
\emph{not} containing $m$.  The condition is vacuous for $n=1$.  Its role is
to force every spanning surface to be connected, so that Kakimizu's vertices
--- which are allowed to be disconnected, and are required only to be
incompressible componentwise --- coincide with ours.

\begin{thmD}\phantomsection\label{thm:D}
Let $L$ be a non-split, linking-indecomposable link in $S^3$ which is not the
unknot.  Then $\IS(L)$ is contractible.
\end{thmD}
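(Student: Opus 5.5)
The plan is to reduce Theorem~\hyperref[thm:D]{D} to the same two ingredients used for Theorem~\hyperref[thm:A]{A}: Theorem~\hyperref[thm:B]{B} (every non-empty connected flag exchange complex is contractible) and the geometric verification of (N1) and (N2) for $\IS$ with a lexicographic genus--area complexity. The work is to check that each step of the knot argument uses only features a linking-indecomposable non-split link also has.

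\textbf{Step 1 (surfaces are connected).} Let $S$ be a spanning surface for $L$, and suppose it splits as $S=S_1\sqcup S_2$ with both parts non-empty. Each component $L_i$ meets $\bd S$ in a longitude, and that longitude lies on exactly one part. This gives a partition $I\sqcup I^c$ of the indices. For $m\in I$, the longitude of $L_m$ is disjoint from $S_2$, and $S_2$ is a Seifert surface for $L_{I^c}$. Hence $\lk(L_m,L_{I^c})$ is the algebraic intersection of a push-off of $L_m$ with $S_2$, which is $0$. The symmetric argument gives $\lk(L_m,L_I)=0$ for $m\in I^c$. This contradicts linking-indecomposability, so $S$ is connected. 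Consequently Kakimizu's vertices, which are componentwise incompressible and possibly disconnected, are exactly connected incompressible Seifert surfaces. So the $\IS(L)$ of \cite{Kak92} is the complex we study, and \cite[Theorem A]{Kak92} gives connectedness because $L$ is non-split. Flagness follows as in the knot case: pairwise disjoint representatives can be made simultaneously disjoint by the same least-area (or cyclic-cover) argument, which is insensitive to the number of boundary components. Non-emptiness is clear.

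\textbf{Step 2 (geometric data).} Fix the metric and the longitudinal foliation of \S\ref{ssec:data} on each boundary torus of $E(L)$. Define the complexity $c(v)=(g(v),A(v))$ lexicographically, as in Definition~\ref{dfn:complexity}. Its value set is well ordered, assuming the same discreteness or finiteness of area values per genus that was used for knots. Theorem~\ref{thm:fixed-smooth} is a local, boundary-fixed attainment statement, so it applies verbatim to several boundary tori. Kapovich compactness likewise applies torus by torus.

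\textbf{Step 3 (exchange axioms).} Rerun the proof of the exchange lemma of \S\ref{sec:exchange-lemma} for $L$. Take $u,w$ at distance $2$ with least-area representatives. Lift to the infinite cyclic cover, which is defined because $S$ is connected and dual to a class with nonzero value on every meridian. Form the cut-and-paste surfaces between consecutive lifts, using Kakimizu's identification of edge distance with $d$ \cite[Proposition 3.1(1)]{Kak92}. This produces an apex of $\cI(u,w)$ of strictly smaller complexity, giving (N2), together with the cone property (N1).

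The main obstacle I expect is exactly this step. Cut-and-paste of two connected surfaces could a priori produce a disconnected or compressible surface, or a surface that is not a Seifert surface for the link. Step~1 rules out disconnection. Incompressibility and the genus/area drop come from the least-area exchange argument, but the Euler characteristic bookkeeping must be rechecked with $n$ boundary components, where the genus is $g=(2-\chi-n)/2$. I would first verify that the exchange preserves the boundary slope on every torus, so that $n$ is constant and the genus comparison reduces to the $\chi$ comparison used for knots. The non-triviality hypothesis is used only where the knot proof used $E(K)$ irreducible and boundary-incompressible. For a non-split link other than the unknot this still holds, since a compressing disc for a boundary torus would make $L$ an unknot or split.

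With (N1) and (N2) established, Theorem~\hyperref[thm:B]{B} yields contractibility of $\IS(L)$.
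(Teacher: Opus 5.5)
Your overall route is the paper's: connectedness of spanning surfaces from linking-indecomposability, Kakimizu's Theorem~A, flagness from \textup{(U)} and \textup{(E)}, the complexity $(g,A)$, the exchange lemma, and Theorem~B. Your Step~1 is a correct dual form of Lemma~\ref{lem:link-connected}. The genuine gap is in Step~2, which fails as written when $n\ge2$.

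\textbf{The foliation.} The foliation of \S\ref{ssec:data} is by preferred longitudes. The boundary of a spanning surface on $T_i$ is instead the surface-framed longitude $\lambda_i$, with $\lk(\lambda_i,L_i)=-\sum_{j\ne i}\lk(L_j,L_i)$ as in \eqref{eq:sfl}. For $n=2$, linking-indecomposability forces $\lk(L_1,L_2)\ne0$, so the two slopes differ; the Hopf link already shows this. With your $J$, no spanning surface has boundary in $J$. Then $A(v)$ is an infimum over the empty set, and \textup{(E)}(ii) has nothing to minimise.

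You must foliate each $T_i$ by curves of the surface-framed slope. That slope is also the one with $\varphi(\lambda_i)=0$ for the total linking homomorphism $\varphi$, which is what makes boundary curves lift to closed curves in the cyclic cover. Moreover, a spanning surface is dual to $\varphi$ itself, which takes the value $1$ on each meridian. This, not connectedness of $S$, is the justification your lifting step needs.

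\textbf{Well-ordering.} The well-ordering of the complexity values is not an assumption. It is Lemma~\ref{lem:wf}, proved via \cite[Corollary~A.3]{Schultens}, whose standing hypotheses (a connected surface meeting each torus in one boundary curve) are exactly what Step~1 supplies. That compactness theorem is global; it is not applied torus by torus.

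\textbf{Step~3.} Here you misplace where Step~1 is consumed. Incompressibility of the outputs does not come from least area: the frontier outputs may be compressible, and they are compressed, keeping the component with boundary. For a link this is well defined only because a separating essential compression of a connected spanning surface cannot split the $n$ boundary curves; otherwise a disconnected spanning surface would result (Lemma~8.3$'$).

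The other essential use of Step~1 is Proposition~\ref{prop:disjoint}. Theorem~\ref{thm:U}(i) gives only ``disjoint or coincide'', and coincidence forces $x=u$ only for connected surfaces. Without this, the hypothesis $S\cap(S_+\cup S_-)=\emptyset$ of Lemma~\ref{lem:frontier} is unavailable and the frontier construction cannot start.

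The Euler-characteristic point you single out is harmless. The number $n$ is fixed, and the annulus argument of Lemma~\ref{lem:frontier}, run on each $T_i$ separately, gives each frontier exactly one curve per torus. Which frontier receives $\bd_iS^+_0$ may depend on $i$.

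What ``rerun'' does not cover verbatim is configuration \textup{(T2)} with several tori:
\begin{itemize}
\item the boundary leaves may coincide on some tori and be disjoint on others;
\item the collar perturbation needs an independently chosen sign on each torus where they coincide;
\item a single intersection curve may visit several collars, so the length bound of Lemma~\ref{lem:collar-empty}(iii) and the plateau cut-off in the width must be arranged torus by torus.
\end{itemize}
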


\subsection{Where the combinatorial theorem sits}\label{ssec:intro-position}

Theorem~\hyperref[thm:B]{B} is a statement about arbitrary flag complexes.  We
locate it against the three results with which it might be confused; the
details are in \S\ref{ssec:F-discussion}.

\emph{Dismantlability.}  Deleting a dominated vertex requires its current
link to be a cone.  The seven-vertex example of
\S\ref{ssec:not-dismantlable} has a contractible descending link at its
largest vertex which is not a cone.  Thus the given exchange order need
not be a dismantling order.  The example itself is dismantlable in a
different order; it does not exclude other dismantlability arguments.

\emph{The descending-link criterion.}  Neither the Morse criterion of
\cite[Lemma~2.3]{Zar24} nor its more general predecessor \cite{ZarMorse}
assumes local finiteness.  Write \textup{(F)} for finiteness of every
descending link.  Under \textup{(F)}, the maximum length of a descending
edge path gives a natural-number Morse height, for any cardinality of
the vertex set.  For every countable exchange complex, even without
\textup{(F)}, a bounded real encoding of the vertex well-order gives a
descending-type Morse function in the sense of \cite{ZarMorse}.
Corollaries~\ref{cor:under-F} and~\ref{cor:countable-morse} give both
arguments.  The exchange-specific steps are heredity of the descending
links and the induction proving them contractible; existing Morse theory
can supply the attachment step.  In particular \textup{(F)}, whose
status for $\IS(K)$ is unknown, is not the boundary of that comparison.
The direct proof below works for all cardinalities.

\emph{The ordering theorem.}  The complexity-convex geodesics and the
isometric embedding of the truncations are both special cases of
\cite[Lemma 9.13]{CCHO}, whose hypotheses are strictly weaker --- no apex, no
flagness, no local finiteness.  We nevertheless give the argument, as
Corollary~\ref{cor:initial-isometric}, so that \S\ref{sec:exchange-complexes}
is self-contained and \cite{CCHO} is a comparison rather than a link in the
chain leading to Theorems \hyperref[thm:A]{A}--\hyperref[thm:D]{D}.

\subsection{The shape of the argument}

The proof of Theorem~\hyperref[thm:A]{A} verifies that $\IS(K)$ is an exchange
complex for the complexity
\[
   c(v)=\bigl(g(v),\,A(v)\bigr)\in\N\times\R_{>0},
\]
ordered lexicographically, where $g(v)$ is the genus and $A(v)$ the infimum of
area over the class.  Axiom (N1) is supplied by three things together: the
connectedness of $\IS(K)$, which is Kakimizu's Theorem~A and is what makes a
two-interval non-empty; flagness; and
Proposition~\ref{prop:apex}(1), which manufactures an apex of the interval out
of the exchange lemma.  Axiom (N2) is the substance, and is supplied by an exchange lemma
for incompressible surfaces (\S\ref{sec:exchange-lemma}) proved by
minimal-surface methods: given two vertices at distance $2$, one produces a
common neighbour of strictly smaller complexity by cutting and recombining
area minimisers in the two classes along their intersection.

The geometric dependency is
\[
\begin{aligned}
 \text{Theorem~\ref{thm:fixed-smooth}}&\ \Longrightarrow\ \textup{(E)}(i),\\
 \textup{(E)}(i)+\text{compactness}+\textup{(R2)}
   &\ \Longrightarrow\ \textup{(R1)}\text{ and }\textup{(E)}(ii).
\end{aligned}
\]
Here \textup{(E)}(i) is fixed-boundary attainment, and
\textup{(E)}(ii) is attainment with boundary sliding in the foliation.
The boundary Hopf argument \textup{(R2)} applies to any smooth proper
minimal surface and does not depend on either attainment statement.
The fixed-boundary theorem supplies a smooth neat minimiser in the
prescribed relative isotopy class; Theorem~\ref{thm:R1} obtains a
sliding-boundary minimiser
using the compactness theorem of Kapovich
\cite[Proposition~A.1 and Corollary~A.3]{Schultens}.  Universal disjointness
\textup{(U)} is a separate cited input
\cite[Theorem~4 and Theorem~A.5]{Schultens}.  These inputs give the exchange
lemma and hence (N2); Theorem B then gives Theorems A, C and D.

Corollary~A.4 of \cite{Schultens} already states the sliding-boundary
conclusion, but its non-emptiness step invokes
\cite[Theorem~6.12]{HassScott}.  Our fixed-boundary proof uses individual
Plateau and disc-convergence results instead.  Smooth convex replacement
domains retain the fixed boundary arc, while a buffer permits smoothing of
the exterior splice.  Area-controlled disc cleaning preserves the relative
isotopy class, including the case where an intersection circle encloses the
original replacement disc.  Ordered smoothing preserves convergence through
the finite covering argument.  See \S\ref{ssec:fixed-smooth}.

The boundary transversality argument uses the lemma that a minimal graph over
a possibly curved plane sector
of opening angle at least $\pi$, lying in $M$ and touching $\bd M$ along one
edge, cannot be tangent to $\bd M$ at the vertex; that lemma also yields the
transversality statement \textup{(R2)} in three lines, in place of a
divergence-form comparison and an explicit barrier.  The gluing and final
boundary bootstrap also use elliptic regularity from \cite[Chs.~6 and~8]{GT}.
Proposition~\ref{prop:R1fixed} separately proves regularity of an attained
piecewise smooth minimum; it is not used to supply existence for the main
theorems.  The collar conditions \eqref{eq:N4} and
\eqref{eq:N5} of
\S\ref{sec:exchange-lemma} control the behaviour of the intersection
pattern at a boundary tangency.  Freedman, Hass and Scott note on
\cite[p.~636]{FHS} that non-transverse boundary behaviour occurs; they do not
supply a structure theorem for it, and \S\ref{ssec:tangential} does.

\subsection*{Organisation}

The paper falls into two halves that meet only at
Definition~\ref{dfn:exchange}.

The geometric half is \S\S\ref{sec:geometric-inputs}--\ref{sec:exchange-lemma}.
\S\ref{sec:geometric-inputs} fixes the metric, states the cited input
\textup{(U)}, proves smooth fixed-boundary attainment, and derives
\textup{(E)}, \textup{(R1)} and \textup{(R2)}.
\S\ref{sec:kakimizu-complex} pins down the
definitions of vertex, edge and simplex against \cite{Kak92}, and proves
flagness and the two compression lemmas.  \S\ref{sec:complexity} introduces the
complexity $c=(g,A)$, proves that the values it attains are well-ordered, and
gives the cyclic-cover construction that manufactures the candidate apices.
\S\ref{sec:exchange-lemma} is the exchange lemma itself, and is the longest
section: rounding and pushing off in \S\ref{ssec:rounding}, the transverse case
in \S\ref{ssec:exchange-thm}, and the two non-transverse configurations in
\S\S\ref{ssec:tangential}--\ref{ssec:T1}.

The combinatorial half is
\S\S\ref{sec:exchange-complexes}--\ref{sec:contractibility}, and uses no
three-dimensional topology at all: \S\ref{sec:exchange-complexes} sets out the
axioms and their heredity, \S\ref{sec:contractibility} proves
Theorem~\hyperref[thm:B]{B} and locates it against dismantlability and the
descending-link criterion.  \S\ref{sec:applications} puts the halves together
and proves Theorems \hyperref[thm:A]{A},
\hyperref[thm:C]{C} and \hyperref[thm:D]{D}, and then says where the method stops.

A reader who wants only the combinatorics may begin at
\S\ref{sec:exchange-complexes}; a reader who wants only the geometry may stop
after \S\ref{sec:exchange-lemma}, whose conclusion, axiom \textup{(N2)} for
$\IS(K)$, is Proposition~\ref{prop:apex}.

\subsection{Open questions}\label{ssec:open}

The smooth attainment theorem does not identify its infimum with the one
over every surface of Definition~\ref{conv:ps}.  An area-density theorem for
that larger class would give such an identification for classes with smooth
neat representatives.  This additional assertion is not used by
Theorems A--D; in particular the area sublevels in Theorem C use the smooth
definition of $A(v)$.

\begin{itemize}
\item We do not know whether $\IS(K)$ satisfies \textup{(F)}, equivalently
      whether a vertex can have infinitely many neighbours of smaller
      genus.  This is not needed by either the direct combinatorial proof
      or its countable Morse formulation.
\item This proof does not establish Theorem~\hyperref[thm:D]{D} for general
      links.  The failed step is identified in
      \S\ref{ssec:general-links}; no counterexample is claimed.
\item An equivariant version is not established.  The additional issues
      of invariant geometric data and the geometric fixed-point set are
      described in \S\ref{ssec:equivariant}.
\item This paper does not establish the required connectedness and
      geometric inputs for general Haken manifolds; see
      \S\ref{ssec:haken}.
\end{itemize}

\subsection*{Acknowledgements}

The mathematical content of this paper was developed with substantial
assistance from large language models, which were used for literature search,
for drafting, and for adversarial checking of the arguments; the authors are
responsible for all statements.  The status of previously reported Lean
fragments, and the absence of a formalisation of the main combinatorial
theorem, are stated in Remark~\ref{rem:lean}.

\section{Geometric inputs}\label{sec:geometric-inputs}

This section fixes the Riemannian data on $E(K)$ and assembles the facts about
area minimisers consumed by the exchange lemma.  Universal disjointness
\textup{(U)} is cited.  Theorem~\ref{thm:fixed-smooth} proves fixed-boundary
attainment \textup{(E)}(i) among smooth neat embeddings.  Kapovich's compactness
theorem then gives sliding-boundary attainment and \textup{(R1)};
Lemma~\ref{lem:R2} gives transversality.  Proposition~\ref{prop:R1fixed} is a
separate regularity implication for an attained piecewise smooth minimum.
The existence proof does not use that implication or sliding-boundary
attainment to establish its own input.

\subsection{The metric and the geometric statements}\label{ssec:data}

Choose on $E(K)$ a \emph{relative Riemannian metric}, that is, one making
$\bd E(K)$ strictly convex, with the following specified collar.  Choose a
flat metric on the boundary torus.  On a collar
$[0,\varepsilon)_r\times\bd E(K)$, with $r$ the inward coordinate and
$\bd E(K)=\{r=0\}$, take
\begin{equation}\label{eq:collar}
   dr^2+\varphi(r)^2\,g_{\bd E(K)},\qquad \varphi(0)=1,\quad \varphi'(0)<0,
\end{equation}
where we fix $\varphi(r)=1-r$ for sufficiently small $r$, and extend the metric
smoothly over the compact exterior.  For links make the same choice on each
boundary torus.  Two consequences of \eqref{eq:collar} are
used repeatedly and are recorded now.  Along the collar
$\nabla_X\bd_r=(\varphi'/\varphi)X$ for $X$ tangent to the level sets, so
\begin{equation}\label{eq:hessr}
   \Hess r=\varphi\varphi'\,g_{\bd E(K)}\;<\;0
   \qquad\text{on the level sets,}
\end{equation}
and in particular $r$ is strictly concave along sufficiently short geodesics
with initial velocity tangent to a level set.  A sufficiently short geodesic
joining two nearby points of $\bd E(K)$ therefore has $r>0$ at its interior
points.  Dually, the second fundamental
form of $\{r=0\}$ with respect to the \emph{inward} normal $\bd_r$ is
$-\varphi\varphi'g_{\bd E(K)}=g_{\bd E(K)}>0$, so $\bd E(K)$ has mean curvature
$2>0$ with respect to the inward normal, which is the sign convention of
\cite[p.~110]{HassScott}.  We also write $\widetilde{E}$ for the manifold
obtained by extending the collar to $r\in(-\varepsilon,\varepsilon)$ with the
same formula \eqref{eq:collar}; thus $E(K)\subset\widetilde{E}$ isometrically
and $\bd E(K)$ is a two-sided smooth surface in $\Int\widetilde{E}$.

Choose the flat metric so that the preferred longitudes form a linear,
\emph{real-analytic} foliation $J$ of the boundary torus; smoothness of $J$
is all the citations below require
\cite[Definition~3, pp.~884--885]{Schultens}, and real-analyticity, free here,
is used in \S\ref{sec:exchange-lemma}.  A properly embedded $F\subset E(K)$
with $\bd F\subset J$ is \emph{relatively least-area} if it is smooth and neat
and $\Area(F)$ is least among smooth neat representatives in its smooth
proper isotopy class subject to the boundary lying in $J$: the boundary
slides from leaf to leaf and is not pinned pointwise.

\begin{dfn}[Piecewise smooth surfaces]\label{conv:ps}
A compact surface $F$ properly embedded in a Riemannian $3$-manifold $X$ is
\emph{piecewise smooth} if there is a finite graph $\Gamma_F\subset F$,
embedded as a finite union of smooth arcs and circles, \emph{properly} --- so
that $\Gamma_F\cap\bd F$ is the finite set of endpoints of its arcs, and no arc
runs along $\bd F$ --- such that
$F\setminus\Gamma_F$ is a smooth submanifold of $X$ and the closure of each of
its components --- its \emph{pieces} --- is a compact surface with corners,
smooth in its interior and embedded of class $C^1$ up to its own boundary, the
two edges at each corner having distinct tangent rays.  A
smooth surface is piecewise smooth, with
$\Gamma_F=\emptyset$.  Such an $F$ admits an abstract triangulation compatible
with $\Gamma_F$; this does not assert ambient $C^\infty$ parametrisations
on the closed triangles.  \emph{Isotopy} of piecewise smooth surfaces is understood
in this tame category: ambient isotopies are locally flat and may pass through
surfaces with corners.  This does not assert that such an isotopy is a smooth
family of diffeomorphisms.  Lemma~\ref{lem:category-bridge} supplies the
comparison on smooth neat endpoints that is needed for the vertex classes
and for the geometric citations.  In the tame category the classical theorems of
Alexander --- that a locally flat $2$-sphere in the interior of a $3$-ball
bounds a ball there, and that a homeomorphism of a ball fixing the boundary pointwise is
isotopic to the identity rel the boundary --- are available.

The $C^1$ requirement is sufficient for the auxiliary regularity implication
in Proposition~\ref{prop:R1fixed}.  The proper singular graph and distinct
corner rays are part of its hypotheses.  This definition describes
intermediate frontiers and that auxiliary assertion; it does not define the
area competing class of the main theorems.

\end{dfn}

\begin{rem}[The competing class and the fixed-boundary citation]
\label{rem:ps-reading}
Hass and Scott use a class of piecewise smooth surfaces isotopic to $F$ rel
$\bd F$ \cite[Theorem~6.12, pp.~112--113]{HassScott}, without defining that
regularity term.  We do not infer membership in Definition~\ref{conv:ps}
from those words.  Instead Theorem~\ref{thm:fixed-smooth} proves attainment
of the infimum $I_{\mathrm{sm}}$ over smooth neat representatives in the
relative smooth ambient isotopy class.  If $I_{\mathrm{ps}}$ is the infimum
over Definition~\ref{conv:ps} in the corresponding tame class, then
$I_{\mathrm{ps}}\leq I_{\mathrm{sm}}$; the reverse inequality is not asserted.

The boundary of a disc cut from a sharp half-ball can have two corners.
The proof in \S\ref{ssec:fixed-smooth} first smooths the convex replacement
domain while retaining a central patch of the ambient boundary.  Its cut
curves are smooth.  The exterior splice can still have a cuspidal strip,
but the complete joined sheet is a Lipschitz graph, and its relative,
area-controlled smoothing is proved there.  The disc-convergence lemmas
are applied to exact Plateau discs before this smoothing, not to arbitrary
approximate minimisers.

Proposition~\ref{prop:R1fixed} remains a separate implication: if a minimum
in Definition~\ref{conv:ps} is attained, it is smooth and neat.  Neither
that implication nor exclusion of a one-sided double-cover limit alone
establishes existence.  The main argument uses the smooth attainment
theorem and does not require attainment in the larger competing class.
\end{rem}

\begin{thm}[\textup{(U)}: universal disjointness of relative area minimisers]
\label{thm:U}
Let $E$ be compact and irreducible with the data above.
\begin{enumerate}
\item[\textup{(i)}] Relative least-area incompressible surfaces
$F_1,F_2\subset E$ whose proper isotopy classes contain disjoint
representatives are themselves disjoint, or coincide.
\item[\textup{(ii)}] If $f_i\colon S_i\to E$, $i=1,\dots,n$, are incompressible,
pairwise non-isotopic and pairwise disjoint, then relative area minimisers
$g_i$ in the classes of the $f_i$ have pairwise disjoint images.
\end{enumerate}
\end{thm}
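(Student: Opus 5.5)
The plan is not to reprove (U) from scratch but to reduce it to the cited results \cite[Theorem~4 and Theorem~A.5]{Schultens}, checking that our data fall under their hypotheses. The work is then bookkeeping: hypotheses, competing class, and the step from pairs to families. I would proceed in three steps.

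\textbf{Step 1: the data of \S\ref{ssec:data}.} First I would verify that our metric and foliation are admissible in the sense of \cite[Definition~3, pp.~884--885]{Schultens}.
\begin{itemize}
\item The collar \eqref{eq:collar} makes $\bd E(K)$ strictly mean convex with respect to the inward normal, with mean curvature $2$ in the convention of \cite[p.~110]{HassScott}.
\item The longitudinal foliation $J$ is linear, hence smooth.
\item $E=E(K)$ is compact and irreducible, and the surfaces are incompressible.
\end{itemize}
I would also check that ``relatively least-area'' in our sense agrees with the notion used there: the infimum is taken over smooth neat representatives of the smooth proper isotopy class, with boundary sliding in $J$. If Schultens' competing class is a priori larger, I would argue as follows. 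A minimiser in our class is, by (R1) and (R2), a smooth proper minimal surface meeting $\bd E$ transversally. The disjointness argument only uses the exchange-and-roundoff comparison between two such surfaces, together with the existence of minimisers. For that existence I would use Theorem~\ref{thm:R1} in place of the existence statement in the citation.

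\textbf{Step 2: the mechanism behind (i).} So that the citation is used correctly, I would recall the Freedman--Hass--Scott argument that Schultens adapts. Suppose $F_1\ne F_2$ are least-area, have disjointly isotopic classes, and meet.
\begin{itemize}
\item Perturb to general position, using (R2) at the boundary.
\item Use incompressibility and the disjoint representatives to find, via Waldhausen's product-region theorem, a product region between sub-surfaces of $F_1$ and $F_2$. If the surfaces are not disjoint after perturbation, one can use the infinite cyclic cover of $E$ instead.
\item Exchange the two bounding pieces across this region. This produces surfaces in the same classes with total area at most $\Area F_1+\Area F_2$.
\item The resulting surfaces have folds along the exchange curves. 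Rounding off those folds strictly lowers area, contradicting minimality.
\end{itemize}
The case $F_1=F_2$ as sets gives the ``coincide'' alternative.

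\textbf{Step 3: (ii) from (i).} For (ii), I would apply (i) pairwise to the minimisers $g_i,g_j$. Each pair of classes has disjoint representatives, namely $f_i,f_j$, and the classes are distinct, so the coincidence alternative is excluded. Pairwise disjointness of a finite family is exactly the conclusion required.

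\textbf{Main obstacle.} I expect the hardest part to be the boundary behaviour in Step 2. When $\bd F_1$ and $\bd F_2$ lie on leaves of $J$, an intersection arc can end at a point where the two surfaces are tangent along $\bd E$. There the roundoff must be shown to remain in the neat smooth class with sliding boundary. My first attempt would be to use the strict convexity \eqref{eq:hessr} together with the sector lemma behind (R2). Near $\bd E$ these reduce the tangential configuration to transverse intersection arcs, after which the roundoff is local and interior.
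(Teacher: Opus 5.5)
Your route matches the paper's: (U) is imported rather than reproved, with clause (i) from \cite[Theorem~4]{Schultens} and clause (ii) from Kapovich's \cite[Theorem~A.5]{Schultens}.

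Your Step~3, deriving (ii) from (i) pair by pair, is a valid variant. The classes of $f_i$ and $f_j$ contain the disjoint representatives $f_i,f_j$ and are distinct, so their minimisers cannot coincide. The dichotomy in (i) therefore gives pairwise disjointness. The paper cites A.5 anyway, but remarks after Proposition~\ref{prop:flag} that only the two-surface clause is ever used, for exactly this reason. Your route buys one fewer cited input.

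The weak point is the one step with actual content: whether your minimiser is ``relatively least-area'' in the cited sense. You leave this as a conditional (``if Schultens' competing class is a priori larger\dots''). Your fallback is to re-run the Freedman--Hass--Scott exchange inside the smooth class, and that is not carried out. Your Step~2 sketch would not carry it either:
\begin{itemize}
\item perturbing least-area surfaces into general position destroys the minimality the contradiction needs, unless the area cost is controlled as in \S\ref{ssec:T2};
\item the product-region step is only gestured at;
\item the boundary analysis you call the main obstacle is left speculative.
\end{itemize}

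None of this is needed, because the mismatch runs the other way. The paper settles the interface through Kapovich's definition (Remark~\ref{rem:E2-gives-R1}). On p.~899, just before Theorem~A.5, Kapovich calls relatively least-area the area minimisers in $\cM([f])$, the space of stable minimal surfaces in the class whose boundary is a curve of $J$. The argument then runs as follows.
\begin{itemize}
\item Every member of $\cM([f])$ is a smooth proper embedding, and is neat by Lemma~\ref{lem:R2}. So $\cM([f])$ sits inside the smooth neat competing class.
\item The smooth-class minimiser itself lies in $\cM([f])$, by Theorem~\ref{thm:E}(ii).
\item Hence it minimises area on $\cM([f])$, which is precisely the hypothesis the citation consumes.
\end{itemize}
No re-proof and no area-density statement is required.

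A minor point in Step~1: Kapovich's setting requires a strictly convex boundary, i.e.\ a positive definite second fundamental form, not merely positive mean curvature. The collar \eqref{eq:collar} supplies this, since the second fundamental form is $-\varphi\varphi' g=g>0$. It also requires $P^2$-irreducibility, which is automatic because $E$ is orientable.
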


Clause (i) is \cite[Theorem~4, p.~885]{Schultens} and clause (ii) is
\cite[Theorem~A.5, p.~899]{Schultens}; the latter carries the number $12$ in
the author's preprint, and we cite the journal version throughout.  The force
of (U) is the universal quantifier, which lets minimisers be chosen one class
at a time, before any pair is examined.

\begin{thm}[\textup{(E)}: attainment]\label{thm:E}
Use the metric and foliation of \S\ref{ssec:data}.  Let $F\subset E(K)$
be a connected orientable incompressible smooth neat surface, with
$\bd F$ a leaf of $J$.
\begin{enumerate}
\item[\textup{(i)}] \emph{(Fixed boundary.)}  Let $\cF_{\mathrm{sm}}(F)$
be the smooth neat surfaces smoothly ambiently isotopic to $F$ rel
$\bd F$, and put
$I_{\mathrm{sm}}(F)=\inf_{G\in\cF_{\mathrm{sm}}(F)}\Area(G)$.
There is $F'\in\cF_{\mathrm{sm}}(F)$ with area $I_{\mathrm{sm}}(F)$;
it is a stable minimal surface.
\item[\textup{(ii)}] \emph{(Boundary sliding in $J$.)}  The infimum over
smooth neat surfaces in the smooth proper isotopy class of $F$, with
boundary in $J$, is attained.  Its minimiser belongs to $\cM([f])$ in the
sense of \cite[p.~897]{Schultens}: it is a stable minimal surface in the
proper isotopy class of the smooth embedding
$f\colon(S,\bd S)\to(E(K),\bd E(K))$, with
$f^{-1}(\bd E(K))=\bd S$ and boundary a parametrised curve of $J$.
\end{enumerate}
\end{thm}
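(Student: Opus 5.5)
Part (i) is the fixed-boundary attainment proved in Theorem~\ref{thm:fixed-smooth}, which the introduction says will be proved in \S\ref{ssec:fixed-smooth}. So I will derive (i) from it by checking its hypotheses and state the stability conclusion. I will then obtain (ii) from (i) together with Kapovich's compactness theorem \cite[Proposition~A.1 and Corollary~A.3]{Schultens}, following the dependency chart in the introduction.

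For (i), I apply Theorem~\ref{thm:fixed-smooth} to $F$ with boundary the fixed leaf $\bd F$. The hypotheses needed are compactness, the convex collar \eqref{eq:collar}, incompressibility, and smooth neatness of $F$. This gives $F'\in\cF_{\mathrm{sm}}(F)$ with $\Area(F')=I_{\mathrm{sm}}(F)$. To see that $F'$ is stable and minimal, I argue as follows.
\begin{itemize}
\item Any compactly supported normal variation in the interior is realised by an ambient isotopy fixed near $\bd E(K)$.
\item Such an isotopy stays in $\cF_{\mathrm{sm}}(F)$.
\item Hence the first variation vanishes, so $H=0$, and the second variation is nonnegative, which is stability.
\end{itemize}

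For (ii), let $I$ be the infimum over the sliding class. Choose smooth neat $G_n$ in the smooth proper isotopy class of $F$ with $\bd G_n\subset J$ and $\Area(G_n)\to I$. First, a boundary isotopy along the linear foliation $J$, extended through the collar, moves $\bd G_n$ to the leaf $\bd F$. Because the collar metric is a product up to the factor $\varphi$, this costs area tending to $0$ when the leaf displacement is bounded. Alternatively, since the isotopy moves whole leaves, one can simply keep $\bd G_n$ as it is. Second, I replace each $G_n$ by the fixed-boundary minimiser $G_n'$ from (i), which has $\Area(G_n')\le\Area(G_n)$. Each $G_n'$ is then a stable minimal surface with boundary in $J$ and uniformly bounded area. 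By Lemma~\ref{lem:R2} (the boundary Hopf argument (R2)), each $G_n'$ is transverse to $\bd E(K)$.

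Kapovich's compactness theorem then gives a subsequence converging smoothly, possibly with multiplicity, to a stable minimal limit $G$ with boundary in $J$. The boundary leaves converge because the leaf space of $J$ is compact. I expect the main obstacle to be showing that the multiplicity is one and that the limit is in the correct proper isotopy class. To exclude a one-sided or multiply covered limit, I would use that a multiplicity $k\ge2$ limit would force $\bd G$ to be a leaf covered $k$ times. This is impossible, since $\bd G_n$ is a single longitude and the longitude class is primitive in $H_1(\bd E(K))$. Once the multiplicity is one, smooth graphical convergence shows that $G_n'$ is a normal graph over $G$ for large $n$, so $G$ is properly isotopic to $F$ through surfaces with boundary in $J$. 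Lower semicontinuity of area then gives $\Area(G)=I$. Finally, $G\in\cM([f])$: $G$ is stable minimal, $f^{-1}(\bd E(K))=\bd S$ holds by neatness, and $\bd G$ is a parametrised leaf of $J$. Together with (R2) this also yields (R1).
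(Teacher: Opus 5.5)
Your proposal is correct and is essentially the paper's proof: (i) is Theorem~\ref{thm:fixed-smooth} (whose hypotheses also include irreducibility of $E(K)$, which belongs in your checklist), and (ii) is the argument of Theorem~\ref{thm:R1}, which replaces a sliding minimising sequence by the fixed-boundary minimisers of (i), applies Kapovich's compactness, and uses Lemma~\ref{lem:R2} to make the limit neat. The only divergence is at the limit. The paper reads \cite[Proposition~A.1]{Schultens} as $C^1$-compactness of $\cM_a(S)$, whose members are already embedded images of $S$, and stays in the class of $F$ via the finite open--closed decomposition into isotopy pieces of \cite[Corollary~A.3]{Schultens}, so the multiplicity issue you call the main obstacle never arises; your single-leaf/primitivity and graph arguments are therefore a dispensable substitute (near $\bd G$ they would need a boundary-adapted transverse line field rather than metric normals), and so is your aside about sliding $\bd G_n$ to $\bd F$, whose claim of vanishing area cost for a merely bounded displacement is not justified.
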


\begin{proof}
Clause (i) is Theorem~\ref{thm:fixed-smooth}.  The exterior is compact,
orientable and irreducible, and the metric has the exact collar required
there.  Clause (ii) follows from Theorem~\ref{thm:R1} and
Lemma~\ref{lem:R2}, using clause (i) and
\cite[Proposition~A.1 and Corollary~A.3]{Schultens}.  No sliding-boundary
attainment is used in the fixed-boundary construction.
\end{proof}

Here \textup{(E)} without qualification means clause \textup{(ii)}, because a
vertex of $\IS(K)$ is a class in which the boundary longitude may move
(\S\ref{sec:kakimizu-complex}), so the competing class is the sliding one and
the complexity's area coordinate $A(v)$ is its infimum.

\begin{rem}[The compactness and disjointness interfaces]
\label{rem:E2-gives-R1}
Corollary~A.4 of \cite{Schultens} states sliding-boundary attainment, but
its non-emptiness step invokes \cite[Theorem~6.12]{HassScott} on p.~897.
Here non-emptiness is supplied by Theorem~\ref{thm:fixed-smooth}; only the
separate compactness and isotopy-component assertions are used to pass to
the sliding class.

On p.~899, immediately before Theorem~A.5, Kapovich calls area minimisers
in $\cM([f])$ relatively least-area.  Every member of that space is a
smooth proper stable minimal embedding and is neat by the boundary
argument of Lemma~\ref{lem:R2}.  A minimum over all smooth neat competitors
therefore minimises area on $\cM([f])$ as well.  This verifies the input
to \textup{(U)} without an area-density assertion for
Definition~\ref{conv:ps}.
\end{rem}

\subsection{Smooth fixed-boundary least-area representatives}
\label{ssec:fixed-smooth}

Let \(M\) be the compact irreducible orientable knot or allowed link exterior, with the
metric of \eqref{eq:collar}, whose boundary collar is

\[
dr^2+(1-r)^2g_{\partial M},\qquad r\geq0,
\]

where \(r\) points inward and the boundary tori are flat. Let \(F\subset M\) be a
compact connected smooth neat two-sided incompressible surface properly embedded in
\(M\), with nonempty prescribed smooth boundary \(\lambda\). Define

\[
\begin{split}
I_{\rm sm}=\inf\bigl\{\operatorname{Area}(G):{}&
G\text{ is smooth and neat, and is smoothly}\\
&\text{ambiently isotopic to }F\text{ rel }\lambda\bigr\}.
\end{split}
\]

\begin{thm}[Smooth fixed-boundary attainment]\label{thm:fixed-smooth}
The infimum \(I_{\rm sm}\) is attained by a smooth neat embedded minimal
surface \(T\) smoothly ambiently isotopic to \(F\) rel \(\lambda\).
\end{thm}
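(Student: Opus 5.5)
The plan follows the Meeks--Simon--Yau / Hass--Scott replacement scheme, run entirely inside the smooth neat class so that the limit can be certified to lie in the relative smooth isotopy class of \(F\).

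\emph{Minimising sequence and local picture.} Take \(G_n\) smooth, neat, smoothly ambiently isotopic to \(F\) rel \(\lambda\), with \(\Area(G_n)\to I_{\rm sm}\). Cover \(M\) by finitely many small convex balls \(B\) of three kinds: interior balls; half-balls centred on \(\partial M\setminus\lambda\); and half-balls centred on \(\lambda\). Strict convexity of \(\partial M\) in the collar, see \eqref{eq:hessr}, makes these half-balls mean-convex. Their corners are smoothed while a central patch of \(\partial M\) is kept, so that the replacement domains are smooth and convex and retain any fixed boundary arc of \(\lambda\).

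\emph{Replacement step.} On each domain \(B\), in a fixed order, I would do the following.
\begin{enumerate}
\item Isotope \(G_n\) by an area-controlled cleaning so that \(G_n\cap\partial B\) is a union of curves, each bounding a disc in \(G_n\). Since \(G_n\) is incompressible and \(M\) is irreducible, every intersection circle with \(\partial B\) that is essential in \(\partial B\) minus the arc bounds a disc in \(G_n\). Innermost discs are replaced by their parallels on \(\partial B\), with area loss at most \(\varepsilon_n\to0\), and the resulting isotopy is rel \(\lambda\) by Alexander's theorem.
\item Replace the intersection of \(G_n\) with \(B\) by exact Plateau solutions. For a half-ball through \(\lambda\), these are solutions of the Plateau problem with the fixed arc of \(\lambda\) as partial boundary.
\item The splice along \(\partial B\) is only Lipschitz. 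Inside a buffer annulus it is smoothed relative to \(\lambda\), and neatly at \(\partial M\), at arbitrarily small area cost.
\end{enumerate}
By the Meeks--Simon--Yau disc-convergence lemmas, applied to the exact Plateau discs before smoothing, the replaced surfaces converge smoothly on concentric smaller domains. A finite covering argument, with shrinking radii and an ordered smoothing, then gives smooth convergence on all of \(M\) away from \(\lambda\). Near \(\lambda\) I would use boundary regularity, from the Hildebrandt and Hardt--Simon type results quoted via \cite{GT}, to get convergence up to \(\lambda\). The sequence stays minimising and keeps its relative isotopy class.

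\emph{The limit.} The limit \(T\) is a smooth embedded stable minimal surface with \(\partial T=\lambda\) and area \(I_{\rm sm}\); mass does not drop, because no area is lost in the bounded cleaning steps. The limit could be a one-sided double cover of a surface, or could have extra closed components, but both possibilities are excluded for the following reasons.
\begin{itemize}
\item Its boundary multiplicity along \(\lambda\) is one.
\item \(F\) is two-sided and incompressible.
\item \(M\) is irreducible.
\item Any closed sphere component would bound a ball and have positive area.
\end{itemize}
Neatness follows from the boundary Hopf lemma mentioned in the introduction: the surface is a minimal graph over a sector of opening angle at least \(\pi\) touching \(\partial M\), so it cannot be tangent there. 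Thus \(T\) meets \(\partial M\) only in \(\lambda\), and transversally.

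\emph{Isotopy class of the limit.} Finally, for large \(n\), \(G_n\) is a normal graph over \(T\) in a tubular neighbourhood. Flowing along the normal direction, which fixes \(\lambda\), gives a smooth ambient isotopy rel \(\lambda\) from \(G_n\) to \(T\). Hence \(T\in\cF_{\rm sm}(F)\).

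\emph{Main obstacle.} I expect the main obstacle to be the replacement in half-balls meeting \(\lambda\). There one must show that an intersection circle enclosing the original replacement disc can still be cleaned without changing the class rel \(\lambda\). One must also smooth the exterior splice, which is cuspidal at the corner, while keeping both neatness and the area bound. My first attempt would be to write the joined sheet as a Lipschitz graph over a half-plane in Fermi coordinates and mollify only in the buffer, which sits away from \(\lambda\).
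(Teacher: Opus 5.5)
Your outline has the same architecture as the paper's proof. It takes a smooth minimising sequence, uses smoothed convex half-balls that keep a central patch of $\bd M$, takes exact Plateau discs before any smoothing, and ends with a degree-one graph argument. But the steps where the paper does its real work are either missing or would fail as you state them.

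\emph{The replacement step.} The components of $G_n\cap B$ need not be discs, and the disc in $G_n$ bounded by an outermost cut loop need not lie in $B$. So ``replace $G_n\cap B$ by exact Plateau solutions'' either changes the topology or lacks the comparison $\Area(\text{new disc})\le\Area(\text{old disc})$: a disc minimising only in $B$ cannot be compared with an old disc that leaves $B$. Your step~1 does not repair this. Replacing a surface disc by a parallel of a cap of $\bd B$ can raise area by order $R^2$. For a nearly flat sheet through the centre, the surface disc has area about $\pi R^2$ and either cap about $2\pi R^2$, so this is not an $o(1)$ operation. The paper instead minimises in the complete outward extension $N$, where the retraction onto $M$ strictly lowers Jacobians. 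It then confines the minimiser to $U$ using the $O(R^2)$ cap bound, monotonicity, and the maximum principle for the strictly convex $w_{R,\epsilon}$. The resulting disc is no larger than any old disc with the same boundary. The complement of the outermost surface discs is connected, contains $\lambda\setminus U$, and hence lies outside $U$; so the new discs miss it and no cleaning is needed at this stage.

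\emph{Gluing across overlapping domains.} Your ``finite covering argument'' is exactly where limits on overlapping domains can fail to fit together, through a collapsing bridge or a bend along the seam, and you give no mechanism for either. The paper excludes bridges by the Hass--Scott bridge construction combined with Lemma~\ref{lem:fb-cleaning}. It is there --- away from $\lambda$, not in the half-balls through $\lambda$ --- that an intersection circle $\gamma$ can enclose the bridge disc $D$. Case~3 handles this by replacing the whole surface disc $D_\gamma\supset D$ by the clean subdisc $E_\gamma$ of the comparison disc and stopping. The isotopy comes from the ball bounded by $D_\gamma\cup E_\gamma$: the connected retained surface reaches $\bd M$, so it lies outside that ball. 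The area bound comes from $\Area(D_\gamma)\ge\Area(D)$ and $\Area(E_\gamma)\le\Area(E)$. Bends are excluded by a first-variation flow. You name the enclosing circle as the main obstacle but supply no argument for it.

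\emph{Smoothing near $\lambda$.} Your proposed smoothing fails here. The seam ends on $\lambda$ at the flat-transition points, where the exterior piece is cuspidal, so a mollification supported away from $\lambda$ cannot remove it. The paper writes the joined sheet as the Lipschitz graph $g=g_o+\max\{r-h(x),0\}\,b$ with zero trace (Lemma~\ref{lem:fb-splice-graph}). It then convolves after odd reflection in $r$ (Lemma~\ref{lem:fb-splice-smoothing}); for convergent families it reflects the residual $g_i-g$ instead (Lemma~\ref{lem:fb-ordered-smoothing}). This fixes $\lambda$ and keeps neatness.

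\emph{The limit and its isotopy class.} Boundary regularity of each Plateau disc does not yield a single limiting boundary sheet. The paper extracts subdiscs in a smaller rounded half-ball and uses the Hass--Scott boundary lemmas to prevent other local discs from accumulating at $\lambda$. That, and not incompressibility or irreducibility, is what gives degree one: the tube projection is a covering from a connected surface with exactly one preimage over each point of $\lambda$. Finally, metric normals along $\lambda$ are not tangent to $\bd M$. The tube should use a transverse line bundle whose fibres over $\bd T$ lie in $\bd M$, and one then scales the graph sections, which vanish on $\lambda$.
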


The statement applies to every permitted fixed tuple of boundary leaves in
a knot or non-split link exterior.  On each boundary torus choose a flat
metric and use the exact warped collar above, extending the metric over
the compact exterior.  These exteriors are irreducible and the permitted
spanning surfaces are connected with nonempty boundary.

We use the disk results of \cite[Theorems~2.7, 2.9 and Lemmas~3.3, 3.5, 3.6,
6.9--6.11]{HassScott}, and the local bridge construction in the proof of
\cite[Theorem~4.1, pp.~103--104]{HassScott}.  The fixed-boundary surface
assertion of \cite[Theorem~6.12]{HassScott} is not an input.

\paragraph{The outward extension.}\label{lem:fb-extension}

Attach \((-\infty,0]\times\partial M\) to the collar, with the same metric
\(dr^2+(1-r)^2g_{\partial M}\), to form \(N\). The metric matches to all orders at
\(r=0\), is complete, and has bounded geometry: on the end the torus scale is at least
one, curvature is bounded, and the injectivity radius has a positive lower bound; the
remaining part is compact. Projection of the added end to \(r=0\), extended by the
identity on \(M\), is \(1\)-Lipschitz.

For a rank-two tangent plane in the added end this projection strictly decreases its
two-dimensional Jacobian. Indeed it kills the radial component and contracts each torus
component by \(1/(1-r)<1\). Thus an area-minimizing Plateau disk in \(N\), whose
boundary lies in \(M\), cannot have an open piece outside \(M\): composing with the
retraction would strictly lower area. Interior branch points, if represented
parametrically, are isolated and do not change this conclusion. This observation
concerns disk minimization among maps and does not assume surface-class attainment.

\paragraph{Smooth convex replacement domains.}\label{lem:fb-domains}

At \(p\in\partial M\), choose a smooth defining function \(u\) for \(M\), with
\(M=\{u\leq0\}\), whose Hessian is positive definite on a fixed coordinate neighborhood
in \(N\). Such a function exists here: begin with outward signed distance \(d=-r\). Its
tangential Hessian on \(\partial M\) is positive definite. Replacing \(d\) by \(d+C
d^2\), and then shrinking the neighborhood, makes the full Hessian positive definite as
well.

Let \(v_R(x)=\operatorname{dist}_N(p,x)^2-R^2\), for \(R\) smaller than the convexity
radius. Fix a smooth even convex function \(\theta_\epsilon\) with

\[
\theta_\epsilon(t)=|t|\quad(|t|\geq\epsilon),\qquad
\theta_\epsilon\geq|t|,\qquad |\theta'_\epsilon|\leq1.
\]

Set

\[
w_{R,\epsilon}=\frac{u+v_R+\theta_\epsilon(u-v_R)}2,
\qquad U_{R,\epsilon}=\{w_{R,\epsilon}\leq0\}.
\]

The Hessian is

\[
\nabla^2w=\frac{1+\theta'}2\nabla^2u+\frac{1-\theta'}2\nabla^2v_R
+\frac{\theta''}2\,d(u-v_R)\otimes d(u-v_R),
\]

so it is positive definite. For sufficiently small \(\epsilon\ll R^2\), the zero level
is regular: near the old corner the two outward normals have a fixed angle smaller than
\(\pi\), and elsewhere one of the two defining functions is unchanged. The sublevel is a
smooth strictly convex ball, lies in the original half-ball, and agrees exactly with
\(\partial M\) on the central patch where \(v_R< -\epsilon\). It contains a prescribed
smaller relative half-ball after \(\epsilon\) is chosen small. The area bound
\(\operatorname{Area}(\partial U_{R,\epsilon})\leq C R^2\) is uniform in the rounding
parameter. Indeed the first two Hessian terms give a positive lower bound on the fixed
coordinate neighborhood, and the last is positive semidefinite. The gradient is
uniformly bounded because the first derivatives are convex weights. In normal
coordinates at \(p\), the Christoffel symbols are \(O(R)\). After decreasing \(R\), the
Euclidean Hessian is positive definite throughout the containing radius-\(R\) ball. The
domain is Euclidean convex there. Divide its outward normals among the six signed
coordinate directions so that the chosen component has absolute value at least
\(1/\sqrt3\). On each part the corresponding projection is injective by convexity, and
its graph area is at most \(\sqrt3\) times the projected area. Each projection has area
at most \(\pi R^2\); summing and comparing the smooth metric with the Euclidean metric
proves the bound. No uniform curvature bound on the rounding strip is needed. Interior
replacement balls use the ordinary squared-distance construction.

For a common generic radius, fix \(\epsilon\) throughout a small radius interval
retaining the protected core. Where \(w_{R,\epsilon}\) depends on \(v_R\), its
\(R\)-derivative is \(-2R\) times the positive weight of \(v_R\), hence is nonzero.
Where that weight vanishes, \(w_{R,\epsilon}=u\), and the extended neat input is already
transverse to \(u=0\). Parametric transversality therefore provides a full-measure set
of radii for each input surface and the already constructed smooth limit. The countable
intersection supplies a common radius without losing the protected core.

\paragraph{The cut curve at a flat transition.}\label{lem:fb-cut-curves}

Extend a smooth neat input surface a short distance through \(\partial M\). At a point
of \(\lambda\), its tangent plane contains \(T\lambda\) and a vector with nonzero
\(dr\)-component. Hence it is transverse to \(\partial M\), and also to \(\partial U\)
wherever the latter agrees with, or is tangent to, \(\partial M\). The ordinary implicit
function theorem on this \emph{extended} surface shows that its intersection with the
smooth \(\partial U\) is a smooth curve through the flat transition. One must not use a
stratified transversality assertion for the unextended manifold-with-boundary at this
point.

When \(U\) meets just one small arc of \(\lambda\), there is one cut loop containing
that arc and the other cut loops are interior. The cut loop containing \(\lambda\) is
smooth, although the piece of the old surface outside \(U\) has a cusp at the endpoint
of its shared boundary arc. Smoothness of the cut loop does \emph{not} make this cusped
exterior piece an admissible Definition~\ref{conv:ps} piece.

\paragraph{Global comparison disks.}\label{lem:fb-global-disks}

It is insufficient to minimize only among disks contained in \(U\): an outermost old
disk bounded by a cut loop need not be contained in \(U\). Use a disk which minimizes
area in \(N\). By Morrey's theorem in the homogeneously regular form recorded in
\cite[Theorem~2.7 and its following remark, p.~94]{HassScott}, the smooth cut loop,
which spans its cap disk, bounds a global least-area parametrized disk in \(N\).
This existence precedes the following confinement argument.

A smooth cut loop on the sphere \(\partial U\) bounds one of its two cap disks, of area
at most \(\operatorname{Area}(\partial U)=O(R^2)\). The global minimizing disk therefore
has this area bound. Interior monotonicity in the complete extension \(N\) implies that,
for sufficiently small \(R\), it cannot exit a fixed larger coordinate ball: a point at
a fixed positive distance from the cut loop would contribute a fixed positive amount of
area. Its image is therefore in the neighborhood on which \(w_{R,\epsilon}\) is strictly
convex. The maximum principle for \(w_{R,\epsilon}\) composed with its conformal minimal
parametrization then confines it to \(U\).

The disk is consequently also a least-area disk in \(U\), and
\cite[Theorem~2.9]{HassScott} gives an embedded disk; disks for disjoint cut loops are
disjoint. Smooth-boundary Plateau regularity and strict convexity give smoothness and
neatness up to the \emph{whole smooth cut loop}. No corner-boundary regularity is being
invoked. Because it was minimized in \(N\), its area is at most that of every old disk
in \(M\subset N\) having the same boundary.

\begin{lem}[The graph at a flat transition]\label{lem:fb-splice-graph}
Suppose an old smooth neat sheet and a new smooth disk have the same smooth cut curve,
the new disk lies inside \(U\), and the retained old sheet lies outside \(U\). At an
endpoint \(q\in\lambda\) of the actual artificial seam, their union is locally a
Lipschitz graph over a half-plane, with zero boundary trace after straightening
\(\lambda\).
\end{lem}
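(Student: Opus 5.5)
I would work in local coordinates adapted to the flat collar near $q$. The boundary arc $\lambda$ near $q$ lies in the protected core, where $\partial U$ agrees exactly with $\partial M=\{r=0\}$. Choose coordinates $(s,t,r)$ near $q$ with these properties: $s$ runs along the leaf $\lambda$, $t$ is transverse to $\lambda$ within the flat torus, and $r\ge 0$ is the inward collar coordinate. After a smooth change of the $t$ coordinate, $\lambda=\{t=0,\ r=0\}$; this is the straightening of $\lambda$. By neatness, the tangent plane of the extended old sheet at $q$ contains $\partial_s$ and a vector with nonzero $\partial_r$ component. So the extended old sheet is locally a smooth graph $t=h(s,r)$ over a full neighbourhood in the $(s,r)$ plane, with $h(s,0)=0$. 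The new Plateau disk is smooth and neat up to the whole smooth cut curve, as recorded in the construction of global comparison disks, and it meets $\partial M$ transversally along $\lambda$. Hence near $q$ it is also a graph $t=k(s,r)$ over $r\ge 0$ restricted to one side of the projected cut curve, with $k(s,0)=0$.

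Next I would identify the seam. The cut curve is $\Sigma\cap\partial U$, and by the flat-transition paragraph it is a smooth curve through $q$. Along the central patch it coincides with $\lambda$. At the endpoint $q$ of the artificial seam it leaves $\{r=0\}$ into the interior, since $\partial U$ bends inward past the core. Its projection to the $(s,r)$ half-plane $H=\{r\ge0\}$ is a $C^1$ curve $\gamma$ that starts tangent to the boundary line $r=0$ at $q$. This tangency is exactly the source of the cusp in the exterior piece. The curve $\gamma$ divides a neighbourhood of $q$ in $H$ into a cuspidal region $\Omega_{\rm in}$, over which the new disk sits, and its complement $\Omega_{\rm out}$, over which the retained old sheet sits. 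Define $f=k$ on $\overline\Omega_{\rm in}$ and $f=h$ on $\overline\Omega_{\rm out}$. The two definitions agree on $\gamma$ because both sheets contain the cut curve. Consequently the union of the two sheets is the graph of $f$ over $H$, and $f(s,0)=0$ because both $h$ and $k$ vanish on $r=0$.

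The remaining point, which I expect to be the main obstacle, is the Lipschitz bound across the cusp. Each of $h$ and $k$ is smooth with bounded gradient on its own closed domain. A function that is piecewise $C^1$ and continuous across a common interface is Lipschitz on a union whose pieces are not quasiconvex only if the pieces themselves are handled correctly. The issue is that $\Omega_{\rm in}$ is a thin cusp, so a straight segment joining two points of $H$ may cross $\gamma$ many times or run along it. I would avoid any geometric argument and instead use the general fact that a continuous function on a convex domain, which is $C^1$ on each of finitely many closed pieces separated by a $C^1$ curve, is Lipschitz with constant $\max(\|\nabla h\|_\infty,\|\nabla k\|_\infty)$. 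To prove it, take a generic segment in $H$. By transversality, for almost every segment (or after perturbation) it meets $\gamma$ in finitely many points, since $\gamma$ is a single smooth embedded arc and the segment is analytic. On each subinterval $f$ is $C^1$ with the stated gradient bound, continuity glues the subintervals, and density extends the estimate to all pairs of points. The bound on $\nabla k$ up to the cusp tip $q$ holds because $k$ is smooth up to the cut curve, including at $q$, by smooth-boundary Plateau regularity. Here I would check carefully that the graph representation of the new disk persists uniformly down to $q$, using that its tangent plane at $q$ contains $T_q\lambda$ and is transverse to $\partial M$.
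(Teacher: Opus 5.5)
Your setup matches the paper's: neatness makes both sheets graphs over the $(s,r)$-plane near $q$, the cut curve projects to a smooth curve tangent to $\{r=0\}$ at $q$, and the glued function has zero trace. You take a different route to the Lipschitz bound.

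The paper extends both graph functions smoothly across the projected cut curve $r=h(x)$. Their difference vanishes on that curve, so it is divisible by the defining function $r-h(x)$, and the union becomes $g=g_o+\max\{r-h(x),0\}\,b$ with $b$ smooth. The Lipschitz bound is then immediate, with no analysis of segments or of the cusp. The formula also displays the splice explicitly as a smooth graph plus a kink on one side of the seam.

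You instead use a general gluing lemma for continuous piecewise-$C^1$ functions on a convex domain. This gives the sharp constant $\max\{\|\nabla h\|_\infty,\|\nabla k\|_\infty\}$ and does not need the interface to be a graph over $s$. However, your genericity step is mis-justified. Analyticity of the segment is irrelevant, since a line can meet a merely smooth arc infinitely often. The correct argument has three parts:
\begin{itemize}
\item the lines tangent to $\gamma$ somewhere form the image of a $C^1$ map from an interval into the two-dimensional space of lines, so they form a null set;
\item a line transverse to $\gamma$ at every intersection meets the compact arc only finitely often;
\item the map sending $(a,b)$ to the line through $a$ and $b$ is a submersion on pairs of distinct points, so almost every pair of endpoints spans such a line.
\end{itemize}
Alternatively you can drop genericity altogether. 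Take a $C^1$ extension $\tilde k$ of $k$ across $\gamma$, which you need anyway. Then every $x\in\gamma$ satisfies $|f(y)-f(x)|\le\max\{|h(y)-h(x)|,|\tilde k(y)-\tilde k(x)|\}$. So the Dini derivatives of $f$ along every segment are bounded, and the mean value inequality for Dini derivatives gives the bound directly.

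Separately, you have the two sides reversed, which also contradicts your own preceding sentence about the cusp lying in the exterior piece. Since $U\subset M$ is the rounded intersection of $M$ with a ball, near $q$ it lies on the far side of the bent part of $\partial U$ from $\partial M$. So the new disk sits over the region above the projected cut curve. Near $q$ that region is bounded by the smooth curve formed by the $\lambda$-arc followed by the seam. The cuspidal region between $\{r=0\}$ and the seam carries the retained \emph{exterior} old strip, as the paper says. Your Lipschitz argument is symmetric, so nothing breaks. But your remark about a ``cusp tip'' of the new disk is misplaced. Also, $k(s,0)=0$ holds only on the side where the cut curve runs along $\lambda$; on the other side the zero trace comes from the old sheet.
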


\begin{proof}
Choose coordinates \(x,y,r\), with \(r\geq0\) in \(M\), \(\lambda=\{y=r=0\}\), and \(x\)
along \(\lambda\). Neatness of either sheet implies that projection onto \((x,r)\) is
locally invertible. This assertion does not require the two tangent planes to be close.
Write the two graph functions as \(g_o,g_n\). The cut curve projects to \(r=h(x)\),
where \(h\geq0\), and \(h=0\) on the interval where the curve lies in \(\lambda\). Both
graph functions agree on \(r=h(x)\). After extending the smooth functions across that
curve, the union has the form

\[
g(x,r)=g_o(x,r)+\max\{r-h(x),0\}\,b(x,r)
\]

for a smooth local function \(b\); division by \(r-h(x)\) is valid since the difference
vanishes on the cut curve. This formula, or its version with the roles interchanged,
proves the Lipschitz bound even where the exterior strip is cuspidal. The trace is
\(g(x,0)=0\).
\end{proof}

Away from \(\partial M\), the analogous common graph is obtained by projecting onto the
cut-curve coordinate and the signed normal coordinate of \(\partial U\). Transversality
of the old sheet and neatness of the new disk give the two inverse projections.

\begin{lem}[Smoothing a splice]\label{lem:fb-splice-smoothing}
For a fixed embedded splice as above, there are smooth neat surfaces obtained by tame
ambient isotopies fixing \(\partial M\), supported in any sufficiently small prescribed
neighborhood of its entire seam, which have area tending to the area of the splice. A
compact protected subset disjoint from the seam is unchanged.
\end{lem}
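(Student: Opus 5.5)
The plan is to work locally near the seam in the graph coordinates already produced by Lemma~\ref{lem:fb-splice-graph} and the remark after it. Cover the compact seam by finitely many coordinate charts. In each chart the splice is the graph of a Lipschitz function $g$ over a planar domain: over a half-plane at an endpoint $q\in\lambda$, and over a full plane elsewhere. The function $g$ is smooth away from the projected cut curve, and at the boundary $g(x,0)=0$ after $\lambda$ has been straightened. In each chart we replace $g$ by $g_\delta=\chi g+(1-\chi)(g*\rho_\delta)$, with two adjustments. First, near $r=0$ we mollify only in the $x$-direction together with a one-sided average in $r$ (a reflected kernel), so that the trace stays $0$. Second, we add a correction vanishing to first order at $r=0$, so that $\partial_r g_\delta(x,0)$ stays away from zero. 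That keeps the graph neat, since neatness of the old sheet gives a definite angle with $\partial M$ away from the artificial strip. The cutoff $\chi$ equals $1$ outside a prescribed neighbourhood of the seam and on the protected compact set. We carry out the charts one at a time in a fixed order, so that later mollifications are applied to the already smoothed surface. Because each step alters the surface only in a small set, the overlaps are finite and controlled.

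For the area estimate we use that the graph area functional $\int\sqrt{1+|\nabla g|^2}$, in the ambient metric written in these coordinates, is continuous under $W^{1,1}$ convergence with uniform Lipschitz bounds. We have $g_\delta\to g$ uniformly, $\operatorname{Lip}(g_\delta)\le C\operatorname{Lip}(g)$, and $\nabla g_\delta\to\nabla g$ almost everywhere; this holds off the measure-zero cut curve, where $g$ is smooth. Dominated convergence then gives area convergence in each chart. The cutoff contributes error terms of size $|\nabla\chi|\,|g-g*\rho_\delta|=O(\delta\cdot\delta^{-1}\cdot\delta)$ after choosing the cutoff width comparable to a fixed scale, and these tend to $0$.

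The isotopy statement comes from the graph structure. The straight-line homotopy $t g_\delta+(1-t)g$ consists of Lipschitz graphs, hence of embedded tame surfaces. Each stays in the chart, has zero boundary trace, and is fixed outside the support. This family extends to a tame ambient isotopy fixing $\partial M$, for instance by a vertical shear in the normal coordinate damped by a bump function. Smallness of $\delta$ keeps the support inside the prescribed neighbourhood and away from the other sheets of the surface, which lie at positive distance from the compact seam.

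I expect the main obstacle to be the boundary endpoint $q$, where the exterior strip is cuspidal. There, naive mollification can destroy neatness, or push the surface out of $M$ if the reflection is done carelessly. I would first try the odd reflection $g(x,-r)=-g(x,r)$ in the straightened half-plane, which preserves the zero trace under symmetric mollification. I would then check that $\partial_r g$ is bounded away from zero on both sides of the cusp, since both $g_o$ and $g_n$ are neat. If that holds, convex combinations of these derivatives, which is what mollification produces, keep $\partial_r g_\delta(x,0)\neq0$.
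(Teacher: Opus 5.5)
Your outline matches the paper's proof. You use graph charts along the seam, odd reflection in $r$ with an even kernel at $\partial M$, ordinary convolution inside, and cutoff blends chart by chart. Area converges from a uniform Lipschitz bound plus $W^{1,1}$ (or a.e.) convergence of gradients. The linear graph path is extended by a damped vertical fibre map fixing $r=0$.

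\emph{Neatness.} The step you call the main obstacle rests on a false premise. In the chart of Lemma~\ref{lem:fb-splice-graph} the splice is a graph $y=g(x,r)$ over the $(x,r)$ half-plane, with $\partial M=\{r=0\}$. The tangent plane of \emph{any} $C^1$ graph of this form contains $\partial_r+g_r\,\partial_y$, whose $dr$-component is $1$. So transversality to $\partial M$ is automatic, whatever the value of $\partial_r g_\delta(x,0)$.

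Conversely, neatness gives no lower bound on $|g_r|$: the plane $\{y=0\}$ meets $\partial M$ orthogonally and has $g_r\equiv0$. Your proposed check can therefore fail, and the first-order correction has nothing to do. Drop both. Odd extension, even kernel and restriction to $r\ge0$ already give a graph over the half-plane that is smooth up to $r=0$ with zero trace, hence a neat surface in $M$.

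Discard also your first variant (mollify in $x$, one-sided average in $r$). A one-sided average does not keep the trace zero. Moreover, at $q$ the seam $r=h(x)$ leaves $\lambda$ through a flat transition and is tangent to it, so averaging in $x$ alone does not remove the crease there.

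\emph{Sequential blending.} This is left implicit in ``the overlaps are finite and controlled''. In one chart, your blend $\chi g+(1-\chi)(g*\rho_\delta)$ must have $\chi=1$ near the chart boundary so the change is compactly supported. But the seam exits through that boundary, so where $\chi$ passes from $0$ to $1$ across the seam the blend still has a crease.

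The paper handles this as follows:
\begin{itemize}
\item it uses enlarged charts with smaller closed cores whose interiors cover the seam, and takes each cutoff equal to $1$ on the current core;
\item at a point already smoothed, both the current graph and its convolution are smooth, so every later blend keeps it smooth;
\item displacements are kept small enough that the core cover still covers the displaced seam.
\end{itemize}
With these two repairs your argument coincides with the paper's.
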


\begin{proof}
Use a finite collection of enlarged graph charts and smaller closed cores whose
interiors cover the seam. At the ambient boundary, extend \(g\) oddly in \(r\), convolve
with an even smooth kernel, and restrict to \(r\geq0\). The resulting graph has zero
boundary trace. In an interior chart use ordinary convolution. At each step choose a
smooth cutoff equal to one on the corresponding core and zero near the boundary of its
enlarged chart, and blend the current graph with its smoothed graph. The cutoff
transition may meet the seam; it need not already be smooth. After this step the surface
is smooth on the new core. At a point already made smooth in an earlier step, both the
current graph and its convolution are smooth, so every later blend keeps it smooth.
Finitely many steps therefore smooth the whole seam, including its endpoints, without
leaving a terminating crease inside a single component. The displacements are chosen
sufficiently small that the original smaller-core cover remains a cover of the displaced
seam.

Convolution converges uniformly and in \(W^{1,1}\), with uniformly bounded first
derivatives. In a smooth ambient chart the graph area density \(J(x,g,Dg)\) is uniformly
Lipschitz in \((g,Dg)\) on this bounded set. The area difference therefore tends to
zero.

The linear graph path \((1-t)g+t g_\delta\) stays embedded. A sufficiently small
vertical displacement extends it to a fiber-preserving ambient homeomorphism, equal to
the identity near the chart boundary and on \(r=0\). This is a tame ambient isotopy. For
a fixed configuration all unselected sheets are compact and disjoint from the selected
local graph, so their positive local clearance permits a sufficiently small uniform
displacement. The finite iteration is an isotopy of the whole surface. Its support
misses the protected compact set by the initial chart choice.
\end{proof}

This lemma applies to the splices constructed above. It is \emph{not} asserted for an
arbitrary Definition~\ref{conv:ps} surface: a general cusped fold along an edge need not
be a graph.

\paragraph{Relative disk isotopy.}\label{lem:fb-relative-disks}
For the qualitative topological disk replacement, first align the two neat boundary
collars by their common \((x,r)\)-graphs, fixing \(\lambda\). Cut along the retained
connected component \(C\), which reaches the ambient boundary. The resulting manifold is
irreducible: an interior sphere bounds a ball in \(M\); since \(C\) misses the sphere
and reaches \(\partial M\), it lies outside that ball, so the ball remains in the cut
manifold. The old and new disks are proper tame disks with the same boundary in this
irreducible manifold. The usual relative disk-isotopy theorem identifies them. Undo the
collar alignment. No area estimate is required during this qualitative isotopy. After
Lemma~\ref{lem:fb-splice-smoothing} gives smooth neat endpoints,
Lemma~\ref{lem:category-bridge} supplies the smooth relative ambient isotopy required in
\(I_{\rm sm}\). That bridge is purely topological and does not use minimal-surface
existence, area complexity, or universal disjointness.

\paragraph{Smoothing a convergent collection of sheets.}

Naive pointwise smoothing is inadequate for an induction: if the tangent jump is
\(a_i\to0\), a smoothing width \(\delta_i\) may create second derivatives of size
\(a_i/\delta_i\). Nor does direct odd reflection approximate the full boundary jet of a
general smooth graph. The following version avoids both problems.

\begin{lem}[Simultaneous smoothing]\label{lem:fb-ordered-smoothing}
In a fixed enlarged graph chart let finitely many disjoint, ordered, continuous
piecewise-smooth graphs \(g_{i,1}<\cdots<g_{i,m}\) converge piecewise in \(C^1\) to the
same smooth graph \(g\). Assume they converge smoothly away from the splice seam. Then
they can be smoothed simultaneously, keeping their order and remaining unchanged off a
prescribed seam neighborhood, so that the smoothed graphs converge to \(g\) in every
\(C^k\) on the smaller chart. Their total area changes by \(o(1)\). At a boundary chart
the same conclusion holds for a single graph with trace equal to the fixed smooth trace
of \(g\).
\end{lem}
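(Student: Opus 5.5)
The plan is to build the smoothing from two ingredients that respect the order and converge to $g$: a single convolution scale chosen uniformly in $i$, and one common cutoff for all sheets. Each $g_{i,j}$ is written as $g+\eta_{i,j}$. By hypothesis $\eta_{i,j}$ is piecewise smooth, tends to $0$ piecewise in $C^1$, and tends to $0$ smoothly off the seam. The key point is that I smooth only the difference $\eta_{i,j}$, never $g$ itself, so the possibly large higher jets of $g$ are never mollified and the smoothed sheets inherit the full jet of $g$. Concretely, I fix a cutoff $\chi$ that equals one on a seam neighbourhood $V'$ and vanishes outside the prescribed neighbourhood $V$. I then set
\[
\tilde g_{i,j}=g+(1-\chi)\eta_{i,j}+\chi\,(\eta_{i,j}*\rho_{\delta_i}),
\]
with a mollifier $\rho$ and widths $\delta_i\to0$ to be chosen. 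This leaves each graph unchanged outside $V$.

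\emph{Step 1: convergence.} On $V\setminus V'$ the blend involves only the smooth region, where $\eta_{i,j}\to0$ in every $C^k$; the same holds for its mollification. On $V'$, $\|D^k(\eta*\rho_\delta)\|_\infty\le C_k\delta^{1-k}\|D\eta\|_\infty$ for $k\ge1$. Since $\|D\eta_{i,j}\|_\infty=:a_i\to0$, I choose $\delta_i$ so that $a_i\delta_i^{1-k}\to0$ for each fixed $k$. A diagonal choice such as $\delta_i=a_i^{1/i}$ (or $\delta_i=\max(a_i^{1/i},1/i)$) works, because for fixed $k$ and large $i$ we have $a_i^{1-(k-1)/i}\to0$. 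This step is the answer to the objection raised before the lemma: the scale is adapted to the size of the jump, not fixed. Finite $m$ lets me take the maximum over $j$.

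\emph{Step 2: order and disjointness.} Convolution with a nonnegative kernel is order preserving, and a convex combination of ordered functions with the same weight $\chi(x)$ is ordered. Hence $g_{i,j}<g_{i,j+1}$ gives $\tilde g_{i,j}\le\tilde g_{i,j+1}$. Strictness follows on compacts because the gaps $g_{i,j+1}-g_{i,j}$ are continuous and positive, and so are their mollifications. This is the reason for using one common $\chi$ and one common $\delta_i$ for every $j$ at a given $i$.

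\emph{Step 3: area.} This repeats the proof of Lemma~\ref{lem:fb-splice-smoothing}. The graph area density is Lipschitz in $(g,Dg)$ on the bounded set where the first derivatives lie, and $\tilde g_{i,j}-g_{i,j}\to0$ in $W^{1,1}$ with bounded gradients, so the total area changes by $o(1)$.

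\emph{Step 4: boundary charts.} For a single graph with trace equal to that of $g$, the difference $\eta$ has zero trace on $r=0$. I extend $\eta$ oddly in $r$, convolve with an even kernel, and restrict. The odd extension of a zero-trace piecewise $C^1$ function is Lipschitz with $\|D\eta\|$ unchanged, and the result stays odd, hence has zero trace. The trace of $\tilde g$ is therefore exactly that of $g$, and the estimates of Step 1 apply to $\eta$. Only the difference is reflected, not $g$ itself, which avoids the jet problem the paper mentions.

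I expect the main obstacle to be the blend region of Step 1 together with the boundary chart. One must check that the odd extension of $\eta$ does not destroy the $C^k$ decay near $r=0$ away from the seam, where $\eta$ is smooth but its odd reflection generally is not. I would handle this by reflecting only inside $V$, and by shrinking $V'$ so that the seam's boundary endpoints are its only contact with $r=0$ there. Outside $V'$ the unmodified term $(1-\chi)\eta$ already converges smoothly up to $r=0$.
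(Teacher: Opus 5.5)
Your proposal is correct and is essentially the paper's proof: you smooth only the residual $g_{i,j}-g$, with one nonnegative kernel, one cutoff and one diagonally chosen scale for all sheets, so that positivity preserves strict order, the bound $C_k\,\delta^{1-k}\|D(g_{i,j}-g)\|_\infty$ gives smooth convergence where the cutoff is one, and the zero-trace residual is oddly reflected at the boundary. Some small corrections and remarks.
\begin{itemize}
\item Where the cutoff dies off near the edge of the enlarged chart, its transition can cross the seam. So your claim that $V\setminus V'$ sees only the smooth region holds only on the smaller chart; the paper says this explicitly and then iterates over a finite chart cover.
\item The worry in your last paragraph is already settled by your Step 1 estimate, which uses only the Lipschitz bound of the reflected residual.
\item Your choice $\delta_i=a_i^{1/i}$ need not tend to $0$. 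This is harmless, since $\delta_i$ only has to be capped by the chart size.
\end{itemize}
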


\begin{proof}
Put \(f_{i,a}=g_{i,a}-g\) and \(e_i=\max_a\|f_{i,a}\|_{W^{1,\infty}}\to0\). Choose one
nonnegative convolution kernel and one cutoff for every sheet. On the central seam
neighborhood replace the graph by

\[
\widehat g_{i,a}=g+K_{\delta_i}*f_{i,a}.
\]

For each fixed \(k\geq1\),

\[
\|D^k(K_{\delta_i}*f_{i,a})\|_\infty
\leq C_k\,e_i\,\delta_i^{1-k}.
\]

Choose \(\delta_i\downarrow0\) sufficiently slowly, by diagonal selection, that these
bounds tend to zero for every fixed \(k\). On a smaller core where the cutoff is one
this gives smooth convergence immediately. On the cutoff transition there may still be
an unsmoothed seam, so no smooth-convergence assertion is made there at this step.
Positivity of the convolution and the common cutoff preserve strict sheet order; no
lower bound on the gap between sheets is required. The graph homotopy is also ordered.
Area convergence follows from \(C^1\) convergence.

At \(r=0\), apply odd reflection to the residual \(f_i\), not to \(g_i\) itself, and add
\(g\) back afterward. The residual has zero trace. Its odd extension has
\(W^{1,\infty}\)-norm \(O(e_i)\), so the same derivative estimates apply after
convolution. This keeps the boundary fixed and recovers every derivative of the actual
limit \(g\), including its nonzero even normal derivatives.

For a finite chart cover, proceed by the sequential core procedure of
Lemma~\ref{lem:fb-splice-smoothing}. At every stage all graph residuals are still
\(o(1)\) in \(W^{1,\infty}\), also after passing through another fixed smooth chart.
Choose the new common scale slowly enough for the derivative estimates on that chart. On
a previous core the current graph already converges smoothly to the local expression of
the same fixed limit; the new convolution does so as well. Their cutoff blend therefore
preserves smooth convergence on that core, even if this core meets the new transition.
On the new core the cutoff is one. The finite iteration establishes smooth convergence
throughout the smaller-core cover. Use one cutoff and one scale for all locally ordered
sheets at each step.
\end{proof}

If several ordered boundary graphs with the same trace ever have to be treated, a
positivity-preserving half-space kernel is available: for a radially decreasing even
kernel,

\[
K_\delta(x-x',r-s)-K_\delta(x-x',r+s)\geq0\quad(r,s\geq0).
\]

This is the odd-reflection convolution kernel on the half-space and preserves
nonnegative differences. The application below only needs the single actual boundary
sheet.

An important support qualification: in regions where the graphs already converge
smoothly, there is no need to impose a prescribed numerical error such as \(2^{-i}\).
The area alteration being \(o(1)\) suffices. Imposing both an arbitrarily fast error
schedule and high-derivative convergence would unnecessarily reintroduce the scale
conflict. At the remaining seam, outside the already convergent region and away from the
new protected core, apply Lemma~\ref{lem:fb-splice-smoothing} with an arbitrary error
tending to zero.

\paragraph{Uniqueness of the boundary sheet.}\label{lem:fb-boundary-sheets}

At the first replacement near the ambient boundary, the central patch has the
fixed smooth boundary arc \(\lambda\). Apply \cite[Lemmas~6.9 and~6.11]{HassScott}
there; no convergence of the old surfaces is assumed. On a smaller central patch
disjoint from \(\lambda\), use the same statements with fixed boundary
\(\Gamma=\varnothing\). The moving-arc lemma is used only at later interfaces
where the old surfaces already converge smoothly.

At a point \(q\in\lambda\) already lying in a previously convergent region, the old
sequence has one boundary sheet in a sufficiently small chart. Its intersection with the
smooth new frontier is therefore one smooth arc converging smoothly through the
flat-transition point. For the new disk containing this arc, the hypotheses of
\cite[Lemma~6.10]{HassScott} apply to an open patch of the \emph{smooth sphere}
\(\partial U\) containing \(q\). The fact that this patch is tangent to \(\partial M\)
creates no corner in the disk's boundary data.

One must also exclude extra local pieces of the \emph{same} original disk. Choose a
smaller rounded strictly convex half-ball \(W\subset U\) around \(q\), using \(\partial
U\), rather than \(\partial M\), as its central face. Its central face contains a fixed
small patch on which the original boundary curve is the single smoothly convergent arc
just described. Choose the radius transverse to the countable extended disks and small
enough that \(W\) does not contain an entire original boundary loop.

Every cut loop on \(\partial W\) bounds a subdisk of its original disk. This subdisk is
\emph{globally area-minimizing in \(N\)}: a smaller-area replacement would lower the
original disk's area. The small cap on \(\partial W\) bounds its area, and the
confinement argument of \hyperref[lem:fb-global-disks]{the global comparison-disk
argument} puts the entire subdisk in \(W\). Strict convexity keeps its interior off
\(\partial W\). Consequently the pieces cut out in \(W\) really are least-area disks,
not multiply connected planar pieces; nested additional cut loops are impossible.
Exactly one local disk contains the central boundary arc. All other local disks, whether
from the same original disk or a different one, have empty fixed boundary on the central
face.

Apply \cite[Lemmas~6.9--6.11]{HassScott} to the collection of these remaining local
disks, whose total area is still bounded by the original total area. None can accumulate
at \(q\): a limiting surface through \(q\) could neither be properly embedded with empty
boundary there nor be a minimal subsurface of the strictly convex patch. The single disk
containing the arc is the boundary component supplied by the boundary-convergence lemma.
Thus there is one actual local boundary sheet. This is established before claiming a
common ordered graph chart.

On the interior part of the seam, the two limiting pieces match smoothly after the usual
no-bend argument. Their smooth extensions to \(q\) consequently have identical jets at
\(q\), by continuity along the seam. This justifies the common smooth reference graph at
the flat-transition endpoint, rather than assuming it as a boundary regularity
statement.

The word 'convergent' here must mean the geometric smooth disk convergence of the cited
lemmas, including local finiteness; weak varifold convergence is not sufficient for the
exclusion of extra sheets or the following covering argument.

\paragraph{Area-controlled disk cleaning.}

\begin{lem}[Area-controlled disk replacement]\label{lem:fb-cleaning}
Let \(S\) be a compact connected smooth neat two-sided incompressible surface,
properly embedded with nonempty boundary in a compact irreducible orientable
\(3\)-manifold \(M\). Let \(D,E\subset\operatorname{Int}M\) be smooth embedded
closed disks, with \(D\subset\operatorname{Int}S\),
\(\partial E=\partial D\) and \(\operatorname{Int}E\cap D=\varnothing\). It may meet
\(S\setminus D\). Suppose \(\operatorname{Area}(E)<\operatorname{Area}(D)\).

For every sufficiently small \(\tau>0\), there is a smooth embedded surface \(S'\),
smoothly ambiently isotopic to \(S\) rel \(\partial S\), such that

\[
\operatorname{Area}(S')\leq\operatorname{Area}(S)-\operatorname{Area}(D)+\operatorname{Area}(E)+\tau.
\]

Applications to the raw splices below are made only after the specific preparation in
\hyperref[lem:fb-bridge-preparation]{the collapsing-bridge argument}. No density
assertion for arbitrary Definition~\ref{conv:ps} surfaces is needed.
\end{lem}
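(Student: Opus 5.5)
The plan is to first produce the cut-and-paste surface \(S_0=(S\setminus D)\cup E\) and then remove its defects. \(S_0\) is a compact surface with a corner along \(\partial D\). Since \(\operatorname{Int}E\) may meet \(S\setminus D\), it need not be embedded. I would first make \(E\) transverse to \(S\) by a small perturbation fixing \(\partial E\). This can change \(\operatorname{Area}(E)\) by an arbitrarily small amount, which is absorbed into \(\tau\). Then \(\operatorname{Int}E\cap(S\setminus D)\) is a finite union of circles, and arcs cannot occur because \(E\) lies in \(\operatorname{Int}M\) and \(\partial E=\partial D\) meets \(S\) only along \(\partial D\). I would induct on the number of circles, so that the embedded case is reached before any smoothing is done.

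\textbf{Step 1 (removing intersection circles without increasing area).} Take a circle \(\gamma\) of \(E\cap S\) that is innermost on \(E\). It bounds a subdisk \(E_\gamma\subset E\) whose interior is disjoint from \(S\). By incompressibility of \(S\), \(\gamma\) also bounds a disk \(D_\gamma\subset S\). Two cases arise. If \(D_\gamma\) misses \(\operatorname{Int}D\), then \(D_\gamma\cup E_\gamma\) is a sphere, and irreducibility makes it bound a ball. The sub-case \(\operatorname{Area}(E_\gamma)<\operatorname{Area}(D_\gamma)\) needs no separate treatment, because every step below only removes area from \(E\). The key move is to surger \(E\) rather than \(S\). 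Replace \(E\) by the surface obtained by swapping \(E_\gamma\) for a parallel copy of \(D_\gamma\) pushed slightly to the side of \(S\) away from \(D\), or simply discard the part of \(E\) that has become redundant. Concretely, I prefer this: let \(\gamma\) be outermost on \(E\) among circles whose \(S\)-disks do not contain \(D\), and cut \(E\) along \(\gamma\). The correct replacement is then \(E':=(E\setminus E_\gamma)\cup D_\gamma'\) only when this does not increase area. Otherwise, use the dual replacement of \(D_\gamma\) inside \(S\) by \(E_\gamma\); this is a smaller instance of the lemma itself, with the hypothesis satisfied, so induction on the number of circles applies. In the remaining case \(D_\gamma\supset D\), or the circle encloses the original replacement disc (the situation flagged in the introduction). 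Here I would instead use the region of \(E\) outside \(\gamma\): the annulus in \(S\) between \(\partial D\) and \(\gamma\) together with \(E\setminus E_\gamma\) gives a disk of smaller area with boundary \(\partial D\) and fewer intersections. Each step strictly decreases the number of circles and does not increase the candidate replacement area beyond \(\operatorname{Area}(E)+\tau/2\). After finitely many steps \(E\cap(S\setminus D)=\varnothing\).

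\textbf{Step 2 (isotopy).} With \(E\) now disjoint from \(S\setminus D\), the sphere \(D\cup E\) lies in \(\operatorname{Int}M\) and bounds a ball \(B\) by irreducibility. Because \(S\) is connected and has nonempty boundary on \(\partial M\), \(S\setminus D\) misses \(B\). Alexander's theorem, in the tame category of Definition~\ref{conv:ps}, then gives a tame isotopy rel \(\partial S\) from \(S\) to \(S_0\), supported near \(B\).

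\textbf{Step 3 (smoothing).} \(S_0\) is embedded with a single crease along \(\partial D\), where two smooth sheets meet transversally or tangentially. Locally it is a Lipschitz graph over the normal coordinate of \(\partial D\), as in the interior version of Lemma~\ref{lem:fb-splice-graph}. Lemma~\ref{lem:fb-splice-smoothing} then provides smooth surfaces \(S'\), reached by tame isotopies rel \(\partial M\) and supported near \(\partial D\), whose area is within \(\tau/2\) of \(\operatorname{Area}(S_0)\). Lemma~\ref{lem:category-bridge} converts the composite tame isotopy between smooth neat endpoints into a smooth ambient isotopy rel \(\partial S\).

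I expect Step 1 to be the main obstacle, in particular the case where an intersection circle encloses \(D\) on \(S\). There the choice between surgering \(E\) and surgering \(S\) must be organised so that each move both reduces complexity and keeps the area bound. My first attempt would be a lexicographic induction on \((\#(E\cap S),\ \operatorname{Area})\), always taking innermost circles on \(E\) and comparing areas disk by disk.
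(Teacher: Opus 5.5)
Your Steps 2 and 3 agree in outline with the paper's final ball slide and rounding. Step 1, which you correctly single out as the crux, does not work as organised.

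\emph{Cleanliness.} Replacing $E_\gamma$ by a push-off of $D_\gamma$ gives an embedded disk only if $\Int D_\gamma$ contains no circle of $E\cap S$. Taking $\gamma$ innermost on $E$ makes only $E_\gamma$ clean, and your ``outermost'' variant makes neither disk clean.

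\emph{The recursive fallback.} Applying the lemma itself to $(D_\gamma,E_\gamma)$ also fails. For outermost $\gamma$ the hypothesis $\Int E_\gamma\cap D_\gamma=\varnothing$ can be violated. Even when it holds, the lemma's conclusion says nothing about $D$ staying in the new surface, or about that surface's new intersections with $E$, so the outer induction cannot resume.

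\emph{The missing idea} is a global choice. Among all disks $E_\gamma$, together with those $D_\gamma$ disjoint from $D$, take one, $Q$, of least area. Then $Q$ is automatically clean: an inner circle would bound a strictly smaller eligible disk, and on the $S$ side that subdisk still misses $D$. Since the partner disk at the same $\gamma$ was also eligible, either move is area non-increasing.
\begin{itemize}
\item If $Q=D_\gamma$, modify the auxiliary disk and leave $S$ alone.
\item If $Q=E_\gamma$ with $D_\gamma\cap D=\varnothing$, ball-slide $D_\gamma$ onto a copy of $E_\gamma$, keeping $D$ fixed.
\end{itemize}
The quantity to track is $\Area(S)-\Area(D)+\Area(E)$ for the current surface and current auxiliary disk, not $\Area(E)$ alone.

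\emph{The enclosing case $D\subset D_\gamma$.} Your construction here is incorrect. The annulus $D_\gamma\setminus\Int D$ and $E\setminus\Int E_\gamma$ both have boundary $\partial D\cup\gamma$, so their union is a closed surface, not a disk bounded by $\partial D$. No area comparison is available for it, nor for $(D_\gamma\setminus\Int D)\cup E_\gamma$ if that was what you meant.

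The right move is to change $S$, not the auxiliary disk. When $E_\gamma$ is clean, ball-slide the whole of $D_\gamma$ onto a copy of $E_\gamma$ and stop. The bound $\Area(S)-\Area(D_\gamma)+\Area(E_\gamma)\le\Area(S)-\Area(D)+\Area(E)$ follows purely from $D\subset D_\gamma$ and $E_\gamma\subset E$. With your innermost-on-$E$ choice this case is in fact immediate.

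\emph{The collar at $\partial D$.} A perturbation of $E$ fixing $\partial E$ does not by itself exclude intersection arcs of $\Int E$ with $S\setminus D$ running into $\partial D$. Nor does it exclude a cusp where $E$ leaves along the ray of $S\setminus D$. You need a collar adjustment placing the inward ray of $E$ strictly on one side of $S$, which is possible because it avoids the ray of $D$. This also gives the crease in Step 3 an angle in $(0,\pi)$, so it can be rounded.
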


\begin{proof}
An arbitrarily small collar alteration of \(E\), fixing its boundary and keeping it off
\(\operatorname{Int}D\), makes its interior disjoint from \(S\) near \(\partial D\). In
the normal two-dimensional fibers along that circle, keep the incoming ray of \(D\)
fixed and put the ray of \(E\) on one side of the plane of \(S\); join the adjusted germ
to the original collar inside a shrinking tube. This adjustment avoids the fixed disk
ray, preserves the embedded disk, and has area cost tending to zero with the tube width.
General position away from this collar makes the remaining intersections finitely many
transverse circles. The shared \(\partial D\) is not counted among them.

Allocate all collar, push-off, and smoothing errors below a total \(\tau\), chosen
smaller than half the initial area difference. For an intersection circle \(\gamma\),
denote its subdisk in \(E\) by \(E_\gamma\). It bounds a disk \(D_\gamma\) in \(S\), by
incompressibility. This disk is unique because \(S\) is connected with nonempty boundary
and hence is not a sphere. Since \(\gamma\) misses \(D\), either \(D_\gamma\cap
D=\varnothing\) or \(D\subset D_\gamma\).

Consider all disks \(E_\gamma\), together with the disks \(D_\gamma\) disjoint from
\(D\). Choose one, \(Q\), of smallest area. Its interior contains no intersection
circle: an inner circle would bound a strictly smaller subdisk in \(E\) when \(Q\subset
E\); when \(Q\subset S\), its unique surface disk lies inside \(Q\), remains disjoint
from \(D\), and is eligible as well. This proves cleanliness of the chosen disk without
claiming that \(S\setminus D\) is a disk.

\emph{Case 1: \(Q=D_\gamma\subset S\), disjoint from \(D\).} Replace \(E_\gamma\) in the
auxiliary disk by \(Q\). The interior of \(Q\) is disjoint from all of \(E\), so this is
an embedded disk with the same boundary. It remains disjoint from \(D\). Its area does
not increase, since \(E_\gamma\) was eligible too. Push \(Q\) very slightly to the side
of \(S\) from which the retained collar of \(E\) approaches, and round the joining
circle. The clean interior and the transverse collar model ensure that this removes
\(\gamma\), introduces no new intersection circle, and preserves embeddedness. Other
circles in the discarded subdisk disappear. The cost can be made as small as assigned.
No isotopy of this auxiliary disk relative to \(D\) is asserted or needed. In
particular, a ball bounded by \(Q\cup E_\gamma\) is allowed to contain \(D\); the
surface \(S\) has not changed.

\emph{Case 2: \(Q=E_\gamma\) and \(D_\gamma\cap D=\varnothing\).} The sphere
\(E_\gamma\cup D_\gamma\) is embedded, since \(\operatorname{Int}E_\gamma\cap
S=\varnothing\). It bounds a ball by irreducibility. The retained surface
\(S\setminus\operatorname{Int}D_\gamma\) is connected, is disjoint from this sphere
except for the joining circle, and reaches \(\partial M\). It is therefore outside the
interior ball. In particular \(D\) is outside the ball. A ball slide replaces
\(D_\gamma\) by a small parallel copy of \(E_\gamma\), followed by rounding. This
preserves the relative ambient isotopy class of \(S\), also preserving \(D\). It does
not increase area apart from the assigned error, since \(D_\gamma\) was eligible. The
transverse collar push-off removes \(\gamma\) and all circles on the discarded surface
disk without introducing new circles. The auxiliary disk remains unchanged.

\emph{Case 3: \(Q=E_\gamma\) and \(D\subset D_\gamma\).} Again
\(\operatorname{Int}E_\gamma\cap S=\varnothing\). Replace the whole \(D_\gamma\) by
\(E_\gamma\), obtaining an embedded surface. The same ball argument supplies the
relative ambient isotopy. Stop. Apart from the assigned rounding cost its area is at
most

\[
\operatorname{Area}(S)-\operatorname{Area}(D_\gamma)+\operatorname{Area}(E_\gamma)
\leq\operatorname{Area}(S)-\operatorname{Area}(D)+\operatorname{Area}(E).
\]

This is the case unavailable in the punctured-complement argument.

At each nonterminal step the intersection count strictly decreases. The disk \(D\) stays
fixed inside the current surface. The auxiliary disk stays embedded, has boundary
\(\partial D\), and misses \(\operatorname{Int}D\). The quantity

\[
\mathcal B=\operatorname{Area}(S)-\operatorname{Area}(D)+\operatorname{Area}(E)
\]

does not increase, apart from the allocated error. The current surface remains in the
initial relative class and is still incompressible. There are at most as many steps as
the initial number of circles, so a finite allocation below \(\tau\) is possible.

If Case 3 never occurs, the final auxiliary disk is disjoint from the retained
\(S\setminus D\). It and \(D\) form an embedded sphere. The connected retained surface
reaches \(\partial M\), so it is outside the ball bounded by that sphere. A final ball
slide replaces \(D\) by \(E\), and a small rounding gives \(S'\). The budget \(\mathcal
B\) proves the claimed bound. Tame ball slides with smooth endpoints give smooth
relative ambient isotopies by Lemma~\ref{lem:category-bridge}.
\end{proof}

\paragraph{The collapsing-bridge alternative.}\label{lem:fb-bridge-preparation} The
local geometric construction in \cite[proof of Theorem~4.1, pp.~103--104]{HassScott},
gives disks \(G_i,G'_i\) with common boundary, disjoint interiors, and a fixed positive
area difference. This geometric construction does not use the topology of the complement
of \(G_i\). Lemma~\ref{lem:fb-cleaning}, with an error smaller than half that
difference, supplies an embedded relative-isotopy competitor with a fixed area saving.
An intersection circle enclosing \(G_i\) is handled by Case 3 rather than being mistaken
for a disk in its complement.

Here is the preparatory step when the current surface is a raw splice. It concerns only
a \emph{fixed finite marked configuration for each fixed index} and does not use a
density theorem for arbitrary piecewise-smooth surfaces.

First choose all the local data in the bridge construction: the small auxiliary
half-ball on the old side, its two disk pieces, the small regular neighborhood of their
smooth limiting disk, the connector in the intersection of the new disk with the
relevant regular-neighborhood face, and a thin strip about that connector. The
regular-neighborhood face cuts off the two small cap disks used by the construction.
Choose this finite collection in general position with the raw seam. Marked curves
either cross the seam in finitely many transverse points or follow a smooth subarc of
it; tangential accidental contacts can be removed before choosing the smoothing width.
Choose the neighborhoods and strip thin enough that the desired area inequality has a
fixed positive margin. The cap-area estimate uses the whole small regular-neighborhood
face, not merely the connector length.

For this fixed index, the graph smoothing in Lemma~\ref{lem:fb-splice-smoothing} can be
taken in a tube of width \(h\) about the seam, with ambient fiber homeomorphisms \(H_h\)
and inverses having a Lipschitz constant bounded independently of \(h\). To see this in
one chart, the graph displacement is \(O(h)\), its first derivatives are bounded by the
fixed graph Lipschitz bound, and both the base cutoff and the fiber cutoff may have
scale \(h\). Choose the fiber cutoff width to be a sufficiently large fixed multiple of
the displacement bound; then its fiber derivative is bounded away from zero. The base
derivatives of the homeomorphism and its inverse remain bounded. Finite composition
preserves these bounds.

Off the seam these maps are eventually the identity, or converge in \(C^1\) on compact
sets if a fixed cutoff is used. Along the seam their tangential derivatives tend to the
identity: the two graph pieces have the same boundary trace and hence the same
tangential derivative there. Thus the area of any of the finitely many marked smooth
pieces changes by a quantity tending to zero; inside the shrinking tube use the uniform
Lipschitz bound and the fact that its intersection with a fixed two-dimensional piece
has area tending to zero. The lengths of marked curves also converge. For transverse
crossings the affected parameter intervals shrink to finitely many points and have
uniformly bounded speed. For a segment lying on the seam, use the tangential derivative
convergence just noted. This is stronger than \(C^0\) closeness and controls the
particular curves actually used.

Transport the marked curves and connector to the smoothed surface, making any finitely
many endpoint roundings inside still smaller marked neighborhoods. Then
\emph{reconstruct} its two-disk-plus-strip subdisk and the comparison annulus with its
two cap disks on this smooth surface, using the same small auxiliary faces and collars,
or smooth approximations of their transported marked faces. These are finite collar,
strip, and cap constructions; the preceding area and length estimates let their total
error be less than any chosen fraction of the fixed gain. The caps remain within the
chosen small-area regular-neighborhood face. Take the complementary frontier in this
reconstruction, so the two resulting disks have exactly the same boundary and disjoint
interiors by construction. An old auxiliary disk is not being inserted unchanged into a
newly smoothed surface, and a merely Lipschitz image of that old disk is not being
called smooth.

To make the approximation of transported faces precise, choose the auxiliary faces
generically so that at their finitely many transverse meetings with the seam they are
transverse to the fiber direction used for smoothing. Choose the collars along a shared
boundary arc with the same property. Each affected face is a graph over the unchanged
base, so its transported image is a Lipschitz graph.
Lemma~\ref{lem:fb-splice-smoothing}'s graph approximation applies to these graphs; no
density theorem for arbitrary locally flat surfaces is used. Build the comparison disk's
collar from the actual smooth boundary of the reconstructed disk, on the prescribed
strict side. Outside an arbitrarily thin common collar the two compact disjoint disk
pieces have positive separation, preserved by sufficiently small uniform graph
approximations. The collar area tends to zero with its width. Thus this reconstruction
retains exactly the common boundary and disjoint interiors.

Choose \(h\) and the finitely many final collar widths small enough that the loss is
below one eighth of the fixed area margin, and apply Lemma~\ref{lem:fb-cleaning} to this
smooth pair. All choices may depend on the index: only the positive margin needs to be
uniform. The local bridge operation stays away from \(\lambda\); its endpoint on
\(\lambda\) is handled instead by \hyperref[lem:fb-boundary-sheets]{the boundary-sheet
argument}.

\paragraph{Excluding bends.}\label{lem:fb-no-bend}

After the collapsing-bridge alternative is excluded, each retained outer boundary arc is
matched to one inner limiting disk. In a small interior seam chart there are \(m\) outer
copies of one smooth limiting sheet. Let \(\eta\) be their common outward unit conormal
along the seam and let \(\nu_1,\ldots,\nu_m\) be the outward unit conormals of the
matched inner pieces, counted with multiplicity. There are exactly \(m\) inner
incidences: at every finite stage each retained arc has one disk attached; separated-arc
convergence and the no-bridge conclusion preserve those incidences. Extra disks without
boundary on this local patch cannot reach the strictly convex frontier in the limit, by
the empty-boundary argument in \hyperref[lem:fb-boundary-sheets]{the boundary-sheet
argument}.

Choose a nonnegative cutoff \(\chi\) supported in a subarc and a smooth ambient vector
field \(X\), supported in the interior seam chart, with \(X=\chi\eta\) on the seam. Each
limiting piece is minimal away from the seam. Since the vector field vanishes near the
other chart edges, integration of its first variation gives

\[
\delta A(X)=\int_{\rm seam}\chi\,\eta\cdot\left(m\eta+\sum_{a=1}^m\nu_a\right)
=\int_{\rm seam}\chi\sum_{a=1}^m(1+\eta\cdot\nu_a)\geq0.
\]

If a true bend occurs, choose \(\chi\) so this integral is positive. The flow of \(-X\)
decreases the limiting weighted area by a fixed positive amount at a fixed sufficiently
small time. Piecewise \(C^1\) convergence gives convergence of the areas both before and
after that fixed flow. Thus it decreases the raw approximating areas by a fixed amount
for all sufficiently large indices.

The flow acts on the whole embedded surface and preserves its relative isotopy class,
with support disjoint from \(\lambda\). It also transports its local Lipschitz splice
charts. Lemma~\ref{lem:fb-splice-smoothing} therefore gives smooth competitors whose
area errors tend to zero, contradicting convergence to \(I_{\rm sm}\).

Consequently every \(\nu_a=-\eta\). Each matched pair joins \(C^1\). In a common graph
chart the minimal graph equations join weakly without an interface measure; uniform
ellipticity and interior regularity give a smooth minimal graph. If several inner
branches are matched to copies of the same old limiting graph, these full joined
minimal graphs are locally ordered, as limits of the ordered embedded sheets.
They coincide on the old side. Choosing a contact point in the interior of that
side, the strong maximum principle makes them coincide on the whole connected
smaller graph chart. Thus there is one smooth limiting graph there, with its
possible multiplicity, rather than merely agreement of tangent planes.
At the ambient-boundary
endpoint both sides have smooth extensions by \hyperref[lem:fb-boundary-sheets]{the
boundary-sheet argument}, and equality of their interior jets extends continuously to
that endpoint. The common reference graph in Lemma~\ref{lem:fb-ordered-smoothing} is
therefore smooth there as well.

\begin{proof}[Proof of Theorem~\ref{thm:fixed-smooth}]

Choose finitely many protected relative open cores \(V_1,\ldots,V_n\) covering \(M\),
each with closure in one of the small replacement neighborhoods of
\hyperref[lem:fb-domains]{the replacement-domain construction}. The replacement domain
\(U_j\supset\overline V_j\) may be selected from a fixed radius interval when stage
\(j\) is reached. The interval preserves the protected core.

Start with a smooth minimizing sequence \(F_i^0\) for \(I_{\rm sm}\). Inductively require:

\begin{enumerate}
\item \(F_i^{j-1}\) is smoothly and neatly embedded, is in the prescribed smooth
relative class, and its areas tend to \(I_{\rm sm}\).
\item On a relative neighborhood of the already protected compact cores, it converges
smoothly, possibly with interior multiplicity, to a smooth embedded minimal surface; at
its portion of \(\lambda\) there is the unique boundary sheet described in
\hyperref[lem:fb-boundary-sheets]{the boundary-sheet argument}.
\end{enumerate}

Choose \(U_j\) generic for the countable sequence and the current limit, using the
extended surfaces at \(\partial M\). Every cut loop is null-homotopic in \(M\) and hence
in the input surface. For the loop sharing an interval with \(\partial F_i^{j-1}\), push
that interval slightly inward in the surface to express the same assertion as the usual
simple-loop statement; its disk is the boundary-parallel subdisk bounded by the original
cut loop. Select the outermost disjoint disks. Their complement is connected and
contains the portions of \(\lambda\) outside \(U_j\), and consequently lies outside
\(U_j\). The exact replacement disks from \hyperref[lem:fb-global-disks]{the global
comparison-disk argument} are pairwise disjoint and miss this retained complement. They
give an embedded raw splice \(P_i^j\), topologically in the prescribed relative class,
with

\[
\operatorname{Area}(P_i^j)\leq\operatorname{Area}(F_i^{j-1}).
\]

Every raw splice admits the area approximation in Lemma~\ref{lem:fb-splice-smoothing}.
Therefore any fixed positive area saving obtained by a further admissible local
cut-and-paste operation contradicts the definition of \(I_{\rm sm}\), after smoothing
with error smaller than that saving. Exact monotonicity within the smooth class is not
required.

Take subsequences of the \emph{exact replacement disks} before smoothing them. Disk
convergence gives a smooth minimal limit on \(V_j\), including its fixed-boundary arc.
At the interface with the already convergent region, separated boundary arcs give
boundary convergence. Exclude the local collapsing-bridge pair by
Lemma~\ref{lem:fb-cleaning}, which supplies an embedded relative-isotopy competitor even
when an intersection circle encloses the original bridge disk. Exclude a bend by the
ambient flow in \hyperref[lem:fb-no-bend]{the no-bend argument}. Both fixed positive
savings survive the \(o(1)\) errors. At interface endpoints on \(\lambda\), use the
local subdisk extraction in \hyperref[lem:fb-boundary-sheets]{the boundary-sheet
argument} instead of a corner theorem.

This gives raw local sheets which converge piecewise smoothly to one smooth limit in the
enlarged protected region. The common graph representation required in
Lemma~\ref{lem:fb-ordered-smoothing} is obtained \emph{now}, from this piecewise \(C^1\)
convergence and local finiteness: the projection to the common tangent-plane chart is
invertible on each piece, their common boundary values glue the pieces, and the
no-thin-bridge conclusion prevents a folded connection. This step does not use the final
global covering conclusion. Finitely many local sheets can be ordered by their height
because the raw surface is embedded.

Apply Lemma~\ref{lem:fb-ordered-smoothing} simultaneously to the sheets on the already
convergent part of the new seam. On the rest of the seam use
Lemma~\ref{lem:fb-splice-smoothing}. The latter support is disjoint from all smaller
protected cores, and the former smoothing preserves smooth convergence there. Choosing a
finite subordinate collection of chart cutoffs gives a smooth neat \(F_i^j\) in the same
class with

\[
\operatorname{Area}(F_i^j)\leq\operatorname{Area}(F_i^{j-1})+o(1).
\]

Hence \(\operatorname{Area}(F_i^j)\to I_{\rm sm}\), and the induction hypotheses hold on
the union of the first \(j\) protected cores. The exact-disk lemmas have only been
applied to the exact disks; they have not been applied to the smoothed near-minimizers.

After the finite induction, the final smooth surfaces converge smoothly throughout \(M\)
to a compact embedded minimal surface \(T\), with its prescribed boundary. Smoothness
and the strict boundary barrier give neatness. Choose a \emph{boundary-adapted
transverse line bundle}, rather than metric-orthogonal normals: at \(\partial T\), take
the line in \(T\partial M\) transverse to \(T\partial T\), and extend it to a smooth
line field transverse to \(T\). Its tubular embedding can be chosen to take fibers over
\(\partial T\) into \(\partial M\), and to take all other sufficiently short fibers into
\(\operatorname{Int}M\). Geometric convergence places every final surface in this tube,
transverse to these fibers. The induced projection is a proper local diffeomorphism of
surfaces with boundary, hence a covering. Connectedness puts it over one component.
Every point of \(\lambda\) has exactly one preimage, since the surface has precisely
the prescribed boundary and the tubular projection fixes \(\lambda\) pointwise.
Thus the covering has degree one, also when \(\lambda\) has several components. It follows that
the surfaces are single graphs in this adapted line bundle. Their graph sections vanish
on \(\partial T\), so scaling those sections to zero is an isotopy fixing \(\lambda\);
boundary-adapted isotopy extension supplies the ambient isotopy. Finally smooth area
convergence gives

\[
\operatorname{Area}(T)=I_{\rm sm}.
\]

The degree-one argument at fixed boundary is more direct here than using the
primitive-slope obstruction to a one-sided double cover. No conclusion about a
minimizing sequence converging merely in the sense of varifolds would justify this step.
\end{proof}

\begin{rem}\label{lem:fb-scope}
Theorem~\ref{thm:fixed-smooth} concerns smooth neat competitors in a fixed
relative smooth ambient isotopy class.  It does not identify this infimum
with the infimum over all surfaces of Definition~\ref{conv:ps}.
Lemma~\ref{lem:fb-splice-smoothing} applies to the graph splices constructed
in the proof; it does not assert area density for arbitrary piecewise
\(C^1\) surfaces with cuspidal folds.
\end{rem}

\subsection{Auxiliary regularity and sliding-boundary attainment}\label{ssec:R1}

The local argument begins with an existing fixed-boundary minimiser in
Definition~\ref{conv:ps}; it does not assume sliding-boundary attainment.
Throughout the local argument, $M$ is a compact orientable $P^2$-irreducible
Riemannian $3$-manifold with the collar data \eqref{eq:collar}, and
$\lambda\subset\bd M$ is a smooth non-empty closed $1$-manifold.
Let $F$ be a connected orientable incompressible surface with boundary
$\lambda$, let $\cF$ be the surfaces of Definition~\ref{conv:ps} properly
isotopic to $F$ rel $\lambda$, and put
$I=\inf\{\Area(G):G\in\cF\}$.  Assume $F'\in\cF$ has $\Area(F')=I$.
This is an assumption for the auxiliary regularity assertion, not an
existence input to the main proof.  Proposition~\ref{prop:R1fixed} is an implication
about a given minimiser.

Write $\Gamma:=\Gamma_{F'}$ for a graph as in Definition~\ref{conv:ps}.
The closures of the components of $F'\setminus\Gamma$ are the pieces.  Each
piece is smooth in its interior and embedded of class $C^1$ up to its own
boundary, including corners; no higher boundary regularity is assumed.

\begin{lem}[Stationarity]\label{lem:localdisc}
Let $X$ be a smooth vector field on $M$ with compact support in $\Int M$.  Then
\begin{equation}\label{eq:firstvar}
   \sum_P\int_P\operatorname{div}_PX\,dA\;=\;0 ,
\end{equation}
the sum being over the pieces $P$ of $F'$.  Consequently
\begin{enumerate}
\item[\textup{(i)}] every piece is a minimal surface; and
\item[\textup{(ii)}] along the interior of every arc of $\Gamma$ that lies in
$\Int F'$, the two sheets of $F'$ meeting there have opposite outward
conormals, $\eta_1+\eta_2=0$.
\end{enumerate}
\end{lem}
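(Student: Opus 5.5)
The plan is to test $F'$ against a one-parameter family of competitors produced by the flow of $X$. Let $\Phi_t$ be the flow of $X$. Since $X$ has compact support in $\Int M$, each $\Phi_t$ is a diffeomorphism of $M$ that is the identity near $\bd M$. Three consequences follow directly.
\begin{enumerate}
\item[\textup{(a)}] $\Phi_t(F')$ is again piecewise smooth in the sense of Definition~\ref{conv:ps}, with graph $\Phi_t(\Gamma)$ and pieces $\Phi_t(P)$. A diffeomorphism preserves $C^1$ embeddedness up to the boundary of each piece, and it preserves distinctness of the corner rays.
\item[\textup{(b)}] $\Phi_t(F')$ is properly isotopic to $F'$ rel $\lambda$, via $s\mapsto\Phi_{st}$, so $\Phi_t(F')\in\cF$.
\item[\textup{(c)}] Hence $\Area(\Phi_t(F'))\ge I=\Area(F')$ for every $t$. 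Applying this to both $X$ and $-X$, the derivative at $t=0$ must vanish, provided it exists.
\end{enumerate}

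Differentiability comes from the area formula applied piece by piece: $\Area(\Phi_t(P))=\int_P J_t\,dA$, where $J_t$ is the tangential Jacobian of $d\Phi_t$ along $TP$. Because each piece is $C^1$ up to its boundary, $TP$ is continuous on the compact closure of $P$. Therefore $J_t$ is smooth in $t$ with $\partial_tJ_t|_{t=0}=\operatorname{div}_PX$, uniformly on $P$. Differentiating under the integral over the finitely many pieces gives \eqref{eq:firstvar}.

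For (i), take $X$ supported in a small ball meeting $F'$ only in the interior of one piece $P$, away from $\Gamma$. Then \eqref{eq:firstvar} reduces to the classical first variation $-\int_P \langle H,X\rangle\,dA=0$. Since $X$ is arbitrary there, $H\equiv0$ on $\Int P$, so every piece is minimal.

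For (ii), fix an interior point $q$ of an arc $\alpha\subset\Gamma\cap\Int F'$. Near $q$ exactly two pieces $P_1,P_2$ meet along $\alpha$, because $\Gamma$ is a properly embedded graph and $q$ is not a vertex. Apply the divergence theorem on each piece, writing $\operatorname{div}_PX=\operatorname{div}_P X^{\top}-\langle H,X\rangle$. The $H$ term vanishes by (i), so
\[
\sum_P\int_P\operatorname{div}_PX\,dA=\int_\alpha\langle X,\eta_1+\eta_2\rangle\,ds .
\]
Choosing $X=\chi(\eta_1+\eta_2)$ near $q$, extended smoothly with a cutoff $\chi\ge0$, forces $\eta_1+\eta_2=0$ along $\alpha$.

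The main obstacle is justifying the divergence theorem for $X^\top$ on a piece that is only $C^1$ up to its boundary, with minimal interior. I would handle it by exhausting $P$ with smooth subdomains $P_\delta$ whose boundaries converge in $C^1$ to $\alpha$. On each $P_\delta$ the divergence theorem holds, and $\operatorname{div}_PX$ is integrable because it is bounded. The boundary integrals converge by continuity of the conormal up to $\alpha$, which is exactly where the $C^1$ hypothesis is used. Corners of $\Gamma$ form a finite set and do not affect the line integrals.
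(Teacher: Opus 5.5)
Your proposal is correct and follows the paper's proof step for step. The flow of $X$ gives competitors in $\cF$, and the first variation is computed piece by piece using $C^1$ regularity up to the boundary. Clause (i) comes from localising inside a single piece, and clause (ii) from localising at an interior point of an arc and exhausting each sheet by smooth subdomains, so that the boundary terms converge to $\int\langle X,\eta_1+\eta_2\rangle$.

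One small fix is needed. The conormals $\eta_1,\eta_2$ are only continuous along the arc, so $X=\chi(\eta_1+\eta_2)$ cannot literally be extended to a smooth field. Instead, deduce $\eta_1+\eta_2=0$ from the vanishing of $\int_\alpha\langle X,\eta_1+\eta_2\rangle\,ds$ for every smooth $X$ supported near $q$, for example by approximating $\chi(\eta_1+\eta_2)$ uniformly along $\alpha$ by smooth fields.
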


\begin{proof}
Let $\{\varphi_t\}$ be the flow of $X$.  Each $\varphi_t$ is a diffeomorphism
of $M$ equal to the identity near $\bd M$, so $\varphi_t(F')$ is properly
embedded, is piecewise smooth in the sense of
Definition~\ref{conv:ps} --- that definition is invariant under
diffeomorphisms of $M$, with $\Gamma_{\varphi_t(F')}=\varphi_t(\Gamma)$ --- and
is isotopic to $F'$ rel $\bd F'$.  Hence $\varphi_t(F')\in\cF$ and
$\Area(\varphi_t(F'))\ge I=\Area(F')$, with equality at $t=0$; so the
derivative at $t=0$ vanishes, and that derivative is
$\sum_P\int_P\operatorname{div}_PX$, the integrand being continuous on each
piece because the piece is $C^1$ up to its boundary.  This is
\eqref{eq:firstvar}.  Note which properties of $\cF$ have been used: only that
it is closed under diffeomorphisms of $M$ fixing $\bd M$ pointwise and that
$F'$ minimises area over it.  The abstract stationarity conclusion uses these two properties.  Its
expression as a sum over pieces, and the conormal calculation below, use the
structure supplied by Definition~\ref{conv:ps}.

For (i), let $q\in\Int P$.  Since $F'$ is compact and embedded there is a ball
$B\subset\Int M$ about $q$ with $F'\cap B\subset\Int P$.  For $X$ supported in
$B$ only one term survives in \eqref{eq:firstvar}, and the divergence theorem
on the smooth surface $\Int P$ turns it into
$-\int_P\langle H_P,X\rangle=0$; as $X$ is arbitrary, $H_P=0$ near $q$.

For (ii), let $q$ be an interior point of an arc $e\subset\Gamma\cap\Int F'$.
The surface $F'$ is a disc near $q$ and $e$ is an embedded arc in it, so $e$
separates that disc into exactly two sheets, lying in pieces $P_1,P_2$; and
there is a ball $B\subset\Int M$ about $q$ with $F'\cap B$ contained in their
union.  Fix $X$ supported in $B$.  In this neighbourhood of the open edge,
choose inward parallel exhaustion arcs on each $P_i$, converging to
$e\cap B$ in $C^1$, and complete them to boundaries of compact subsurfaces
$P_i^\delta\subset\Int P_i$ outside the support of $X$.  No convergence
through a corner of a piece is required.  Since
$H_{P_i}=0$ by (i), the divergence theorem gives
$\int_{P_i^\delta}\operatorname{div}_{P_i}X=\int_{\bd P_i^\delta}
\langle\eta_i,X\rangle$; both sides converge as $\delta\to0$, the left because
the integrand is continuous on $P_i$ and the right because $\eta_i$ extends
continuously to $\bd P_i$, the piece being $C^1$ up to its boundary.  So
\eqref{eq:firstvar} becomes
$\int_{e\cap B}\langle\eta_1+\eta_2,X\rangle=0$ for every such $X$, whence
$\eta_1+\eta_2=0$ on $e\cap B$.
\end{proof}

\begin{lem}[Sheets with opposite conormals glue smoothly]\label{lem:ball}
Let $P_1,P_2\subset M$ be embedded surfaces, smooth and minimal in their
interiors and $C^1$ up to their boundaries, meeting along
a common boundary arc $e$ and otherwise disjoint near an interior point $q$ of
$e$, and suppose their outward conormals satisfy $\eta_1+\eta_2=0$ along $e$.
Then $P_1\cup P_2$ is a smooth minimal surface near $q$.
\end{lem}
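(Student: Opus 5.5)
The plan is to show that $P_1\cup P_2$ is a $C^1$ surface near $q$, write it as a single Lipschitz (indeed $C^1$) graph over its common tangent plane, check that this graph is a weak solution of the minimal surface equation across $e$, and then bootstrap regularity.

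\emph{Step 1: a common tangent plane.} At $q$ each $P_i$ has a tangent half-plane. It is spanned by $T_qe$ and the inward conormal $-\eta_i$. Since $\eta_2=-\eta_1$, the two half-planes are complementary halves of one plane $\Pi_q$, spanned by $T_qe$ and $\eta_1(q)$. By continuity of the $\eta_i$ along $e$, this holds at every point of $e$ near $q$.

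\emph{Step 2: a graph representation.} Take exponential coordinates at $q$ with $\Pi_q$ horizontal. Each $P_i$ is $C^1$ up to $e$. So near $q$ it is the graph of a function $u_i$ over a region $\Omega_i\subset\Pi_q$, where $u_i$ is $C^1$ up to the projected arc $\pi(e)$. The two regions lie on opposite sides of $\pi(e)$, because the inward conormals point in opposite horizontal directions. Their union with $\pi(e)$ is a neighbourhood of $0$. Set $u=u_i$ on $\Omega_i$. The $u_i$ agree on $\pi(e)$, and so do their full gradients, because the tangent planes coincide along $e$. Hence $u\in C^1$, and in particular $u$ is Lipschitz.

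\emph{Step 3: weak minimal surface equation.} For a test function $\zeta$ compactly supported near $0$, consider the vertical variation $X=\zeta\,\partial_z$ (cut off in $z$). Apply the first-variation identity on each piece, with the divergence theorem as in the proof of Lemma~\ref{lem:localdisc}(ii), exhausting each piece from inside. Each $P_i$ is minimal in its interior, so only the boundary terms $\int_e\langle\eta_1+\eta_2,X\rangle$ survive, and these vanish. In graph form this says that $\int A^{j}(x,u,Du)\,\partial_j\zeta+B(x,u,Du)\zeta=0$ for all such $\zeta$. This is the divergence-form minimal surface equation in the ambient metric, and it holds weakly across $\pi(e)$ with no interface measure.

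\emph{Step 4: regularity.} Because $Du$ is continuous and small near $0$, the equation is uniformly elliptic there with smooth structure. De Giorgi–Nash, or the $C^{1,\alpha}$ theory for quasilinear equations with continuous gradient in \cite[Ch.~8]{GT}, gives $u\in C^{1,\alpha}$. Differentiating the equation then gives $C^{2,\alpha}$ by Schauder theory \cite[Ch.~6]{GT}. Bootstrapping gives $u\in C^\infty$, so $P_1\cup P_2$ is a smooth minimal surface near $q$.

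The main obstacle is Step 2. One must justify that near $q$ the union is a graph. This uses only $C^1$ regularity up to $e$ together with the opposite-conormal condition, and it must not fold back: the latter is ruled out by the opposite sides of $\pi(e)$ and local disjointness away from $e$. One must also justify that the first-variation boundary term correctly encodes the weak equation. I would handle this by passing to the limit in the exhaustion, exactly as in Lemma~\ref{lem:localdisc}.
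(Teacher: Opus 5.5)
Your proposal is correct and follows essentially the same route as the paper. The argument is identical in outline: $\eta_2=-\eta_1$ gives a common tangent plane along $e$, the two sheets form a single $C^1$ graph over the two sides of the projected arc, this graph is a Lipschitz weak solution of the divergence-form minimal surface equation because the interface terms cancel, and De Giorgi--Nash--Moser followed by Schauder bootstrapping finishes. The differences are cosmetic. You derive the weak equation from the first-variation identity with a vertical field and the exhaustion of Lemma~\ref{lem:localdisc}(ii), whereas the paper integrates by parts in the graph equation on each side, using continuity of $Du$ and $\nu_1=-\nu_2$. The paper also states explicitly that De Giorgi--Nash--Moser is applied to difference quotients of $u$; your appeal to ``the $C^{1,\alpha}$ theory for quasilinear equations'' in \cite[Ch.~8]{GT} leaves this step implicit, and that chapter treats linear equations.
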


\begin{proof}
At each point of $e$ the vectors $\eta_1,\eta_2$ are unit, orthogonal to the
tangent of $e$, and tangent to $P_1,P_2$ respectively; $\eta_2=-\eta_1$
therefore gives
$T P_1=\operatorname{span}(\dot e,\eta_1)=\operatorname{span}(\dot e,\eta_2)
=T P_2$ along $e$.  So $P_1$ and $P_2$ have a common tangent plane $T$ at each
point of $e$ near $q$.

Choose coordinates near $q$ with $T=\{y=0\}$.  For a small enough
neighbourhood, $P_1$ and $P_2$ are graphs $y=u_1$, $y=u_2$ over the two closed
sides of the projection of $e$ in $\{y=0\}$, and $u_1=u_2$ and $Du_1=Du_2$ on
that projection; so the function $u$ equal to $u_i$ on the $i$th side is $C^1$
on a full disc $D$.  Write the minimal surface equation for a graph in the
ambient metric in divergence form,
$\bd_i\bigl(a^i(x,u,Du)\bigr)+b(x,u,Du)=0$, with $a$ and $b$ smooth and the
equation uniformly elliptic on bounded gradients.  Each $u_i$ solves it
classically on its own side; for a test function $\varphi$ supported in $D$,
integrating by parts on each side produces two interface integrals of
$a^i(x,u,Du)\nu_i\varphi$ along the projection of $e$, and these cancel because
$Du$ is continuous across it and $\nu_1=-\nu_2$.  Hence $u$ is a Lipschitz weak
solution on all of $D$.

Interior regularity for quasilinear divergence-form elliptic equations now
applies: difference quotients of $u$ solve linear divergence-form equations
with bounded measurable uniformly elliptic coefficients, so De Giorgi--Nash--%
Moser gives $Du\in C^{0,\alpha}$, after which the coefficients
$a^i(x,u,Du)$ are $C^{0,\alpha}$ and Schauder bootstrapping gives
$u\in C^\infty$; see \cite[2nd ed., Chs.~6 and~8]{GT}.  So $P_1\cup P_2$ is a
smooth surface near $q$, and it is minimal because $u$ solves the equation.
\end{proof}

\begin{lem}[Hopf at a boundary point]\label{lem:pivot}
Let $p\in\bd M$, let $T\subset T_pM$ be a plane, and let $U\subset T$ be a
closed planar region which near $0\in\bd U$ is bounded by two $C^2$ arcs
with distinct tangent rays and interior opening angle in $[\pi,2\pi)$.
The arcs may be straight, so a circular sector is included.  Let $W$
be the graph over $U\cap B_\delta(0)$ of a $C^1$ map into the orthogonal
complement of $T$ which vanishes to first order at $0$; so $p\in W$ and
$T_pW=T$.  Assume that $W\subset M$, that $W$ is a minimal surface over the
interior of $U$, and that $W$ meets $\bd M$ only over $\bd U$ --- over one
edge, say, or over the vertex alone.  Then
\[
   dr_p|_T\neq0,\qquad\text{equivalently}\qquad T\neq T_p\bd M .
\]
\end{lem}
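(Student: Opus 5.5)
We argue by contradiction: assume $T=T_p\bd M$, so that $W$ is tangent to $\bd M$ at $p$, and derive a conflict between the minimal surface equation and the strict convexity recorded in \eqref{eq:hessr}.  Since $T=T_p\bd M$, the normal direction to $T$ is $\bd_r$ at $p$.  Write $W$ in collar coordinates as a graph $r=f(x)$ over $U\cap B_\delta(0)$, with $x$ ranging over $T$ identified with a neighbourhood in the boundary torus.  The hypotheses give three facts: $f\geq0$, because $W\subset M$; $f=0$ at $0$ and $Df(0)=0$, because the graph vanishes to first order and $T_pW=T$; and $f>0$ over $\Int U$, because $W$ meets $\bd M$ only over $\bd U$.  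The plan is to show that $f$ satisfies a strict differential inequality which, by Hopf's lemma, is incompatible with this vanishing behaviour at a boundary point of opening angle at least $\pi$.

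\emph{Step 1: the differential inequality.}  Restrict the ambient function $r$ to the minimal surface $W$.  On any surface, $\Delta_W r=\operatorname{tr}_W\Hess r+\langle H_W,\nabla r\rangle$.  Minimality kills the second term.  By \eqref{eq:hessr}, $\Hess r$ equals $\varphi\varphi'g_{\bd M}$ on vectors tangent to level sets and is $O(1)$ in general.  Hence, where $W$ is nearly tangent to the level sets (for $\delta$ small, by $C^1$ continuity), $\Delta_W r\leq -c<0$.  In the graph parametrisation $\Delta_W$ is a uniformly elliptic operator $L$ with continuous coefficients (the pieces are only $C^1$ up to the boundary, but interior smoothness of a minimal graph suffices away from $\bd U$).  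So $Lf\leq -c<0$ in $\Int U$, $f\ge 0$, and $f$ attains its minimum value $0$ at the boundary point $0$.

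\emph{Step 2: interior contradiction via a comparison cone.}  Because the opening angle of $U$ at $0$ is at least $\pi$, $U$ contains, near $0$, a region bounded by the two $C^2$ arcs whose tangent rays span an angle $\geq\pi$.  We would like an interior tangent ball at $0$.  When the angle is exactly $\pi$ with $C^2$ arcs, a small ball touching $0$ fits inside $U$, and the Hopf lemma for the superharmonic-type inequality $Lf\le 0$ gives $\partial_\nu f(0)<0$ along the inward normal, that is, $f$ grows linearly into $U$.  This contradicts $Df(0)=0$.  When the angle exceeds $\pi$, $U$ contains a half-disc tangent at $0$ as well, so the same interior-ball argument applies.

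\emph{Main obstacle.}  The delicate step is the Hopf lemma when $U$ has only a $C^2$-bounded region at a corner and $f$ is merely $C^1$ up to $\bd U$.  First, we must check that an interior ball touching $0$ exists: for angle $\pi$ with two $C^2$ arcs meeting tangentially, the curvature of the arcs is bounded, so a ball of radius less than the inverse curvature bound fits.  Second, the standard barrier $e^{-\alpha|x-x_0|^2}-e^{-\alpha\rho^2}$ requires only continuity of $f$ on the closed ball and $f>0$ inside, which hold, and bounded coefficients of $L$, which hold by the $C^1$ bound and interior minimality.  The alternative case, where $W$ touches $\bd M$ only at the vertex, is included, since positivity of $f$ on $\Int U$ is all that is used.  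The strict sign $Lf\le -c$ in fact makes the argument easier than the borderline Hopf case.
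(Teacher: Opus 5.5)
Your route is the paper's: restrict $r$ to $W$, use minimality and $\Hess r\le0$ to get $\Delta_W(r\circ\Phi)\le0$, place an open disc in $\Int U$ tangent to $\bd U$ at $0$, and apply Hopf's boundary point lemma. The disc comes from a straight sub-sector when the angle exceeds $\pi$, and from the quadratic bound of the two $C^2$ arcs when the angle equals $\pi$.

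Some of your framing is unnecessary. The contradiction hypothesis is not needed, because Hopf gives directly $\bd_\nu(r\circ\Phi)(0)=dr_p(\nu)<0$ for the outward $\nu\in T$. The strict bound $\Delta_W r\le-c$ is not needed either; $\le0$ suffices. Fix one sign as well: Hopf gives $\bd_\nu f(0)<0$ for the \emph{outward} normal, which is what ``$f$ grows linearly into $U$'' means.

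The step that is not justified as written is the boundedness of the first-order coefficients of $\Delta_W$ up to the vertex. In the graph parametrisation $\Delta_W=h^{ij}\bd_i\bd_j+b^k\bd_k$ with $b^k=\Delta_Wx^k=-h^{ij}\Gamma(h)^k_{ij}$. The Christoffel symbols of the induced metric involve second derivatives of the graph, and these need not be bounded near $0$, since the graph is only $C^1$ there. Interior smoothness gives nothing uniform on a ball whose boundary passes through $0$, and the $C^1$ bound alone does not control these terms.

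The missing idea is to use minimality through the minimal-immersion equation for the ambient coordinate functions. For $a=1,2$,
\[
 b^a=\Delta_Wx^a=-h^{ij}\widetilde\Gamma^a_{bc}(\Phi)\,\bd_i\Phi^b\,\bd_j\Phi^c ,
\]
which is continuous up to $0$ and bounded by the $C^1$ norm of $\Phi$. Only then does $\Delta_W$ have bounded drift, uniform ellipticity and no zeroth-order term on the tangent disc, which are the hypotheses of the Hopf lemma.

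Finally, define $f:=r\circ\Phi$ on the given region $U$ rather than re-projecting $W$ onto the boundary torus. The re-projected domain is only a $C^1$ image of $U$, and at opening angle exactly $\pi$ a merely $C^1$ boundary curve need not admit an interior tangent disc.
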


\begin{proof}
Put $u:=r\circ\Phi$ on $U\cap B_\delta(0)$, where $\Phi$ is the graph map.
Since $W\subset M=\{r\ge0\}$ we have $u\ge0$; $u$ vanishes at $0$ and, by
hypothesis, nowhere on $\Int U$, so $u>0$ there; and $u$ is
smooth over $\Int U$ and $C^1$ up to $0$.

For a minimal surface $W$ and a smooth function $r$,
\[
   \Delta_Wu=\operatorname{tr}_W\Hess r+\langle\operatorname{grad}r,H\rangle
   =\operatorname{tr}_W\Hess r ,
\]
$H$ vanishing.  In the collar $\Hess r=\varphi\varphi'\,g_{\bd M}\le0$ on
vectors tangent to the level sets of $r$, by \eqref{eq:hessr}, while
$\Hess r(\bd_r,\cdot)=0$ because $r$ is a geodesic coordinate; so $\Hess r\le0$
as a quadratic form and $\Delta_Wu\le0$.  In the graph coordinates $x^1,x^2$
write the induced metric as $h_{ij}$.  The matrix $(h^{ij})$ is continuous
up to $0$ and uniformly positive definite.  The first-order coefficients
are bounded as well; this needs minimality, since the graph is assumed only
$C^1$ at $0$.  Indeed, in smooth ambient coordinates with graph map
$\Phi=(x^1,x^2,w)$, the minimal immersion equation gives, for $a=1,2$,
\[
 b^a:=\Delta_W x^a
   =-h^{ij}\widetilde\Gamma^a_{bc}(\Phi)
                  \partial_i\Phi^b\partial_j\Phi^c,
\]
where $\widetilde\Gamma$ denotes the ambient Christoffel symbols.
Thus $b^a$ extends continuously to $0$, and on the interior
$\Delta_W=h^{ij}\partial_i\partial_j+b^a\partial_a$ has bounded
coefficients, uniform ellipticity, and no zeroth-order term.

There is an open disc $B\subset\Int U$ tangent to $\bd U$ at $0$.
For an opening angle greater than $\pi$, choose a smaller straight sector
of opening still greater than $\pi$ inside $U$ near $0$ and place such a
disc in it.  For opening angle $\pi$, the two $C^2$ boundary arcs have
opposite tangent rays and together form a $C^{1,1}$ graph near $0$;
its quadratic bound supplies an interior tangent disc.  Shrink $B$ to
lie in the graph neighbourhood.  On $B$ we have $-u<0=-u(0)$, the interior
sphere condition holds, and the outward normal derivative
exists because $u$ is $C^1$ up to $0$.  Hopf's boundary point lemma
\cite[2nd ed., Lemma~3.4, p.~34]{GT} gives $\bd_\nu(-u)(0)>0$, that is
$\bd_\nu u(0)<0$ for the outward $\nu$.  The differential of $\Phi$ at $0$ is
the inclusion $T\hookrightarrow T_pM$, so $\bd_\nu u(0)=dr_p(\nu)$, whence
$dr_p|_T\ne0$.
\end{proof}

The lemma consumes less convexity than the divergence-form comparison it
replaces: only $\Hess r\le0$, that is $\varphi'\le0$ in \eqref{eq:collar}, and
not the strict positivity of the mean curvature of $\bd M$.  It is used twice
below, once inside Proposition~\ref{prop:R1fixed} and once, in the simplest
case $U$ a half-plane, as the whole of Lemma~\ref{lem:R2}.

The argument below never compares $F'$ with a least area disc and never
leaves the piecewise smooth category.  It assumes an attained minimum;
it does not supply the smooth-class existence theorem proved above.

\begin{prop}[Minimisers with fixed boundary are smooth up to $\bd M$]
\label{prop:R1fixed}
Let $F,\cF,I,F'$ be as at the start of this subsection.  Then $F'$ is a smooth
embedded minimal surface, smooth up to $\bd M$, with $F'\cap\bd M=\bd F'
=\lambda$; it is neat and stable.
\end{prop}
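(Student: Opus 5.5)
The plan is to show that the singular graph $\Gamma$ can be removed piece by piece, using the three local lemmas just proved. Lemma~\ref{lem:localdisc} makes every piece minimal. Lemma~\ref{lem:ball} removes interior open arcs of $\Gamma$. Lemma~\ref{lem:pivot} controls behaviour at $\bd M$ and at vertices. After that, only the finitely many vertices of $\Gamma$ and the points of $\Gamma\cap\bd F'$ remain, and I expect these isolated points to be the main obstacle.

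\emph{Interior arcs and interior vertices.} First, along the interior of each arc $e\subset\Gamma\cap\Int F'$, Lemma~\ref{lem:localdisc}(ii) gives $\eta_1+\eta_2=0$. Lemma~\ref{lem:ball} then makes $F'$ a smooth minimal surface near every interior point of $e$. Consequently $F'\cap\Int M$ is smooth minimal away from a finite set $V$ of vertices; any arc of $\Gamma$ lying in $\Int F'$ but touching $\bd M$ is treated the same way.

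At an interior vertex $v$, the surface is a topological disc that is $C^1$ on each sector and smooth minimal on the punctured disc. Its stationarity \eqref{eq:firstvar} holds for fields supported near $v$. A point has zero $1$-capacity in dimension $2$, so the surface is a stationary integral varifold near $v$ with finitely many $C^1$ sectors. I would then show that the tangent cone at $v$ is a single plane. The sector tangents at $v$ form a union of finitely many half-planes or wedges glued along rays. Stationarity of the cone forces the sum of the conormal rays at each edge ray to vanish, which rules out corners; embeddedness then gives multiplicity one. Allard regularity, or alternatively writing the surface as a $C^1$ graph near $v$ and removing the isolated singularity of a bounded weak solution of the minimal surface equation, then gives smoothness at $v$. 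This is where I expect the real work: one must use that distinct corner rays at a corner of a piece still combine into a planar, embedded multiplicity-one cone.

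\emph{Boundary.} Next I will show $F'\cap\bd M=\lambda$. Suppose an interior point $p$ of $F'$ touches $\bd M$. Near $p$ the now-smooth minimal surface lies in $\{r\ge0\}$. The Hessian computation in Lemma~\ref{lem:pivot} gives $\Delta u\le0$ for $u=r$ restricted to the surface, and the strong maximum principle then makes $u\equiv0$ locally, so an open piece lies in $\bd M$. This piece would have to be minimal, contradicting the positive mean curvature of $\bd M$.

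At points of $\lambda$, including endpoints of arcs of $\Gamma$ there, the union of the pieces is locally a graph over a planar region. That region is a half-plane at smooth points of $\bd F'$. At an arc endpoint, the sectors have opening angles summing to $\pi$, so some sector has angle $\ge\pi$ or can be combined with its neighbour after the interior gluing. Lemma~\ref{lem:pivot} then gives $T\neq T_p\bd M$, which is transversality, i.e.\ neatness. With transversality in hand, boundary regularity for the minimal surface equation with smooth Dirichlet data on $\lambda$ bootstraps $C^1$ to $C^\infty$ up to $\lambda$, again via \cite[Chs.~6 and~8]{GT}, using the gluing across $\Gamma$ from Lemma~\ref{lem:ball} extended to the boundary.

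\emph{Stability.} Finally, stability follows directly from minimality: for any normal variation compactly supported in $\Int F'$, extend it to an ambient compactly supported field. The flow stays in $\cF$, so the second variation of area is nonnegative.
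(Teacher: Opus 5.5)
Your interior steps are sound. The arc gluing is the paper's. Your ``multiplicity one'' at an interior vertex is the degree-one covering argument, and it works because the sheet heights can be followed around a closed loop in the connected punctured disc.

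The genuine gap is at an endpoint $p\in\lambda$ of an arc of $\Gamma$, where you assert that the sector angles sum to $\pi$. What is actually available is weaker:
\begin{itemize}
\item all sectors $S_0,\dots,S_m$ share one tangent plane $T$ at $p$;
\item their tangent rays sweep monotonically;
\item the two extreme rays are the opposite tangent rays of $\lambda$.
\end{itemize}
Hence only $\Theta:=\sum\theta_i\equiv\pi\pmod{2\pi}$ is known. Embeddedness does not rule out $\Theta=3\pi,5\pi,\dots$. At a boundary point the sweep is an interval rather than a loop, so there is no monodromy to exploit. A half-disc neighbourhood winding one and a half times around $p$ in projection, with overlapping sheets at different heights, is embedded. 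In that configuration every direction of $T$ is a tangent ray of some sector lying in $\{r\ge0\}$, so $dr_p|_T=0$, i.e.\ $T=T_p\bd M$. This is exactly the tangential behaviour the proposition must exclude. Until $\Theta=\pi$ is known, $F'$ is not shown to be a single graph over a half-disc. So neither your application of Lemma~\ref{lem:pivot} to the whole neighbourhood nor the Dirichlet bootstrap is justified.

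The missing idea is to get transversality first, from a partial block of sectors, and only then deduce $\Theta=\pi$. Let $j$ be least with $\theta_0+\dots+\theta_j\ge\pi$. If that sum is $<2\pi$, take the block $S_0\cup\dots\cup S_j$. Otherwise take the single sector $S_j$, which then has $\theta_j>\pi$. Either way this is a $C^1$ graph over a curved sector of opening in $[\pi,2\pi)$, chosen without assuming any bound $\theta_i<\pi$. By properness of $F'$, the block meets $\bd M$ only over the boundary of that sector, so Lemma~\ref{lem:pivot} gives $dr_p|_T\neq0$. (Properness is part of membership in $\cF$, so your maximum-principle proof of $F'\cap\bd M=\lambda$ is correct but unnecessary; properness is needed here instead.) Every sector lies in $M=\{r\ge0\}$ and is $C^1$ at $p$, so each of its tangent rays $v$ has $dr_p(v)\ge0$. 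The monotone sweep of length $\Theta$ therefore lies in a closed half-circle, forcing $\Theta\le\pi$ and hence $\Theta=\pi$. Only now is $F'$ near $p$ a $C^1$ graph over a one-sided neighbourhood of the projected $\lambda$.

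The next step, from $C^1$ to $C^{1,\alpha}$ up to $\lambda$, also needs a specific tool. The Schauder and $C^{1,\alpha}$ theorems of \cite[Chs.~6 and~8]{GT} require H\"older leading coefficients, but $a^i_{p_j}(x,u,Du)$ is a priori only continuous. One option is a tangential difference-quotient argument with boundary De Giorgi estimates. The paper instead freezes the coefficients into non-divergence form and uses a $W^{2,p}$ Dirichlet theorem for continuous coefficients before bootstrapping.
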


\begin{proof}
By clause (i) of Lemma~\ref{lem:localdisc} every piece is a minimal surface,
and by clause (ii) of that lemma together with Lemma~\ref{lem:ball} the surface
$F'$ is smooth and minimal near every interior point of every arc of $\Gamma$
lying in $\Int F'$.  Three things remain: the vertices of $\Gamma$ in
$\Int F'$, the points of $\lambda$, and stability.

\emph{Vertices in the interior.}  Let $q$ be a vertex of $\Gamma$ in
$\Int F'$.  Finitely many arcs of $\Gamma$ emanate from $q$ and cut a disc
neighbourhood $N$ of $q$ in $F'$ into closed sectors $S_1,\dots,S_m$ in cyclic
order, each contained in a piece, hence minimal in its interior and embedded
of class $C^1$ up to its own boundary, in particular up to $q$.  Along the interior of each shared arc
the two adjacent sectors have the same tangent plane, by the previous
paragraph; and the tangent plane of a sector extends continuously to $q$.  So
all the $S_i$ have one and the same tangent plane $T$ at $q$.

Take coordinates with $q=0$ and $T=\{y=0\}$, and shrink $N$ so that each $S_i$
is the graph of a $C^1$ function over a closed sector $\Sigma_i\subset T$ with
vertex $0$ and opening angle $\theta_i$, the $\Sigma_i$ consecutive.  Here
$\theta_i\in(0,2\pi)$, the two boundary arcs of $S_i$ having distinct tangent
rays at $q$ by the corner clause of Definition~\ref{conv:ps}; no bound
$\theta_i<\pi$ is available, and none is used.
Let $\pi$ denote the orthogonal projection onto $T$.  Then $\pi$ carries
$N\setminus\{q\}$ onto a punctured disc $D_\varepsilon\setminus\{0\}$ as a
covering map of some degree $k$, with $2\pi k=\sum_i\theta_i$.  Over each point
of $D_\varepsilon\setminus\{0\}$ the $k$ preimages are distinct points of the
embedded surface $F'$, hence have pairwise distinct heights; the heights depend
continuously on the point and never coincide, so they order the sheets
consistently and the covering is trivial.  But $N\setminus\{q\}$, a disc minus
an interior point, is connected.  Hence $k=1$.

So $N$ is the graph of a function $u$ over $D_\varepsilon$.  It is $C^1$, the
sectors having matching tangent planes along the arcs; it is smooth off the
arcs; and it is a weak solution of the minimal surface equation on
$D_\varepsilon\setminus\{0\}$, the interface terms cancelling as in the proof
of Lemma~\ref{lem:ball}.  It is then a weak solution on all of
$D_\varepsilon$: cutting a test function off at radius $\delta$ about $0$
changes the weak formulation by at most $C\delta^{-1}\cdot\pi\delta^2\to0$,
$u$ being Lipschitz.  By the regularity quoted in Lemma~\ref{lem:ball}, $u$ is
smooth.  So $F'$ is smooth near $q$, and $F'$ is a smooth minimal surface on
all of $\Int F'$.

\emph{Points of $\lambda$.}  Since $\Gamma$ is properly embedded, a point
$p\in\lambda$ either lies off $\Gamma$ or is one of the finitely many endpoints
of arcs of $\Gamma$ on $\lambda$.  In the first case some relative
neighbourhood of $p$ misses $\Gamma$ altogether, so lies in
$F'\setminus\Gamma$, which by Definition~\ref{conv:ps} is a smooth submanifold
of $M$ --- smooth, that is, up to $\bd M$ and including its boundary points ---
and $F'$ is smooth up to $\bd M$ at $p$.  (The clause about the pieces would
give only $C^1$ here, which is why it is not the clause used.)  So only the
finitely many endpoints remain; fix such a $p$.

Those arcs cut a half-disc neighbourhood $N$ of $p$ in $F'$ into closed sectors
$S_0,\dots,S_m$, where $S_0$ and $S_m$ carry the two arcs $\ell_0,\ell_m$ into
which $\lambda$ is divided at $p$.  Exactly as in the interior case, all the
$S_i$ share a tangent plane $T$ at $p$; take coordinates with $p=0$,
$T=\{y=0\}$, and write $\Sigma_i\subset T$ for the projected sector of $S_i$.
Its boundary arcs are smooth projections of the arcs of $\Gamma$ and
$\lambda$, and its tangent opening is $\theta_i\in(0,2\pi)$ as before.
Put $\Theta:=\sum_i\theta_i$.  The extreme tangent rays of
$\Sigma_0$ and $\Sigma_m$ are the two tangent rays of $\lambda$ at
$p$, which are opposite; hence
\begin{equation}\label{eq:anglecong}
   \Theta\equiv\pi \pmod{2\pi},\qquad\text{and in particular }\Theta\ge\pi .
\end{equation}

What is wanted is a sector of opening angle in $[\pi,2\pi)$ over which $F'$ is
a graph.  Let $j$ be least with $\Theta_j:=\theta_0+\dots+\theta_j\ge\pi$; such
a $j$ exists by \eqref{eq:anglecong}.  If $\Theta_j<2\pi$, put
$U:=\Sigma_0\cup\dots\cup\Sigma_j$ and $W:=S_0\cup\dots\cup S_j$.  If instead
$\Theta_j\ge2\pi$ then $j\ge1$, since $\theta_0<2\pi$, and $\Theta_{j-1}<\pi$
by the minimality of $j$, so that the single angle $\theta_j$ already exceeds
$\pi$; put $U:=\Sigma_j$ and $W:=S_j$.

Either way, after shrinking the neighbourhood, $U$ is a possibly curved
sector with smooth boundary arcs and tangent opening in $[\pi,2\pi)$,
and $W$ is a $C^1$ graph over it.  The projected sectors fit along their
shared arcs; the opening strictly below $2\pi$ keeps the two extreme
arcs distinct near the vertex.  The surface $W$ lies in $M$ and is minimal
over $\Int U$, by the previous
step; and $W$ meets $\bd M$ only over $\bd U$, because $F'\cap\bd M=\lambda$
by properness while $\lambda\cap N=\ell_0\cup\ell_m$, and $\ell_0,\ell_m$ are
the extreme edges, carried by $S_0$ and $S_m$.  Lemma~\ref{lem:pivot}
applies and gives $dr_p|_T\ne0$.

Now let $v$ be a unit vector tangent at $p$ to one of the sectors.  That sector
is $C^1$ at $p$ and contained in $M=\{r\ge0\}$, so its points at parameter $t$
in the direction $v$ have $r=t\,dr_p(v)+o(t)\ge0$, forcing $dr_p(v)\ge0$.  The
sectors are in cyclic order about $p$ and consecutive ones share an edge, so
the set of such $v$ is the image in the unit circle of $T$ of an interval of
length $\Theta$, swept monotonically from the tangent ray of $\ell_0$; and it
lies in the closed half-circle $\{dr_p\ge0\}$, whose
length is $\pi$ because $dr_p|_T\ne0$.  Were $\Theta\ge2\pi$ that sweep would
cover the whole circle, which no half-circle contains; so $\Theta<2\pi$, the
sweep is injective, its image is an arc of angular length $\Theta$, and
$\Theta\le\pi$.  With
\eqref{eq:anglecong}, $\Theta=\pi$.  In particular each $\theta_i<\pi$ after
all; that bound is a conclusion here, not a hypothesis, which is why the
selection of $U$ above could not be allowed to use it.

So the projected sectors $\Sigma_i$ tile a one-sided neighbourhood
$H\subset T$ of the projection of $\lambda$, and $N$ is the graph of a
function $u$ over $H$.  It is $C^1$ on $\overline H$ and, by the previous step
and the cancellation of the interface terms as in Lemma~\ref{lem:ball}, a weak
solution of the minimal surface equation on $\Int H$.  The curved side of
$\bd H$ is the projection of $\lambda$, a smooth curve, so near $0$ the domain
$H$ has smooth boundary; and there $u$ takes the boundary values given by
$\lambda$ itself, which are smooth.

Freeze the equation to give the first boundary regularity improvement explicitly.  In the
interior of $H$ expand the minimal-graph equation as
\[
 \begin{gathered}
   Lu:=A^{ij}(x)D_{ij}u=f_0(x),\qquad
   A^{ij}(x)=a^i_{p_j}(x,u,Du),\\
   f_0=-b-\sum_i a^i_{x_i}-\sum_i a^i_zD_i u,
 \end{gathered}
\]
where the functions on the right are evaluated at $(x,u,Du)$.
The matrix $A$ is symmetric (it is the Hessian of the graph-area
integrand in its gradient variables), continuous on $\overline H$,
and uniformly positive definite after shrinking $H$; $f_0$ is
continuous and bounded.  Here we use the already established
$C^1$ regularity, not a prior H\"older estimate for $Du$.

Choose a bounded smooth planar domain $\Omega\subset H$ whose
boundary agrees with $\bd H$ near $0$ and is capped off inside $H$
away from $0$.  A smooth boundary-flattening chart constructs such a
domain by rounding the two distant corners of a half-disc.  Let
$\varphi$ be a smooth extension of the prescribed boundary height,
and choose a smooth cutoff $\chi$, equal to $1$ near $0$ and zero
near the artificial part of $\bd\Omega$, with support so small that
the remaining boundary on its support is part of $\bd H$.
Then $w=\chi(u-\varphi)$ is $C^1$ on $\overline\Omega$, smooth in
$\Omega$, and zero on $\bd\Omega$, and
\[
   Lw=h:=\chi(f_0-L\varphi)
       +2A^{ij}(D_i\chi)D_j(u-\varphi)
       +(u-\varphi)L\chi\in C^0(\overline\Omega).
\]
Continuous coefficients on $\overline\Omega$ are bounded and have
vanishing mean oscillation: the mean oscillation over
$\Omega\cap B(x,r)$ is bounded by their continuity modulus at $2r$.
Thus \cite[Lemma~3.4, pp.~756--757]{CavaliereTransirico}, applied to
$-L$ with both lower-order coefficients zero and $p=4$, gives
$v\in W^{2,4}(\Omega)\cap W^{1,4}_0(\Omega)$ satisfying $Lv=h$.
Both $v$ and $w$ belong to
$W^{2,2}_{\mathrm{loc}}(\Omega)\cap C^0(\overline\Omega)$ and have
the same zero boundary values.  The linear uniqueness theorem
\cite[Theorem~9.5, p.~225]{GT} therefore gives $v=w$; its condition
(9.3) holds here because $L$ has zero first- and zeroth-order terms.
Sobolev embedding yields $u=\varphi+v\in C^{1,1/2}$ near $0$.
The coefficients of the graph equation are now H\"older continuous
there; boundary Schauder regularity and bootstrapping
\cite[Ch.~6]{GT} give $u\in C^\infty$ up to the prescribed boundary
arc.  Hence $F'$ is smoothly embedded up to $\bd M$ near $p$.

It is worth recording that this last step, and not a matching of jets, is what
lets Definition~\ref{conv:ps} ask only $C^1$ of the pieces.

\emph{Globally and stability.}  The two steps give a smooth structure near
every point of the compact set $F'$, so $F'$ is a smooth embedded minimal
surface, smooth up to $\bd M$; and $F'\cap\bd M=\bd F'=\lambda$ is part of the
hypothesis that $F'$ is properly embedded, which is the membership condition of
$\cF$.  At a boundary point its smooth graph over its tangent plane has a
smooth planar boundary arc, of opening angle $\pi$.  Lemma~\ref{lem:pivot}
therefore gives $dr|_{TF'}\ne0$, so the embedding is neat.  This uses only the
already proved fixed-boundary regularity and does not appeal to the later
sliding-boundary theorem.  Finally $F'$ has least area among the piecewise smooth surfaces
isotopic to it rel $\bd F'$, so all nearby normal deformations fixing the
boundary increase area: $F'$ is stable.
\end{proof}

\begin{thm}[\textup{(R1)}: boundary regularity of relative minimisers]
\label{thm:R1}
Use the metric and foliation of \S\ref{ssec:data}.  Let $F\subset E(K)$
be connected, orientable, incompressible, smooth and neatly embedded,
with $\bd F$ a leaf of $J$, and put
\begin{multline*}
 A:=\inf\bigl\{\Area(G):G\text{ smooth and neatly embedded,}\\
 G\text{ smoothly properly isotopic to }F,\quad\bd G\subset J\bigr\}.
\end{multline*}
Then there is a smooth neat stable minimal surface $\Sigma$ in this class
with $\Area(\Sigma)=A$.  It is smooth up to its boundary, and
$\Sigma\cap\bd E(K)=\bd\Sigma$.
\end{thm}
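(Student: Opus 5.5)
The plan is to reduce sliding-boundary attainment to the fixed-boundary Theorem~\ref{thm:fixed-smooth} and then pass to a limit over the leaf space of $J$ using Kapovich's compactness \cite[Proposition~A.1 and Corollary~A.3]{Schultens}.

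\textbf{Step 1: reduce to fixed boundary leaves.} The leaves of the linear foliation $J$ are parametrised by a circle $S^1_J$. Sliding the boundary of $F$ along $J$, via an ambient isotopy supported in the collar, gives for each leaf $\lambda$ a well-defined relative class $\cF_{\mathrm{sm}}(F_\lambda)$. The smooth proper class of $F$ with boundary in $J$ is the union of these relative classes over $\lambda$. Hence
\[
A=\inf_{\lambda\in S^1_J} I_{\mathrm{sm}}(F_\lambda),
\]
where $I_{\mathrm{sm}}(F_\lambda)$ is the infimum over $\cF_{\mathrm{sm}}(F_\lambda)$. One point needs checking: a smooth proper isotopy with boundary in $J$ can be reparametrised so that its boundary slide is realised in the collar. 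This should follow from isotopy extension together with the warped product structure \eqref{eq:collar}.

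By Theorem~\ref{thm:fixed-smooth}, each $I_{\mathrm{sm}}(F_\lambda)$ is attained by a smooth neat embedded minimal surface $T_\lambda$. This surface is stable, being a minimiser among fixed-boundary competitors. Choose leaves $\lambda_n$ with $\Area(T_{\lambda_n})\to A$. By compactness of $S^1_J$, pass to a subsequence with $\lambda_n\to\lambda_\infty$.

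\textbf{Step 2: pass to a limit.} The $T_{\lambda_n}$ are stable minimal embeddings with uniformly bounded area, boundary on leaves of $J$, and fixed topology. Kapovich's compactness theorem then gives a subsequence converging smoothly, up to the boundary, to a stable minimal surface $\Sigma$ with $\bd\Sigma=\lambda_\infty$. Its isotopy-component assertion (Corollary~A.3) places $\Sigma$, for large $n$, in the same proper isotopy component as $T_{\lambda_n}$, and hence as $F$. Only this compactness and isotopy-component part of \cite{Schultens} is used. Non-emptiness comes from Theorem~\ref{thm:fixed-smooth}, not from \cite[Theorem~6.12]{HassScott}, as in Remark~\ref{rem:E2-gives-R1}.

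\textbf{Step 3: properties of the limit.} Neatness of $\Sigma$ is Lemma~\ref{lem:R2}: the Hopf argument of Lemma~\ref{lem:pivot}, applied with $U$ a half-plane, shows that $\Sigma$ cannot be tangent to $\bd E(K)$ along its boundary. Interior contact with $\bd E(K)$ is excluded by the maximum principle, using $\Delta_\Sigma r\le0$ from \eqref{eq:hessr}: an interior minimum $r=0$ would force $r\equiv0$ on a component, which is impossible for a surface with boundary a single longitude. Thus $\Sigma\cap\bd E(K)=\bd\Sigma$.

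With $\Sigma$ a smooth neat competitor, $\Area(\Sigma)\ge A$. Smooth convergence gives $\Area(\Sigma)=\lim\Area(T_{\lambda_n})=A$.

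\textbf{Main obstacle.} The delicate point is the multiplicity-one and correct-class conclusion in Step~2. The concern is that the $T_{\lambda_n}$ might converge with multiplicity two to a one-sided surface, or might leave the isotopy class. If the cited isotopy-component assertion does not transparently exclude this, I would argue as at the end of the proof of Theorem~\ref{thm:fixed-smooth}.

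First take a boundary-adapted transverse line bundle over $\Sigma$. Projection of $T_{\lambda_n}$ to $\Sigma$ along this bundle is a covering map. Its degree on the boundary is one, because $\bd T_{\lambda_n}$ is a single primitive longitude leaf converging to $\lambda_\infty$, and a double cover would wrap the boundary twice. So the degree is one. The $T_{\lambda_n}$ are then graphs over $\Sigma$, and scaling the graph sections to zero, combined with a slide along $J$ near the boundary, gives the required proper isotopy.
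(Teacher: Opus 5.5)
Your Steps~2 and~3 follow the paper's proof, but Step~1 makes a claim that is false in general, and the choice of $\lambda_n$ depends on it. The relative class $\cF_{\mathrm{sm}}(F_\lambda)$ is not determined by the target leaf $\lambda$. Two slides of $\bd F$ onto $\lambda$ that differ by one extra turn around the leaf circle differ, at time one, by a twist $\tau_\mu$ supported in the collar in a direction transverse to $J$. The surface $\tau_\mu(F)$ need not be isotopic to $F$ rel $\bd F$, and for a non-fibred knot it is not. Composing such an isotopy with the collar slide would give a proper isotopy from $F$ to itself whose boundary winds once around $\bd E(K)$. This lifts to the infinite cyclic cover as a proper isotopy from $\widetilde F$ to the disjoint translate $\tau^{\pm1}\widetilde F$. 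The compact region between them would then be a product, so $E(K)$ would fibre with fibre $F$.

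Thus the sliding class is the union of the relative classes of all the spun copies $\tau_\mu^kF_\lambda$, $k\in\Z$, not one relative class per leaf. Only the inequality $A\le\inf_\lambda I_{\mathrm{sm}}(F_\lambda)$ survives. Your limit $\Sigma$ is therefore shown only to attain the infimum over a subclass, which need not equal $A$. The point you flagged is not where the trouble lies: the boundary motion can be pushed into the collar, but it then carries a winding number, so leaves alone do not parametrise the relative classes.

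The repair is the paper's first step: do not parametrise by leaves at all. Take a smooth neat minimising sequence $G_i$ for $A$ and apply Theorem~\ref{thm:fixed-smooth} in the relative class of each $G_i$ itself. The resulting minimiser $F_i'$ lies in the sliding class, has $\bd F_i'=\bd G_i$, and satisfies $A\le\Area(F_i')=I_{\mathrm{sm}}(G_i)\le\Area(G_i)\to A$.

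From there your Steps~2 and~3 coincide with the paper's argument:
\begin{enumerate}
\item the $F_i'$ lie in $\cM_a(S)$ with $a=A+1$;
\item Proposition~A.1 of \cite{Schultens} gives a $C^1$-convergent subsequence;
\item passing to a subsequence in one open and closed piece of Corollary~A.3 keeps the limit in the class of $F$;
\item Lemma~\ref{lem:R2} gives neatness, and continuity of area gives $\Area(\Sigma)=A$.
\end{enumerate}
Your maximum-principle argument and the degree-one fallback then become unnecessary. The identity $\Sigma\cap\bd E(K)=\bd\Sigma$ is part of membership in $\cM(S)$, and the isotopy class is supplied by Corollary~A.3.
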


\begin{proof}
Take a smooth neat minimising sequence $G_i$ for $A$.  Each is
incompressible.  The fixed-boundary theorem gives a smooth neat stable
minimal surface $F_i'$ in its relative smooth ambient isotopy class,
with $\bd F_i'=\bd G_i$ and
\[
 A\leq\Area(F_i')=I_{\mathrm{sm}}(G_i)
   \leq\Area(G_i)\longrightarrow A.
\]
These are smooth proper embeddings of the same compact connected source
surface $S$, in one smooth proper isotopy class.  They belong to
$\cM(S)$ of \cite[p.~897]{Schultens}, and for large $i$ their areas are at
most $a:=A+1$.  Thus they belong to $\cM_a(S)$.

The stability used here is stability under variations fixing the boundary.
Small smooth such variations remain neat and in the relative isotopy class,
so fixed-boundary area minimality supplies it.  This is the stability used
in \cite[Proposition~A.1]{Schultens}; sliding-boundary minimality has not
been assumed to produce this sequence.

By \cite[Corollary~A.3, p.~898]{Schultens}, $\cM_a(S)$ is a disjoint union
of finitely many open and closed subsets, each consisting of mutually
isotopic surfaces.  Pass to a subsequence in one such subset $\cN$.
By \cite[Proposition~A.1, p.~897]{Schultens}, $\cM_a(S)$ is compact, so
after another subsequence $F_i'\to\Sigma$ in $C^1$ with
$\Sigma\in\cM_a(S)$.  Closedness gives $\Sigma\in\cN$, hence it is smoothly
properly isotopic to $F$.  Continuity of area gives $\Area(\Sigma)=A$.

Membership in $\cM(S)$ means that $\Sigma$ is a stable minimal surface
which is the image of a smooth proper embedding
$f\colon(S,\bd S)\to(E(K),\bd E(K))$, with
$f^{-1}(\bd E(K))=\bd S$ and boundary a parametrised curve of $J$.
The independent boundary argument of Lemma~\ref{lem:R2} makes it neat.
Thus it is itself a competitor for $A$ and gives the claimed attainment.
\end{proof}

The same proof works for the link exteriors of \S\ref{ssec:links}.
Theorem~\ref{thm:fixed-smooth} allows the prescribed curve on each boundary
torus.  Kapovich's standing hypothesis \cite[p.~897]{Schultens} requires
$f^{-1}(\bd_iM)$ to be one component of $\bd S$ for each ambient boundary
component, which is exactly the spanning-surface convention here.  We use
this as the $n$-component form of \textup{(R1)}.

\begin{lem}[\textup{(R2)}: transversality along the boundary]\label{lem:R2}
Let $\Sigma\subset E(K)$ be a smooth properly embedded minimal surface,
smooth up to its boundary, with $\Sigma\cap\bd E(K)=\bd\Sigma$.  Then
$T_p\Sigma\neq T_p\bd E(K)$ for every $p\in\bd\Sigma$; that is, $\Sigma$ meets
$\bd E(K)$ transversally along $\bd\Sigma$, and is therefore neatly embedded.
\end{lem}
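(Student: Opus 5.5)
This is the simplest case of Lemma~\ref{lem:pivot}, where $U$ is a half-plane. I argue by contradiction, and suppose that at some $p\in\bd\Sigma$ we have $T:=T_p\Sigma=T_p\bd E(K)$. The point is to produce the configuration of Lemma~\ref{lem:pivot} with opening angle exactly $\pi$ and then read off $dr_p|_T\neq0$, which says $T\neq T_p\bd E(K)$ because $T_p\bd E(K)=\ker dr_p$.

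First, I set up the graph. Since $\Sigma$ is smooth up to its boundary, some relative neighbourhood of $p$ in $\Sigma$ is the graph $W$ of a smooth map $w$ into $T^\perp$. The map is defined over a closed planar region $U\cap B_\delta(0)\subset T$, and $w$ vanishes to first order at $0$. Here $U$ is the region bounded near $0$ by the orthogonal projection of the boundary arc $\bd\Sigma$ near $p$. That projection is a smooth embedded curve through $0$ with tangent line $T_p\bd\Sigma\subset T$. So near $0$, $U$ is bounded by two smooth arcs with opposite tangent rays, and its interior opening angle is $\pi$, which lies in the allowed range $[\pi,2\pi)$. This is the opening-angle-$\pi$ case of the lemma, handled there by the $C^{1,1}$ interior-disc remark.

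Next, I check the hypotheses of Lemma~\ref{lem:pivot}.
\begin{itemize}
\item $W\subset E(K)=M$, since $\Sigma$ is properly embedded.
\item $W$ is minimal over $\Int U$, by hypothesis.
\item $W$ meets $\bd M$ only over $\bd U$. This uses the assumption $\Sigma\cap\bd E(K)=\bd\Sigma$: interior points of $\Sigma$ have $r>0$, and the points of $W$ on $\bd\Sigma$ are exactly those lying over the boundary arc of $U$.
\item The regularity required is only $C^1$ up to $0$, and smoothness gives more than that.
\item The collar structure \eqref{eq:collar} supplies $\Hess r\le0$, which is all that Lemma~\ref{lem:pivot} uses.
\end{itemize}
The lemma then gives $dr_p|_T\ne0$, contradicting $T=T_p\bd E(K)=\ker dr_p$.

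The only point needing care is that the region $U$ should sit on one side of the projected boundary curve. This requires the orthogonal projection $\Sigma\to T$ to be a local diffeomorphism near $p$ up to the boundary. That holds because $d(\text{proj})$ is the identity on $T_p\Sigma=T$, and the inverse function theorem applies to a smooth extension of $\Sigma$ slightly beyond $\bd\Sigma$, restricted back to $\Sigma$. I expect this step to be the only mild obstacle, and it is routine. Transversality along all of $\bd\Sigma$, and hence neatness, follows since $p$ was arbitrary.
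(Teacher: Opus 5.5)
Your proposal is correct and is essentially the paper's proof: you write $\Sigma$ near $p$ as a graph over a half-neighbourhood in $T_p\Sigma$ of opening angle $\pi$, and then Lemma~\ref{lem:pivot} gives $dr_p|_{T_p\Sigma}\neq0$. The contradiction framing is unnecessary, since checking the hypotheses never uses the assumed tangency and the lemma gives the conclusion directly.
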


\begin{proof}
This is Lemma~\ref{lem:pivot} in its simplest case and does not require
any attainment theorem.  Let $p\in\bd\Sigma$ and $T:=T_p\Sigma$.  In a
boundary chart at $p$ the surface $\Sigma$ is the graph over a smooth planar
half-neighbourhood $U\subset T$, of tangent opening angle $\pi$, of a
smooth map vanishing to first order at $p$; the graph lies in $E(K)$, is
minimal, and meets $\bd E(K)$ exactly over the boundary arc through $0$.
Lemma~\ref{lem:pivot} gives $dr_p|_T\ne0$, that is $T_p\Sigma\ne T_p\bd E(K)$.
\end{proof}

\begin{rem}\label{rem:R2-convexity}
The proof consumes only $\Hess r\le0$, that is $\varphi'\le0$ in
\eqref{eq:collar}, and not the strict positivity of the mean curvature of
$\bd E(K)$ that an earlier version of this argument used through a
divergence-form comparison and an explicit annular barrier.  With
Lemma~\ref{lem:pivot} in hand that machinery is not needed, and
this Hopf argument uses \cite[Lemma~3.4]{GT}.  The gluing argument in
Lemma~\ref{lem:ball} and the boundary bootstrap in
Proposition~\ref{prop:R1fixed} still use weak elliptic regularity from
\cite[Chs.~6 and~8]{GT}.
\end{rem}

\begin{rem}[The route through least area \emph{maps}]\label{rem:alt-route}
Minimising among maps in a relative homotopy class would provide an
alternative to Theorem~\ref{thm:fixed-smooth} only after establishing
fixed-boundary regularity, embeddedness, and return to the prescribed
relative isotopy class.  These are separate requirements: a statement
about minimal maps alone does not supply the representative used here.
Theorem~\ref{thm:fixed-smooth} works directly with embedded competitors in
that class.  No extension of the closed-surface embedding results of
\cite{FHS} is needed for its proof.  Sliding-boundary attainment still
uses the compactness argument of Theorem~\ref{thm:R1}.
\end{rem}

\section{The Kakimizu complex}\label{sec:kakimizu-complex}

For the geometric statements in this section, use the warped-collar metric
and longitudinal foliation of \S\ref{sec:geometric-inputs}.  Smooth
fixed-boundary attainment is Theorem~\ref{thm:fixed-smooth}; minimisation
with boundary allowed to vary through the foliation is \textup{(E)}(ii).

\subsection{Surfaces, vertices, simplices}

A \emph{knot} is a smooth oriented embedding $K\hookrightarrow S^3$.  Choose an
open tubular neighbourhood $N(K)$ and write $E(K)=S^3\setminus\Int N(K)$ for
the compact \emph{exterior}; its boundary is a torus.  The \emph{preferred
longitude} is the oriented slope on $\bd E(K)$ whose linking number with $K$ is
zero.  A \emph{Seifert surface in the exterior} is a compact, connected,
orientable, neatly embedded surface $S\subset E(K)$ whose oriented boundary is
a preferred longitude; since $S$ is connected with a single boundary component,
\begin{equation}\label{eq:euler-genus}
   \chi(S)=1-2g(S),
\end{equation}
and $g(K)=\min\{g(S)\}$.  A \emph{compressing disc} for $S$ is an embedded disc
$D\subset E(K)$ with $D\cap S=\bd D$ such that $\bd D$ is essential on $S$;
if none exists, $S$ is \emph{incompressible}.  For a two-sided properly
embedded surface, this is equivalent to $\pi_1$-injectivity, by the Loop
Theorem in the form of \cite[Corollary~3.3, p.~48]{Hatcher3M}; the reverse
implication follows directly from a compressing disc.  This identifies
the convention here with that of \S\ref{sec:intro}.  \emph{Boundary
incompressibility is not required}, and no step below uses it; see
\S\ref{ssec:definitions-used}.

An \emph{ambient isotopy} is a smooth family of diffeomorphisms
$H_t\colon E(K)\to E(K)$ with $H_0=\mathrm{id}$; a \emph{normal isotopy} is a
smooth family $f_t\colon S\to E(K)$ of neat embeddings.  Two surfaces determine
the same vertex if and only if they are ambient isotopic in $E(K)$.  The
boundary longitude is allowed to move on the boundary torus: the isotopy is not
required to fix any chosen boundary curve pointwise.  That an ambient isotopy
restricts to a normal isotopy is immediate; the converse is
Lemma~\ref{lem:isotopy-extension}, so on Seifert surfaces the two equivalence
relations agree.  The sliding-boundary area minimum of
\S\ref{sec:geometric-inputs} is taken over smooth neat representatives of
this class and therefore computes an invariant of a vertex.  The tame
cut-and-paste isotopies used below have smooth neat endpoints;
Lemma~\ref{lem:category-bridge} identifies their endpoint classes with
these smooth ambient isotopy classes.

The exterior is irreducible \cite[p.~228]{Kak92}; this is used in
Lemma~\ref{lem:clean-discs}.

\begin{dfn}\label{dfn:complexes}
The vertices of $\IS(K)$ are the ambient isotopy classes of incompressible
Seifert surfaces in $E(K)$.  A finite set of pairwise distinct vertices spans a
simplex if and only if its classes admit \emph{simultaneously} pairwise
disjoint representatives.  For an integer $\ell\ge g(K)$,
\[
   \IS_\ell(K)=\IS(K)\bigl[\{[S]:g(S)\le\ell\}\bigr]
\]
is the \emph{full} subcomplex on the vertices of genus at most $\ell$; in
particular $\IS_{g(K)}(K)=\MS(K)$ is the minimal-genus complex.  We write
$\dist(u,w)$ for the graph distance in the $1$-skeleton.  A complex is
\emph{flag} if every finite clique in its $1$-skeleton spans a simplex.
\end{dfn}

Two notions of disjointness must be kept apart.  An \emph{edge} speaks only of
a single pair of vertices, and different edges may use mutually incompatible
representatives; a \emph{simplex of dimension at least $2$} requires
representatives chosen once and for all that are pairwise disjoint as a family.
The passage from the former to the latter is Proposition~\ref{prop:flag}, whose
proof uses \textup{(E)} and \textup{(U)}; it is never implicit.

Finally, $\dist$ is not merely a graph metric but Kakimizu's own distance: by
\cite[Proposition~3.1(1), p.~231]{Kak92} the minimum length of an edge path in
$\IS(L)$ between two classes equals the distance $d$ obtained by counting lifts
to the infinite cyclic cover.  We use this in
\S\ref{sec:exchange-lemma}, where the hypothesis of the exchange lemma is
$\dist(u,w)=2$ and what the construction is handed is a pair of surfaces at
Kakimizu distance $2$.  The statement is strictly stronger than connectedness
and does not appear in version~1 of this work.

\subsection{Which definitions these are}\label{ssec:definitions-used}

Three conventions have to be pinned down before Kakimizu's theorems may be
quoted, because the literature does not agree on them.

\emph{Vertices.}  We use Kakimizu's definition, not the paraphrase of it that
has entered the literature.  A spanning surface for a link $L$ is, in
\cite[p.~225]{Kak92}, a surface $S=\Sigma\cap E(L)$ with $\Sigma$ oriented in
$S^3$, $\bd\Sigma=L$, $\Sigma$ having no closed component and \emph{possibly
disconnected}, and $\Sigma\cap N(L)$ a collar of $\bd\Sigma$; it is
incompressible if \emph{each component} is incompressible in $E(L)$.  In
particular $\bd$-incompressibility is not required.  Przytycki and Schultens
describe the same complex as having for vertices the classes of surfaces
``incompressible and $\bd$-incompressible'' \cite{PS12}; that description adds
a condition which is not in \cite{Kak92}, and a manuscript which inherits it
owes a lemma identifying the two vertex sets.  We do not impose it, so the
vertex set of Definition~\ref{dfn:complexes} is Kakimizu's on the nose; and the
condition is in any case automatic, which we record now, both because it
settles the discrepancy and because it is the hypothesis under which the
rigidity theorems of \cite{Wald68} are stated.

\begin{lembdyprime}[Incompressible spanning surfaces are $\bd$-incompressible]
Let $L$ be a link in $S^3$ and let $\Sigma\subset E(L)$ be an incompressible
spanning surface, neatly embedded, with $\bd\Sigma$ the union of the
longitudes.  Then $\Sigma$ is $\bd$-incompressible: there is no embedded disc
$D\subset E(L)$ with $D\cap\Sigma=a$ and $D\cap\bd E(L)=b$, where
$\bd D=a\cup b$ and $a$ is an arc which is essential in $\Sigma$, that is,
which does not cut a disc off $\Sigma$ together with an arc of $\bd\Sigma$.
For $n=1$ this says that every incompressible Seifert surface in a knot
exterior is $\bd$-incompressible.
\end{lembdyprime}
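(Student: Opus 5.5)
I will prove the lemma by the standard boundary-compression argument: a $\bd$-compressing disc, combined with the torus boundary and the fact that each boundary component of $\Sigma$ is a single longitude on its torus, produces either a compressing disc or a contradiction with essentiality of the arc.

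Suppose $D$ is a $\bd$-compressing disc with $\bd D=a\cup b$, $a\subset\Sigma$ essential and $b\subset\bd E(L)$. The arc $b$ lies on one boundary torus $T_i$ with both endpoints on $\bd\Sigma\cap T_i$. By the spanning-surface convention this intersection is the single longitude $\lambda_i$, since $\Sigma\cap N(L)$ is a collar of $\bd\Sigma$. The interior of $b$ misses $\Sigma$, so $b$ is an arc in the annulus $A=T_i\setminus\Int N(\lambda_i)$ with endpoints on $\bd A$.

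There are two cases for $b$. In the first case $b$ is inessential in $A$: its endpoints lie on the same boundary component of $A$, and it cuts off a disc $B\subset A$ together with an arc $c$ of $\lambda_i$ (on one side of the collar). Then $D\cup B$, pushed slightly off $T_i$, is a disc whose boundary $a\cup c$ lies in $\Sigma$. Because $a$ is essential, the loop $a\cup c$ does not bound a disc in $\Sigma$; strictly speaking, one must check that if $a\cup c$ bounded a disc in $\Sigma$, then $a$ would cut off a disc together with the arc $c$, contradicting essentiality. So $D\cup B$ is a compressing disc, after a small isotopy making its interior disjoint from $\Sigma$. This contradicts incompressibility. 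The other arc of $\lambda_i$, call it $c'$, gives the same loop up to adding the whole of $\lambda_i$, so whichever side $B$ lies on is irrelevant.

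In the second case $b$ is essential in $A$: it runs from one side of $\lambda_i$ to the other, i.e.\ it crosses the collar annulus, which is a spanning arc of the annulus $T_i$ cut along $\lambda_i$. Here I use two-sidedness and orientability. The normal direction of $\Sigma$ at the two endpoints of $b$ points to opposite sides, so $D$ approaches $\Sigma$ along $a$ from opposite sides at the two ends of $a$. Since $D$ meets $\Sigma$ only in $a$, the side of $\Sigma$ on which $D$ lies near $a$ is constant along $a$. This is a contradiction. I expect this to be the main obstacle: the constancy of the side along $a$ is just a local product structure $\Sigma\times[-1,1]$ near $\Sigma$, restricted to the collar of $a$ in $D$. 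I must phrase it carefully at the corners $\bd a$, where neatness of $\Sigma$ gives the needed local model.

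For links with several components the only extra point is that $b$ lies on a single torus and $\Sigma$ meets that torus in exactly one curve. This holds because the spanning convention takes $\bd\Sigma=L$ with one longitude per component. Disconnectedness of $\Sigma$ is harmless, since $a$ and $c$ lie in one component and incompressibility is componentwise. The case $n=1$ is the stated special case.
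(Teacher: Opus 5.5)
Your proof is correct. Your first case, with both endpoints of $b$ on the same boundary circle of the cut-open annulus, is the paper's argument: $D\cup B$ is either a compressing disc or shows that $a$ is inessential.

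The routes differ in the second case. The paper pushes $a$ off $\Sigma$ to the side containing $D$. Because $b$ leaves its two endpoints on opposite sides, the perturbed curve $\bd D$ then crosses $\Sigma$ exactly once, near one endpoint of $b$. The paper contradicts the resulting algebraic intersection number $\pm1$ with $\varphi(\bd D)=0$, using that $\Sigma$ is Poincar\'e dual to the total linking homomorphism $\varphi$.

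You argue locally instead: the side of $\Sigma$ on which $D$ lies is constant along $a$, and at each corner it agrees with the side on which $b$ departs. The corner point you flag is easy to settle. Take a collar $a\times[0,\epsilon]$ of $a$ in $D$; it contains the initial segments of $b$ at both ends. The set $a\times(0,\epsilon]$ is connected and disjoint from $\Sigma$, so it lies on one side of a bicollar $\Sigma\times[-1,1]$. By neatness that bicollar meets $T_i$ in a bicollar of $\lambda_i$. Hence $b$ leaves both endpoints on the side of $\lambda_i$ containing $D$, contradicting that it runs from $\lambda_i^{+}$ to $\lambda_i^{-}$.

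What each approach buys: yours needs no homology, and it shows directly that $b$ can never depart a corner on the side of $\Sigma$ away from $D$, which is the alternative the paper's crossing count allows for. The paper's version, in exchange, never relates $D$ near its corners to $b$; it uses only that the two departures of $b$ lie on opposite sides.

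One small correction: take $A$ to be $T_i$ cut open along $\lambda_i$, as you do in the second case, rather than $T_i\setminus\Int N(\lambda_i)$. The endpoints of $b$ lie on $\lambda_i$ itself.
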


\begin{proof}
Suppose such a $D$ exists.  Since $D\cap\Sigma=a$ and
$\lambda:=\Sigma\cap\bd E(L)=\bd\Sigma$ lies in $\Sigma$, the interior of $b$
misses $\lambda$; so $b$ is an arc properly embedded in the surface obtained by
cutting $\bd E(L)$ along $\lambda$, and both endpoints of $b$ lie on the same
torus $T_i$, on which the cut yields an annulus $A_i$ with boundary circles
$\lambda_i^+,\lambda_i^-$, the two sides of $\lambda_i$ in $T_i$.  A properly
embedded arc in an annulus either has both endpoints on the same boundary
circle, and is then $\bd$-parallel, or joins the two.

\emph{Both endpoints on the same circle.}  Then $b$ together with an arc
$\lambda_0\subset\lambda_i$ bounds a disc $\Delta\subset A_i$, so
$\Int\Delta\cap\lambda=\emptyset$.  Now $D\cup\Delta$ is an embedded disc, the
two meeting exactly in $b$, its boundary is the closed curve
$a\cup\lambda_0\subset\Sigma$, and its interior misses $\Sigma$; pushing
$\Delta$ off $\bd E(L)$ makes it a disc meeting $\Sigma$ exactly in its
boundary.  If $a\cup\lambda_0$ is essential in $\Sigma$ this is a compressing
disc, contradicting incompressibility.  If it is inessential it bounds a disc
in $\Sigma$, so $a$ cuts a disc off $\Sigma$ together with an arc of
$\bd\Sigma$ and was not essential.

\emph{Endpoints on the two different circles.}  Near a point of $\lambda_i$ the
pair $(E(L),\Sigma)$ is the pair $(\{z\ge0\},\{x=0,z\ge0\})$, so the two sides
of $\lambda_i$ in $T_i$ are the traces of the two sides of $\Sigma$.  Perturb
$\bd D$ by pushing $a$ off $\Sigma$ to the side on which $D$ lies.  The
interior of $b$ misses $\Sigma$, since $b\subset\bd E(L)$ and
$\Sigma\cap\bd E(L)=\lambda$, so the resulting closed curve meets $\Sigma$ only
near the two endpoints of $b$; and at such an endpoint it crosses or does not
according as $b$ leaves the endpoint on the far side of $\Sigma$ from $D$ or on
the near one.  In the case at hand $b$ leaves its two endpoints on opposite
sides, so exactly one of them is the far side and there is exactly one
crossing.  Hence the algebraic intersection number of $\bd D$ with $\Sigma$ is
$\pm1$.  But $\Sigma$ is Poincar\'e dual to the total linking homomorphism
$\varphi\colon H_1(E(L))\to\Z$, $\gamma\mapsto\lk(\gamma,L)$, which is the
homomorphism of \S\ref{ssec:links} --- nothing proved there is needed here, the
duality holding for any spanning surface of any link by a count of meridians.
So that number is $\varphi(\bd D)$, which
vanishes because $\bd D$ bounds the disc $D$ in $E(L)$.
\end{proof}

The only remaining discrepancy, connectedness, is
vacuous for a knot: a spanning surface for a knot has no closed component and
exactly one boundary circle, hence is connected, so ``connected and
incompressible'' and ``incompressible componentwise'' select the same surfaces.

\emph{Non-splitness.}  Kakimizu's Theorem~A is stated for a non-split oriented
link, and the hypothesis is used, not decorative: it is what makes $E(L)$
irreducible.  For a knot it is vacuous.  It returns in
Theorem~\hyperref[thm:D]{D}, where it is assumed.

\emph{Edges.}  Kakimizu spans an edge on two classes with disjoint
representatives \cite[1.3(b), p.~227]{Kak92}; Przytycki and Schultens instead
require Kakimizu distance one, and point out that the two conditions differ for
links, precisely because spanning surfaces there are allowed to be
disconnected.  We use the disjointness convention, as in
Definition~\ref{dfn:complexes}.  For a knot the two agree, since spanning
surfaces are then connected, so the discrepancy is vacuous here; for
Theorem~\hyperref[thm:D]{D} it is neutralised by the linking condition, which
forces every spanning surface to be connected.

\subsection{Connectedness, flagness, non-emptiness}

\begin{prop}[Connectedness]\label{prop:connected}
$\IS(\mathrm{unknot})$ is a single point.  For every non-trivial knot $K$, the
complex $\IS(K)$ is connected.
\end{prop}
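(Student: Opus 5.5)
The two assertions will be handled separately; both reduce to facts already quoted.

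\emph{The unknot.} The exterior $E(\mathrm{unknot})$ is a solid torus in which the preferred longitude bounds a meridian disc. I first show that every incompressible Seifert surface $S$ for the unknot is a disc. A Seifert surface of genus $g\ge1$ has a non-separating simple closed curve $\gamma\subset S$. Its image in $H_1(E)\cong\Z$ is $\lk(\gamma,K)$, which the Seifert pairing and the fact that $S$ is a Seifert surface force to be zero. So $\gamma$ is null-homologous, hence null-homotopic in the solid torus, whose fundamental group is $\Z$. Since $\gamma$ is essential on $S$, this contradicts $\pi_1$-injectivity, which is equivalent to incompressibility as recorded above. Hence $g(S)=0$ and $S$ is a meridian disc. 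Any two meridian discs with boundary a longitude are ambient isotopic in the solid torus, the boundary being allowed to slide: first isotope the boundaries to coincide, then apply the relative disc-isotopy theorem in the irreducible solid torus, as in the relative disk isotopy step of \S\ref{ssec:fixed-smooth}. Finally, Lemma~\ref{lem:isotopy-extension} and Lemma~\ref{lem:category-bridge} convert this into a smooth ambient isotopy. Thus there is exactly one vertex. It is non-empty, since the spanning disc exists, and a complex with one vertex is a point.

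\emph{Non-trivial knots.} For $K$ non-trivial I invoke Kakimizu's Theorem~A \cite[Theorem A, p.~226]{Kak92}, in the printed form quoted in \S\ref{sec:intro}, applied to $L=K$; a knot is a non-split oriented link. To make the citation legitimate, I must check that our $\IS(K)$ is literally Kakimizu's complex, and \S\ref{ssec:definitions-used} has done this.
\begin{itemize}
\item Vertices agree. Spanning surfaces of a knot are connected, and Lemma~3.1$'$ removes the $\bd$-incompressibility discrepancy.
\item Edges agree. Both conventions use disjoint representatives, and the disjointness and distance-one conventions coincide for knots.
\item Isotopy classes agree. Kakimizu's ambient isotopy with sliding boundary is our relation, by Lemma~\ref{lem:isotopy-extension}.
\end{itemize}
Non-emptiness holds because a minimal-genus Seifert surface is incompressible: a compression would lower the genus.

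\emph{Main obstacle.} The only non-bookkeeping step is the unknot case, specifically the uniqueness up to isotopy of the spanning disc with sliding boundary. That is where I would be careful to use irreducibility and the category bridge rather than a smooth uniqueness claim taken on faith.
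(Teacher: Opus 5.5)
Your proposal is correct and follows the paper's own proof. For the unknot you use $\pi_1$-injectivity into $\pi_1(V)\cong\Z$ to force genus zero: the paper phrases this as ``a free group of rank $2g\ge2$ does not embed in $\Z$,'' while you exhibit an essential curve with zero linking number. You then use the innermost-circle/ball argument to show the meridian disc is unique, and for non-trivial $K$ you cite Kakimizu's Theorem~A exactly as the paper does.

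Your extra bookkeeping (the category bridge and the convention checks of \S\ref{ssec:definitions-used}) is consistent with the paper. One small point: Lemma~3.1$'$ is not needed to match Kakimizu's vertex set, since his definition never imposes $\partial$-incompressibility.
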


\begin{proof}
The exterior of the unknot is a solid torus $V$, the complementary handlebody
of a genus-one Heegaard splitting of $S^3$, and the preferred longitude is a
meridian of $V$.  If $S\subset V$ is an incompressible Seifert surface then
$\pi_1(S)\to\pi_1(V)\cong\Z$ is injective by the Loop Theorem criterion
just cited.  A free group of
rank $2g(S)\ge2$ does not embed in $\Z$, so $g(S)=0$ and $S$ is a meridian
disc; two meridian discs may be isotoped to share their boundary, made disjoint
by innermost-circle surgery, and then the sphere they assemble bounds a ball
by Alexander--Schoenflies, which isotopes one to the other.  For non-trivial
$K$ the statement is \cite[Theorem~A, p.~226]{Kak92}, quoted in
\S\ref{sec:intro}.
\end{proof}

The engine behind that theorem is \cite[Theorem~2.1, p.~228]{Kak92}, whose
proof splits on p.~229 into a minimal-genus case and an incompressible case; it
is the second that we need.  Nothing is claimed here about the connectedness of
a truncation $\IS_\ell(K)$.  That is proved in \S\ref{sec:applications}, as
part of Theorem~\hyperref[thm:C]{C} and by way of
Corollary~\ref{cor:initial-isometric}; the implication runs from the global
statement to the truncations and never in the reverse direction, and an
independent source is recorded in Remark~\ref{rem:chen-shen}.

\begin{prop}[Flagness]\label{prop:flag}
$\IS(K)$ is flag, and hence so is every full subcomplex $\IS_\ell(K)$.
\end{prop}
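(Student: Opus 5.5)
Given a finite clique $v_1,\dots,v_n$ in the $1$-skeleton of $\IS(K)$, with the $v_i$ pairwise distinct, I will produce simultaneously pairwise disjoint representatives by taking relative least-area representatives one class at a time.

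First, for each $i$ choose a smooth neat incompressible representative $S_i$ of $v_i$, isotoped so that $\bd S_i$ is a leaf of $J$. Such an isotopy exists because $\bd S_i$ is a preferred longitude, which is isotopic on the torus to a leaf of the linear longitudinal foliation, and isotopy extension carries the surface along. Then apply \textup{(E)}(ii), Theorem~\ref{thm:E}, to obtain a sliding-boundary minimiser $\Sigma_i$ in the smooth proper isotopy class of $S_i$. By Remark~\ref{rem:E2-gives-R1} it belongs to $\cM([f_i])$ and is relatively least-area. It still represents $v_i$, since smooth proper isotopy and ambient isotopy agree on Seifert surfaces by Lemma~\ref{lem:isotopy-extension}.

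Next, fix $i\neq j$. Because $v_i,v_j$ span an edge, there are disjoint representatives of the two classes. The surfaces $\Sigma_i,\Sigma_j$ are relatively least-area and incompressible, so Theorem~\ref{thm:U}(i) shows they are disjoint or coincide. They cannot coincide, because their classes $v_i\neq v_j$ are distinct. Hence $\Sigma_i\cap\Sigma_j=\emptyset$.

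The universal quantifier in \textup{(U)} is what makes this work. The minimisers were chosen before any pair was examined, so a single family $\{\Sigma_1,\dots,\Sigma_n\}$ is pairwise disjoint, and $\{v_1,\dots,v_n\}$ spans a simplex. Flagness of each $\IS_\ell(K)$ is then formal, since a full subcomplex of a flag complex is flag: a clique in $\IS_\ell(K)$ is a clique in $\IS(K)$, it spans a simplex there, and all its vertices lie in $\IS_\ell(K)$.

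I expect the main obstacle to be the interface with the citation. I must check that the competing class used here, smooth neat surfaces with boundary in $J$, matches the hypotheses of \cite[Theorem~4]{Schultens}, namely surfaces in $\cM([f])$ that are area-minimising there. This is where Remark~\ref{rem:E2-gives-R1} is invoked. I also need the ``coincide'' alternative to be excluded by distinctness of the isotopy classes, not merely of the images. Equal images would give isotopic embeddings up to reparametrisation and orientation, and orientation is fixed by the oriented longitude, so equal images would force $v_i=v_j$.
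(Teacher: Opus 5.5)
Your proposal is correct and is essentially the paper's proof: you fix relative area minimisers one class at a time via \textup{(E)}(ii), apply \textup{(U)}(i) to each pair with coincidence excluded because the vertices are distinct, and note that the family is simultaneously disjoint because it was chosen before any pair was examined; flagness of the truncations then follows from fullness. Your additional checks are consistent with what the paper uses implicitly when it invokes \textup{(E)}(ii). These are moving the boundary onto a leaf of $J$, the $\cM([f])$ interface via Remark~\ref{rem:E2-gives-R1}, and the point that equal images give the same vertex.
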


\begin{proof}
Let $\{v_1,\dots,v_n\}$ be a clique in the $1$-skeleton.  Its vertices are
distinct, so the corresponding surfaces are pairwise non-isotopic and all
incompressible.  By \textup{(E)}(ii) fix, once and for all and one class at a time,
a relative area minimiser $A_i$ in the class $v_i$.  Let $i\neq j$.  Since
$\{v_i,v_j\}$ is an edge, the two classes contain disjoint representatives, so
Theorem~\ref{thm:U}(i) applies to the pair $A_i,A_j$ and yields that they are
disjoint or coincide; and $v_i\neq v_j$ excludes coincidence.  The $A_i$ were
fixed before any pair was examined and do not depend on $(i,j)$, so
$\{A_1,\dots,A_n\}$ is a family of \emph{simultaneously} pairwise disjoint
representatives, which is the simplex required.  Fullness makes the truncations
retain the same simplices.
\end{proof}

The proof consumes only the two-surface clause of \textup{(U)}; the
$n$-surface clause is not needed, because the choice made in advance already
converts pairwise information into simultaneous information.  Flagness is
therefore obtained here from \textup{(U)} and the smooth attainment
conclusion \textup{(E)}(ii), based on Theorem~\ref{thm:fixed-smooth}.  The same
mechanism, applied to a vertex and two of its neighbours, is what the exchange
lemma is handed, and we isolate it here because it is the exact point at which
the argument for links breaks.

\begin{prop}[Disjointness from two neighbours at once]\label{prop:disjoint}
Let $u,w$ be vertices of $\IS(K)$ with $\dist(u,w)=2$, let $x\in\cI(u,w)$ be a
common neighbour, and let $S_+\in u$, $S_-\in w$ and $S\in x$ be relative area
minimisers supplied by \textup{(E)}, chosen one class at a time.  Then
\[
   S\cap(S_+\cup S_-)=\emptyset .
\]
\end{prop}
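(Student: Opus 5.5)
The plan is to repeat the mechanism of Proposition~\ref{prop:flag}, once for each of the two edges at $x$.

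Since $x\in\cI(u,w)$, both $\{x,u\}$ and $\{x,w\}$ are edges of $\IS(K)$. By Definition~\ref{dfn:complexes}, the classes $x$ and $u$ therefore contain disjoint representatives, and so do $x$ and $w$. These two pairs of representatives may be mutually incompatible; that is exactly why the statement is not immediate.

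The surfaces $S$, $S_+$ and $S_-$ were fixed in advance, one class at a time, as relative area minimisers supplied by \textup{(E)}(ii). By Remark~\ref{rem:E2-gives-R1} they are relatively least-area in the sense required by Theorem~\ref{thm:U}. They are incompressible because their classes are vertices of $\IS(K)$. The exterior $E(K)$ is irreducible and carries the metric and foliation of \S\ref{ssec:data}, so the hypotheses of Theorem~\ref{thm:U} hold.

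\emph{First pair.} Apply Theorem~\ref{thm:U}(i) to $S$ and $S_+$. Their proper isotopy classes contain disjoint representatives, so $S$ and $S_+$ are either disjoint or coincide. Coincidence would give $x=u$. But $\dist(u,w)=2$ while $\dist(x,w)=1$, so $x\neq u$. Hence $S\cap S_+=\emptyset$.

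\emph{Second pair.} The same argument applied to $S$ and $S_-$ gives $S\cap S_-=\emptyset$, using $x\neq w$. This follows symmetrically from $\dist(x,u)=1\neq 2$.

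Since $S$ was chosen before either pair was examined, the two conclusions concern the same surface $S$ and combine to give $S\cap(S_+\cup S_-)=\emptyset$. No claim about $S_+\cap S_-$ is made or needed.

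The only step needing care is the interface check: that a vertex class, whose boundary may slide in $J$, is the proper isotopy class to which Theorem~\ref{thm:U}(i) refers. I would settle this by citing the identification of ambient and normal isotopy classes (Lemma~\ref{lem:isotopy-extension}), together with Remark~\ref{rem:E2-gives-R1}, which places the minimisers in $\cM([f])$.
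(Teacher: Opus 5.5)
Your proof is correct and is essentially the paper's argument: apply Theorem~\ref{thm:U}(i) to $S,S_+$ and to $S,S_-$, exclude coincidence because $x\neq u,w$, and use that $S$ was fixed before either pair was examined. The paper makes one point explicit that you leave tacit: a Seifert surface of a knot is connected, so ``coincide'' means equality as sets and hence $x=u$. This is exactly the step that fails for links (\S\ref{ssec:general-links}).
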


\begin{proof}
Since $x$ and $u$ are adjacent, their classes contain disjoint
representatives, so Theorem~\ref{thm:U}(i) applies to $S$ and $S_+$: they are
disjoint or they coincide.  A Seifert surface for a knot is connected, so
``coincide'' means that $S$ and $S_+$ are equal as sets and hence that $x=u$,
which is excluded.  Therefore $S\cap S_+=\emptyset$, and the same argument on
the edge $\{x,w\}$ gives $S\cap S_-=\emptyset$.  The three minimisers were
fixed before any pair was examined, so the two conclusions hold for one and the
same $S$.
\end{proof}

The step that fails for a general link is the sentence beginning ``A Seifert
surface for a knot is connected''.  For a link, Kakimizu's spanning surfaces
may be disconnected, Theorem~\ref{thm:U}(i) then yields only that each
component of $S$ is disjoint from or equal to a component of $S_+$, and two
distinct vertices may share a component; the intersection $S\cap S_+$ need not
be empty.  This is the obstruction analysed in \S\ref{ssec:general-links}, and
the linking condition of Theorem~\hyperref[thm:D]{D} removes it by forcing
every spanning surface to be connected.

\begin{prop}[Non-emptiness]\label{prop:nonempty}
$\IS(K)\neq\emptyset$ for every knot $K$, and $\IS_\ell(K)\neq\emptyset$ for
every integer $\ell\ge g(K)$.
\end{prop}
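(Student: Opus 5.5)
The plan is to exhibit an incompressible Seifert surface of minimal genus $g(K)$. This gives a vertex of $\IS(K)$, and since $g(K)\le\ell$ it is also a vertex of the full subcomplex $\IS_\ell(K)$. Both non-emptiness claims then follow at once, because $\IS_{g(K)}(K)\subseteq\IS_\ell(K)\subseteq\IS(K)$ as full subcomplexes on nested vertex sets.

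\emph{Existence of a Seifert surface in the exterior.} Seifert's algorithm applied to an oriented diagram of $K$ gives a compact connected orientable surface $\Sigma\subset S^3$ with $\bd\Sigma=K$. Its framing is the Seifert framing, so after shrinking into a collar, $\Sigma\cap N(K)$ is a collar of $K$. Then $S=\Sigma\cap E(K)$ has boundary a curve on $\bd E(K)$ with linking number $0$ with $K$, which is the preferred longitude. A small isotopy supported near $\bd E(K)$ makes $S$ neat, that is transverse to the boundary. It is connected with one boundary circle. Hence the set of Seifert surfaces in the exterior is non-empty, and $g(K)=\min g(S)$ is a well-defined non-negative integer, attained by some $S_0$.

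\emph{Incompressibility of a minimal-genus surface.} I would argue by contradiction. Suppose $D$ is a compressing disc for $S_0$. Surger $S_0$ along $D$: remove an annular neighbourhood of $\bd D$ and cap off with two parallel copies of $D$. If $\bd D$ is non-separating on $S_0$, the result is connected, still neat, with the same longitude boundary, and has Euler characteristic $\chi(S_0)+2$. By \eqref{eq:euler-genus} its genus is $g(S_0)-1$, contradicting minimality. If $\bd D$ separates $S_0$, the surgered surface has two components, and exactly one of them, say $S_1$, contains $\bd S_0$. The other component is closed, and since $\bd D$ is essential it has positive genus. Hence $S_1$ is a Seifert surface whose genus is strictly smaller than $g(S_0)$, again contradicting minimality. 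So $S_0$ is incompressible, and its ambient isotopy class is a vertex of genus $g(K)\le\ell$.

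\emph{Main obstacle.} The only step that needs care is the separating case, and specifically the genus count there. The curve $\bd D$ splits $S_0$ into pieces of genera $g_1,g_2$ with $g_1+g_2=g(S_0)$. Essentiality forces the closed side to have positive genus, since a genus-zero closed side would make $\bd D$ bound a disc in $S_0$. After capping, $S_1$ has genus equal to that of the boundary side, which is strictly less than $g(S_0)$. The remaining routine points are smoothing the corners created by the surgery so that the result is again smooth and neat, and noting that the unknot case is consistent: there $g(K)=0$ and the disc gives the single vertex of Proposition~\ref{prop:connected}.
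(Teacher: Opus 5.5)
Your proof is correct and takes essentially the paper's route. The paper also obtains a Seifert surface from Seifert's algorithm and argues that a least-genus Seifert surface is incompressible, because a compression (Lemma~\ref{lem:euler}) would lower the genus. It additionally runs a compress-until-incompressible argument for $\IS(K)$ alone; your observation that a single minimal-genus vertex witnesses both claims makes that step unnecessary.
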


\begin{proof}
Seifert's algorithm, applied to a regular diagram of $K$, produces a compact
connected oriented surface $S_0\subset S^3$ with $\bd S_0=K$; made transverse
to $N(K)$, it meets it in an annulus, and $S=S_0\cap E(K)$ is a Seifert surface
in the exterior; connectedness of $S_0$ is part of the definition over which
the existence statement quantifies \cite[Definition~2.1 and Theorem~2.2,
pp.~15--16]{Lickorish}.
If $S$ is incompressible then $[S]$ is a vertex.  Otherwise compress and keep
the component carrying the longitudinal boundary; by Lemma~\ref{lem:euler} each
such step strictly lowers the genus, so after finitely many steps the process
stops at an incompressible Seifert surface.  For the truncations, a Seifert
surface of least genus is incompressible, since a compression would produce one
of smaller genus, so it is a vertex of genus $g(K)$ and lies in every
$\IS_\ell(K)$ with $\ell\ge g(K)$.
\end{proof}

\subsection{Compression and the robustness of neighbours}

Throughout this subsection $A$ and $B$ are transverse, neatly embedded compact
surfaces.  Cutting both along $A\cap B$, the resulting components are called
\emph{patches} and the intersection curves and arcs duplicated on their
boundaries are called \emph{exchange seams}; a \emph{disc patch} is a disc
component of the result, never a component of a set-theoretic symmetric
difference.  This is the vocabulary of \S\ref{sec:exchange-lemma}.

\begin{lem}[Euler characteristic and compression]\label{lem:euler}
Let $A,B$ be compact and transverse.  Cut along $A\cap B$ and reglue by the
orientation-compatible normal exchange, and let $P_{\mathrm{tot}}$ be the union
of the output components.  Then
$\chi(P_{\mathrm{tot}})=\chi(A)+\chi(B)$, so that if
$P_{\mathrm{tot}}=P\sqcup C_1\sqcup\dots\sqcup C_m$ then
\begin{equation}\label{eq:euler-split}
   \chi(P)=\chi(A)+\chi(B)-\sum_j\chi(C_j).
\end{equation}
Moreover, compressing a connected Seifert surface along an essential circle and
keeping the component that carries the longitudinal boundary strictly lowers
the genus; iterating therefore terminates at an incompressible Seifert surface.
\end{lem}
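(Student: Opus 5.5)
The Euler characteristic identity is purely combinatorial, and I will prove it by counting over a cell structure. Choose compatible triangulations of $A$ and $B$ in which $A\cap B$ is a subcomplex of each. Since $A$ and $B$ are transverse and neat, $A\cap B$ is a compact $1$-manifold, properly embedded in both surfaces: a finite disjoint union of circles and arcs, the arcs having their endpoints on $\bd A\cap\bd B\subset\bd E(K)$. Cutting $A$ along $A\cap B$ replaces each seam $\gamma$ by two copies, and likewise for $B$. The orientation-compatible normal exchange then reglues these four copies in pairs, different from the original pairs, in a neighbourhood of $\gamma$.

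Rather than track the regluing, I will compute $\chi$ of the disjoint union of cut pieces and then account for the gluing.
\begin{itemize}
\item Cutting a surface along a properly embedded circle preserves $\chi$: a circle has $\chi=0$, and the duplicated circle also has $\chi=0$.
\item Cutting along a properly embedded arc raises $\chi$ by one, since an arc has $\chi=1$.
\item Regluing two boundary arcs lowers $\chi$ by one; regluing two boundary circles leaves $\chi$ unchanged.
\end{itemize}
Each arc seam is cut once in $A$ and once in $B$, giving $+2$. The exchange reglues the resulting four copies into two pairs, giving $-2$. So the net change is zero per seam. By inclusion--exclusion this gives
\[
\chi(P_{\mathrm{tot}})=\chi(A\sqcup B)=\chi(A)+\chi(B).
\]
Equation \eqref{eq:euler-split} then follows from additivity of $\chi$ over components. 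The only point needing care is that the exchange is a local operation in a product neighbourhood $\gamma\times(\text{cross})$, so it is genuinely a regluing of cut copies and nothing else.

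For the compression statement, let $S$ be a connected Seifert surface and $D$ a compressing disc with $\bd D=\alpha$ essential in $S$. Surgery removes an annulus neighbourhood of $\alpha$ and caps it off with two parallel copies of $D$, so the result $S'$ has $\chi(S')=\chi(S)+2$. There are two cases.
\begin{itemize}
\item If $\alpha$ is non-separating, $S'$ is connected and still has one boundary circle. By \eqref{eq:euler-genus}, $g(S')=g(S)-1$.
\item If $\alpha$ separates, then $S'=S_1\sqcup S_2$, where $S_1$ carries the longitude and $S_2$ is closed.
\end{itemize}
The separating case is the main obstacle: one must show the kept component has strictly smaller genus. Here I will use essentiality. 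The component $S_2$ is not a sphere, since otherwise $\alpha$ would bound a disc in $S$, namely the half of $S$ that capped to $S_2$ minus the cap. Also, $S_2$ cannot be a non-orientable closed surface, since it is orientable. So $\chi(S_2)\le 0$, and therefore
\[
\chi(S_1)=\chi(S)+2-\chi(S_2)\ge\chi(S)+2,
\]
which gives $g(S_1)\le g(S)-1$.

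Termination is immediate, since genus is a non-negative integer that strictly drops at each step. The kept surface stays a Seifert surface: it is connected, orientable, neat, and has the same boundary longitude.
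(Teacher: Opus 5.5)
Your proposal is correct and follows essentially the same route as the paper. For the Euler identity, you triangulate so that the intersection is a subcomplex and observe that cutting duplicates the seam cells while the normal exchange re-identifies them; you simply make the per-seam count explicit for arcs and circles. For the compression claim, you separate the non-separating case from the separating case, where essentiality rules out a sphere on the discarded closed side, which gives the strict genus drop and hence termination.
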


\begin{proof}
Triangulate $A\cup B$ so that each intersection curve is a subcomplex.  Cutting
duplicates exactly the cells of the intersection trace and the normal regluing
identifies the same cells again, so the Euler correction is the same before and
after the exchange; additivity over components gives \eqref{eq:euler-split}.
For the second statement: compressing along a non-separating circle removes a
handle and keeps the unique boundary component, so the genus drops by one;
compressing along a separating essential circle discards a closed side, which
has positive genus since otherwise the circle would bound a disc, and this
again lowers the genus of the component with boundary.  The genus is a
non-negative integer, so the process is finite, and it stops only where no
compression is available.
\end{proof}

\begin{lem}[Prescribed compressions performed away from a fixed neighbour]
\label{lem:clean-discs}
Let $M$ be a compact irreducible orientable smooth $3$-manifold, let $P\subset M$ be
connected, two-sided and smoothly neatly embedded, with non-empty boundary,
and let $Q\subset M$ be incompressible, smoothly neatly embedded and disjoint
from $P$.  Given any finite compression sequence for $P$, each step retaining
a specified connected component with non-empty boundary,
there is a compression sequence whose intermediate and final ambient isotopy
classes agree with the given ones one by one and all of whose discs are
disjoint from $Q$.  In particular the final surface has a representative
disjoint from $Q$.
\end{lem}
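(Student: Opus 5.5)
We argue by induction on the length of the compression sequence, maintaining at each stage a representative of the current ambient isotopy class that is disjoint from $Q$. Write $P=P_0,P_1,\dots,P_n$ for the given sequence, with $P_{k}$ obtained from $P_{k-1}$ by compressing along a disc $D_k$ and retaining a specified component with non-empty boundary. It suffices to prove a one-step statement. \emph{If $P'$ is isotopic to $P_{k-1}$ and disjoint from $Q$, then there is a compressing disc $D'$ for $P'$ with $D'\cap Q=\emptyset$ such that the compression of $P'$ along $D'$, with the corresponding component retained, is ambient isotopic to $P_k$.} The compressed surface lies in a small neighbourhood of $P'\cup D'$, so it is again disjoint from $Q$, and the induction continues.

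For the one-step statement, transport $D_k$ by an ambient isotopy carrying $P_{k-1}$ to $P'$. This gives a compressing disc $D$ for $P'$, with $D\cap P'=\bd D$, whose compression yields the prescribed class. Put $D$ in general position with respect to $Q$. Since $\bd D\subset P'$ and $P'\cap Q=\emptyset$, the intersection $D\cap Q$ consists of finitely many circles in $\Int D$. We remove these circles by innermost-circle surgery, in two steps.
\begin{enumerate}
\item Take a circle $\gamma$ that is innermost on $D$; it bounds a subdisc $D_\gamma\subset D$ with $\Int D_\gamma\cap Q=\emptyset$. Since $Q$ is incompressible, $\gamma$ bounds a disc $E_\gamma\subset Q$.
\item The sphere $D_\gamma\cup E_\gamma$ bounds a ball $B$ by irreducibility. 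Replace $D_\gamma$ by a copy of $E_\gamma$ pushed slightly off $Q$ to the appropriate side. This removes $\gamma$, together with any other circles of $D\cap Q$ lying on $E_\gamma$, and introduces no new ones.
\end{enumerate}

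The main obstacle is making sure this surgery preserves the two things we need: that the new disc still meets $P'$ only in $\bd D$, and that the compression it defines is isotopic to the old one. For the first, note that $E_\gamma\subset Q$ is disjoint from $P'$. Moreover $B$ cannot contain any part of $P'$. Indeed, $P'$ is connected, misses the sphere $\bd B$ (because $P'\cap D_\gamma=\emptyset$ and $P'\cap Q=\emptyset$), and reaches $\bd M$, so $P'$ lies outside $B$. Hence the modified disc is embedded and still meets $P'$ exactly in $\bd D$.

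For the second, the new disc is isotopic to $D$ through the ball $B$, by an isotopy fixing $P'$. This isotopy is supported near $B$, which misses $P'$. Compressions along isotopic discs rel $P'$ give ambient isotopic surfaces, and they induce the same partition into components, so the retained component is the same. One subtlety is that $B$ may contain other parts of $D$, or pieces of $Q$. We handle this by always choosing an innermost circle on $D$ and pushing the replacement disc off $Q$ into the side away from $B$ (i.e. towards where $D_\gamma$ approaches). Even if $B$ contains other parts of $D$, the surgery decreases the number of intersection circles. The isotopy statement only needs $B\cap P'=\emptyset$, which we established above.

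After finitely many such surgeries $D'\cap Q=\emptyset$, and the step is complete. Composing the $n$ steps gives the required compression sequence, whose intermediate classes match the given ones and all of whose discs avoid $Q$. The final surface lies in a neighbourhood of $P'\cup D'$ that misses $Q$, which gives the last assertion.
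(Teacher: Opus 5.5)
Your overall plan matches the paper's. You induct along the sequence, transport the next disc by the ambient isotopy, and clean one disc at a time by innermost-circle surgery, using connectedness of $P'$ and $\bd P'\subset\bd M$ to keep the ball off $P'$. But step (2) fails, because you take the circle innermost on the wrong surface.

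With $\gamma$ innermost on $D$, nothing prevents $\Int E_\gamma$ from containing further circles of $D\cap Q$. For instance, $D$ can meet $Q$ in two circles $\gamma,c$ that bound disjoint subdiscs of $D$, both innermost there, while on $Q$ one has $c\subset\Int E_\gamma$ and the subdisc $D_c$ sits inside $B$. Such circles lie on $D\setminus D_\gamma$, which your surgery never touches, so they are not removed, contrary to what you assert. Moreover, at each of them $D\setminus D_\gamma$ crosses $Q$ transversally, hence also crosses every nearby parallel copy $E'_\gamma$ of $E_\gamma$, on either side. So $(D\setminus D_\gamma)\cup E'_\gamma$ is not embedded. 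Your check that the new disc meets $P'$ only in $\bd D$ does not address this self-intersection. The isotopy ``through $B$'' fails for the same reason: once $B$ contains other parts of $D$, as you concede it may, sweeping $D_\gamma$ across $B$ is not an isotopy of embedded discs, and no choice of push-off side repairs it.

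The fix, which is what the paper does, is to take the innermost disc on $Q$. Every circle of $D\cap Q$ bounds a subdisc of $D$, so it is null-homotopic in $M$. By incompressibility in its $\pi_1$-injective form, it therefore bounds a disc in $Q$. Take such a disc $E$ minimal under inclusion, so that $\Int E\cap D=\emptyset$, and let $F\subset D$ be the subdisc with $\bd F=\bd E$; $F$ may contain other intersection circles. Then $E\cup F$ is an embedded sphere bounding a ball $B$. Your argument shows $B\cap P'=\emptyset$. Moreover $B$ misses $D\setminus F$, since that set is connected, is disjoint from $\bd B=E\cup F$, and contains $\bd D\subset P'$. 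Hence pushing $F$ across $B$ to a parallel copy of $E$ is a genuine isotopy of $D$ rel $\bd D$ in the complement of $P'$. It removes $\bd E$ together with every circle inside $F$ and creates none, so the circle count strictly decreases and your induction goes through.

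A secondary omission: the paper rounds the corner of $E\cup F$ (Lemma~\ref{lem:round}) before invoking irreducibility. It then converts the resulting piecewise smooth isotopy of the compressed surfaces rel boundary into a smooth ambient isotopy by Lemma~\ref{lem:category-bridge}, since the classes in the statement are smooth ambient isotopy classes.
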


\begin{proof}
It suffices to clean a single prescribed compressing disc $D$ and then induct
along the sequence, transporting the next prescribed disc to the current
representative at each step.  Put $D$ transverse to $Q$.  As
$\bd D\subset P$ and $P\cap Q=\emptyset$, the intersection $D\cap Q$ consists
of circles.  Each is null-homotopic in $M$, being the boundary of a subdisc of
$D$, hence null-homotopic in $Q$ by incompressibility, hence bounds a disc in
$Q$; choose such a disc $E\subset Q$ innermost, so $\Int E\cap D=\emptyset$.
Its boundary circle bounds in $D$ a unique subdisc $F$ not containing $\bd D$,
and $E\cup F$ is a sphere with a corner along that circle; rounding the corner
by the qualitative part of Lemma~\ref{lem:round}, at a width small enough that
the tube misses $P$, gives a smoothly embedded sphere, and by irreducibility it
bounds a ball $B$.  By clause (5) of that lemma the rounding does not change
the region bounded, up to the tube; so $B$ agrees off the tube with the region
bounded by $E\cup F$, and we use it in that form below.  The sphere is
disjoint from $P$; since $B$ lies in the interior of $M$ while $P$ is connected
with its boundary on $\bd M$, the ball is disjoint from $P$ as well.
Pushing $F$ across $B$ to a parallel copy of $E$
is a locally flat piecewise smooth isotopy of $D$ rel $\bd D$, keeps the disc
interiors off $P$, and removes that circle together with every intersection
circle inside $F$.  Repeating produces $D'$ disjoint from $Q$ and isotopic to
$D$ in that category rel boundary.  Round the resulting disc and the two
compressed surfaces at positive widths, keeping the collars of $P$ fixed;
the widths can be chosen so small that the cleaned disc and its compressed
surface remain disjoint from $Q$.  The two smooth neat boundary-bearing
outputs are piecewise smooth isotopic rel their unchanged boundary, hence
smoothly ambient isotopic by Lemma~\ref{lem:category-bridge}.  Use this smooth
ambient diffeomorphism when transporting the next prescribed disc.
The component with boundary remains
disjoint from $Q$ and furnishes the input for the next step; the discarded
closed components play no role.
\end{proof}

\begin{cor}[Robustness of neighbours]\label{cor:robust}
Let $P$ be connected, two-sided and smoothly neatly embedded, with non-empty boundary and disjoint
from an incompressible surface $Q$.  Then \emph{every} ambient isotopy class
obtained by compressing $P$ down to an incompressible descendant with boundary
is, in $\IS(K)$, equal or adjacent to $[Q]$.
\end{cor}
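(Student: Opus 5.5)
The plan is to deduce the corollary directly from Lemma~\ref{lem:clean-discs}, applied in the knot exterior $M=E(K)$. This exterior is compact, orientable and irreducible (\cite[p.~228]{Kak92}). We take the given $P$, and for $Q$ a smooth neat incompressible Seifert surface representing the vertex $[Q]$. Fix any finite compression sequence for $P$ in which each step retains the component carrying the boundary. By Lemma~\ref{lem:euler}, such a sequence terminates at an incompressible surface $P_{\mathrm{fin}}$ with boundary. The lemma then supplies a second compression sequence, all of whose discs miss $Q$. Its final surface $P'_{\mathrm{fin}}$ is disjoint from $Q$ and lies in the same ambient isotopy class as $P_{\mathrm{fin}}$.

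It remains to check that $[P'_{\mathrm{fin}}]$ is a vertex of $\IS(K)$. The surface $P'_{\mathrm{fin}}$ is connected, being the retained component. It is compact, orientable and neatly embedded, and its boundary is the original longitude, since compressing discs lie in the interior and the boundary is untouched. Incompressibility is a property of the isotopy class, so $P'_{\mathrm{fin}}$ inherits it from $P_{\mathrm{fin}}$. Hence $[P'_{\mathrm{fin}}]$ is a vertex.

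Now we compare with $[Q]$. If $[P'_{\mathrm{fin}}]=[Q]$, the two are equal. Otherwise they are distinct vertices with disjoint representatives $P'_{\mathrm{fin}}$ and $Q$, so by Definition~\ref{dfn:complexes} they span an edge. The argument is uniform over every compression sequence, and this gives the quantifier ``every'' in the statement.

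The main point needing care is the identification of the vertex: that the output of the cleaned sequence lies in the same smooth ambient isotopy class as the prescribed output. That identification is exactly what Lemma~\ref{lem:clean-discs} asserts, through Lemma~\ref{lem:category-bridge}. The remaining check is that $P$ is a Seifert surface, so that its boundary is a preferred longitude. This is implicit in the phrase ``in $\IS(K)$'', and I would record it as a standing assumption.
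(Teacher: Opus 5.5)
Your proposal is correct and is essentially the paper's proof: apply Lemma~\ref{lem:clean-discs} to the given compression sequence to obtain a final representative of the same ambient isotopy class disjoint from $Q$, then conclude ``equal or adjacent'' from Definition~\ref{dfn:complexes}. Your additional checks, that the output is a vertex and that $P$ is implicitly a Seifert surface, are reasonable elaborations the paper leaves implicit. One wording point: the corollary quantifies over sequences that already end at an incompressible descendant, so you need not invoke Lemma~\ref{lem:euler} to make a finite sequence end there.
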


\begin{proof}
Apply Lemma~\ref{lem:clean-discs} to the chosen compression sequence; its final
representative is disjoint from $Q$.  If the two classes coincide they are
equal, and otherwise their disjoint representatives span an edge.
\end{proof}

\subsection{Normal isotopies are ambient}

\begin{lem}[Isotopy extension; {\cite[Theorem~2.4.6, p.~52]{Wall2016}}]
\label{lem:isotopy-extension}
Let $M$ and $F$ be compact smooth manifolds with boundary and let
$f\colon F\times[0,1]\to M$ be a smooth isotopy through neat embeddings, that
is, $f_t^{-1}(\bd M)=\bd F$ and $f_t(F)$ meets $\bd M$ transversally along its
boundary.  Then there is a smooth ambient isotopy $H_t\colon M\to M$ with
$H_0=\mathrm{id}_M$ and $H_t\circ f_0=f_t$ for all $t$.
\end{lem}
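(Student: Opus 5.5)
The plan is the standard collar-and-flow construction, reducing to the closed-manifold isotopy extension theorem after straightening the boundary.

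\emph{Step 1: compatible collars.} Fix a collar $c\colon\bd M\times[0,1)\to M$. Using neatness, deform it so that each $f_t(F)$ meets the collar in a product near the boundary. Transversality along $\bd F$ lets us choose, smoothly in $t$, a vector field along $f_t(\bd F)$ which is inward-pointing in $M$ and tangent to $f_t(F)$. Extend it to an inward field on $\bd M$, depending smoothly on $t$ and normalised by compactness. The resulting time-dependent collar $c_t$ satisfies
\[
c_t^{-1}(f_t(F))=f_t(\bd F)\times[0,\epsilon)
\]
for some fixed $\epsilon>0$.

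\emph{Step 2: the boundary isotopy.} On $\bd M$, the restriction $f_t|_{\bd F}$ is an isotopy of a closed submanifold of the closed manifold $\bd M$. The closed-manifold isotopy extension theorem gives an ambient isotopy $h_t$ of $\bd M$. Extend $h_t$ to the collar by $c_t\circ(h_t\times\mathrm{id})\circ c_0^{-1}$, and taper it off with a cutoff in the collar coordinate. This gives an ambient isotopy $G_t$ of $M$ which agrees with $f_t$ near $\bd F$, up to the collar identification.

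\emph{Step 3: the interior, via a time-dependent vector field.} Replace $f_t$ by $G_t^{-1}\circ f_t$, which is now fixed near $\bd F$. Define a vector field along the track of the isotopy by $V_t(f_t(x))=\partial_t f_t(x)$. This field is smooth on the compact submanifold $\{(f_t(x),t)\}\subset M\times[0,1]$, and it vanishes near $\bd M$. Extend it to a smooth time-dependent vector field on $M$ with compact support in $\Int M$, using a tubular neighbourhood of the track and a partition of unity. Its flow $\Phi_t$ exists for all $t\in[0,1]$ by compactness, and it satisfies $\Phi_t\circ f_0=f_t$. Then $H_t=G_t\circ\Phi_t$ is the required ambient isotopy.

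\emph{Main obstacle.} The delicate point is Step 1 together with the smooth gluing in Step 3 near $\bd M$. The extended vector field must be tangent to $\bd M$ there, so that the flow preserves $M$, and this tangency must hold uniformly in $t$. Neatness, in the form of transversality along $\bd F$, is exactly what makes the collar adaptation possible. Since this is the textbook result cited as \cite[Theorem~2.4.6]{Wall2016}, in the paper I would simply invoke that reference and verify only that its hypotheses — compactness and isotopy through neat embeddings — are the ones stated.
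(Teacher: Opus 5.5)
Your closing move is to invoke \cite[Theorem~2.4.6]{Wall2016} after checking only compactness and neatness, and that is exactly what the paper does. The paper gives no proof. It only remarks that Wall needs no orientability or dimension hypothesis, and that the resulting diffeotopy of $M$ preserves $\partial M$ setwise while moving it.

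If you want the sketch to stand in for the citation, three steps fail as written.

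In Step~1, your inward field is given only along $\partial M$ and is tangent to $f_t(F)$ only at $f_t(\partial F)$. That does not make $f_t(F)$ a product in the collar it generates. You need a field on a neighbourhood of $\partial M$ that is tangent to $f_t(F)$ at every point of $f_t(F)$ there. It should preferably equal $(f_t)_*$ of one fixed collar field of $F$, so that parametrisations match and not just images.

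In Step~2, a diffeomorphism cannot be ``tapered with a cutoff''. What you cut off is the time-dependent vector field generating $c_t\circ(h_t\times\mathrm{id})\circ c_0^{-1}$. Agreement with the untapered map then holds only on a smaller neighbourhood of $\partial M$, by a uniform bound on trajectories.

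In Step~3, your $G_t$ matches $f_t$ near $\partial F$ only ``up to the collar identification''. So near $\partial F$, $G_t^{-1}\circ f_t$ equals $f_0\circ\psi_t$ for some reparametrisation $\psi_t$ fixing $\partial F$. Its velocity is tangent to $f_0(F)$ but need not vanish near $\partial M$, so the extension cannot be supported in $\Int M$. It must instead be required tangent to $\partial M$. That is the condition your last paragraph calls the main obstacle, and it contradicts your claim that the field vanishes there.

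Step~2 can in fact be dropped. On the track $\{(f_t(x),t)\}\subset M\times[0,1]$, the velocity $\partial_t f_t$ is automatically tangent to $\partial M$ at points of $f_t(\partial F)$, because $f_t(\partial F)\subset\partial M$ for every $t$. Neatness makes the track transverse to $\partial M\times[0,1]$. So the velocity has local extensions tangent to $\partial M$, for instance by composing with a local retraction onto the track that carries $\partial M\times[0,1]$ into the part of the track over $\partial M$. Both agreement with $\partial_t f_t$ on the track and tangency to $\partial M$ are convex conditions, so a partition of unity gives a global time-dependent field. Its flow is $H_t$ directly, with no collars, no boundary isotopy extension and no tapering.
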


Wall's hypotheses are compactness of $F$ and neatness of the embeddings; there
is no orientability hypothesis and no dimension condition relating $F$ to $M$,
and the ambient isotopy is covered by a diffeotopy of $M$, which is what we use.

Taking $M=E(K)$, which is compact with boundary, and $F$ a Seifert surface,
every normal isotopy is a smooth family of neat embeddings, so the lemma
applies: a normal isotopy class, as used in relative area minimisation, is
exactly an ambient isotopy vertex.

\begin{rem}[No silent strengthening]\label{rem:setwise}
The ambient isotopy produced above preserves $\bd E(K)$ \emph{setwise}, and the
longitudinal boundary curve is allowed to move.  Strengthening this to ``fixing
the boundary pointwise'' would make the statement false: on
$M=S^1\times[0,1]$ the rotational isotopy of the boundary is realised by no
ambient isotopy fixing the boundary pointwise.  In citations, and in
formalisation, this must be written setwise.
\end{rem}

\subsection{Comparison with the piecewise smooth category}

\begin{lem}[The category bridge]\label{lem:category-bridge}
Let $M$ be a compact orientable smooth $3$-manifold and let $F_0,F_1$ be
compact two-sided neat smooth surfaces, each component having non-empty
boundary.  A locally flat ambient isotopy from $F_0$ to $F_1$ fixing their
common boundary pointwise can be replaced, as an isotopy of images, by a
smooth ambient isotopy fixing $\bd M$ pointwise.

If $\bd M$ consists of tori and each $F_i$ has one oriented essential boundary
curve on each torus, the same comparison holds for isotopies allowing those
curves to move; the resulting smooth ambient isotopy then preserves $\bd M$
setwise.  In particular the piecewise smooth isotopy classes used in
Definition~\ref{conv:ps}, restricted to smooth neat spanning surfaces, are
exactly the smooth ambient isotopy classes used for vertices.
\end{lem}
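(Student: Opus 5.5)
Since $F_0,F_1$ are smooth neat surfaces joined by a locally flat ambient isotopy, we are in dimension $3$, where the topological (TOP), piecewise-linear (PL) and smooth (DIFF) categories agree on isotopy questions for surfaces in $3$-manifolds. The plan is to pass from the given topological isotopy to a smooth one in three steps: first reduce to a statement about the images, then upgrade the topological isotopy to a smooth one, and finally treat the sliding-boundary case.

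\emph{Step 1: reduce to images.} The locally flat isotopy ends in a homeomorphism $h$ of $M$ with $h|_{\bd M}=\mathrm{id}$ and $h(F_0)=F_1$. Since $h$ is isotopic to the identity rel $\bd M$ through homeomorphisms, the claim reduces to: two smooth neat surfaces that are topologically isotopic rel $\bd M$ are smoothly isotopic rel $\bd M$.

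\emph{Step 2: upgrade the isotopy (main obstacle).} Two routes are available, and I would try the second first because it avoids the full Hauptvermutung machinery.
\begin{itemize}
\item[(a)] Apply Moise--Bing to approximate $h$ by a PL, then smooth, homeomorphism $h'$, isotopic to $h$ rel $\bd M$ and with $h'(F_0)$ arbitrarily $C^0$-close to $F_1$.
\item[(b)] Use Waldhausen's isotopy theory directly. Each $F_i$ has non-empty boundary in every component and is two-sided. Cut $M$ along $F_1$ and consider $h(F_0)$ in general position with a slightly pushed-off copy of $F_1$. Remove intersections by innermost-disc and outermost-arc arguments in the complement; these are PL ball moves, valid after smoothing corners by Alexander's theorem. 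This yields a smooth product region, and crossing it gives a smooth isotopy.
\end{itemize}
The genuine difficulty in route (b) is that the surfaces need not be incompressible, so Waldhausen's product theorem does not apply verbatim. I would therefore rely on the fact that $h$ is already a topological isotopy: a homotopy-to-isotopy argument is not needed, only a $C^0$-small approximation. Concretely, once $h'(F_0)$ lies in a thin tubular neighbourhood of $F_1$ and is a smooth section there (after a further small smooth perturbation, which is transverse to the fibres by general position in a collar), straight-line contraction of the section gives the smooth isotopy. Boundary-adapted isotopy extension (Lemma~\ref{lem:isotopy-extension} in its rel boundary form) then produces a smooth ambient isotopy fixing $\bd M$ pointwise. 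The delicate point is forcing $h'(F_0)$ to be a graph rather than a multi-sheeted approximant. For that I would argue via degree one of the projection, exactly as at the end of the proof of Theorem~\ref{thm:fixed-smooth}: the projection fixes the common boundary pointwise, and each component has non-empty boundary.

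\emph{Step 3: sliding boundary.} On each torus the two oriented essential curves $\bd F_0,\bd F_1$ are isotopic in the torus, since they are parallel to the boundary of the isotopy restricted to the torus. Choose a smooth isotopy of the torus carrying $\bd F_0$ to $\bd F_1$ and extend it through a collar to a smooth ambient isotopy preserving $\bd M$ setwise. Compose, and correct the resulting boundary homeomorphism by a torus isotopy fixing the curve, using that the mapping-class data agree because both arise from the same isotopy. This reduces to the rel-boundary case. The final sentence about classes then follows immediately, since piecewise smooth isotopies of Definition~\ref{conv:ps} are locally flat.
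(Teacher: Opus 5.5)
\textbf{The central step is missing.} In route (a) you approximate only the \emph{endpoint} $h$ by a diffeomorphism $h'$. That gives a smooth surface $h'(F_0)$ near $F_1$, but no smooth isotopy from $F_0$ to $h'(F_0)$: you know only that $h'$ is isotopic to the identity rel $\bd M$ \emph{through homeomorphisms}. Upgrading that to a smooth isotopy is exactly the TOP--DIFF comparison the lemma asserts. So your opening sentence about the three categories agreeing, as an unreferenced assertion, is the conclusion rather than a tool.

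The paper supplies this input by a precise citation: Cerf's comparison of the topological and $C^r$ orbits of a separating regular interface \cite[III, \S3.2.1, Corollary~3]{CerfEmbeddings}, which is conditional on the Smale conjecture and so uses \cite{HatcherSmale}. Because $F_i$ need not separate, the paper applies it to the double frontiers $F_i\times\{\pm1\}$ of product neighbourhoods, matched beforehand by collar uniqueness \cite{ArmstrongCollars}. It then moves from the frontier back to $F_1$ inside the product.

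The part of Step 2 you do carry out also fails as written. $C^0$-closeness plus general position does not make $h'(F_0)$ transverse to the fibres of a tubular neighbourhood of $F_1$. Fold curves of the projection are stable, and a $C^0$-small smooth surface can fold back inside the tube. The degree-one argument at the end of the proof of Theorem~\ref{thm:fixed-smooth} works there only because smooth convergence makes the projection a local diffeomorphism. Nothing comparable is available for an approximation of a mere homeomorphism.

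To avoid the Cerf--Hatcher input, you would have to subdivide the \emph{whole} isotopy into $C^0$-small steps and prove a small-step comparison without graphs. One plausible way is a Waldhausen-type argument inside a product neighbourhood, where the surfaces are automatically incompressible. That is a separate proof which the proposal does not contain.

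\textbf{Two further steps need repair.} In Step 1 the hypothesis fixes only $\lambda=\bd F_0=\bd F_1$ pointwise, not $\bd M$. The paper first composes with collar maps $C_t(x,r)=(b_{t\chi(r)}^{-1}(x),r)$, where $b_t=H_t|_{\bd M}$, and uses that every $b_s$ fixes $\lambda$, so that $C_1$ preserves $F_1$.

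In Step 3, matching the endpoint boundary homeomorphism settles only a $\pi_0$ question. After that correction, the restriction of the isotopy to each boundary torus is a based loop in $\operatorname{Homeo}_0(T^2)$, whose fundamental group is $\Z^2$. If the loop is non-trivial, the collar correction that makes the isotopy fix $\bd M$ ends in a twist supported in the collar. That twist carries $F_1$ to a surface which need not be isotopic to $F_1$ rel boundary. So you must control the loop itself, not just its endpoint.

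The paper does this as follows: it homotopes the loop to a smooth translation loop using \cite{HamstromHomeotopy}, and extends that loop to a smooth collar-supported isotopy $B_t$. It then applies the rel-boundary case to $B_t^{-1}H_t$ after a nullhomotopy correction, and finally follows by $B_t$. This is also why the resulting smooth isotopy preserves $\bd M$ only setwise. Your reduction to the rel-boundary case needs a step of this kind.
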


\begin{proof}
We specify the relative comparison result being used.  For a regular
decomposition $M=N\cup\overline{M\setminus N}$ with smooth separating
interface $E$, Cerf compares the orbit of $E$ under boundary-fixing
homeomorphisms with its orbit under $C^r$ diffeomorphisms whose $r$-jet on
$\bd M$ is the identity.  The inclusion induces a bijection on path
components \cite[III, \S3.2.1, Corollary~3, pp.~366--367]{CerfEmbeddings}.
The hypothesis called the Smale conjecture in that statement is supplied by
\cite{HatcherSmale}.

Two details concerning this citation are useful.  Endpoint parametrisations
may be made smooth: a homeomorphism between compact smooth surfaces is
isotopic to a diffeomorphism \cite[Theorem~1.13, p.~42]{FarbMargalit}.
If it is already the identity on a boundary collar, this can be done relative
to a smaller collar: triangulate outside that collar, straighten the finitely
many image vertices and edges, smooth their disc neighbourhoods, and fill the
remaining discs; the Alexander isotopy on each disc fixes its boundary.
Extend this change of parametrisation through a bicollar of $E$.  Next, a
smooth endpoint in the topological orbit belongs to the differentiable orbit:
approximate its ambient homeomorphism by a diffeomorphism, using density in
\cite[III, \S3.1.3(1), p.~364]{CerfEmbeddings}; the remaining smooth embedding
is $C^0$-close to the inclusion and lies in its differentiable orbit by the
$C^0$ openness in part~(2) of that same theorem.  Corollary~3 can therefore be
applied to both endpoints.  Take $r\ge2$; the resulting path of $C^r$
embeddings can first be straightened in a boundary collar, relative to the
endpoints, and then approximated relative to the endpoints and that collar
by a smooth isotopy, since embeddings and transversality are open in $C^1$
on a compact parameter interval.  Smooth isotopy extension then supplies a
smooth ambient isotopy.  Thus no differentiability in the parameter is being
inferred from a merely continuous path.

First suppose the given isotopy $H_t$ fixes
$\lambda=\bd F_0=\bd F_1$ pointwise.  Smoothly straighten the endpoints in a
common boundary collar, so that there they are $\lambda\times[0,\epsilon]$.
Write $b_t=H_t|_{\bd M}$.  For a collar cutoff $\chi$ equal to $1$ at the
boundary and $0$ outside that collar, the homeomorphisms
\[
   C_t(x,r)=\bigl(b_{t\chi(r)}^{-1}(x),r\bigr)
\]
extend by the identity and make $C_tH_t$ fix all of $\bd M$.  At the final
time $C_1$ preserves $F_1$, because every $b_s$ fixes $\lambda$.  We can also
arrange that the isotopy fixes a smaller boundary collar: compress $M$ into
the complement of that collar, conjugate the isotopy there, and extend by
the identity.  Since the endpoint surfaces are products in the collar, this
does not change either endpoint image.

Choose product neighbourhoods $N_i\cong F_i\times[-1,1]$ with identical
boundary product coordinates.  The transported neighbourhood $H_1(N_0)$ and
$N_1$ have isotopic bicollars relative to $F_1$ and the fixed boundary collar.
This is collar uniqueness after cutting along $F_1$
\cite[Theorem~2, pp.~123--124]{ArmstrongCollars}: perform every shrinking and
sliding step relative to a smaller common collar, taking all shrinking
factors, including the initial half-collar factor, equal to $1$ there and
making their transitions inside the common collar.  Consequently the
ordered double frontiers
\[
   E_i=F_i\times\{-1,1\}
      =N_i\cap\overline{M\setminus N_i}
\]
are topologically ambient isotopic rel $\bd M$.  These are separating regular
interfaces, even when $F_i$ itself does not separate $M$, so the comparison
just established applies.  Its smooth ambient isotopy carries $N_0$ to
$N_1$: the side is specified by the annular bands $N_i\cap\bd M$, which are
fixed, and every component of $F_i$ has boundary.

The image of the middle surface of $N_0$ is now smoothly isotopic inside
$N_1$ to its positive frontier, using the transported product coordinates,
and that frontier is smoothly isotopic to $F_1$, using the chosen product
coordinates of $N_1$.  These two products have the same boundary coordinates.
The boundary therefore moves out and back along one fixed annular coordinate.
If its displacement is $a(t)$, apply the flow for time $-a(t)$ of the
corresponding smooth boundary vector field, extended into a collar of $M$.
This correction is the identity at both endpoints and fixes the boundary
curve throughout the corrected surface isotopy.  Its velocity is zero on
$\lambda$; extending that velocity by a vector field zero on $\bd M$ gives
the relative form of smooth isotopy extension.  This proves the first claim.

For the moving-boundary claim, first use a smooth collar isotopy to make the
endpoint boundary curves agree and then straighten the endpoint surfaces in
that collar.  The endpoint boundary homeomorphism $g$ preserves each such
curve $\lambda$ and is isotopic to the identity on its torus.  It can be
isotoped to the identity while preserving $\lambda$ setwise.  Indeed, first
undo its restriction to $\lambda$ by interpolating increasing lifts of
circle homeomorphisms.  After cutting the torus along $\lambda$, the remaining
homeomorphism is a boundary-fixed annulus homeomorphism.  Its twist number
$n$ sends a transverse homology class $\mu$ to $\mu+n[\lambda]$; hence $n=0$.
A spanning arc can then be fixed by the disc bigon isotopy, and cutting along
it leaves a disc, where the Alexander isotopy completes the argument.
Extending this boundary isotopy into the collar preserves the product
endpoint surface.  Append it to $H_t$.  Its boundary path $b_t$ is now a
based loop, although it need not preserve $\lambda$ at intermediate times.

By \cite[Theorem~1.5.2, p.~217]{HamstromHomeotopy},
$\pi_1(\operatorname{Homeo}_0(T^2))\cong\mathbb Z^2$.  Evaluation at a point
has the smooth translation section $T^2\to\operatorname{Homeo}_0(T^2)$.
On fundamental groups this is a split surjection from $\mathbb Z^2$ to
$\mathbb Z^2$, hence an isomorphism.  Thus, on each boundary torus, $b_t$ is
homotopic as a based loop to a smooth translation loop $\beta_t$.  Extend
$\beta_t$ to a smooth collar-supported ambient isotopy $B_t$.  The boundary
loop of $B_t^{-1}H_t$ has a based nullhomotopy $c(t,u)$, with
$c(t,0)=\beta_t^{-1}b_t$, $c(t,1)=\mathrm{id}$ and
$c(0,u)=c(1,u)=\mathrm{id}$.  For a collar function $\psi$ equal to $0$ at
the boundary and $1$ outside the collar, set
\[
   D_t(x,r)=\bigl(c(t,\psi(r))^{-1}(x),r\bigr).
\]
Then $D_tB_t^{-1}H_t$ fixes $\bd M$ throughout, and $D_0=D_1=\mathrm{id}$.
The first part gives a smooth ambient isotopy from $F_0$ to $B_1^{-1}(F_1)$;
following it by $B_t$ gives the required smooth isotopy to $F_1$.
\end{proof}

\section{Complexity and the frontier construction}\label{sec:complexity}

Use the warped-collar metric and longitudinal foliation of
\S\ref{sec:geometric-inputs}.  Theorem~\ref{thm:fixed-smooth} supplies
smooth fixed-boundary minimisers, and \textup{(E)}(ii) supplies the
sliding-boundary minima used here.

Two things are assembled here.  The first is the complexity function that will
be fed to Definition~\ref{dfn:exchange}, together with the one fact about it
that is not formal, namely that the set of values it attains is well-ordered.
The second is the cyclic-cover construction which, given a vertex and two of
its neighbours, manufactures the pair of candidate surfaces on which
\S\ref{sec:exchange-lemma} operates.

\subsection{The complexity}

\begin{dfn}[Complexity]\label{dfn:complexity}
Let $v$ be a vertex of $\IS(K)$.  Write $g(v)$ for the genus of any
representative, which by \eqref{eq:euler-genus} is determined by $v$, and
\[
   A(v)\;:=\;\inf\bigl\{\Area(F)\;:\;F\in v,\ \bd F\subset J,\ 
                      F\text{ is smooth and neat}\bigr\},
\]
the infimum of area over smooth neat representatives of the smooth ambient
isotopy class $v$, with boundary on a leaf of the longitudinal foliation.
The area-refined subcomplexes of Theorem~\hyperref[thm:C]{C} use this
infimum.  Set
\[
   c(v)\;:=\;\bigl(g(v),A(v)\bigr)\in\N\times\R_{>0},
\]
and order $\N\times\R_{>0}$ lexicographically.  Finally put
\[
   W\;:=\;c\bigl(\IS(K)^{(0)}\bigr),
\]
the set of \emph{attained} values, with the induced order.
\end{dfn}

The infimum is attained, by \textup{(E)}(ii) of Theorem~\ref{thm:E} applied to
the normal isotopy class of a representative --- which is the vertex, by
Lemma~\ref{lem:isotopy-extension} --- so $A(v)$ is a minimum and is realised by
a stable minimal surface, smoothly neatly embedded up to its boundary.  Both coordinates
are therefore invariants of $v$ and not of a representative.

It is $W$, and not $\N\times\R_{>0}$, that is required to be well-ordered in
Definition~\ref{dfn:exchange}, and the distinction is not cosmetic:
$\N\times\R_{>0}$ is not well-ordered, since $\R_{>0}$ is not.  What has to be
proved is that the values actually taken by $c$ are sparse.  That is the
content of the next lemma, and it is the only place in this paper where a
statement about the totality of vertices of $\IS(K)$ is needed.

\begin{lem}[Well-foundedness of the complexity]\label{lem:wf}
Let $K$ be a non-trivial knot, with the relative metric and the foliation $J$
of \S\ref{sec:geometric-inputs}.  Then:
\begin{enumerate}
\item[\textup{(i)}] for every $h\in\N$ and every $a>0$ the set
      $\{v\in\IS(K)^{(0)}:g(v)=h,\ A(v)\le a\}$ is finite;
\item[\textup{(ii)}] consequently, for each $h$ the set
      $W_h:=\{A(v):g(v)=h\}$ is a closed discrete subset of $\R_{>0}$ and is
      well-ordered of order type at most $\omega$;
\item[\textup{(iii)}] $W$ is well-ordered of order type at most $\omega^2$; in
      particular there is no infinite strictly descending chain in $W$, and
      every non-empty subset of $W$ has a least element;
\item[\textup{(iv)}] $\IS(K)$ has countably many vertices.
\end{enumerate}
\end{lem}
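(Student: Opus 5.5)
The substance is clause (i); clauses (ii)--(iv) then follow formally. For (i), fix $h$ and $a$ and suppose, for contradiction, that there are infinitely many pairwise distinct vertices $v_1,v_2,\dots$ of genus $h$ with $A(v_k)\le a$.

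By \textup{(E)}(ii) of Theorem~\ref{thm:E}, choose in each class a relatively least-area representative $\Sigma_k$. Each $\Sigma_k$ is a smooth neat stable minimal surface with $\Area(\Sigma_k)\le a$, and its boundary is a leaf of $J$. All the $\Sigma_k$ are embeddings of one fixed compact connected source surface $S$ of genus $h$ with one boundary circle, by \eqref{eq:euler-genus}. Hence they all lie in Kapovich's space $\cM_{a}(S)$ of \cite[p.~897]{Schultens}, exactly as in the proof of Theorem~\ref{thm:R1}.

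Now invoke \cite[Corollary~A.3]{Schultens}: $\cM_a(S)$ is a finite disjoint union of open and closed subsets, each consisting of mutually isotopic surfaces. By pigeonhole, two of the $\Sigma_k$ lie in one such subset and are therefore properly isotopic. The isotopy is through neat embeddings, since members of $\cM(S)$ are neat by Lemma~\ref{lem:R2}. Lemma~\ref{lem:isotopy-extension} then makes this isotopy ambient, so $v_k=v_{k'}$, a contradiction.

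The main point to check is that Kapovich's notion of ``mutually isotopic'' within a component is the smooth proper isotopy defining vertices, with the boundary sliding in $J$. This is the same identification already used in the proof of Theorem~\ref{thm:R1}. If Kapovich's isotopies are only topological, I would pass through Lemma~\ref{lem:category-bridge} in its moving-boundary form.

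For (ii), fix $h$. Every bounded interval $(0,a]$ contains only finitely many values of $W_h$, by (i). So $W_h$ is closed and discrete in $\R_{>0}$, and listing its elements increasingly gives order type at most $\omega$.

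For (iii), $W\subset\bigsqcup_{h\in\N}\{h\}\times W_h$ with the lexicographic order. This is an $\N$-indexed ordered sum of well-orders of type at most $\omega$, so it is well-ordered of type at most $\omega\cdot\omega=\omega^2$. Explicitly, a non-empty subset has a least first coordinate $h$, and within that fibre a least element exists by (ii).

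For (iv), write $\IS(K)^{(0)}=\bigcup_{h\in\N}\bigcup_{n\in\N}\{v:g(v)=h,\ A(v)\le n\}$. This is a countable union of finite sets by (i), and $A(v)$ is finite for every $v$ by attainment.
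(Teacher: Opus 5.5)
Your proposal is correct and follows the paper's proof essentially step for step. The minimisers from \textup{(E)}(ii) land in Kapovich's $\cM_a(S)$, and Corollary~A.3 gives finitely many isotopy pieces. Lemma~\ref{lem:isotopy-extension} then turns a shared piece into equality of vertices; your pigeonhole contradiction is the paper's injectivity of $v\mapsto$ piece. Clauses (ii)--(iv) are the same formal deductions.

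One small wording point: neatness of the endpoints in $\cM(S)$ does not by itself make the intermediate embeddings of Kapovich's isotopy neat. That step is really the identification of isotopy conventions you already flag, which the paper also takes as its convention. It is not a consequence of Lemma~\ref{lem:R2}.
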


\begin{proof}
Fix $h$ and let $S$ be the compact orientable surface of genus $h$ with one
boundary circle.  We use the compactness theorem of Kapovich's appendix to
\cite{Schultens} in the form printed there, and we transcribe its membership
conditions rather than paraphrasing them.  Its setting
\cite[p.~897]{Schultens} is a $P^2$-irreducible compact Riemannian
$3$-manifold $M$ with smooth strictly convex boundary, together with a compact
family $\cF$ of smooth curves on $\bd M$, the case of interest being that in
which $\bd M$ is a single torus and $\cF$ is a smooth foliation of it by closed
curves.  For a proper embedding $f\colon(S,\bd S)\to(M,\bd M)$ whose class is
admissible --- meaning that $f^{-1}(\bd M)$ is a single component of $\bd S$
and $f(S)$ is incompressible --- one writes $M([f])$ for the set of stable
minimal surfaces in the proper isotopy class $[f]$ whose boundary is a
parametrised curve of $\cF$, modulo reparametrisation, and
\[
   \cM(S)\;=\;\bigcup_{[f]}M([f]),
\]
the union being over all such classes, topologised by $C^1$ convergence;
$\cM_a$ denotes the subset of surfaces of area at most $a$.  Two features of
this definition are used below and are worth naming.  The union runs over
\emph{all} admissible proper isotopy classes at once, and the boundary of a
member is required only to be a curve of $\cF$, not a prescribed one.

These hypotheses hold for $M=E(K)$, $\cF=J$: the exterior is compact and
orientable and is irreducible \cite[p.~228]{Kak92}, and an orientable
$3$-manifold contains no two-sided projective plane, so $E(K)$ is
$P^2$-irreducible; $\bd E(K)$ is a torus, made strictly convex by
\eqref{eq:collar}; and $J$ is a smooth --- indeed real-analytic --- foliation
of it by closed curves.  A Seifert surface in the exterior is properly
embedded, incompressible, and meets $\bd E(K)$ in its single boundary circle,
so its class is admissible.

Now let $v$ be a vertex with $g(v)=h$ and $A(v)\le a$, and let $\Sigma_v$ be a
minimiser supplied by \textup{(E)}(ii).  It is a stable minimal surface with
boundary a leaf of $J$ and area $A(v)\le a$, so $\Sigma_v\in\cM_a$.  By
\cite[Corollary~A.3, p.~898]{Schultens} the set $\cM_a$ is the disjoint union
of finitely many open and closed subsets $\cM_a([g])$, each consisting of
surfaces isotopic to one another.  Distinct vertices are distinct ambient
isotopy classes, hence, by Lemma~\ref{lem:isotopy-extension} and the convention
fixed with it, distinct proper isotopy classes; so $v\mapsto$ the piece
containing $\Sigma_v$ is injective on the set in (i), which is therefore finite.

For (ii): $W_h\cap(0,a]$ is the image of the finite set of (i), hence finite,
for every $a>0$; a subset of $\R_{>0}$ meeting every interval $(0,a]$ in a
finite set is closed and discrete in $\R_{>0}$ and is order-isomorphic to an
initial segment of $\N$.  For (iii): lexicographically, $W$ is the ordered sum
$\sum_{h\in\N}W_h$ of well-orders of type at most $\omega$ indexed by
$\N$, hence a well-order of type at most $\omega\cdot\omega=\omega^2$.  For
(iv): each set in (i) is finite and
$\IS(K)^{(0)}=\bigcup_{h\in\N}\bigcup_{n\in\N}\{v:g(v)=h,\ A(v)\le n\}$.
\end{proof}

\begin{rem}[Least elements without choice]\label{rem:wf-zf}
Remark~\ref{rem:choice} records that in general the passage from ``no infinite
descending chain'' to ``every non-empty subset has a least element'' uses
dependent choice.  For the complexity of
Definition~\ref{dfn:complexity} it does not.  Let $\emptyset\neq T\subseteq W$.
The set $\{h:(h,\alpha)\in T\text{ for some }\alpha\}$ is a non-empty subset of
$\N$ and so has a least element $h_0$.  Pick any $\alpha_1$ with
$(h_0,\alpha_1)\in T$ --- a single instance, not a choice function.  By
Lemma~\ref{lem:wf}(i) the set
$\{\alpha:(h_0,\alpha)\in T,\ \alpha\le\alpha_1\}$ is finite and non-empty, so
it has a least element $\alpha_0$, and $(h_0,\alpha_0)=\min T$.  No form of
choice is used.  The use of choice flagged in Remark~\ref{rem:choice} is
elsewhere: in the well-ordering of the fibres in Lemma~\ref{lem:ties}.
\end{rem}

\begin{rem}[Two remarks on the hypotheses of Lemma~\ref{lem:wf}]
\label{rem:wf-honest}
First, the appendix \cite[p.~897]{Schultens} carries two standing hypotheses on
the surface: that it is connected, and that its preimage of each boundary
component of $M$ is a single component of its own boundary.  For a knot both
are automatic, a spanning surface in the sense of
\S\ref{ssec:definitions-used} having no closed component and exactly one
boundary circle; for links they are supplied by
Lemma~\ref{lem:link-connected}.

Second, on this route the hypothesis that $K$ is non-trivial is not consumed.
Every knot exterior, the unknot's included, satisfies those hypotheses, so
the finiteness statement (i) holds for the unknot as well --- where, by
Proposition~\ref{prop:connected}, it is anyway visible.  The hypothesis is kept
in the statement because it is the standing hypothesis of
Theorem~\hyperref[thm:A]{A} and of every result of
\S\ref{sec:exchange-lemma}, and because a version of this lemma proved instead
by normalising surfaces in the infinite cyclic cover \emph{does} consume it.
The exponent in $\omega^2$ is an upper bound only.  Order type at most
$\omega$, together with finite fibres, would imply \textup{(F)} of
\S\ref{ssec:F-discussion}.  The converse does not follow: \textup{(F)}
constrains the lower neighbours of each vertex, not its entire initial
segment.  We do not know whether $\IS(K)$ satisfies \textup{(F)}.
\end{rem}

\subsection{The cyclic-cover frontier construction}

The exchange lemma is handed a vertex $x$ and two vertices $u,w$ adjacent to
it and at distance $2$ from one another.  What
Proposition~\ref{prop:disjoint} supplies is one surface disjoint from two
others which meet each other; the construction below turns that configuration
into two new Seifert surfaces.  It is stated for surfaces, not for classes,
and no minimality is assumed or concluded.

\begin{lem}[Frontier construction]\label{lem:frontier}
Let $S,S_+,S_-$ be smoothly neatly embedded Seifert surfaces in $E(K)$,
with $\bd S_+$ and $\bd S_-$ leaves of $J$, and with
\begin{equation}\label{eq:frontier-hyp}
   S\cap(S_+\cup S_-)=\emptyset ,
\end{equation}
with $S_+$ meeting $S_-$, the pair $S_+,S_-$ transverse, $\bd S_+\cap\bd
S_-=\emptyset$, and with no disc patch produced by cutting $S_+$ and $S_-$
along $S_+\cap S_-$.  Then there
are Seifert surfaces $P_\uparrow,P_\downarrow$ such that
\begin{enumerate}
\item each is disjoint from $S$, $S_+$, $S_-$ and from the other;
\item the ambient isotopy classes of $P_\uparrow,P_\downarrow$ can be fixed
      before $Z$ is chosen: every compact surface $Z$ disjoint from
      $S_+\cup S_-$ is disjoint from simultaneous representatives of these
      same two classes, still satisfying \textup{(1)};
\item writing $T_\uparrow,T_\downarrow$ for the possibly disconnected frontier
      outputs containing $P_\uparrow,P_\downarrow$,
      \begin{equation}\label{eq:frontier-euler}
         \chi(T_\uparrow)+\chi(T_\downarrow)=\chi(S_+)+\chi(S_-),
      \end{equation}
      and every component discarded from either output is a closed surface of
      Euler characteristic at most $0$.
\end{enumerate}
\end{lem}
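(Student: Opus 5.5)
We plan to work in the infinite cyclic cover $p\colon\widetilde E\to E(K)$ with deck generator $\tau$, in the manner of Kakimizu's distance and of \cite{PS12}.  Fix a lift $\widetilde S$ of $S$.  It separates $\widetilde E$, and $\widetilde S$ and $\tau\widetilde S$ bound a fundamental slab $\Omega$.  By \eqref{eq:frontier-hyp}, $S_\pm$ miss $S$, so each has a lift $\widetilde S_\pm$ contained in $\Int\Omega$.  Each such lift separates $\widetilde E$; write $R_\pm$ for the closed side of $\widetilde S_\pm$ containing $\widetilde S$.  Put
\[
  \widetilde T_\uparrow:=\operatorname{Fr}(R_+\cap R_-),\qquad
  \widetilde T_\downarrow:=\operatorname{Fr}(R_+\cup R_-).
\]
The lifts may be chosen so that $\widetilde S_+\cap\widetilde S_-$ projects onto all of $S_+\cap S_-$; this is automatic because both lifts lie in the same slab and $\tau$-translates of each lift are disjoint.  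Both frontiers lie in $\Omega$ between $\widetilde S$ and $\tau\widetilde S$, and their $\tau$-translates are disjoint from them.  Hence $p$ restricts to embeddings, giving $T_\uparrow,T_\downarrow\subset E(K)$ disjoint from $S$.

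Combinatorially, $T_\uparrow\sqcup T_\downarrow$ is exactly the orientation-compatible normal exchange of $S_+$ and $S_-$ along $S_+\cap S_-$.  Since $\bd S_+\cap\bd S_-=\emptyset$, this intersection consists of circles only.  Lemma~\ref{lem:euler} therefore gives \eqref{eq:frontier-euler}.  Each output has boundary a single longitude, namely $\bd S_+$ or $\bd S_-$ according to which region is outermost near $\bd\widetilde E$.  Let $P_\uparrow,P_\downarrow$ be the components carrying the boundary; these are Seifert surfaces.  Every other component is closed and is a union of patches glued along circles.  Because no patch is a disc, each patch has $\chi\le0$, and gluing along circles adds nothing, so every discarded component has $\chi\le0$.

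For the disjointness in (1), note that the frontiers contain pieces of $\widetilde S_\pm$ and meet at the seams.  We push $\widetilde T_\uparrow$ slightly into $\Int(R_+\cap R_-)$ and $\widetilde T_\downarrow$ slightly out of $R_+\cup R_-$, rounding the seams by the qualitative part of Lemma~\ref{lem:round}.  The push is taken inside an arbitrarily thin regular neighbourhood $\cN$ of $\widetilde S_+\cup\widetilde S_-$, which still misses $\widetilde S$ and $\tau\widetilde S$.  This makes the two outputs disjoint from $S_+$, $S_-$, $S$ and from each other, and it keeps the boundaries on leaves of $J$ near $\bd S_\pm$.

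For (2), the ambient isotopy classes of the pushed-off surfaces do not depend on the width of $\cN$, since shrinking the width is an isotopy through such push-offs.  So we fix the classes first.  Given a compact $Z$ disjoint from $S_+\cup S_-$, it has positive distance from that set, and we choose the width below that distance.  The resulting representatives miss $Z$ and still satisfy (1).

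The main obstacle is the embeddedness and separation bookkeeping in the cover.  One must show that every lift of $S_\pm$ separates $\widetilde E$, so that the regions $R_\pm$ make sense.  One must also show that the chosen lifts capture all intersections, so that no translate $\tau^k\widetilde S_-$ with $k\neq0$ meets $\widetilde S_+$; I would derive this from confinement to the slab $\Omega$.  Finally, the rounded push-offs have to be genuinely smooth and neat at the boundary; I would handle this using Lemma~\ref{lem:round} and Lemma~\ref{lem:category-bridge}.
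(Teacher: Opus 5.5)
Your proposal is correct and is essentially the paper's proof: lift $S_\pm$ into the slab of the infinite cyclic cover cut out by $S$, take the frontiers of the two opposite quadrant regions, round and push off into those open regions, obtain \eqref{eq:frontier-euler} from Lemma~\ref{lem:euler}, and shrink the positive parameters after $Z$ is given. There are two small differences. You bound $\chi$ of a discarded closed component by summing patch Euler characteristics, each $\le0$ by the no-disc-patch hypothesis, whereas the paper argues that a spherical component would contain an innermost seam bounding a disc patch. Also, the paper constructs explicitly the smooth interpolation in rounding width and push-off scale that makes the output classes independent of those parameters, while you only assert this.
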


\begin{proof}
Let $p\colon\widetilde E\to E(K)$ be the infinite cyclic cover determined by
the linking number homomorphism $\pi_1(E(K))\twoheadrightarrow\Z$ and let
$\tau$ generate its positive deck group.  A Seifert surface carries the
primitive class dual to $p$, hence is non-separating, and cutting along $S$
produces a cut-open fundamental domain between a lift $S_0$ and
$S_1=\tau S_0$.  Let $C$ denote its interior in the stacking direction,
so $p|_C\colon C\to E(K)\setminus S$ is a homeomorphism.  Hypothesis
\eqref{eq:frontier-hyp} is what makes $S_+$ and $S_-$ lie in
$E(K)\setminus S$, so the inverse of that homeomorphism lifts them to
$S^+_0,S^-_0\subset C$; this is the only place \eqref{eq:frontier-hyp} is used,
and it is used for both surfaces at once.  Each of $S^\pm_0$ is a wall in the
corresponding stacking of $C$-translates and its complement in $\widetilde E$
has two components; write $M^\pm_\uparrow,M^\pm_\downarrow$ for them, the upper
one being the one containing $S_1$.  Put
\begin{equation}\label{eq:frontiers}
   \widetilde B_\downarrow
     =\mathrm{fr}\bigl(M^-_\downarrow\cap M^+_\downarrow\bigr),\qquad
   \widetilde B_\uparrow
     =\mathrm{fr}\bigl(M^-_\uparrow\cap M^+_\uparrow\bigr).
\end{equation}
These are assembled from complementary subsurfaces of $S^+_0\cup S^-_0$ and are
piecewise smooth with corners exactly along $\Gamma:=S^+_0\cap S^-_0$, which by
transversality and $\bd S_+\cap\bd S_-=\emptyset$ is a finite family of
disjoint smooth closed curves in the interior.  Round the corners by
Lemma~\ref{lem:round} to get smooth neatly embedded
$\widehat B_\uparrow,\widehat B_\downarrow$, then push each into its own open
region at scale $\sigma$ by Lemma~\ref{lem:pushoff}.  The two regions are
opposite, so the results are disjoint from the lifted inputs and from each
other; they lie in $C$, on which $p$ is injective, and project to embedded
surfaces $T_\uparrow(\eta,\sigma),T_\downarrow(\eta,\sigma)$ disjoint from
$S,S_+,S_-$.  Here $\eta$ denotes the two smooth positive rounding widths;
the rounding charts are fixed for this pair of input surfaces.

We record why neither small positive parameter determines an ambient isotopy
class.  For two admissible width choices $\eta^0,\eta^1$, use
$\eta^u=(1-u)\eta^0+u\eta^1$, $0\le u\le1$, separately on the two frontiers.
The pointwise upper and derivative bounds are convex, and
$\min_{\Gamma\times[0,1]}\eta^u>0$.  The positive-width assertion of
Lemma~\ref{lem:round} therefore gives a smooth family of neatly embedded
roundings, constant near the ambient boundary.  The two roundings remain
disjoint, since they occupy opposite open sectors near each seam and have
disjoint sheets away from the seams.

Choose the inward push-off vector fields smoothly along this family.  Near
the boundary all the roundings agree, so use the same leaf-preserving
boundary field as in Lemma~\ref{lem:pushoff}; in the interior extend the
smooth family of inward normals.  A partition of unity preserves the
positive inward component.  Compactness of the parameter interval and of
the surfaces gives one positive push-off size valid throughout the family.
Any two choices of inward field on a fixed rounding can likewise be joined
by their convex interpolation after reducing the push-off size.  Thus
shrinking an initial push-off, varying the width and field, and enlarging
the final push-off gives a smooth isotopy between any two sufficiently
small outputs.  Lemma~\ref{lem:isotopy-extension}, applied to the disjoint
union, makes this an ambient isotopy.  The isotopy can be supported in any
fixed neighbourhood containing these operations: extend its velocity there
and multiply by a cutoff equal to one on the moving part.  In particular
it can fix $S$.  No smooth isotopy with a zero-width endpoint is used.
We may consequently suppress both $\eta$ and $\sigma$ when referring to
the output classes.

Each frontier is the frontier of a region in an orientable cover, hence
two-sided and consistently oriented across the resolved seams, and the
algebraic intersection number of a lifted meridian with its boundary is $+1$.

We count the boundary curves, since the conclusion depends on it.  The
preimage of $\bd E(K)$ in $\widetilde E$ is an annulus, the longitude lifting
and the meridian not, and $\bd S^+_0$ and $\bd S^-_0$ are two disjoint core
circles of it; so one of them separates the other from the positive end.  Say
$\bd S^-_0$ lies below $\bd S^+_0$.  Points of $S^+_0$ near $\bd S^+_0$ then lie
above $S^-_0$, that is in $M^-_\uparrow$, and $S^+_0$ separates $M^+_\uparrow$
from $M^+_\downarrow$; so they are frontier points of
$M^-_\uparrow\cap M^+_\uparrow$ and $\bd S^+_0\subset\widetilde
B_\uparrow$.  Symmetrically, points of $S^-_0$ near $\bd S^-_0$ lie below
$S^+_0$, so $\bd S^-_0\subset\widetilde B_\downarrow$.  Since $\Gamma$ is a
family of closed curves in the interior, cutting along it creates no new
boundary, and $\bd\widetilde B_\uparrow=\bd S^+_0$,
$\bd\widetilde B_\downarrow=\bd S^-_0$: each frontier has exactly one boundary
circle.  Hence each output has exactly one
component with boundary, a connected oriented Seifert surface whose boundary is
a longitude, and the others
are closed.  These are $P_\uparrow,P_\downarrow$, and clause (1) holds.

For (2), first fix any admissible positive widths and push-offs, hence the
two output classes just constructed.  After $Z$ is given, choose a
neighbourhood $N$ of $S_+\cup S_-$ with closure disjoint from $Z\cup S$;
this is possible by compactness and the stated disjointness hypotheses.
Multiply both fixed widths by a sufficiently small number $\lambda>0$.
The width and derivative constraints remain satisfied, and the actual
metric support radii in Lemma~\ref{lem:round} tend uniformly to zero as
$\lambda\downarrow0$, since this pair of input surfaces and its transverse
intersection are fixed.  The new roundings therefore lie in $N$.  Choose
their positive push-offs small enough that they too stay in $N$, in their
respective open regions.  They avoid $Z$ and still satisfy (1).  The
positive-parameter isotopy above identifies their boundary-bearing
components with the already fixed classes of $P_\uparrow,P_\downarrow$.
Only these representatives depend on $Z$; the classes do not.

For (3): the two frontiers exhaust the cut-open $S^+_0\cup S^-_0$, and
Lemma~\ref{lem:euler} applied to this exchange gives
\eqref{eq:frontier-euler}.  A discarded component is closed, and being cut from
the orientable $S_+\cup S_-$ it is orientable, so it has $\chi\le0$ unless it
is a sphere; suppose one were.  If it carried no exchange seam it would be a
whole component of one of the inputs with the intersection curves deleted, and
no such component is closed, the inputs being connected with non-empty
boundary.  So it carries a seam, and an innermost seam on it bounds a
seam-free disc on the sphere: a disc patch, which the hypothesis excludes.
\end{proof}

\begin{rem}\label{rem:frontier-no-genus}
The corresponding construction in the literature assumes minimal genus in its
hypotheses and asserts it in its conclusion.  Nothing of the kind is used or
claimed here: Lemma~\ref{lem:frontier} says nothing about the genus of either
output beyond what \eqref{eq:frontier-euler} forces, and that is all
\S\ref{sec:exchange-lemma} consumes.  The hypothesis
\eqref{eq:frontier-hyp} is, by contrast, indispensable, and it is the exact
sentence that fails for a general link; see \S\ref{ssec:general-links}.
\end{rem}

\section{The exchange lemma}\label{sec:exchange-lemma}

Use the warped-collar metric and longitudinal foliation of
\S\ref{sec:geometric-inputs}.  Relative area minimisers are the smooth neat
minimisers of \textup{(E)}(ii), obtained from
Theorem~\ref{thm:fixed-smooth} and the sliding-boundary compactness argument.

This section proves axiom (N2) for $\IS(K)$.  Given two vertices at distance
$2$ and a common neighbour, it produces two new vertices, each an apex of the
two-interval, whose complexities cannot both fail to drop.  The mechanism is
Lemma~\ref{lem:frontier} applied to relative area minimisers, and the
difficulty is quantitative: rounding the corners of a frontier gains area,
pushing the result off the inputs costs area, and the gain must be pinned down
before the cost --- and, in \S\S\ref{ssec:tangential}--\ref{ssec:T1}, before the further
parameter introduced by making two tangent minimisers transverse.

Throughout, $K$ is a non-trivial knot, and by Theorem~\ref{thm:R1} and
Lemma~\ref{lem:R2} every minimiser supplied by \textup{(E)} may be taken smooth
up to $\bd E(K)$ and is then neatly embedded.

\subsection{Rounding and pushing off}\label{ssec:rounding}

The frontier of Lemma~\ref{lem:frontier} is piecewise smooth with corners,
whereas $A(\cdot)$ is an infimum over smooth neatly embedded surfaces.  Two
steps separate the two, and their order is the pivot of the quantifier
structure below.  Lemma~\ref{lem:round} is stated pointwise --- varying angle,
varying width, gain an integral --- because in \S\ref{ssec:T2} the
angle degenerates near a tangency point and a single global width would
collapse with it.

\begin{lem}[Corner rounding with local quantitative data]\label{lem:round}
Let $M$ be an orientable compact smooth Riemannian $3$-manifold and let
$\Gamma\subset\Int M$ be a finite disjoint union of smooth embedded circles.
Let $\widetilde B$ be an embedded surface, smooth and neat away from
$\Gamma$, with two smooth sheets extending smoothly across each circle and
meeting transversally there.  Suppose the chosen wedge has opening angle
$\alpha(s)\in(0,\pi)$, and write
\begin{equation}\label{eq:angle}
 \theta(s)=\min\{\alpha(s),\pi-\alpha(s)\},\qquad
 m(s)=\cot(\alpha(s)/2),\qquad Q(\theta)=\csc(\theta/2).
\end{equation}
There is an adapted tubular coordinate system $\Xi(s,\xi_1,\xi_2)$,
$s$ arclength on $\Gamma$, with a positive variable radius $\varrho(s)$,
in which the sheets are exactly $\xi_1=m(s)|\xi_2|$ and the metric equals
the Euclidean metric on the zero section.  The radius is chosen so that
the coordinate map is globally injective and its image contains no other
parts of $\widetilde B$.  Fix such a system and continuous positive local
data $\varrho,d,K$, with $K\ge1$, satisfying
\begin{equation}\label{eq:reach}
 \begin{gathered}
 |m'(s)|\le K(s),\qquad
 \|\Xi^*h(s,\xi)-\delta\|\le K(s)|\xi|
       \quad (|\xi|<\varrho(s)),\\
 d(s)\le\min\{1,\operatorname{dist}(s,\bd M)\}.
 \end{gathered}
\end{equation}
The distance to the empty boundary is read as $+\infty$.
These are local bounds in the specified tube; in particular $K(s)$ is
not a supremum over other portions of $\Gamma$.

There is a continuous non-decreasing $g_0:(0,\pi/2]\to(0,1)$ such that,
on putting
\begin{equation}\label{eq:etamax}
 \begin{aligned}
 E(\theta,\varrho,d,K)&:=\min\Bigl\{
  \frac{\varrho}{4Q(\theta)},\frac1{8KQ(\theta)},
  \sqrt{\frac{g_0(\theta)}{8K^2}},\\
 &\hspace{4em}
  \frac{g_0(\theta)}{48KQ(\theta)^2},
  \frac d{8Q(\theta)},1\Bigr\},\\
 H(\theta)&:=\frac{\sqrt{g_0(\theta)}}{8\cot(\theta/2)},\\
 \eta_{\max}(s)&:=E(\theta(s),\varrho(s),d(s),K(s)),\qquad
 \kappa(s):=H(\theta(s)).
 \end{aligned}
\end{equation}
every $C^\infty$ width satisfying
\begin{equation}\label{eq:width}
 0<\eta(s)\le\eta_{\max}(s),\qquad |\eta'(s)|\le\kappa(s)
\end{equation}
gives a compact smooth neatly embedded surface $\widehat B(\eta)$ with:
\begin{enumerate}
\item The change is supported in the \emph{coordinate} tube
      $T_\eta:=\Xi\{(s,\xi):|\xi|<2Q(\theta(s))\eta(s)\}$.
      A point of this tube over $s$ is at ambient distance at most
      $4Q(\theta(s))\eta(s)$ from $s$.  Thus the tube misses $\bd M$ and
      $\bd\widehat B=\bd\widetilde B$.  Here $\eta$ is a coordinate
      half-width, not a metric tube radius.
\item The rounded sheet lies on the closed wedge side.  If that side is an
      open region $U$ bounded by $\widetilde B$, then
      $\widehat B\subset\overline U$ and it has a one-sided collar into $U$.
\item The old and new surfaces are ambiently isotopic in the locally flat
      piecewise smooth category, by an isotopy supported in $T_\eta$.
\item The decrease is non-negative in each coordinate slice and
      \begin{equation}\label{eq:gain}
       \Area(\widehat B(\eta))\le\Area(\widetilde B)
                   -\int_\Gamma g_0(\theta(s))\eta(s)\,ds.
      \end{equation}
\item If $\widetilde B$ is closed and separating, so is $\widehat B$.
      Corresponding complementary regions agree outside $T_\eta$.
\item Any two positive smooth admissible widths in this fixed coordinate
      system give smoothly ambiently isotopic rounded surfaces, rel boundary.
      The isotopy is supported in the union of their coordinate tubes.
      For the two opposite frontiers the two interpolations may be performed
      simultaneously and stay disjoint.
\end{enumerate}
Moreover, if on a subset $V\subset\Gamma$ one has
$\theta\ge\theta_\bullet$, $\varrho\ge\varrho_\bullet$,
$d\ge d_\bullet$ and $K\le K_\bullet$, then on that subset
\begin{equation}\label{eq:etalower}
 \eta_{\max}\ge E(\theta_\bullet,\varrho_\bullet,d_\bullet,K_\bullet)>0,
 \qquad \kappa\ge H(\theta_\bullet)>0.
\end{equation}
For all sectors formed by the same two transverse surfaces one may use common
data: take the minimum of their admissible radii and boundary clearances and
the maximum of their local $K$'s.  The resulting bounds in
\eqref{eq:etamax} are the same for all four sectors.
\end{lem}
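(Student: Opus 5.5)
The plan is to build $\widehat B(\eta)$ slice by slice in the adapted coordinates $\Xi$. First I construct the coordinate system itself. Along each circle of $\Gamma$, take the bisector plane of the two sheets in the normal bundle. Use exponential-type coordinates normal to $\Gamma$, then apply a fibrewise linear shear/rotation depending smoothly on $s$, so that the two sheets become exactly the graphs $\xi_1=m(s)|\xi_2|$. Straightening each sheet to a cone near $\Gamma$ is done by a fibre-preserving diffeomorphism tangent to the identity at the zero section, so the metric is Euclidean on the zero section. The radius $\varrho(s)$ is then shrunk continuously so that $\Xi$ is injective and meets no other part of $\widetilde B$; this uses compactness of $\Gamma$ and embeddedness of $\widetilde B$. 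With the system fixed, I define the rounded sheet in each slice by replacing the corner by the curve $\xi_1=m(s)\,\phi_{\eta(s)}(\xi_2)$. Here $\phi_\eta(t)=\eta\,\phi(t/\eta)$, and $\phi$ is a fixed smooth convex even function with $\phi(t)=|t|$ for $|t|\ge1$ and $\phi\ge|t|$. This places the new sheet on the wedge side, giving clause (2). Its support lies in $|\xi_2|<\eta$, hence $|\xi|<2Q\eta$ after accounting for the slope, giving clause (1). The ambient distance bound $4Q\eta$ follows from the metric comparison $\|\Xi^*h-\delta\|\le K|\xi|$ together with $\eta\le 1/(8KQ)$, and the clearance $\eta\le d/(8Q)$ keeps the tube off $\bd M$.

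Next comes the area gain (4), which is the main computation. In the flat model with constant $m$, the slice length saved is $\eta\,c(\alpha)$ for an explicit positive $c$. I take $g_0(\theta)$ to be, say, half the minimum of this over the two complementary wedge angles $\alpha$ and $\pi-\alpha$ with the same $\theta$, adjusted downward to be continuous and non-decreasing in $\theta$. Three kinds of error must then be absorbed. The metric deviation is at most $K|\xi|\lesssim KQ\eta$ relative to a slice length of order $Q\eta$, giving an error of order $KQ^2\eta^2$; the constraint $\eta\le g_0/(48KQ^2)$ makes this at most a fraction of $g_0\eta$. The $s$-dependence of $m$ and $\eta$ enters through the tangential derivative of the graph. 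Its size is $|m'|\eta+m|\eta'|$, and it contributes quadratically to the area density, so the bounds $|m'|\le K$, $\eta\le\sqrt{g_0/(8K^2)}$ and $|\eta'|\le\sqrt{g_0}/(8\cot(\theta/2))$ make this term at most $g_0/8$ per unit length. Hence the slice-wise saving is nonnegative and integrates to \eqref{eq:gain}. I expect this step to be the main obstacle: each constant in $E$ and $H$ has to be matched to one specific error term, and the non-monotone dependence on $\alpha$ versus $\theta$ (acute versus obtuse sectors) must be checked so that a single $g_0(\theta)$ works for both.

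Finally, the topological clauses. For (3), the straight-line homotopy between $|t|$ and $\phi_\eta$ inside each slice is a graph isotopy in the tube, and it extends to an ambient isotopy supported in $T_\eta$ that is locally flat at the corner. For (6), convex combinations of two admissible widths remain admissible, since the constraints \eqref{eq:width} are convex in $(\eta,\eta')$; this yields a smooth family of rounded surfaces, and Lemma~\ref{lem:isotopy-extension} turns it into an ambient isotopy. The two opposite frontiers use opposite wedges, so the two interpolations stay disjoint. For (5), the graph modification lies within one wedge side, so complementary regions change only inside $T_\eta$. The lower bounds \eqref{eq:etalower} follow by monotonicity: $E$ is increasing in $\theta$ (through $Q$ and $g_0$), in $\varrho$ and in $d$, and decreasing in $K$, while $H$ is increasing in $\theta$. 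The statement about common data for the four sectors follows because $E$ and $H$ depend only on $\theta$, which is shared by $\alpha$ and $\pi-\alpha$.
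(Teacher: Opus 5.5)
Your proposal is correct and follows essentially the paper's proof. It uses the same bisector-frame tubular coordinates straightened by a fibre-preserving map tangent to the identity, the same profile replacement $\xi_1=m\eta\,\phi(\xi_2/\eta)$, the same three error terms matched to the constants in $E$ and $H$, and the same graph-interpolation, convex-interpolation and monotonicity arguments for clauses (3), (6) and \eqref{eq:etalower}. The differences are cosmetic: in the bisector frame no linear shear is needed (a non-orthogonal one would spoil $\Xi^*h=\delta$ on the zero section), and the paper gets monotonicity of $g_0$ without any downward adjustment, by showing that the flat saving $g(m)$ increases with $m$, so that the obtuse sector $m=\tan(\theta/2)$ is the worst case and $g_0(\theta)=\tfrac14 g(\tan(\theta/2))$.
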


\begin{proof}
\emph{The coordinate system and its scope.}
Orient each component of $\Gamma$.  The wedge bisector and the ambient
orientation give a smooth frame of its normal plane bundle.  Start with the
normal exponential map in this frame.  The tubular neighbourhood theorem
gives an injective map on a neighbourhood of the zero section; decrease its
radius to exclude every portion of $\widetilde B$ other than the two chosen
sheet collars.  Compact embeddedness makes these choices possible.  In each
normal fibre, extend the two sheet collars smoothly across the origin and
take smooth defining functions for them.  Their two normal differentials
are linearly independent.  The inverse function theorem applied to the pair
of defining functions, normalized to have their prescribed linear parts,
therefore gives a fibre-preserving change of coordinates with identity
first derivative at the zero section which straightens both sheets exactly.
The defining functions and the normal frame are chosen along the whole
circle, so these local inverses agree on overlaps after restricting their
domains.  Shrinking the radius once more keeps their images in the original
injective normal tube.  The resulting global system satisfies
\begin{equation}\label{eq:straight}
 \widetilde B=\{\xi_1=m(s)|\xi_2|\},\qquad
 \Xi^*h(s,0)=\delta.
\end{equation}
It is smooth, including in $s$.  The fundamental theorem of calculus in each
fibre bounds the metric error by $K(s)|\xi|$; together with $|m'(s)|$,
these estimates admit a continuous positive local majorant $K$.  Increasing
$K$ and decreasing $\varrho$ are allowed.  All coordinate estimates on a
fixed smaller tube depend on finitely many local derivatives of the two
defining functions, the ambient metric and the inverse of their normal
Jacobian.  No derivative bound at a different circle or a distant point is
used.  A radius bound by itself would not exclude unrelated sheet pieces;
their exclusion is part of the choice of tube above.

\emph{The profile.}
Take a smooth odd function $\chi$ with $0\le\chi\le1$ on $[0,1]$,
$\chi=1$ near $1$, and $\chi(0)=0$.  Put
$\Theta(t)=c_0+\int_0^t\chi$, where $c_0=1-\int_0^1\chi>0$.
Then $\Theta$ is even, $\Theta\ge|t|$, and it equals $|t|$ near
$|t|=1$, with all derivatives matching.  Set
\[
 g(m)=\int_{-1}^1\left(\sqrt{1+m^2}
                    -\sqrt{1+m^2\Theta'(t)^2}\right)dt,
 \qquad g_0(\theta)=\tfrac14g(\tan(\theta/2)).
\]
The function $g$ is positive and increasing: its derivative has non-negative
integrand since $u\mapsto u/\sqrt{1+m^2u}$ increases for $0\le u\le1$.
Thus $0<g_0(\theta)<1$, $g_0$ is non-decreasing, and
$g(m)\ge4g_0(\theta)$ for either complementary sector angle.

On $|\xi_2|\le\eta(s)$ replace the wedge by the graph
$\xi_1=F(s,\xi_2):=m(s)\eta(s)\Theta(\xi_2/\eta(s))$.
Since the width is smooth and positive, this is a smooth graph, and it
agrees with the old sheets near the matching edges $|\xi_2|=\eta(s)$.
It is contained
in the closed wedge side; where the profile changes it lies strictly inside
that side.  Its coordinate radius is at most
$\eta\sqrt{1+m^2}\le\eta Q(\theta)$.  The metric estimate and
$\eta\le1/(8KQ)$ imply that $\Xi$ is at most $2$-Lipschitz along the
radial segments in $T_\eta$, so the stated ambient support bound follows.
The caps $\eta\le\varrho/(4Q)$ and $\eta\le d/(8Q)$ keep that support
inside the valid tube and away from the ambient boundary.  Embeddedness
follows from the global injectivity and sheet-exclusion properties of the
tube.  Compactness and neatness give the one-sided collar in (2).

\emph{The area estimate.}
With $t=\xi_2/\eta(s)$,
\begin{equation}\label{eq:Fderiv}
 \begin{aligned}
 F_{\xi_2}&=m\Theta'(t),\qquad
 F_s=\eta m'\Theta(t)+m\eta'(\Theta(t)-t\Theta'(t)),\\
 F_s^2&\le2\eta^2K^2+8m^2\eta'^2.
 \end{aligned}
\end{equation}
The old Euclidean area density is
$\sqrt{1+m^2+m'^2\xi_2^2}\ge\sqrt{1+m^2}$.
Using $\sqrt{a+b}\le\sqrt a+b/(2\sqrt a)$, $a\ge1$, the Euclidean
decrease per unit $s$ is at least
$\eta[g(m)-2\eta^2K^2-8m^2\eta'^2]$.
Both Euclidean profile integrals are at most $3\eta Q$: indeed
$\eta^2K^2\le g_0/8$ and
$F_s^2\le g_0/4+g_0/8<1$, by \eqref{eq:etamax}.
For any tangent plane the metric perturbation changes its area density by
at most $2K\eta Q$ times its Euclidean density, since
$\|h-\delta\|\le K\eta Q\le1/8$.  Hence the two metric errors together
cost at most $12K\eta^2Q^2$, independently of the graph slope, and
\begin{equation}\label{eq:gainslice}
 \begin{aligned}
 \text{decrease per unit }s&\ge
 \eta\bigl[g(m)-2\eta^2K^2-8m^2\eta'^2-12K\eta Q^2\bigr]\\
 &\ge\tfrac{27}{8}g_0(\theta)\eta.
 \end{aligned}
\end{equation}
Here the three losses are respectively at most $g_0/4$, $g_0/8$ and
$g_0/4$, while $g(m)\ge4g_0$.  This proves (4), with a strict margin,
slice by slice.

\emph{Isotopies and common data.}
The old and new graphs cobound a product in each fibre.  Interpolating their
heights gives a locally flat piecewise smooth isotopy, extended within
$T_\eta$ by a fibrewise increasing homeomorphism, fixed outside a slightly
larger neighbourhood of those graphs.  The spare factor $2$ in the support
radius provides this neighbourhood.  This proves (3), and carries
complementary regions to the corresponding regions, proving (5).
This isotopy to the cornered object is not asserted to be smooth.

For two positive smooth admissible widths use instead
\begin{equation}\label{eq:round-interpolation}
 \eta_u=(1-u)\eta_0+u\eta_1,\qquad 0\le u\le1.
\end{equation}
The inequalities \eqref{eq:width} are convex and hence persist.  On the
compact parameter space $\Gamma\times[0,1]$ the width has a positive
minimum, so the profile formula gives a smooth family of neat embeddings,
fixed near the boundary.  Its support is in $T_{\eta_0}\cup T_{\eta_1}$.
Smooth isotopy extension, Lemma~\ref{lem:isotopy-extension}, gives (6),
with support in that open set.  The two opposite wedge sides are disjoint
away from the zero section, and the rounded sheets avoid the zero section;
the two families therefore remain disjoint and may be extended together.

Finally $Q(\theta)$ decreases and $g_0,H$ increase with $\theta$, so
\eqref{eq:etalower} follows directly from \eqref{eq:etamax}, with $E$
non-increasing in its last argument.  For a pair of transverse surfaces
construct the sector charts using the same normal tube.  There are only
four sectors.  Minimum radii and clearances and maximum local $K$'s preserve
all estimates; $m\le\cot(\theta/2)$ for every sector.  Thus the common
$E,H$ apply to all four, even though their individual slopes and derivative
bounds differ.
\end{proof}

\begin{lem}[Pushing off]\label{lem:pushoff}
Let $\widehat B$ be a compact $C^\infty$ neatly embedded surface satisfying
clause (2) of Lemma~\ref{lem:round} for an open region $U$, with
$\bd\widehat B$ a leaf of the
longitudinal foliation.  Then there are $\sigma_0>0$ and $C<\infty$, depending
only on $\widehat B$ and on the ambient metric near it, such that for every
$\sigma\in(0,\sigma_0]$ there is a compact $C^\infty$ neatly embedded
$T(\sigma)$ with $T(\sigma)\subset U$; $\bd T(\sigma)$ a leaf; $T(\sigma)$
isotopic to $\widehat B$ and any two of them isotopic to each other; and
$\bigl|\Area(T(\sigma))-\Area(\widehat B)\bigr|\le C\sigma$.
\end{lem}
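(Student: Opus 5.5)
We push $\widehat B$ off itself along a smooth vector field that points into $U$ along $\widehat B$ and is tangent to the longitudinal foliation at the boundary.

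\emph{Choice of the field.} First build a smooth vector field $Y$ on a neighbourhood of $\widehat B$ with two properties: along $\widehat B$ it has positive component in the direction of the unit normal $\nu$ pointing into $U$, and along $\bd E(K)$ it is tangent to $\bd E(K)$ and preserves $J$. At the boundary, take the linear field on the torus that translates leaves of $J$ to nearby leaves; since $J$ is a linear foliation of a flat torus, translation in the direction orthogonal to the leaves preserves $J$. Neatness makes $\widehat B$ transverse to $\bd E(K)$ along the leaf $\bd\widehat B$. Hence this translation field, with the appropriate sign, has positive $\nu$-component along $\bd\widehat B$. The sign is fixed by the one-sided collar of clause~(2). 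Extend this boundary field into the warped collar \eqref{eq:collar} independently of $r$. In the interior use $\nu$, and glue the two by a partition of unity. This keeps the $\nu$-component positive and the field tangent to $\bd E(K)$. Let $\phi_\sigma$ be its flow, which preserves $\bd E(K)$ and maps leaves of $J$ to leaves.

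\emph{The surfaces $T(\sigma)$.} Put $T(\sigma)=\phi_\sigma(\widehat B)$. The family is a smooth family of neat embeddings, because diffeomorphisms preserving $\bd E(K)$ preserve neatness, and $\bd T(\sigma)$ is a leaf. Lemma~\ref{lem:isotopy-extension} applied to $\sigma\mapsto T(\sigma)$ makes every $T(\sigma)$ ambiently isotopic to $\widehat B$, and any two $T(\sigma)$ to each other.

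\emph{Area bound.} The area of $T(\sigma)$ is $\int_{\widehat B}J\phi_\sigma|_{T\widehat B}$. The Jacobian is smooth in $\sigma$ and equals $1$ at $\sigma=0$, so $|\Area(T(\sigma))-\Area(\widehat B)|\le C\sigma$ on $[0,\sigma_0]$. Here $C$ is the supremum of the first $\sigma$-derivative of the Jacobian, which depends only on $Y$ near $\widehat B$, hence only on $\widehat B$ and the metric.

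\emph{Containment in $U$ (the main obstacle).} Positivity of $\langle Y,\nu\rangle$ immediately puts a first-order push into $U$ near $\widehat B$. The difficulty is points of $\widehat B$ lying on $\bd U$ but only in the closure of the collar: clause~(2) says only $\widehat B\subset\overline U$, and $\bd U$ may contain other pieces, such as the input sheets. I would argue by compactness. Cover $\widehat B$ by finitely many charts in which $\overline U$ near $\widehat B$ lies on one side of the graph $\widehat B$, which is the content of the one-sided collar. In each chart the flow moves points strictly to that side for small positive $\sigma$, at a uniform rate $\ge c\sigma$ with $c=\min\langle Y,\nu\rangle>0$. Points whose distance from $\widehat B$ is of order $\sigma$ then lie in the open collar. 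So there is $\sigma_0$ such that $T(\sigma)\subset U$ for $0<\sigma\le\sigma_0$.

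At boundary points the same argument runs within the boundary torus, where the leaf moves strictly to the $U$-side of $\bd\widehat B$. Taking $\sigma_0$ smaller than the collar width and the injectivity scale of the flow completes the argument.
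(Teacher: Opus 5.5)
Your proposal is correct and follows essentially the paper's proof: a field with positive $\nu$-component along $\widehat B$, tangent to $\bd E(K)$ and equal there to a leaf-preserving field transverse to $J$, whose time-$\sigma$ flow gives $T(\sigma)$. The isotopies come from that flow, the area bound from the Jacobian (the paper bounds $\bd_s\log J$ by $\operatorname{div}X$), and containment in $U$ from the one-sided collar. One small correction: the collar does not say that $\overline U$ lies on one side of $\widehat B$ (along the rounding strip $\widehat B$ lies strictly inside $U$); it says only that a one-sided neighbourhood on the pushed side lies in $U$, which is exactly what your containment step uses.
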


\begin{proof}
Let $\nu$ be the unit normal of $\widehat B$ pointing into $U$ and $n$ the
outward normal of $\bd E(K)$, extended through the boundary collar; put
$X:=\nu-\psi(r)\langle\nu,n\rangle n$ with $\psi\equiv1$ near $r=0$ and
supported in the collar.  Then $X$ is tangent to $\bd E(K)$, and neatness with
compactness gives $\langle\nu,n\rangle^2\le1-c$, so
$\langle X,\nu\rangle\ge c>0$.  On $\bd E(K)$ replace $X$ by a field
transverse to $J$ whose flow carries leaves to leaves, oriented towards $U$,
and interpolate in $r$, shrinking the collar so that
$\langle X,\nu\rangle\ge c/2$ persists.  Its flow $\Psi_s$ exists for
$s\in[0,\sigma_0]$, preserves $\bd E(K)$ and carries leaves to leaves; put
$T(\sigma):=\Psi_\sigma(\widehat B)$.  Transversality makes
$(x,s)\mapsto\Psi_s(x)$ an embedding into the one-sided collar for
$\sigma_0$ small, which gives the first three clauses, the last of them because
$\Psi$ is an ambient isotopy and $\Psi_{\sigma'}\Psi_\sigma^{-1}$ joins two
push-offs.  With $K_1:=\sup|\nabla X|$ near $\widehat B$, the Jacobian of
$\Psi_s|_{\widehat B}$ obeys $\bd_s\log J=\operatorname{div}X\le2K_1$ in
absolute value, so
$|\Area(T(\sigma))-\Area(\widehat B)|\le(e^{2K_1\sigma}-1)\Area(\widehat B)
\le C\sigma$ with $C:=2K_1e^{2K_1\sigma_0}\Area(\widehat B)$, independent of
$\sigma$.
\end{proof}

The order of the two steps is forced: a normal push-off of a cornered surface
is not embedded, while the frontier of an inner parallel set is again cornered.
Hence $C$ is determined by $\widehat B$, that is, by the cornered frontier and
the once-chosen width $\eta$: \emph{the rounding gain is fixed before
$\sigma$ is}.

The next lemma excludes a disc in the symmetric difference.  It carries an
error term because in \S\S\ref{ssec:T2}--\ref{ssec:T1} one of the two surfaces is a
perturbation of a minimiser and not a minimiser.

\begin{lem}[No disc patch]\label{lem:no-disc}
Let $A_0,B$ be relative area minimisers in two distinct vertices of $\IS(K)$,
let $A$ be smoothly neatly embedded, incompressible, normally isotopic to $A_0$ with
$\bd A$ a leaf of $J$, and put $e:=\Area(A)-\Area(A_0)\ge0$.  Suppose $A$ and
$B$ are transverse with $\bd A\cap\bd B=\emptyset$, and suppose that cutting
$A$ and $B$ along $\Gamma:=A\cap B$ produces a disc patch.  For a closed curve
$\gamma\subset\Gamma$ and a smooth width admissible for the common sector
bounds of Lemma~\ref{lem:round}, let
$\cG(\gamma)$ denote the rounding gain \eqref{eq:gain} along $\gamma$; by
\eqref{eq:angle} it is one and the same number for either of the two pairs of
opposite sectors at $\gamma$.  Then there are closed curves
$a,b\subset\Gamma$, not necessarily distinct, with
\begin{equation}\label{eq:twogains}
   \cG(a)+\cG(b)\ \le\ e .
\end{equation}
\end{lem}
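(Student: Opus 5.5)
The plan is the classical disc-swap argument of Freedman--Hass--Scott, made quantitative with the rounding gain of Lemma~\ref{lem:round}.

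Let $\Delta$ be a disc patch, and choose it innermost: its boundary is a single curve $\gamma\subset\Gamma$, and its interior meets no other seam. Then $\Delta$ is a disc in $A$ or in $B$ whose interior misses the other surface. By incompressibility of that other surface, $\gamma$ also bounds a disc $\Delta'$ there. We may assume $\Delta'$ misses the boundary, since $\bd A\cap\bd B=\emptyset$ and $\gamma$ is an interior closed curve. I would pass to an innermost configuration inside $\Delta'$ as well. If $\Delta'$ contains further curves of $\Gamma$, pick instead a curve innermost on $\Delta'$; its disc on the first surface need not be innermost, so iterate. This produces a curve $\gamma$ bounding discs $D_A\subset A$ and $D_B\subset B$ whose interiors are disjoint from the other surface. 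This is the standard ``innermost on both sides'' reduction, and I would carry it out by minimising the number of seams enclosed.

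The sphere $D_A\cup D_B$ bounds a ball by irreducibility. Swapping yields two cornered surfaces:
\[
A'=(A\setminus D_A)\cup D_B \qquad\text{and}\qquad B'=(B\setminus D_B)\cup D_A .
\]
Both have corners along $\gamma$, and possibly along other seams only where they are already transverse. By the ball argument (as in Lemma~\ref{lem:clean-discs}), $A'$ is isotopic to $A$, hence to $A_0$. Likewise $B'$ is isotopic to $B$, so the vertex classes are preserved. Moreover
\[
\Area(A')+\Area(B')=\Area(A)+\Area(B).
\]
Now round each corner along $\gamma$ by Lemma~\ref{lem:round}. The wedge chosen for $A'$ and the one chosen for $B'$ are the two opposite sectors at $\gamma$, so each rounding gains $\cG(\gamma)$. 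After rounding, the surfaces are smooth and neat and lie in the vertex classes by Lemma~\ref{lem:category-bridge}.

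Minimality gives $A(\text{class of }A)=\Area(A_0)\le\Area(A')-\cG(\gamma)$ and $\Area(B)\le\Area(B')-\cG(\gamma)$. Summing and using the area identity,
\[
\Area(A_0)+\Area(B)\le \Area(A)+\Area(B)-2\cG(\gamma),
\]
that is, $2\cG(\gamma)\le e$. This is \eqref{eq:twogains} with $a=b=\gamma$, which explains the ``not necessarily distinct'' clause.

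The main obstacle is the reduction step. When the innermost disc on one side has an enclosing disc on the other side that contains other seams, the swapped disc passes through those seams. The surface $A'$ is then not merely cornered along $\gamma$: it acquires corners at further curves and may not be embedded without a push-off. I would handle this by swapping the full discs, so that $A'$ and $B'$ are still exchanges of pieces of the same surfaces. Their corners lie along all curves of $\Gamma$ inside $D_A\cup D_B$, so the rounding gains only increase; discarding the extra gains keeps the inequality valid. If embeddedness of the swapped, rounded surfaces fails, I would fall back to rounding the two corners of a single curve $a$ for $A'$ and a possibly different curve $b$ for $B'$, each chosen innermost on its own side. This yields $\cG(a)+\cG(b)\le e$, which is why the statement allows two curves.
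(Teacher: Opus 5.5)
\noindent There is a genuine gap, at the step you yourself flag, and neither of your repairs closes it. When some curve is innermost on both sides, your computation $2\cG(\gamma)\le e$ is correct. But such a curve need not exist, so the ``innermost on both sides'' reduction is not available. Passing to a curve innermost in $D_B(\gamma)$ can enlarge the disc on the $A$ side, so no count of enclosed seams decreases, and your alternation can cycle.

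\emph{A topological example.} Work in a ball containing a standard solid torus. Let $A$ be a meridian disc at angle $0$, followed by a tube about the core over the angles $[0,\pi+\epsilon]$. Let $B$ be a wider meridian disc at angle $\pi$, pierced by $A$'s tube, followed by a tube over $[\pi,2\pi+\epsilon]$. This tube narrows before angle $2\pi$, pierces $A$'s meridian disc, and then runs inside $A$'s tube. Finally, let each surface leave through the other's tube wall by a thin pipe to the boundary sphere. These are transverse properly embedded discs meeting in four circles. The circle in $A$'s meridian disc and the circle around $B$'s pipe have patches on the $A$ side only; the other two circles have patches on the $B$ side only. Nothing in your argument uses minimality to exclude such patterns.

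Your ``swap the full discs'' alternative also fails. If $D_A(\gamma)$ is a patch and $\Int D_B(\gamma)$ meets $A$, then it meets $A\setminus D_A(\gamma)$ transversally. So $(A\setminus D_A(\gamma))\cup D_B(\gamma)$ has double curves there, not corners, and it is not an embedded competitor.

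Your fallback has the right shape---this is why the statement allows $a\neq b$---but the inequality does not follow from innermostness. Write $\cA$ (resp.\ $\cB$) for the curves whose disc in $A$ (resp.\ $B$) is a patch. Note that the disc being \emph{inserted} is the one that must be a patch.
For $a\in\cA$, the competitor is $(B\setminus D_B(a))\cup D_A(a)$ in the class of $B$, giving $\Area(D_B(a))+\cG(a)\le\Area(D_A(a))$.
For $b\in\cB$, the competitor is $(A\setminus D_A(b))\cup D_B(b)$ in the class of $A_0$, giving $\Area(D_A(b))+\cG(b)\le\Area(D_B(b))+e$.
Adding these yields only $\cG(a)+\cG(b)\le\varphi(a)-\varphi(b)+e$, where $\varphi:=\Area(D_A)-\Area(D_B)$. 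Four different discs occur, nothing cancels, and innermostness says nothing about the sign of $\varphi(a)-\varphi(b)$.

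The missing step is an area-minimising selection together with a nesting comparison. Choose $a\in\cA$ with $\Area(D_A(a))$ least and $b\in\cB$ with $\Area(D_B(b))$ least. Then $\Area(D_A(a))\le\Area(D_A(b))$. This is trivial if $b\in\cA$. Otherwise a curve $c$ of $\Gamma$ innermost inside $D_A(b)$ lies in $\cA$ with $D_A(c)\subsetneq D_A(b)$. Symmetrically, $\Area(D_B(b))\le\Area(D_B(a))$. Hence $\varphi(a)\le\varphi(b)$, and therefore $\cG(a)\le\varphi(a)\le\varphi(b)\le e-\cG(b)$.
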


The gain uses one common admissible width for all sector choices.
At each intersection circle apply the last clause of Lemma~\ref{lem:round}
to the pair of sheets of $A$ and $B$, taking the minimum chart radius and
the maximum local $K$ over the four sectors.  Use also
$d(x)\le\min\{1,d(x,\bd E(K))\}$, which protects every boundary in the
construction.  The bounds $E,H$ of \eqref{eq:etamax} then apply both to
the frontier and to either disc exchange; no equality of the individual
slopes $m$ and $1/m$ is asserted.  The disc replacement below uses the
original sheet pieces exactly.  Thus the same charts, angle and width
apply at the original circle; no transport to a displaced seam is needed.

\begin{proof}
No patch has a boundary arc on its frontier, since the endpoints of such an arc
would lie in $\bd A\cap\bd B=\emptyset$; so $\Gamma$ is a finite disjoint union
of closed curves in $\Int E(K)$ and the frontier of a disc patch is a single
one of them.

\emph{Step $0$: the discs $D_A(\gamma)$.}  A simple closed curve $\gamma$ in
the interior of $A$ bounds at most one disc in $A$: the two sides of a
separating $\gamma$ cannot both be discs, or $A$ would be a sphere, and the
disc side is the one missing $\bd A$.  Write $D_A(\gamma)$ for that disc when
it exists and $A_\gamma$ for the closure of $A\setminus D_A(\gamma)$; since $A$
is connected, a component of $A_\gamma$ missing $\gamma$ would be a component
of $A$, so $A_\gamma$ is connected, and so is $A_\gamma\setminus\gamma$.  Use
the same notation for $B$.  Let $\cD\subset\Gamma$ be the set of curves bounding
a disc in $A$ or in $B$.  For $\gamma\in\cD$ \emph{both} discs exist: $\gamma$
bounds a disc, hence is null-homotopic in $E(K)$; a two-sided incompressible
surface is $\pi_1$-injective, so $\gamma$ is null-homotopic in the other
surface as well; and a null-homotopic simple closed curve on a surface bounds a
disc there.  A disc patch is $D_A(\gamma)$ or $D_B(\gamma)$ for $\gamma$ its
frontier, by the uniqueness just proved, so $\cD\ne\emptyset$.  Since
$D_A(\gamma)\subset A$ and $A\cap B=\Gamma$, the sets
\[
   \cA:=\{\gamma\in\cD:\Int D_A(\gamma)\cap B=\emptyset\},\qquad
   \cB:=\{\gamma\in\cD:\Int D_B(\gamma)\cap A=\emptyset\}
\]
are the sets of curves whose disc on the respective side is a disc patch.

\emph{Step $1$: $\cA$ and $\cB$ are non-empty.}  $\Gamma$ is finite, hence so
is $\cD$.  Let $\gamma\in\cD$ have $D_A(\gamma)$ minimal under inclusion among
$\{D_A(\gamma'):\gamma'\in\cD\}$.  If a curve $\gamma'$ of $\Gamma$ met
$\Int D_A(\gamma)$ it would lie in it, and would cut off a subdisc of
$D_A(\gamma)$, which is $D_A(\gamma')$ by Step $0$; then
$D_A(\gamma')\subsetneq D_A(\gamma)$ contradicts minimality.  So
$\gamma\in\cA$, and symmetrically $\cB\ne\emptyset$.

\emph{Step $2$: one-sided exchange.}  Let $\gamma\in\cA$ and write
$D:=D_A(\gamma)$, $D':=D_B(\gamma)$.  Since $D\cap B=\gamma$,
the union $D\cup D'$ is a locally flat piecewise smooth embedded sphere
in $\Int E(K)$.  Qualitatively round its corner by
Lemma~\ref{lem:round}.  The smooth sphere bounds a ball by irreducibility;
pull this ball back by the rounding isotopy to obtain a tame ball $Q_0$
with boundary exactly $D\cup D'$.  All these operations take place in
$\Int E(K)$, so $Q_0$ misses $\bd E(K)$.  Now
\begin{equation}\label{eq:ballempty}
   B\cap\Int Q_0=\emptyset .
\end{equation}
Indeed, $B_\gamma\setminus\gamma$ is connected by Step $0$, is disjoint
from $D\cup D'$, and contains $\bd B$, which is outside $Q_0$.
It therefore lies outside $Q_0$, while $D'$ lies on $\bd Q_0$.

Define the exact replacement
\[
   B_0:=B_\gamma\cup D,\qquad
   \Area(B_0)=\Area(B)-\Area(D')+\Area(D).
\]
This is a properly embedded piecewise smooth surface with a corner only
at $\gamma$, and with $\bd B_0=\bd B$.  It is ambiently isotopic to $B$
in the tame category.  Here is the ball-slide construction, including
its support.  Choose an exterior collar
$c:(D\cup D')\times[0,\epsilon]\to E(K)$ adapted to $B_\gamma$, so that
$c(\gamma\times[0,\epsilon])$ is its annular collar at $\gamma$.
To construct it, straighten the two rays of $D\cup D'$ and the exterior
ray of $B_\gamma$ by a fibrewise angular homeomorphism in the wedge
coordinates.  The first two rays become a line and the third its outward
normal ray.  A smooth exterior collar tangent to the straightened
annulus then pulls back to the required tame collar.  Shrink its width
to exclude all other parts of $B_\gamma$.  The union
$R:=Q_0\cup c((D\cup D')\times[0,\epsilon])$ is a ball in $\Int E(K)$.
The discs $B\cap R$ and $B_0\cap R$ have the same boundary on $\bd R$.
For the first, the two complementary closures are
$c(D'\times[0,\epsilon])$ and $Q_0\cup c(D\times[0,\epsilon])$;
for the second they are the same expressions with $D,D'$ exchanged.
Each is a ball: it is either $D^2\times[0,\epsilon]$ or a ball with a
collar attached along one boundary disc.  Map the first disc to the
second fixing its boundary, keep $\bd R$ fixed, and extend the resulting
boundary maps over these two balls by Alexander extension.  This
gives a homeomorphism of $R$ rel $\bd R$ carrying $B\cap R$ to
$B_0\cap R$.  Alexander's isotopy makes it an ambient isotopy, extended
by the identity outside $R$.  The support is the slightly larger ball
$R$, not just $Q_0$ with its boundary fixed.  No smooth isotopy to the
cornered endpoint $B_0$ is asserted.

The two sheets of $B_0$ at $\gamma$ are the original sheets of $A$ and
$B$.  Their smaller wedge has angle in $(0,\pi)$ and the same $\theta$
as the four original sectors.  Applying Lemma~\ref{lem:round} with the common admissible width
therefore gives a smooth neat surface $B''$ with
\[
 \Area(B'')\le\Area(B)-\Area(D')+\Area(D)-\cG(\gamma).
\]
The ball slide and the rounding isotopy show that $B$ and $B''$ are
tame ambiently isotopic rel boundary.  They are both smooth and neat,
so Lemma~\ref{lem:category-bridge} puts them in the same smooth ambient
isotopy class rel boundary.  Hence $B''$ is an admissible competitor for
$B$, and $\Area(B'')\ge\Area(B)$ by minimality.  Consequently

\begin{equation}\label{eq:exchA}
   \Area\bigl(D_B(\gamma)\bigr)+\cG(\gamma)
      \ \le\ \Area\bigl(D_A(\gamma)\bigr),
   \qquad \gamma\in\cA .
\end{equation}
The argument used only that the disc in the \emph{first} surface is a patch, so
exchanging the roles of $A$ and $B$ it applies to $\gamma\in\cB$ and produces a
smooth neatly embedded $A''$, isotopic to $A$ and hence to $A_0$, with the same
boundary leaf and
$\Area(A'')\le\Area(A)-\Area(D_A(\gamma))+\Area(D_B(\gamma))-\cG(\gamma)$.
Here $\Area(A'')\ge\Area(A_0)=\Area(A)-e$, whence
\begin{equation}\label{eq:exchB}
   \Area\bigl(D_A(\gamma)\bigr)+\cG(\gamma)
      \ \le\ \Area\bigl(D_B(\gamma)\bigr)+e,
   \qquad \gamma\in\cB .
\end{equation}

\emph{Step $3$: the two curves.}  Put
$\varphi(\gamma):=\Area(D_A(\gamma))-\Area(D_B(\gamma))$ on $\cD$, so that
\eqref{eq:exchA} reads $\varphi\ge\cG$ on $\cA$ and \eqref{eq:exchB} reads
$\varphi\le e-\cG$ on $\cB$.  Choose $a\in\cA$ with $\Area(D_A(a))$ least and
$b\in\cB$ with $\Area(D_B(b))$ least.  Then
$\Area(D_A(a))\le\Area(D_A(b))$: if $b\in\cA$ this is the choice of $a$, and
otherwise $\Int D_A(b)$ meets $\Gamma$, and choosing among the curves of
$\Gamma$ inside it one, $c$, whose disc $D_A(c)$ --- a subdisc of $D_A(b)$, by
Step $0$ --- is minimal under inclusion, the argument of Step $1$ gives
$c\in\cA$ and hence
$\Area(D_A(a))\le\Area(D_A(c))<\Area(D_A(b))$.  Symmetrically
$\Area(D_B(b))\le\Area(D_B(a))$.  Adding the two,
$\varphi(a)\le\varphi(b)$, so
$\cG(a)\le\varphi(a)\le\varphi(b)\le e-\cG(b)$, which is
\eqref{eq:twogains}.
\end{proof}

The exchange is performed one surface at a time because
\eqref{eq:ballempty} is available only on that side: a simultaneous
replacement of both discs would require $D_A(\gamma)$ and $D_B(\gamma)$ to be
patches for one and the same $\gamma$, which no innermost choice supplies.
What replaces it is the pair \eqref{eq:exchA}, \eqref{eq:exchB} at two possibly
distinct curves, glued by the monotonicity of $\varphi$.  Note also that the
proof uses only that $A_0$ and $B$ separately minimise, not that $A$ does;
the no-product-region statements in the literature assume both surfaces
minimal, which the perturbed $A$ of \S\S\ref{ssec:T2}--\ref{ssec:T1} is not.

\subsection{The exchange theorem}\label{ssec:exchange-thm}

\begin{thm}[Incompressible exchange lemma]\label{thm:exchange}
Let $K$ be a non-trivial knot and let $x,u,w$ be
vertices of $\IS(K)$ with
\[
   \dist(x,u)=\dist(x,w)=1,\qquad \dist(u,w)=2 .
\]
Then there are vertices $w_\uparrow,w_\downarrow$, determined before any
further vertex is considered, such that
\begin{enumerate}
\item[\textup{(a)}] each of $w_\uparrow,w_\downarrow$ is equal or adjacent to
      each of $x,u,w$;
\item[\textup{(b)}] \emph{for these two fixed vertices}, every vertex $z$
      equal or adjacent to both $u$ and $w$ is equal or adjacent to both
      $w_\uparrow$ and $w_\downarrow$;
\item[\textup{(c)}] $g(w_\uparrow)+g(w_\downarrow)\le g(u)+g(w)$;
\item[\textup{(d)}] if \textup{(c)} is an equality then
      $A(w_\uparrow)+A(w_\downarrow)<A(u)+A(w)$.
\end{enumerate}
Clause \textup{(a)} asserts adjacency pair by pair and does \emph{not} assert
that the five vertices carry simultaneously disjoint representatives; they
cannot, since $\dist(u,w)=2$ forces the two minimisers to meet.
\end{thm}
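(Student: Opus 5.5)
The plan is to take relative area minimisers $S\in x$, $S_+\in u$, $S_-\in w$ from \textup{(E)}(ii), chosen one class at a time, and to feed them to Lemma~\ref{lem:frontier}. Proposition~\ref{prop:disjoint} gives $S\cap(S_+\cup S_-)=\emptyset$. Since $\dist(u,w)=2$, Theorem~\ref{thm:U}(i) forces $S_+\cap S_-\neq\emptyset$; otherwise $u,w$ would be adjacent or equal. The remaining hypotheses of Lemma~\ref{lem:frontier} are transversality, disjoint boundaries and the absence of disc patches.

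In the generic case $S_+\pitchfork S_-$ with $\bd S_+\cap\bd S_-=\emptyset$. Lemma~\ref{lem:no-disc} with $e=0$ would then give $\cG(a)+\cG(b)\le0$, contradicting positivity of the rounding gain. So there is no disc patch, and Lemma~\ref{lem:frontier} applies. Let $w_\uparrow=[P_\uparrow]$ and $w_\downarrow=[P_\downarrow]$, after compressing each to an incompressible descendant carrying the boundary. Clause (1) of Lemma~\ref{lem:frontier} together with Corollary~\ref{cor:robust} (applied with $Q=S,S_+,S_-$) gives (a). For (b), take $z$ equal or adjacent to $u$ and $w$. Its minimiser $Z$ is disjoint from $S_+\cup S_-$ by Theorem~\ref{thm:U}(i), except when $z\in\{u,w\}$, where (a) already suffices. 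Clause (2) of Lemma~\ref{lem:frontier} then gives representatives of the same fixed classes disjoint from $Z$, and Corollary~\ref{cor:robust} carries this through the compressions. For (c), equation \eqref{eq:frontier-euler} gives $\chi(T_\uparrow)+\chi(T_\downarrow)=\chi(S_+)+\chi(S_-)$, with the discarded closed components having $\chi\le0$. Compressions only lower genus by Lemma~\ref{lem:euler}, and \eqref{eq:euler-genus} converts this into (c). For (d), equality in (c) forces no compressions and all discarded components to be tori. The total area of the two cornered frontiers is at most $\Area(S_+)+\Area(S_-)$. Rounding by Lemma~\ref{lem:round} gains $\int_\Gamma g_0\eta>0$, and this gain is fixed before choosing $\sigma$. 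Lemma~\ref{lem:pushoff} then costs only $C\sigma$. Choosing $\sigma$ small yields strict inequality, using that $A(\cdot)$ is an infimum.

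The main obstacle is the non-generic configuration, where $S_+$ and $S_-$ are tangent at interior points or along $\bd E(K)$. Minimisers cannot be perturbed freely there, because any perturbation destroys minimality. The plan is to perturb $S_+$ to a transverse $A$ with small excess $e=\Area(A)-\Area(S_+)$. This is exactly why Lemma~\ref{lem:no-disc} carries the error term $e$. The rounding gain along a fixed transverse portion of $\Gamma$ must be bounded below, uniformly in the perturbation, before $e$ is sent to zero. Near tangency points the angle $\theta$ degenerates, so I would use the pointwise bounds \eqref{eq:etalower} on a compact subset of $\Gamma$ kept away from the tangencies. At the boundary I would use the collar structure and the real-analyticity of $J$ to control the intersection pattern. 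Once the gain $G$ is fixed, the order of choices is: first $G$, then $e<G$ (which excludes disc patches and keeps the area strictly below $A(u)+A(w)$), then $\sigma$. The disjointness conclusions are unaffected, since the perturbation can be supported away from $S$ and from any prescribed $Z$.
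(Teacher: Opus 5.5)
Your transverse case ($S_+\pitchfork S_-$ with disjoint boundaries) is essentially the paper's (T0) argument. The non-transverse plan, however, has a genuine gap at exactly the step you call the crux.

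\textbf{The gain bound.} Your order ``$G$ first, then $e<G$'' needs a lower bound, fixed before the perturbation, on the gain of \emph{every} closed component of the perturbed intersection. Lemma~\ref{lem:no-disc} returns two innermost disc-bounding components $a,b$ of that intersection, and you must show $\cG(a)+\cG(b)>e$ for those particular curves. A bound along a fixed compact transverse portion of the original $\Gamma$ says nothing about them. There are two ways this fails:
\begin{enumerate}
\item A perturbation chosen merely to achieve transversality may create small closed intersection circles near an interior tangency point. These bound disc patches, never reach your good set, and have gain as small as you like.
\item At a shared boundary leaf, what fails is not transversality but disjointness of the boundaries, so one boundary must be slid to a neighbouring leaf. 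For an unfavourable slide direction, a component can hug $L$ at height comparable to the slide. There the cap $d/(8Q)$ in \eqref{eq:etamax} forces its admissible width, and hence its gain, to tend to zero.
\end{enumerate}
If such components exist, disc patches are not excluded, the frontier may discard a sphere, and (c) fails.

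The paper closes this with ingredients your plan lacks:
\begin{enumerate}
\item Proposition~\ref{prop:collar}: tangencies are isolated and of finite order (via the similarity principle), satisfy the sign change \eqref{eq:N5}, and obey \eqref{eq:N4} in the collar.
\item A perturbation that subtracts a \emph{constant} near each tangency. Then $p$ stays the only critical point and is not an extremum. A maximum-principle argument gives Lemma~\ref{lem:no-small}, and collapsing the tangency balls then gives length at least $3\rho_0/8$ in the good region.
\item At a shared leaf, a slide direction $\varsigma$ chosen from $\bd_rf(s,0)$, so that the perturbed surface misses $B$ over an arc $I$ up to height $r_1$. Together with $df_t\neq0$ in the collar, this forces every component to have length at least $c_\star$ where the width plateau $\eta_1$ is fixed (Lemma~\ref{lem:collar-empty}).
\item Lemma~\ref{lem:plateau-width}, needed to make one smooth width admissible on all of $\Gamma_t$, since $\kappa$ degenerates near the tangencies.
\end{enumerate}
Appealing to ``collar structure and real-analyticity of $J$'' supplies none of these mechanisms.

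\textbf{The quantifier order in (b).} You let the perturbation be supported away from any prescribed $Z$, i.e.\ chosen after $z$. The statement requires $w_\uparrow,w_\downarrow$ to be fixed before $z$. Since $\IS(K)$ is not locally finite, the set $\cI(u,w)$ may be infinite, so you cannot assume that one perturbation parameter avoids every such $Z$. What is needed is that the output \emph{classes} do not depend on the perturbation parameter. The paper proves this in two steps:
\begin{enumerate}
\item An ambient isotopy carrying $A_{t_0}$ to $A_t$, which preserves $B$ and $\bd E(K)$ and fixes $S$, lifted to the cyclic cover so that it matches the cornered frontiers.
\item A coherent constant-width rounding family over a compact parameter interval, joining the smooth outputs.
\end{enumerate}
Only after that is $t_z$ chosen, once $Z$ is known, together with smaller rounding and push-off scales from Lemma~\ref{lem:frontier}(2).
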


\begin{proof}[Proof when the minimisers are transverse]
By \textup{(E)} fix, one class at a time, relative area minimisers
$S\in x$, $S_+\in u$, $S_-\in w$; by Proposition~\ref{prop:disjoint} the single
surface $S$ is disjoint from both others, and $S_+\cap S_-\ne\emptyset$ since
$\dist(u,w)=2$.  Assume in this proof that $S_+$ and $S_-$ are transverse
with $\bd S_+\cap\bd S_-=\emptyset$.

Write $\Gamma:=S_+\cap S_-$, which by neatness and
$\bd S_+\cap\bd S_-=\emptyset$ is a finite family of disjoint smooth closed
curves in the interior, of total length $\ell_0>0$.  In the normal slice at
$y\in\Gamma$ the two tangent planes cut the plane into four sectors of angles
$\alpha,\pi-\alpha,\alpha,\pi-\alpha$ with $\alpha(y)\in(0,\pi)$, and
transversality together with the compactness of $\Gamma$ makes
\begin{equation}\label{eq:theta0}
   \theta_0:=\min_\Gamma\min\{\alpha,\pi-\alpha\}>0 .
\end{equation}
Choose a constant width $\eta\le\min_\Gamma\eta_{\max}$ once and for all: the
minimum is positive because $\Gamma$ is compact and $\theta\ge\theta_0>0$ on
it, and a constant width satisfies the constraint on $\eta'$ in
\eqref{eq:width} vacuously.  With this width the gain \eqref{eq:gain} along a
closed curve $\gamma\subset\Gamma$ is at least
$g_0(\theta_0)\,\eta\,\mathrm{length}(\gamma)>0$, uniformly in $\gamma$.  Now
$S_-$ is itself a relative area minimiser, so Lemma~\ref{lem:no-disc} applies
with $e=0$: were there a disc patch it would supply closed curves $a,b$ with
$\cG(a)+\cG(b)\le0$, whereas both gains have just been bounded below by a
positive number.  So cutting $S_+$ and $S_-$ along $\Gamma$ produces no disc
patch, Lemma~\ref{lem:frontier} applies, and it yields
$P_\uparrow,P_\downarrow$.

By \eqref{eq:frontier-euler} and $\chi\le0$ for the discarded closed
components, $\chi(P_\uparrow)+\chi(P_\downarrow)\ge\chi(S_+)+\chi(S_-)$;
substituting \eqref{eq:euler-genus} gives
\begin{equation}\label{eq:genus-sum}
   g(P_\uparrow)+g(P_\downarrow)\;\le\;g(u)+g(w).
\end{equation}
Compress each $P$ along essential circles, always keeping the component with
the longitudinal boundary, until it is incompressible; by
Lemma~\ref{lem:euler} the genus does not increase and the process terminates.
Call the resulting vertices $w_\uparrow,w_\downarrow$; with
\eqref{eq:genus-sum} this is (c).  Each $P$ is disjoint from each of
$S,S_+,S_-$, so Corollary~\ref{cor:robust}, applied separately to the three,
gives (a).  For (b): if $z\notin\{u,w\}$ is equal or adjacent to both, take a
minimiser $Z\in z$; by Theorem~\ref{thm:U}(i) it is disjoint from $S_+$ and
from $S_-$, so by clause (2) of Lemma~\ref{lem:frontier} the coarse outputs
have representatives disjoint from $Z$, and Corollary~\ref{cor:robust} applies
again.  As proved there, both the positive rounding width and the push-off
scale may be decreased after $Z$ is supplied.  Their smooth isotopy classes
stay fixed, and the chosen compression sequence is transported along those
isotopies.  Thus the output \emph{vertices} do not move.  The area comparison
below uses the original quantitative width, not these alternate representatives.
If $z\in\{u,w\}$, (a) already gives the conclusion.

Suppose now (c) is an equality.  Then \eqref{eq:genus-sum} and every
compression inequality is an equality, so no compression occurred and each $P$
is already incompressible and represents the corresponding $w$.  The two
frontiers \eqref{eq:frontiers} pick out, in the normal slice at $y\in\Gamma$, a
pair of opposite sectors, so both have the same opening angle $\alpha(y)$ and
$\theta_0$ of \eqref{eq:theta0} bounds the angle for both.  The two cornered
frontiers partition $S_+\cup S_-$ up to the measure-zero set $\Gamma$, so
\begin{equation}\label{eq:frontier-area}
   \Area(\widetilde B_\uparrow)+\Area(\widetilde B_\downarrow)
     =\Area(S_+)+\Area(S_-).
\end{equation}
Round both frontiers with the width $\eta$ already fixed;
by \eqref{eq:gain} the total gain is at least
$2\varepsilon_0$ with $\varepsilon_0:=g_0(\theta_0)\,\eta\,\ell_0>0$, which
depends on $S_\pm$ and on this one choice and on nothing later.  Apply
Lemma~\ref{lem:pushoff} to the two rounded surfaces, obtaining $\sigma_0$ and
$C$ fixed before $\sigma$, and choose $\sigma\in(0,\sigma_0]$ with
$C\sigma<\varepsilon_0$.  Combining, and discarding the closed components,
which have positive area,
\[
\begin{aligned}
   \Area(P_\uparrow)+\Area(P_\downarrow)
   &\;\le\;\Area(S_+)+\Area(S_-)-2\varepsilon_0+C\sigma\\
   &\;<\;A(u)+A(w)-\varepsilon_0 .
\end{aligned}
\]
These are smooth neatly embedded competitors with boundary a leaf of $J$, so
$A(w_\uparrow)+A(w_\downarrow)$ is at most the left-hand side, which is (d).
\end{proof}

\subsection{Non-transverse minimisers}\label{ssec:tangential}

The proof just given assumed two things, and both may fail.  We record first
that the failures are of exactly two kinds.

Both $\bd S_+$ and $\bd S_-$ are leaves of the foliation $J$, and two leaves of
a foliation are equal or disjoint.  So either $\bd S_+\cap\bd S_-=\emptyset$ or
$\bd S_+=\bd S_-$, and in each case the two surfaces may or may not be
transverse in the interior.  Three configurations therefore remain to be
treated beyond the one just settled:
\begin{enumerate}
\item[\textup{(T0)}] $\bd S_+\cap\bd S_-=\emptyset$ and $S_+\pitchfork S_-$;
      this is the case proved above;
\item[\textup{(T1)}] $\bd S_+\cap\bd S_-=\emptyset$, with $S_+$ and $S_-$
      tangent at one or more interior points;
\item[\textup{(T2)}] $\bd S_+=\bd S_-$, so that the two surfaces meet along a
      whole curve of common boundary and the argument above cannot even start.
\end{enumerate}
The surfaces of (T2) need not be tangent along the shared leaf, and in general
are not: by \eqref{eq:N4} in clause (2) of Proposition~\ref{prop:collar} they
are transverse at all but finitely many of its points.  What fails in (T2) is
not transversality but the
disjointness of the boundaries, on which the cutting and pasting of
\S\ref{ssec:exchange-thm} depends throughout.  Configuration (T1) is not a
curiosity of the write-up either: two distinct minimal
surfaces with disjoint boundaries may perfectly well touch at an interior
point, and there it is transversality that fails.  Freedman, Hass and Scott
record on \cite[p.~636]{FHS} that
non-transverse behaviour occurs and give no structure theorem for it;
Proposition~\ref{prop:collar} is that structure theorem, its first clause
covering the interior tangencies of both (T1) and (T2), its second the collar
of (T2).  Both configurations are then disposed of by perturbing $S_-$ to a
transverse competitor and running the transverse argument on the perturbation;
what has to be controlled in each case is that the area lost to the
perturbation stays below the area gained by rounding, and it is
Lemma~\ref{lem:no-disc}, with its error term $e$, that makes the two
comparable.  We take them in the order \textup{(T2)}, \S\ref{ssec:T2}, then
\textup{(T1)}, \S\ref{ssec:T1}, and not in the order of their labels: the
argument for \textup{(T1)} is the argument for \textup{(T2)} with the collar
deleted, so writing \textup{(T2)} first is what makes \textup{(T1)} short.  The
labels themselves group by boundary behaviour, \textup{(T0)} and \textup{(T1)}
being the cases with $\bd S_+\cap\bd S_-=\emptyset$.

Throughout \S\S\ref{ssec:tangential}--\ref{ssec:T1} write $A:=S_-$ and
$B:=S_+$, keeping $S$ fixed and disjoint from both.

\emph{The interior picture.}  Let $p$ be a point of $\Int A\cap\Int B$ at which
$T_pA=T_pB$.  In normal coordinates $(x_1,x_2,y)$ at $p$ with the common
tangent plane $\{y=0\}$, both surfaces are graphs over a disc $D$ about the
origin, say $B=\{y=\beta\}$ and $A=\{y=\hat f\}$ with
$\beta(0)=\hat f(0)=0$ and $D\beta(0)=D\hat f(0)=0$; both are smooth by
Theorem~\ref{thm:R1}.  Both graphs are minimal, so subtracting the two minimal
graph equations and applying the mean value theorem along the segment joining
the two gradients gives, classically on $D$,
\begin{equation}\label{eq:Luint}
   L_p f_p:=\bd_i\bigl(a^{ij}\bd_jf_p\bigr)+b^i\bd_if_p+cf_p=0,
   \qquad f_p:=\hat f-\beta ,
\end{equation}
with $a^{ij}$ smooth and uniformly elliptic and $b^i,c$ smooth.  The tangency
points of $A$ and $B$ in $D$ are exactly the points of $\{f_p=0,\ df_p=0\}$.

\emph{The collar picture, in configuration \textup{(T2)} only.}  Suppose the
two boundary leaves coincide along a longitude $L$.  Take a vector
field on $\bd E(K)$ transverse to $J$ whose flow carries leaves to leaves,
extend it into the collar, and use its direction as a coordinate $y$, the
inward distance $r\ge0$, and arclength $s$ along $L$.  The resulting chart
$(s,r,y)$ is bi-Lipschitz onto its image with a constant
$C_{\mathrm{met}}'<\infty$ depending only on the ambient metric near $L$ and on
the chosen vector field.  By
Lemma~\ref{lem:R2} both surfaces are neat, so the extension may be taken
transverse along $L$ to both tangent planes, and both are graphs over one
half-cylinder:
\begin{equation}\label{eq:graphs}
   \bd E(K)=\{r=0\},\quad B=\{y=0\},\quad A=\{y=f(s,r)\},\quad f(s,0)=0 .
\end{equation}
By Theorem~\ref{thm:R1} the function $f$ is smooth up to $\{r=0\}$.  Both
graphs are minimal on $\{r>0\}$, so subtracting the two minimal graph
equations and applying the mean value theorem along the segment joining the
two gradients gives, classically on the closed half-disc,
\begin{equation}\label{eq:Lu}
   Lf:=\bd_i\bigl(a^{ij}\bd_jf\bigr)+b^i\bd_if+cf=0,
\end{equation}
with $a^{ij}$ smooth and uniformly elliptic and $b^i,c$ smooth, all up to
$\{r=0\}$.

\begin{prop}[Structure of the tangency set]\label{prop:collar}
Let $A$ and $B$ be as above: distinct relative area minimisers in two vertices
at distance $2$, meeting each other.  Then:
\begin{enumerate}
\item[\textup{(1)}] \emph{(Interior; no hypothesis on the boundaries.)}  Write
$\mathcal{T}\subset\Int A\cap\Int B$ for the set of interior tangency points.
Every $p\in\mathcal{T}$ is isolated among the interior tangency points
and has an integer
order $m_p\ge1$ and constants $c_1,c_2>0$ with
$|df_p|\ge c_1\rho^{m_p}-c_2\rho^{m_p+1}>0$ at
distance $\rho$ from $p$ on a punctured neighbourhood, $f_p$ being the local
difference of \eqref{eq:Luint}; so $p$ is the only critical point of $f_p$ in a
disc $D_p$.  Moreover
\begin{align}
   &\text{for each }p\in\mathcal{T},\ f_p\text{ takes both positive and negative
     values in every}\notag\\
   &\text{neighbourhood of }p.\label{eq:N5}
\end{align}
\item[\textup{(2)}] \emph{(Collar; in configuration \textup{(T2)}.)}  If
$\bd A=\bd B=L$, then with $f$ as in \eqref{eq:graphs} there is $r_0>0$ with
$\mathcal{T}\subset\{r\ge r_0\}$ and
\begin{align}
   &\text{in the open collar }\{0<r<r_0\}\text{ one has }df\neq0
     \text{ everywhere,}\notag\\
   &\text{while the critical points of }f\text{ on }\{r=0\}\text{ are finite
     in}\notag\\
   &\text{number and all lie on }\{f=0\}.\label{eq:N4}
\end{align}
\item[\textup{(3)}] In both configurations $\mathcal{T}$ is finite; put
$N:=\#\mathcal{T}$.
\end{enumerate}
\end{prop}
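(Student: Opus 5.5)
The plan rests on the unique continuation and local structure theory for solutions of the linear elliptic equations \eqref{eq:Luint} and \eqref{eq:Lu}. For clause (1), fix $p\in\mathcal T$ and the local difference $f_p=\hat f-\beta$ on $D$. Since $A\neq B$ are distinct minimisers in distinct vertices, $f_p$ cannot vanish identically near $p$. Otherwise the two surfaces agree on an open set, and unique continuation for minimal surfaces, the coincidence set being open and closed in the connected surfaces, would force $A=B$. By the strong unique continuation theorem for second-order elliptic equations with smooth coefficients (Aronszajn, or Carleman estimates), $f_p$ vanishes to some finite order $n$ at $p$. Because $f_p(0)=0$ and $df_p(0)=0$, we have $n\ge2$; put $m_p:=n-1\ge1$. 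The Bers expansion then gives $f_p=P_n+O(\rho^{n+1})$, with $P_n$ a nonzero homogeneous harmonic polynomial for the constant-coefficient operator $a^{ij}(0)\bd_i\bd_j$. After a linear change of coordinates, $P_n=c\,\mathrm{Re}(z^n)$ with $c\neq0$. Such a polynomial has $|dP_n|=|c|n\rho^{n-1}$ and no critical points off the origin. Differentiating the expansion gives $|df_p|\ge c_1\rho^{m_p}-c_2\rho^{m_p+1}$, so $p$ is isolated among the tangency points and is the unique critical point on a small disc $D_p$. Property \eqref{eq:N5} follows because $\mathrm{Re}(z^n)$ changes sign on every circle, and the error term is of strictly higher order.

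For clause (2), apply the same analysis at boundary points of the half-disc for \eqref{eq:Lu}, with the boundary condition $f(s,0)=0$. The expansion at a boundary point $(s_0,0)$ is of the form $c\,\mathrm{Im}(z^n)$ in the half-plane, again after a linear change of coordinates. Vanishing identically is excluded by boundary unique continuation, that is, by odd reflection together with unique continuation. If $n=1$, then $df\neq0$ nearby. If $n\ge2$, the point $(s_0,0)$ is an isolated boundary critical point with $f=0$, and nearby, including in $\{r>0\}$, one has $|df|\gtrsim\rho^{n-1}>0$. Compactness of the longitude $L$ then gives finitely many boundary critical points and a uniform $r_0$ below which $df\neq0$. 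This is \eqref{eq:N4}. Since $\mathcal T$ consists of critical points of $f$ lying in the interior, we get $\mathcal T\subset\{r\ge r_0\}$.

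Clause (3) follows by compactness. Interior tangency points are isolated by clause (1). They cannot accumulate on $\bd A\cap\bd B$ in configuration (T2), by clause (2). In configurations (T0) and (T1) the boundaries are disjoint, so $A\cap B$ stays a positive distance from $\bd E(K)$. Any accumulation point of $\mathcal T$ would therefore be an interior tangency point, by continuity of tangent planes, contradicting isolation. Hence $\mathcal T$ is finite.

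The main obstacle I expect is the global step in clause (1), namely excluding $f_p\equiv0$ near $p$. This needs unique continuation to propagate along the connected surfaces until it contradicts distinctness of the two classes. Once the coincidence set is closed, open and nonempty, it must equal $A=B$ as sets, because both surfaces are connected and properly embedded; this contradicts $u\neq w$. A secondary delicate point is justifying the boundary expansion up to $\{r=0\}$. For this I would use the smoothness of $f$ up to the boundary, supplied by Theorem~\ref{thm:R1}, and then odd reflection in $r$ to reduce to the interior case.
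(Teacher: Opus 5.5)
Your argument is correct in substance, but it follows a different route from the paper's, and one step in clause~(2) needs repair.

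\emph{How the paper proceeds.} The paper never Taylor-expands $f_p$. It first divides by a positive solution $h\ge\tfrac12$ of $Lh=0$, obtained by Lax--Milgram on a small disc, to remove the zeroth-order term. It then passes to isothermal coordinates for $a^{-1}$ by Lichtenstein's theorem, so that the equation becomes $\Delta\hat v+\hat b\cdot\nabla\hat v=0$. Finally it applies Vekua's similarity principle to $\mathfrak{w}=2\bd_z\hat v$. The order $m_p$ is the order of the holomorphic factor, and \eqref{eq:N5} comes from the strong maximum principle for the operator without zeroth-order term.

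\emph{How you proceed.} You use the smoothness of $f_p$ directly. Strong unique continuation gives a finite vanishing order $n\ge2$. The lowest Taylor term is then annihilated by the frozen principal part, so it is $\mathrm{Re}(cz^n)$ after a linear change of variables. The gradient bound, the isolation of $p$ and the sign change in \eqref{eq:N5} are all read off from this model.

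\emph{Comparison.} Your route is shorter and needs neither $h$, nor Lichtenstein, nor Vekua. In two variables the unique continuation theorem you quote is Carleman's, which rests on essentially the same similarity principle. Your route does use $C^\infty$ coefficients and solutions; these are available here. The paper's route needs only a Lipschitz principal part and bounded drift. Your compactness arguments for the uniform $r_0$ and for clause~(3) are the paper's.

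\emph{The step that fails as written.} For $\tilde f(s,r)=-f(s,-r)$ to solve the reflected equation, the off-diagonal principal coefficient must be reflected with a sign change, $\tilde a^{12}(s,r)=-a^{12}(s,-r)$. This coefficient jumps across $\{r=0\}$ unless $a^{12}(s,0)=0$. So the reflected operator does not have a smooth, or even Lipschitz, principal part, and the interior theorem you cite does not apply.

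\emph{The repair.} First apply the shear $(s,r)\mapsto\bigl(s-\tfrac{a^{12}}{a^{22}}(s,0)\,r,\ r\bigr)$. It fixes $\{r=0\}$ pointwise and makes the new $a^{12}$ vanish there. Then $\tilde f\in C^{1,1}$ satisfies $|\tilde a^{ij}\bd_{ij}\tilde f|\le C(|\nabla\tilde f|+|\tilde f|)$ a.e.\ with Lipschitz $\tilde a^{ij}$. The Aronszajn--Krzywicki--Szarski theorem then gives the boundary strong unique continuation you need. This is exactly why the paper takes its isothermal chart, in which the leading matrix is the identity, before reflecting.
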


We refer to \eqref{eq:N4} and \eqref{eq:N5} as the collar conditions, though
strictly \eqref{eq:N4} concerns the collar and \eqref{eq:N5} the interior
tangency set.  In configuration \textup{(T2)} they are used together.  In
configuration \textup{(T1)} only \eqref{eq:N5} is available, and only it is
needed, because there is no collar to control: the whole intersection is then
at a positive distance from $\bd E(K)$.

\begin{proof}
Both clauses rest on the same two devices, applied on different domains: the
zeroth-order term is removed by dividing by a positive solution, and the
resulting divergence-form equation without zeroth-order term is factorised in
the sense of Vekua.  We set the first up once, for a domain $\Delta$ which is
either a disc $D_R(p)$ about an interior tangency point, carrying the operator
$L_p$ of \eqref{eq:Luint}, or a half-disc $D_R^+$ about a point of $L$,
carrying the operator $L$ of \eqref{eq:Lu}.  In both cases we write $L$ for the
operator and $f$ for its solution.

\emph{Removing the zeroth-order term.}  Solve $Lh=0$ on $\Delta$ with $h=1$ on
$\bd\Delta$.  We do
\emph{not} assume a sign condition on the zeroth-order coefficient, so the
existence theorems of \cite[2nd ed., Ch.~8]{GT}, which all carry hypothesis
\textup{(8.8)}, are not available as stated; smallness of $R$ replaces the
sign condition.  Indeed for $u\in W_0^{1,2}(\Delta)$ the Poincar\'e
inequality gives $\|u\|_{L^2}\le CR\|Du\|_{L^2}$, whence the associated
bilinear form satisfies
\[
   B[u,u]\;\ge\;\bigl(\lambda-C\|b\|_\infty R
        -C^2\|c\|_\infty R^2\bigr)\|Du\|_{L^2}^2 ,
\]
which is positive definite for $R$ small; $B$ is bounded on
$W_0^{1,2}(\Delta)$ because the coefficients are, so the Lax--Milgram theorem
\cite[2nd ed., Theorem~5.8, p.~83]{GT} yields a unique weak solution.
Writing $h=1+\varpi$, the function $\varpi$ solves $L\varpi=-c$ with zero
boundary data.  Rescale $x\mapsto x/R$ to the unit disc or half-disc, under which the
drift becomes $O(R)$ and the zeroth-order coefficient $O(R^2)$; testing the
equation against $\varpi$ and using the same Poincar\'e inequality gives
$\|\varpi\|_{L^2}=O(R^2)$.  The global maximum principle
\cite[2nd ed., Theorem~8.15, p.~189]{GT}, applied to $\varpi$ and to
$-\varpi$, both of which lie in $W_0^{1,2}$ and so are $\le0$ on the boundary
in the generalised sense, then gives
$\|\varpi\|_\infty\le C\bigl(\|\varpi\|_{L^2}+\nu^{-1}\|c\|_{L^{q/2}}\bigr)
=O(R^2)$, so $h\ge\tfrac12$ for $R$ small.  We use Theorem~8.15 rather than
the sharper \cite[2nd ed., Theorem~8.16, p.~191]{GT} precisely because the
latter assumes hypothesis \textup{(8.8)}: in Theorem~8.15 the zeroth-order
coefficient enters only through $\lambda^{-1}|d|$ in \textup{(8.6)}, so its
sign is unconstrained, and the $\|\varpi\|_{L^2}$ term that this costs is
supplied by the energy estimate just made.  Put $v:=f/h$;
expanding $L(hv)=0$ with $Lh=0$ gives, weakly,
\begin{equation}\label{eq:Lv}
   \bd_i\bigl(h\,a^{ij}\bd_jv\bigr)
     +\bigl(a^{ij}\bd_jh+h\,b^i\bigr)\bd_iv=0,
\end{equation}
with Lipschitz leading coefficients, bounded drift and \emph{no zeroth-order
term}; in the half-disc case one has in addition $v=0$ on $L$, since $f$ does.
Multiplying by the reciprocal square root of the
determinant of the leading matrix normalises that determinant to $1$ and
changes only the drift.  The zeroth-order term has to be removed afresh on
each domain on which the factorisation or the maximum principle is used: with
a zeroth-order term of uncontrolled sign both fail.

\emph{Proof of \textup{(1)}.}  Let $p\in\mathcal{T}$, run the previous
paragraph on $\Delta=D_R(p)$ with the operator $L_p$ of \eqref{eq:Luint}, and
write $v_p:=f_p/h_p$ for the resulting solution of \eqref{eq:Lv}; note
$v_p(p)=0$.  Choose a smooth Jordan subdomain $\Omega\Subset D_R(p)$
containing $p$.  As in the paragraph ``An isothermal chart up to $L$'' below ---
the argument is the same and shorter, no boundary arc being distinguished ---
Lichtenstein's theorem \cite[Theorem~4.1, case $m=1$]{HvdM} supplies a
$C^{1,\alpha}$ diffeomorphism of $\overline{\mathbb{D}}$ onto
$\overline\Omega$, conformal for the metric \eqref{eq:gmetric}, and by the
density law recorded there the equation for $\hat v_p$ becomes \eqref{eq:flat}
on the disc, with no boundary condition and no reflection needed.  The factorisation
of the paragraph ``Finite order'' below then applies verbatim: either
$\mathfrak{w}:=2\bd_z\hat v_p$ has isolated zeros of finite integer order $m$
with $c_1'|z-z_0|^m\le|\mathfrak{w}|\le c_2'|z-z_0|^m$ nearby, or
$\mathfrak{w}\equiv0$.  In the second case $\nabla\hat v_p\equiv0$, so
$\hat v_p$ is constant, and the constant is $0$ because $\hat v_p$ vanishes at
the image of $p$; then $f_p\equiv0$ near $p$, the set on which the two sheets
agree is open by this argument applied at each of its points and closed by
continuity, so the two surfaces coincide, contradicting $\dist(u,w)=2$.
Transporting through the bi-Lipschitz chart gives $|d\hat v_p|\asymp\rho^m$ and
$|\hat v_p|=O(\rho^{m+1})$ at distance $\rho$ from $p$, and $f_p=h_pv_p$ with
$h_p\ge\tfrac12$ gives $|df_p|\ge c_1\rho^{m}-c_2\rho^{m+1}>0$ on a punctured
neighbourhood of $p$.  In particular $p$ is the only critical point of $f_p$ in
a disc $D_p$, and, the tangency points in $D_p$ being critical points of $f_p$,
$p$ is isolated in $\mathcal{T}$.

For \eqref{eq:N5}, suppose $f_p\ge0$ near $p$, the other sign being identical.
Then $v_p\ge0$ on the disc on which it is defined and $v_p(p)=0$, so by the
strong maximum principle at this regularity
\cite[2nd ed., Theorem~8.19, p.~198]{GT}, whose hypotheses are met since the
operator is in divergence form with no zeroth-order term and
$v_p\in W^{1,2}$, we get $v_p\equiv0$, hence $f_p\equiv0$, and the same
contradiction as above.

\emph{Proof of \textup{(2)}.}  Assume $\bd A=\bd B=L$ and run the removal of
the zeroth-order term on half-discs $\Delta=D_R^+$ against $L$, writing $v:=f/h$
as there.

\emph{An isothermal chart up to $L$.}  Choose a smooth Jordan subdomain
$\Omega$ of the half-disc whose boundary contains a subarc of $L$ around the
point in question.  Write $a=(a^{ij})$ for the leading matrix of \eqref{eq:Lv}
after the normalisation $\det a\equiv1$, $b$ for its drift, and
\begin{equation}\label{eq:gmetric}
   g:=a^{-1},
\end{equation}
a Riemannian metric with $\det g=1$; it is the \emph{inverse} of the leading
matrix that carries the relevant conformal structure, as the computation below
shows.  Being the inverse of a Lipschitz uniformly elliptic matrix, $g$ is
Lipschitz on $\overline\Omega$, hence $C^{0,\alpha}$, and $\bd\Omega$ is
$C^{1,\alpha}$; so by the global form of Lichtenstein's theorem
\cite[Theorem~4.1, case $m=1$]{HvdM} there is a diffeomorphism
$\tau\colon\overline{\mathbb{D}}\to\overline\Omega$ of class $C^{1,\alpha}$,
conformal from the flat metric to $g$.
Let $\varphi$ be a M\"obius map of $\mathbb{D}$ onto the upper half-plane
carrying the arc $\tau^{-1}(L\cap\bd\Omega)$ into $\R$ and $\tau^{-1}$ of the
base point to $0$, and put $F:=\varphi\circ\tau^{-1}$.  Then $F$ is
$C^{1,\alpha}$ up to $L$, sends $L$ into $\R$ and the interior into the upper
half-plane, and is bi-Lipschitz, $DF$ being continuous and invertible on a
compact set.

It remains to see that in the coordinate $z=F(\cdot)$ the operator becomes the
flat Laplacian with a bounded drift.  A divergence-form operator transforms
under a $C^{1,\alpha}$ diffeomorphism by the density law: testing
$\int(a\,\nabla v)\cdot\nabla\phi\,dx$ against $\phi$ and substituting
$x=F^{-1}(z)$ turns the leading matrix into
$DF\,a\,DF^{\mathsf T}/|\det DF|$ and the drift into $DF\,b/|\det DF|$, no
derivative of $DF$ appearing anywhere.  Three consequences.  First, the
transformed drift is again in $L^\infty$, since $DF$ and $(\det DF)^{-1}$ are
bounded.  Second, the transformed leading matrix again has determinant $1$, the
factors $(\det DF)^2$ cancelling.  Third, that matrix is a positive multiple of
the identity exactly when $a=\lambda\,(DF^{\mathsf T}DF)^{-1}$ for a positive
function $\lambda$, that is, exactly when $g=a^{-1}$ is a conformal multiple of
the pullback $F^*\delta$ of the flat metric --- which is what the choice of
$\tau$ arranged, $\varphi$ being conformal.  A positive multiple of the
identity of determinant $1$ is the identity.  Hence \eqref{eq:Lv} becomes
\begin{equation}\label{eq:flat}
   \Delta\hat v+\hat b\cdot\nabla\hat v=0,\qquad \hat b\in L^\infty,\qquad
   \hat v=0\ \text{ on }\ \R .
\end{equation}

\emph{Odd reflection.}  Extend $\hat v$ to the full disc by
$\hat v(z):=-\hat v(\bar z)$ for $\operatorname{Im}z<0$, and extend
$\hat b$ by $\hat b_1(\bar z)=\hat b_1(z)$, $\hat b_2(\bar z)=-\hat b_2(z)$.
For a test function $\phi$ supported in the full disc, split
$\int\nabla\hat v\cdot\nabla\phi-\int(\hat b\cdot\nabla\hat v)\phi$ over the
two half-discs and substitute $z\mapsto\bar z$ in the lower one: the two
interior terms cancel by \eqref{eq:flat}, and the two conormal boundary terms
along $\R$ cancel because the leading matrix is the identity, so its
off-diagonal entry vanishes on $\R$ and the conormal derivative is
$\bd_2\hat v$, which is even under the reflection while the outward normals are
opposite.  Hence the extension solves \eqref{eq:flat} weakly on the full disc.
This is the one point at which the vanishing of the off-diagonal coefficient on
$L$ is used, and it is why the isothermal chart is taken before the reflection
and not after.

\emph{Finite order.}  Put $\mathfrak{w}:=2\bd_z\hat v$.  Since
$\bd_{\bar z}\bd_z=\tfrac14\Delta$ and
$\hat b\cdot\nabla\hat v=\tfrac12[(\hat b_1+i\hat b_2)\mathfrak{w}+(\hat b_1-i\hat b_2)\overline{\mathfrak{w}}]$
for real $\hat v$, equation \eqref{eq:flat} reads
\[
   \bd_{\bar z}\mathfrak{w}=-\tfrac14(\hat b_1+i\hat b_2)\,\mathfrak{w}
                 -\tfrac14(\hat b_1-i\hat b_2)\,\overline{\mathfrak{w}} ,
\]
a generalized analytic function with coefficients in $L^\infty\subset L^p$ for
every $p>2$; this is exactly Vekua's standing hypothesis
\cite[Ch.~III, \S1, condition~(1.10), p.~137]{Vekua}, the equation already
being in the canonical form \cite[Ch.~III, (1.5), p.~131]{Vekua}, whose
leading coefficient is identically $1$.  By
the similarity principle \cite[Ch.~III, \S4.1, Basic Lemma, p.~144, and formula
(4.3), p.~146]{Vekua}, $\mathfrak{w}=\Phi e^{\omega}$ with $\Phi$ holomorphic and $\omega$
bounded and H\"older; so unless $\mathfrak{w}\equiv0$ its zeros are isolated of finite
integer order $m$, with $c_1'|z-z_0|^m\le|\mathfrak{w}|\le c_2'|z-z_0|^m$ nearby
\cite[Ch.~III, \S4.2, p.~145, Theorem~3.5, pp.~146--147, and the
inequalities (4.15), p.~152]{Vekua}.  The
alternative $\mathfrak{w}\equiv0$ forces $\nabla\hat v\equiv0$, hence $\hat v\equiv0$
since $\hat v=0$ on $\R$, hence $f\equiv0$ on an open set; the set where the
two minimal sheets agree is then open by this argument applied at each of its
points and closed by continuity, so the sheets coincide, contradicting
$\dist(u,w)=2$.  Transporting through the bi-Lipschitz $F$ gives
$|d\hat v|\asymp\rho^{m}$ and $|\hat v|=O(\rho^{m+1})$ at distance $\rho$, and
$f=hv$ with $h\ge\tfrac12$ gives
$|df|\ge c_1\rho^{m}-c_2\rho^{m+1}>0$ on a punctured half-neighbourhood.

Every critical point of $f$ on $L$ is therefore isolated in the closed
half-neighbourhood, and at a non-critical boundary point continuity gives a
neighbourhood free of critical points; $L$ is compact, so finitely many
critical points remain and a uniform $r_0$ exists with no critical point in
$\{0<r<r_0\}$.  They lie on $\{f=0\}$ because $f$ vanishes on $L$.  Since a
tangency point in the collar would be a critical point of $f$, this also gives
$\mathcal{T}\subset\{r\ge r_0\}$, and clause (2) is proved.

\emph{Proof of \textup{(3)}.}  In configuration \textup{(T1)} one has
$A\cap B\cap\bd E(K)=\bd A\cap\bd B=\emptyset$, so $A\cap B$ is a compact
subset of $\Int E(K)$; in configuration \textup{(T2)}, clause (2) confines
$\mathcal{T}$ to the compact set $A\cap B\cap\{r\ge r_0\}$, which again misses
$\bd E(K)$ --- a point of $A\cap B$ on $\bd E(K)$ lies on $L$, where $r=0$.  In
both cases $\mathcal{T}$ is a subset of a compact subset $C$ of
$\Int A\cap\Int B$.  Tangency is a closed condition, the tangent planes varying
continuously, so $\mathcal{T}$ is closed in $\Int A\cap\Int B$ and hence closed
in $C$; being also discrete by clause (1), it is finite.
\end{proof}

\subsection{Configuration \textup{(T2)}: a common boundary leaf}\label{ssec:T2}

Reduce the collar width supplied by Proposition~\ref{prop:collar}, if
necessary, so that the closure of this smaller collar misses the finite
interior tangency set.  Write $r_0$ for the reduced width.  The function
$f$ is still defined on a larger collar and \eqref{eq:N4} still holds;
in particular the interior perturbation discs may be chosen disjoint
from the closed collar used below.

\begin{lem}[No small closed intersection curves]\label{lem:no-small}
Let $f_t$ be the perturbation of $f$ described in \eqref{eq:perturb} below.
There are $\rho_0>0$ and $\delta_1>0$ such that for every $t\in(0,\delta_1]$
every closed component $\gamma$ of $A_t\cap B$ has
$\operatorname{diam}(\gamma)\ge\rho_0$.
\end{lem}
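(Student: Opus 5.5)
The plan is to translate ``small closed intersection curve'' into ``interior local extremum of the perturbed height function'', and then to show that $f_t$ has no such extremum near the zero level. I take $f_t$ to be the graph height of $A_t$ over $B$ in the charts of \eqref{eq:Luint} and \eqref{eq:graphs}. Since \eqref{eq:perturb} is not displayed above, I assume it has the form $f_t=f+t\psi$, where $\psi$ is smooth, fixed, and supported in small discs about the points of $\mathcal T$ and in a thin collar; the argument needs only that $t\psi\to0$ in $C^2$ and that $d\psi$ is a fixed non-zero constant covector on each disc $D_p$.

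\emph{Step 1: reduce to critical points.} Fix $\rho_0$ so small that every set of diameter less than $\rho_0$ lies in a single graph chart. If $\gamma$ is a closed component of $A_t\cap B$ with $\operatorname{diam}\gamma<\rho_0$, then $\gamma$ is a Jordan curve in $\{f_t=0\}$ in that chart and bounds a disc $\Omega$ there. Because $f_t$ vanishes on $\bd\Omega$ and is not identically zero inside, it attains an interior maximum or minimum at some point $q\in\Omega$, so $df_t(q)=0$. It therefore suffices to exclude interior local extrema of $f_t$ inside such small discs.

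\emph{Step 2: away from the critical set of $f$.} By Proposition~\ref{prop:collar}, clauses (1)--(3), the critical points of $f$ relevant here form a finite set $\Sigma$: the tangency set $\mathcal T$ together with the finitely many critical points on $L$. Clause (2) also guarantees $df\ne0$ in the open collar. On the compact complement of the $\rho_0$-neighbourhood of $\Sigma$ in the relevant part of $A$, $|df|\ge c>0$. Choose $\delta_1$ with $\delta_1\|d\psi\|_\infty<c/2$; then $df_t\ne0$ there for all $t\le\delta_1$. Hence, after shrinking $\rho_0$ relative to the separation of $\Sigma$, any small disc $\Omega$ as in Step 1 lies in a $3\rho_0$-disc $D_p$ about a single point $p\in\Sigma$.

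\emph{Step 3: near a tangency point.} This is the main obstacle. I would use the Vekua representation from the proof of Proposition~\ref{prop:collar}: after dividing by $h_p$ and passing to the isothermal chart, $\mathfrak w=2\bd_z\hat v_p=\Phi e^{\omega}$ with $\Phi(z)\sim z^m$ and $m\ge1$. Two facts then need checking.
\begin{itemize}
\item[(i)] Critical points of $f_t$ in $D_p$ solve $\bd_z f=-t\,\bd_z\psi$, a constant of size $t$. By Rouch\'e's theorem applied to $\Phi$ (the factor $e^{\omega}$ is H\"older and non-vanishing), there are exactly $m$ solutions, each at distance $\asymp t^{1/m}$ from $p$.
\item[(ii)] At each solution, $\det\Hess f_t=4\bigl(|\bd_z\bd_{\bar z}f_t|^2-|\bd_z^2f_t|^2\bigr)$. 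By \eqref{eq:flat}, $\bd_z\bd_{\bar z}f_t$ is bounded by the drift times $|\mathfrak w|$, plus terms from $h_p$ and the chart, so it is $O(t)$. By contrast, $\bd_z^2 f_t\approx m z^{m-1}$ has size $\asymp t^{(m-1)/m}$, which dominates for small $t$. The one delicate point is the case $m=1$: then $\bd_z^2f_t$ is of order one and $\bd_z\bd_{\bar z}f_t$ is $O(t)$, so the inequality still holds, but it must be checked uniformly in $t$.
\end{itemize}
Consequently every critical point of $f_t$ near $p$ has negative Hessian determinant, i.e.\ is a saddle, and no extremum exists, contradicting Step 1. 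If $\bd_z\psi$ were allowed to vanish, I would instead expect to need \eqref{eq:N5}, the sign change at $p$, to rule out extrema; the linear choice of $\psi$ avoids that.

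\emph{Step 4: the collar.} At a critical point on $L$ the same computation applies after the odd reflection of \eqref{eq:flat}, provided the perturbation is compatible with that reflection. I would choose $\psi$ in the collar to vanish on $L$ and to be odd in the reflected coordinate, so that $f_t$ still vanishes on $\R$ and the reflected function satisfies the same kind of equation. The resulting critical points near the boundary are again saddles. A closed curve of $\{f_t=0\}$ lying in $\{r>0\}$ of small diameter is then excluded exactly as in Step 3. Taking the minimum of the finitely many radii and of the values of $\delta_1$ over the points of $\Sigma$ gives the lemma.
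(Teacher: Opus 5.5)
Your argument is about a perturbation that is not the one in \eqref{eq:perturb}. For the actual perturbation your Steps~3 and~4 fail, so there is a genuine gap.

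\emph{Near a tangency point.} In \eqref{eq:perturb} the bump $\psi_p$ is identically $1$ on $D_p'$, so the interior perturbation just subtracts a constant: $f_t=f-\varsigma t$ on $D_p'$. Thus $d\psi_p=0$ there, not a non-zero constant covector. This makes Step~3(i) false for the actual $f_t$.
\begin{itemize}
\item On $D_p'$ one has $df_t=df$.
\item On $D_p\setminus D_p'$ one has $|df|\ge c>0$, hence $|df_t|\ge c/2$ for small $t$.
\end{itemize}
So the only critical point of $f_t$ in $D_p$ is $p$ itself, with $f_t(p)=-\varsigma t\neq0$. Step~3(ii) then gives nothing: for $m\ge2$ the Hessian of $f$ vanishes at $p$, so the determinant test is inconclusive.

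The missing ingredient is the one you set aside at the end of Step~3. Since $f_t-f$ is constant on $D_p'$, the point $p$ is a local extremum of $f_t$ exactly when it is one of $f$, and \eqref{eq:N5} says it is not. A small component $\gamma$ near $p$ bounds a disc in $D_p$. On that disc $f_t$ either vanishes identically, which gives critical points other than $p$, or has an interior extremum, which must be at $p$. Both are impossible.

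Your linear-tilt argument would also need repair as written. $\mathfrak w=\Phi e^{\omega}$ with $\omega$ only H\"older is not holomorphic, so Rouch\'e does not apply directly. Neither $e^{\omega}$ nor the $C^{1,\alpha}$ isothermal chart supports the computation $\partial_z^2f_t\approx mz^{m-1}$; you would have to work with the leading harmonic polynomial in the smooth chart.

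\emph{In the collar.} Step~4 misreads the construction more seriously. In \eqref{eq:perturb} the collar term is $\varsigma t\chi(r)$ with $\chi\equiv1$ near $r=0$. So $f_t=\varsigma t\neq0$ on $L$, and $\partial A_t$ is the leaf $\{y=\varsigma t\}$, disjoint from $\partial B$. Moving the boundary off $L$ is the whole purpose of the perturbation in configuration (T2). A collar perturbation vanishing on $L$, as you propose, keeps $\partial A_t=\partial B$.

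For the actual $f_t$ you need neither reflection nor any analysis at the boundary critical points of $f$.
\begin{itemize}
\item Where $\chi$ is constant, $df_t=df\neq0$ in $0<r<r_0$ by \eqref{eq:N4}.
\item On the compact transition band $|df|$ has a positive minimum, so $df_t\neq0$ there for small $t$.
\end{itemize}
Hence $df_t\neq0$ throughout the open collar. Since $A_t\cap B$ misses $\{r=0\}$, a small closed component near a point with $r<r_0$ bounds a Jordan disc lying in $\{r>0\}$ inside a half-disc chart. An interior extremum of $f_t$ there, or $f_t\equiv0$, contradicts $df_t\neq0$.

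Your Step~1 reduction and the transverse part of Step~2 agree with the paper. Steps~3 and~4 should be replaced by the two arguments above.
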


We first describe the perturbation, since the lemma depends on its exact form.
Its sign is a free parameter, and fixing it correctly is what keeps the
intersection curves away from $L$; so we begin with that choice.

\begin{lem}[A sign, and an arc of $L$ swept clear]\label{lem:bdry-sign}
Put $a(s):=\bd_rf(s,0)$ and $\Lambda:=\sup|\bd_r^2f|<\infty$.  Then
$a\not\equiv0$, and there are $\varsigma\in\{\pm1\}$, a closed arc $I\subset L$,
and constants $a_0>0$, $r_1>0$, all depending only on $A$ and $B$, such that
\[
   \varsigma f(s,r)\ \ge\ \tfrac12a_0\,r\ >\ 0
   \qquad\text{for }s\in I,\ 0<r\le r_1 .
\]
\end{lem}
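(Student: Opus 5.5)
The argument has three steps: show $a\not\equiv0$, choose an arc of $L$ on which $a$ has a fixed sign, then use Taylor's theorem in $r$. All the analysis needed is already in Proposition~\ref{prop:collar}(2).

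\emph{Step 1: $a\not\equiv 0$.} Since $f(s,0)=0$ identically, $\bd_sf(s,0)=0$, so $df(s,0)=a(s)\,dr$. Hence the critical points of $f$ on $\{r=0\}$ are exactly the zeros of $a$. By \eqref{eq:N4} these are finite in number. Because $L$ is a circle with infinitely many points, $a$ cannot vanish identically. Alternatively, if $a\equiv0$ then $f$ would vanish to second order along $L$, and the finite-order conclusion of the proof of Proposition~\ref{prop:collar}(2) would fail at every point of $L$, contradicting $\dist(u,w)=2$. The finiteness statement already gives the claim, so I will cite \eqref{eq:N4}.

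\emph{Step 2: choosing $\varsigma$, $I$ and $a_0$.} Pick $s_0$ with $a(s_0)\neq0$ and set $\varsigma:=\operatorname{sign}a(s_0)$. By continuity of $a$, which holds because $f$ is smooth up to $\{r=0\}$ by Theorem~\ref{thm:R1}, there is a closed arc $I\ni s_0$ on which $\varsigma a\ge a_0:=\tfrac12|a(s_0)|>0$. One could take $I$ to be any closed arc avoiding the finitely many zeros of $a$. A single arc is all the statement asks for.

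\emph{Step 3: Taylor estimate.} The constant $\Lambda$ is finite because $f$ is smooth on the compact closed collar. We may restrict to $r\le r_0$, within the chart of \eqref{eq:graphs}. For $s\in I$ and $0<r\le r_1$, Taylor's theorem with remainder in $r$, using $f(s,0)=0$, gives
\[
 \varsigma f(s,r)\ \ge\ \varsigma a(s)\,r-\tfrac12\Lambda r^2\ \ge\ a_0r-\tfrac12\Lambda r^2 .
\]
Choose $r_1:=\min\{r_0,\,a_0/\Lambda\}$, with the convention that $a_0/\Lambda=\infty$ if $\Lambda=0$. Then $\tfrac12\Lambda r^2\le\tfrac12a_0r$, which yields $\varsigma f\ge\tfrac12a_0r>0$.

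All the constants depend only on $A$, $B$ and the fixed chart, as required. The only substantive input is Step 1, and that is supplied by Proposition~\ref{prop:collar}(2). I do not expect any real obstacle: the main point to check is that smoothness of $f$ up to $r=0$ (Theorem~\ref{thm:R1}) makes both $a$ continuous and $\Lambda$ finite on the compact half-collar.
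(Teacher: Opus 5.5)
Your proof is correct and follows the paper's argument: the critical points of $f$ on $\{r=0\}$ are the zeros of $a$, finitely many by \eqref{eq:N4}; continuity gives a sign-definite arc $I$; and a second-order estimate in $r$ finishes the proof. The paper integrates $\varsigma\,\partial_r f(s,\rho)\ge a_0-\Lambda\rho\ge\tfrac12 a_0$ instead of quoting Taylor's theorem, and also imposes $r_1\le r_0/4$, which the statement does not require but Lemma~\ref{lem:collar-empty} later relies on (Case~B and \eqref{eq:Zregion}); this is harmless, since $r_1$ may always be decreased.
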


\begin{proof}
Since $f(s,0)\equiv0$ we have $\bd_sf(s,0)\equiv0$, so
$df(s,0)=(0,a(s))$ and the critical points of $f$ on $\{r=0\}$ are exactly the
zeros of $a$.  By \eqref{eq:N4} there are finitely many, so $a\not\equiv0$.
Pick $s_0$ with $a(s_0)\ne0$, set $\varsigma:=\operatorname{sign}a(s_0)$ and
$a_0:=\tfrac12|a(s_0)|$, and use the continuity of $a$ to choose a closed arc
$I\ni s_0$ with $\varsigma a\ge a_0$ on $I$.  Put
$r_1:=\min\{a_0/(2\max\{1,\Lambda\}),\,r_0/4\}$.  For $s\in I$ and $0\le\rho\le r_1$,
\[
   \varsigma\,\bd_rf(s,\rho)\ \ge\ \varsigma a(s)-\Lambda\rho\ \ge\ a_0-\Lambda r_1
   \ \ge\ \tfrac12a_0 ,
\]
and integrating in $r$ gives the claim.  Nothing in the construction refers to
the perturbation parameter.
\end{proof}

Take a smooth cut-off $0\le\chi(r)\le1$ equal to $1$ near $r=0$, supported in the collar
$\{r<r_0\}$, whose transition band is a compact subannulus of $0<r<r_0$, and
pairwise disjoint discs $D_p\Supset D_p'$ outside that closed collar,
around the points of $\mathcal{T}$,
with bumps $\psi_p$ equal to $1$ on $D_p'$; with $\varsigma$ as in
Lemma~\ref{lem:bdry-sign} set
\begin{equation}\label{eq:perturb}
   A_t:=\Bigl\{\,y=f+\varsigma t\chi(r)-\varsigma t
      \sum_{p\in\mathcal{T}}\psi_p\,\Bigr\}.
\end{equation}
Near the boundary $A_t\cap B$ is the level set $\{f=-\varsigma t\}$, regular for
small $t\ne0$ by \eqref{eq:N4}; on the compact transition band, $|df|$ has a positive minimum by
\eqref{eq:N4}, so $d(f+\varsigma t\chi)$ remains nonzero for small $t$.
Outside that band in the collar, $\chi$ is constant and its differential
is zero.  Consequently
\[
  df_t\ne0\quad\hbox{throughout }0<r<r_0,
  \qquad f_t:=f+\varsigma t\chi(r),
\]
for all sufficiently small $t>0$.  This assertion uses no uniform lower
bound for $|df|$ as $r\downarrow0$.  On $D_p'$ one has
$A_t\cap B=\{f=\varsigma t\}$, and by the last clause of
Proposition~\ref{prop:collar} the only critical point of $f$ in $D_p$ is $p$,
with critical value $f(p)=0$, so every $t\ne0$ is a regular value of either
sign; on the
annulus $D_p\setminus D_p'$ one has $|df|\ge c>0$, hence
$|d(f-\varsigma t\psi_p)|\ge c/2$ for $t$ small.  Elsewhere $A_t=A$ and
transversality
is open.  Thus there is $\delta^{(1)}>0$ such that for $t\in(0,\delta^{(1)}]$
the surface $A_t$ is neat, incompressible, transverse to $B$, with $\bd A_t$
the leaf $\{y=\varsigma t\}$ disjoint from $\bd B$, disjoint from $S$, and
$A_t\to A$ in $C^\infty$.  \emph{The interior perturbation is of
subtract-a-constant type, and this cannot be weakened}: a family guaranteeing
only transversality would lose the fact that $f_t$ and $f$ differ by a constant
near $p$, and the proof below would fail.

\begin{proof}[Proof of Lemma~\ref{lem:no-small}]
Otherwise there are $t_n\to0$ and closed components $\gamma_n$ with
$\operatorname{diam}(\gamma_n)\to0$; by $C^0$ convergence, points
$x_n\in\gamma_n$ subconverge to some $x\in A\cap B$.  If $x$ is a transverse
point outside the collar, then in a fixed ball at $x$ the intersection is a
single arc crossing it, and $C^1$ stability keeps that true for small $t$, so
$\gamma_n$ cannot lie in the ball.  If $x=p\in\mathcal{T}$, take $D_p$ as
above; for large $n$ the curve $\gamma_n$ bounds a disc $D_n\subset D_p$, and
$f_t$ vanishes on $\gamma_n$.  If $f_t\equiv0$ on $D_n$ then $df_t\equiv0$
there, impossible since $f_t$ has at most the one critical point $p$ in $D_p$;
otherwise $f_t$ attains a strict interior extremum on $D_n$, at a critical
point, hence at $p$ --- but on $D_p'$ the functions $f_t$ and $f$ differ by the
constant $-\varsigma t$, and by \eqref{eq:N5} $p$ is not a local extremum of $f$, hence
not of $f_t$.  If $0\le r(x)<r_0$, use a sufficiently small half-disc chart
at $x$.  For large $n$, the Jordan disc $D_n$ bounded by $\gamma_n$ lies
in that chart and in $r>0$: the linear coordinate $r$ cannot have a smaller
minimum inside a planar disc than on its boundary.  The preceding
collar-wide statement gives $df_t\ne0$ there.  Since $f_t=0$ on
$\bd D_n$, an interior maximum or minimum would contradict that statement;
if there is neither, $f_t\equiv0$ and its differential vanishes, also a
contradiction.  Each case is a contradiction.
\end{proof}

\emph{Choice of $\rho_2$.}  The next two results share a scale, and we fix it
here rather than inside either of them.  Choose $\rho_2>0$ with the balls
$B_{2\rho_2}(p)$, $p\in\mathcal{T}$, pairwise disjoint,
\begin{equation}\label{eq:rho2}
   4N\rho_2<\rho_0/8,\qquad \rho_2\le r_0/8,\qquad
   \rho_2\le r_1/(2C_{\mathrm{met}}') ,
\end{equation}
and put $P:=\bigcup_pB_{2\rho_2}(p)$, $G:=E(K)\setminus P$ and
$G':=E(K)\setminus\bigcup_pB_{\rho_2}(p)$, so that $G\subset G'$.  Only
$\rho_0$, $r_0$, $r_1$ and $\mathcal{T}$ enter this choice.
Lemma~\ref{lem:collar-empty} below quotes clause (1) of
Corollary~\ref{cor:uniform}, which is stated after it; there is no circle, as
the proof of Corollary~\ref{cor:uniform} uses only
Lemma~\ref{lem:no-small}, Proposition~\ref{prop:collar} and
Lemma~\ref{lem:round}.

\begin{lem}[Every closed component has uniform length away from $L$]
\label{lem:collar-empty}
Let $\varsigma,I,a_0,r_1$ be as in Lemma~\ref{lem:bdry-sign} and write
$\mathcal{C}:=L\times(0,r_0)$ for the open collar.  Then
\begin{enumerate}
\item[\textup{(i)}] for every sufficiently small $t>0$ the set $\Gamma_t:=A_t\cap B$ is disjoint
      from $I\times(0,r_1]$;
\item[\textup{(ii)}] no closed component of $\Gamma_t$ contained in
      $\mathcal{C}$ is null-homotopic in $\mathcal{C}$;
\item[\textup{(iii)}] there are $c_\star>0$ and $\delta_\star>0$, depending only on $A$, $B$ and the
      perturbation scheme, such that for every
      $t\in(0,\delta_\star]$ every closed component $\gamma$
      of $\Gamma_t$ satisfies
      \[
         \mathrm{length}\bigl(\gamma\cap G\cap\{r\ge r_1\}\bigr)\ \ge\ c_\star .
      \]
\end{enumerate}
\end{lem}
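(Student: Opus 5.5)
Clause (i) comes first, directly from Lemma~\ref{lem:bdry-sign} and the form \eqref{eq:perturb}. On the collar $\{r<r_0\}$, which avoids the discs $D_p$, the intersection $A_t\cap B$ is the zero set of $f_t=f+\varsigma t\chi(r)$. For $s\in I$ and $0<r\le r_1$ we have $\varsigma f\ge\tfrac12a_0r>0$ and $\varsigma t\chi\ge0$, so $\varsigma f_t>0$ and no point of $\Gamma_t$ lies in $I\times(0,r_1]$; this holds for every $t>0$ with $r_1<r_0$.

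For (ii), let $\gamma\subset\mathcal C$ be a closed component that is null-homotopic in the annulus $\mathcal C$. Then it bounds a Jordan disc $D\subset\mathcal C$ in the $(s,r)$ chart, and $f_t$ vanishes on $\bd D$. By the collar-wide statement $df_t\ne0$ on $\{0<r<r_0\}$, established after \eqref{eq:perturb}, $f_t$ has no interior extremum on $D$. Hence $f_t\equiv0$ on $D$, which forces $df_t=0$, a contradiction. This is the same argument as the collar case of Lemma~\ref{lem:no-small}.

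For (iii), split into two cases according to whether $\gamma$ stays in the thin collar. \emph{Case 1: $\gamma\subset\{r<r_1\}\subset\mathcal C$.} By (ii), $\gamma$ is essential in the annulus, so it must wind around $L$ and meet every radial segment $\{s\}\times(0,r_1)$, in particular those with $s\in I$. But (i) shows that the block $I\times(0,r_1]$ is free of $\Gamma_t$, a contradiction. So every closed component reaches $\{r\ge r_1\}$. \emph{Case 2: $\gamma$ reaches $\{r\ge r_1\}$.} By Lemma~\ref{lem:no-small}, $\operatorname{diam}\gamma\ge\rho_0$. I would then argue that $\gamma$ cannot spend its whole diameter inside $P\cup\{r<r_1\}$. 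The set $P$ is a union of $N$ balls of radius $2\rho_2$, whose total diameter $4N\rho_2<\rho_0/8$ by \eqref{eq:rho2}.

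The main obstacle is the contribution of the thin collar, which is not small in diameter because it runs all the way along $L$. I would handle it with the sweep of (i). An arc of $\gamma$ inside $\{r<r_1\}$ with endpoints on $\{r=r_1\}$ either cobounds, with a segment of $\{r=r_1\}$, a disc in the collar, or it winds around. Winding is excluded by $I$, exactly as in Case 1. So each such excursion is homotopic rel endpoints into the annulus $r_1/2\le r\le r_1$, and its endpoints are close in the $s$-direction. Its length should therefore be compared to its chord on $\{r=r_1\}$.

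Rather than bound excursions directly, I would get a lower bound on the portion of $\gamma$ in $G\cap\{r\ge r_1\}$ by projecting to the $s$-coordinate and the ambient distance. The diameter $\rho_0$ is realized by two points. Replacing collar excursions by their chords, which changes positions by at most $C_{\mathrm{met}}'r_1$, and removing the balls, which cost at most $\rho_0/8$, leaves a set of diameter at least roughly $\rho_0/2$ covered by $\gamma\cap G\cap\{r\ge r_1\}$ plus chords. If the chord estimate fails, I would instead shrink $r_1$ via \eqref{eq:rho2} and argue by contradiction with a sequence $t_n\to0$ and components $\gamma_n$ whose good length tends to $0$. These would converge into $P\cup\{r\le r_1\}$, which is impossible by the transverse-point analysis of Lemma~\ref{lem:no-small} together with Proposition~\ref{prop:collar}. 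That contradiction produces $c_\star$ and $\delta_\star$ depending only on $A$, $B$ and the scheme.
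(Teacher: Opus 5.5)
Your (i) and (ii) are the paper's arguments, and your Case~1 of (iii) is correct. Case~2, however, has a genuine gap whenever $\gamma$ meets $\{r<r_1\}$ without being contained in it.

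The chord replacement cannot give the bound. The displacement estimate $C_{\mathrm{met}}'r_1$ holds only for radial projection onto $\{r=r_1\}$. That produces the chord lying over the excursion's own $s$-projection, which can be long and is not part of $\gamma$. So ``diameter covered by $\gamma\cap G\cap\{r\ge r_1\}$ plus chords'' says nothing about the first term. Your claim that an excursion's endpoints are close in $s$ is also unjustified: an excursion avoiding $I$ may run along most of $L\setminus I$.

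For instance, nothing you prove excludes an essential curve in the collar that stays below $r_1$ except over a neighbourhood of $I$. Its diameter is comparable to the length of $L$, almost all of it is carried by one long excursion, and its length in $G\cap\{r\ge r_1\}$ is only about $\ell(I)$. The compactness fallback fails as well. A limit of components with vanishing good length lies in $A\cap B\cap(\overline P\cup\{r\le r_1\})$, but $A\cap B$ contains $L$ (and, at zeros of $a$, arcs of $\{f=0\}$ leaving $L$). So Lemma~\ref{lem:no-small} and Proposition~\ref{prop:collar} give no contradiction. Shrinking $r_1$ is not available either: it is fixed by Lemma~\ref{lem:bdry-sign} and determines $\rho_2$, hence $G$, and a smaller $r_1$ only weakens the conclusion.

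The missing idea is a second threshold $r_0/2$, together with the fact that the band $\{r_1\le r\le r_0/2\}$ lies in $G$. This holds because $\mathcal T\subset\{r\ge r_0\}$ and $2\rho_2\le r_0/4$ give $P\subset\{r>3r_0/4\}$. Extend $r$ by $r_0$ outside the collar; there are then three cases.
\begin{enumerate}
\item If $\min_\gamma r\ge r_1$, Corollary~\ref{cor:uniform}(1), which is your ball-removal argument, gives length at least $3\rho_0/8$.
\item If $\min_\gamma r<r_1$ and $\max_\gamma r>r_0/2$, both arcs of $\gamma$ between a low point and a high point cross the band. This gives two disjoint arcs in $G\cap\{r\ge r_1\}$, each of length at least $(r_0/2-r_1)/C_{\mathrm{met}}'\ge r_0/(4C_{\mathrm{met}}')$.
\item If $\min_\gamma r<r_1$ and $\max_\gamma r\le r_0/2$, then $\gamma\subset\mathcal C$; it misses $\bd E(K)$ because $\bd A_t$ and $L$ are distinct leaves. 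By (ii) it projects onto $L$, and by (i) its points over $I$ satisfy $r_1<r\le r_0/2$. They therefore lie in $G\cap\{r\ge r_1\}$ and have length at least $\ell(I)/C_{\mathrm{met}}'$.
\end{enumerate}
Your Case~1 is the third case restricted to $\max_\gamma r<r_1$, where it gives a contradiction rather than a bound. Extending it to $\max_\gamma r\le r_0/2$ and adding the band-crossing case closes the gap, with $c_\star=\min\{3\rho_0/8,\,r_0/2,\,\ell(I)\}/C_{\mathrm{met}}'$ and $\delta_\star\le\min\{\delta_1,\delta^{(1)},\delta_2\}$.
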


\begin{proof}
(i) On the relevant arc of the collar,
$\varsigma f_t=\varsigma f+t\chi\ge\tfrac12a_0r>0$ for $r>0$,
by Lemma~\ref{lem:bdry-sign} and $\chi\ge0$.
Thus $f_t$ cannot vanish there, whether or not $\chi=1$.

(ii) Suppose a closed component $\gamma\subset\mathcal C$ bounds a disc
$D\subset\mathcal C$.  The actual function $f_t=f+\varsigma t\chi$
is zero on $\bd D$ and has no critical point anywhere in the collar, as
proved following \eqref{eq:perturb}.  Any nonzero value on $D$ would give
an interior extremum; identically zero would also give a critical point.
Both alternatives are impossible.

(iii) Take $\delta_\star\le\min\{\delta_1,\delta^{(1)},\delta_2\}$,
where $\delta_2$ is supplied independently by Corollary~\ref{cor:uniform}
below.  Note first that $\gamma\cap\bd E(K)=\emptyset$, since
$A_t\cap\bd E(K)=\bd A_t$ is the leaf $\{y=\varsigma t\}$ while
$B\cap\bd E(K)=\bd B=L=\{y=0\}$.  Next, $\mathcal{T}$ lies in
$\{r\ge r_0\}$ by Proposition~\ref{prop:collar}(2), and $\rho_2\le r_0/8$ by
\eqref{eq:rho2}, so
$P=\bigcup_pB_{2\rho_2}(p)\subset\{r>3r_0/4\}$ and hence
\begin{equation}\label{eq:Zregion}
   \{r_1\le r\le r_0/2\}\ \subset\ G\cap\{r\ge r_1\}=:Z .
\end{equation}
Let $\gamma$ be a closed component of $\Gamma_t$.  Three cases exhaust the
possibilities.

\emph{Case A: $\min_\gamma r\ge r_1$.}  Then $\gamma\subset\{r\ge r_1\}$, so
$\gamma\cap G\subset Z$ and clause (1) of Corollary~\ref{cor:uniform} gives
$\mathrm{length}(\gamma\cap Z)\ge3\rho_0/8$.

\emph{Case B: $\min_\gamma r<r_1$ and $\max_\gamma r>r_0/2$.}  The function
$r$ is continuous on the circle $\gamma$ and takes a value below $r_1$ and a
value above $r_0/2$, so $\gamma$ contains two disjoint subarcs each joining
$\{r=r_1\}$ to $\{r=r_0/2\}$ inside $\{r_1\le r\le r_0/2\}$.  In the collar
chart $r$ is $1$-Lipschitz, so each has length at least
$(r_0/2-r_1)\ge r_0/4$ up to the bi-Lipschitz constant
$C_{\mathrm{met}}'$; by \eqref{eq:Zregion} both lie in $Z$, and together they
give $\mathrm{length}(\gamma\cap Z)\ge r_0/(2C_{\mathrm{met}}')$.

\emph{Case C: $\min_\gamma r<r_1$ and $\max_\gamma r\le r_0/2$.}  Then
$\gamma\subset\{0<r\le r_0/2\}\subset\mathcal{C}$, so by (ii) $\gamma$ is not
null-homotopic in the annulus $\mathcal{C}$ and its projection to $L$ is onto.
In particular $\gamma$ has points over every $s\in I$, and by (i) each of them
has $r>r_1$; by hypothesis each also has $r\le r_0/2$, so the part of $\gamma$
lying over $I$ is contained in $Z$ by \eqref{eq:Zregion}.  The projection is
$1$-Lipschitz in the collar chart, so that part has length at least the
arclength $\ell(I)$ of $I$, up to $C_{\mathrm{met}}'$.

Take $c_\star:=\min\{3\rho_0/8,\ r_0/2,\ \ell(I)\}/C_{\mathrm{met}}'$.
\end{proof}

\begin{cor}[Bounds independent of $t$]\label{cor:uniform}
With $\rho_2$, $P$, $G$, $G'$ as in the choice of $\rho_2$ above, there
are $\theta_1\in(0,\pi/2]$, $\eta_1^{\mathrm{geo}}>0$, $\kappa_1>0$ and
$\delta_2>0$ such that for every $t\in(0,\delta_2]$ and every closed component
$\gamma$ of $A_t\cap B$:
\begin{enumerate}
\item $\mathrm{length}(\gamma\cap G)\ge3\rho_0/8$;
\item the opening angle $\alpha$ along
      $\gamma\cap G'\cap\{r\ge r_1/2\}$ satisfies the
      \emph{two-sided} bound $\min\{\alpha,\pi-\alpha\}\ge\theta_1$;
\item along $\gamma\cap G'\cap\{r\ge r_1/2\}$ the functions of
      Lemma~\ref{lem:round}, read pointwise, satisfy
      $\eta_{\max}\ge\eta_1^{\mathrm{geo}}$ and $\kappa\ge\kappa_1$.
\end{enumerate}
\end{cor}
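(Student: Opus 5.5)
Clause (1) will come from Lemma~\ref{lem:no-small} together with the choice \eqref{eq:rho2}. Let $\gamma$ be a closed component of $A_t\cap B$ with $t\le\delta_1$. Then $\operatorname{diam}\gamma\ge\rho_0$, so $\gamma$ contains two points at distance at least $\rho_0$. The set $\gamma\cap P$ lies in at most $N$ balls of radius $2\rho_2$. The plan is to cut $\gamma$ at these balls: inside each ball one may replace the arc by a chord of length at most $4\rho_2$, which gives
\[
\mathrm{length}(\gamma\cap G)\ \ge\ \rho_0-4N\rho_2\ >\ \rho_0-\rho_0/8 .
\]
More precisely, $\gamma$ traversed from one far point to the other and back gives two arcs, each of extrinsic span at least $\rho_0$. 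Each arc loses at most $4N\rho_2$ to $P$, so each keeps more than $7\rho_0/8$ in $G$; even one arc alone gives $3\rho_0/8$ with room to spare. The only care needed is that the ambient distance is attained along paths, so a chord through $B_{2\rho_2}(p)$ costs at most $4\rho_2$.

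Clauses (2) and (3) will be proved by compactness and contradiction at $t=0$. Consider the compact set
\[
Y:=A\cap B\cap G'\cap\{r\ge r_1/2\}.
\]
It avoids the balls $B_{\rho_2}(p)$, so it contains no interior tangency point. Since $\mathcal{T}\subset\{r\ge r_0\}$ by Proposition~\ref{prop:collar}(2), it also contains no collar tangency, and it avoids $L$ because $r\ge r_1/2>0$. Hence $A$ and $B$ are transverse along $Y$. By continuity and compactness the angle function $\min\{\alpha,\pi-\alpha\}$ has a positive minimum $2\theta_1$ on $Y$, and this minimum persists on a neighbourhood $\mathcal{U}$ of $Y$. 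Since $A_t\to A$ in $C^\infty$, for $t\le\delta_2$ the set $A_t\cap B\cap G'\cap\{r\ge r_1/2\}$ lies in $\mathcal{U}$. There the tangent planes of $A_t$ are $C^0$-close to those of $A$, so the angle stays at least $\theta_1$, which gives clause (2). The main point to check is that no intersection point of $A_t$ and $B$ can appear near a point of $A\setminus B$; this is excluded by $C^0$ convergence and compactness.

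For clause (3), the quantities $\varrho$, $d$ and $K$ of Lemma~\ref{lem:round} are built from local derivatives of the two sheets, the metric, and the inverse normal Jacobian, which is bounded by $\csc\theta$. On $\mathcal{U}$ these are uniformly controlled for all $t\le\delta_2$ by $C^2$ convergence. The clearance $d$ is at least $r_1/2$ up to the metric constant. The chart radius $\varrho$ can be chosen uniformly because the second sheets are uniformly transverse and the unrelated sheet pieces are at a uniform distance. Then \eqref{eq:etalower} gives
\[
\eta_{\max}\ge E(\theta_1,\varrho_\bullet,d_\bullet,K_\bullet)=:\eta_1^{\mathrm{geo}},\qquad \kappa\ge H(\theta_1)=:\kappa_1 .
\]
The main obstacle is this uniform choice of tube radius excluding other sheet pieces. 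I would try to handle it via a uniform embedded normal-tube radius for $B$ and for the $C^2$-convergent family $A_t$, intersected with the compact neighbourhood $\mathcal{U}$.
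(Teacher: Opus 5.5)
Your clause (2) is the paper's argument, and your sequential-compactness version is fine. The genuine gap is in clause (3), at exactly the step you flag and leave open. Lemma~\ref{lem:round} requires one adapted tube along the \emph{whole} of $\Gamma_t$. Its radius must be continuous and positive, the tube globally injective, and it must meet no other part of the cornered frontier. Near the tangency balls, and near $L$ in configuration \textup{(T2)}, the curves $\Gamma_t$ degenerate: distinct branches of $\{f_p=t\}$ approach one another as $t\downarrow0$. So the radius must shrink with $t$ there.

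The difficulty is to keep the radius at least $\varrho_1$ on $V_t=\Gamma_t\cap G'\cap\{r\ge r_1/2\}$ while fibres based in the good part do not collide with fibres or sheets from the bad part. A uniform tubular radius for the \emph{surfaces} $A_t$ and $B$ does not by itself give injectivity of the normal map of the \emph{curve} $\Gamma_t$. Intersecting with $\mathcal U$ only localises the estimate, whereas the lemma needs a single global system.

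The missing argument, as in the paper, has four steps.
\begin{enumerate}
\item Cover a buffered transverse compact set by finitely many balls whose doubles avoid $\mathcal T$ and $\bd E(K)$. For instance, take $A\cap B$ with $\bigcup_pB_{\rho_2/2}(p)$ and $\{r<r_1/4\}$ removed.
\item Use the uniform implicit function theorem to show that in each doubled ball the \emph{entire} set $A_t\cap B$, over all components, is one graph arc with $t$-independent derivative bounds.
\item Take $R$ below the ball radii and the buffer. Then two meeting fibres have bases in a common doubled ball, where the single-arc structure gives injectivity.
\item With a cutoff supported in a slightly larger good region, interpolate between $R$ and a $t$-dependent global radius $r_t<R/2$. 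This $r_t$ must also be small enough to exclude unrelated sheet pieces.
\end{enumerate}
Only after this do the uniform straightening estimates give $\varrho_t\ge\varrho_1$ and $K_t\le K_1$ on $V_t$, and hence \eqref{eq:etalower}.

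Clause (1) also needs a repair. You replace ``the arc'' in each ball $B_{2\rho_2}(p)$ by a chord and charge $4\rho_2$ per ball. But $\gamma$ may enter and leave the same ball arbitrarily often, and a chord per visit gives no bound. Shortcut once per ball instead. Walk along the arc from $a$ to $b$; whenever you reach a ball, jump to the last point of the arc in its closure. The arc never returns to that ball afterwards, so there are at most $N$ jumps, each of length at most $4\rho_2$. The retained pieces are disjoint and lie in $G$, so $d(a,b)\le\mathrm{length}(\gamma\cap G)+4N\rho_2$. The paper achieves the same by collapsing each ball to a point in a quotient pseudometric. With this change your bound of more than $7\rho_0/8$ per arc is correct.
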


\begin{proof}
(1) By Lemma~\ref{lem:no-small} there are $a,b\in\gamma$ with
$d(a,b)\ge\rho_0/2$.  Collapse each of the $N$ components of $P$ to a point and
let $d'$ be the resulting quotient pseudometric.  Collapsing $N$ sets of
diameter at most $4\rho_2$ decreases distances by at most $4N\rho_2$, so
$d(a,b)\le d'(a,b)+4N\rho_2$; and the image of an arc of $\gamma$ from $a$ to
$b$ is a path of length at most $\mathrm{length}(\gamma\cap G)=:\Lambda$.
Hence $\rho_0/2\le\Lambda+4N\rho_2<\Lambda+\rho_0/8$, so
$\Lambda>3\rho_0/8$.  The quotient formulation is needed, not a decoration: a
curve may enter and leave the same ball an unbounded number of times, so the
count ``$N$ balls, each costing $4\rho_2$'' is not valid, whereas each
component is collapsed only once.

(2) Put $G'':=E(K)\setminus\bigcup_pB_{\rho_2/2}(p)$, so $G'\subset G''$, and
put
\[
   \cK\;:=\;A\cap B\cap G''\cap\{r\ge r_1/4\} ,
\]
with the convention that $r$ is extended by $r\equiv r_0$ outside the collar.
The set $\cK$ is compact, and $A$ meets $B$ transversally at every point of it.
Indeed, at a point of $\cK$ inside the collar the two sheets are the graphs
$\{y=f\}$ and $\{y=0\}$ of \S\ref{ssec:tangential}, and transversality along
$\{f=0\}$ is exactly the condition $df\neq0$, which holds by \eqref{eq:N4}; the
tangency points of $L$ itself lie on $\{r=0\}$ and are excluded by the
constraint $r\ge r_1/4$.  At a point of $\cK$ outside the collar
Proposition~\ref{prop:collar} gives transversality away from $\mathcal{T}$, and
$\mathcal{T}$ has been excised by $G''$.  Hence
$\min\{\alpha,\pi-\alpha\}$, a continuous positive function on $\cK$, has a
positive infimum $2\theta_1$; the upper bound is included because the two
frontiers occupy opposite sectors and Lemma~\ref{lem:round} needs $\alpha$
bounded away from $\pi$ as well as from $0$.  For small $t$ the curve
$A_t\cap B$ lies in the $\min\{\rho_2/4,r_1/4\}$-neighbourhood of $A\cap B$ and
on $G''$ the $C^1$ distance from $A_t$ to $A$ is $O(t)$, so the two-sided bound
persists with $\theta_1$ along $\gamma\cap G'\cap\{r\ge r_1/2\}$.

(3) We verify all the local data in Lemma~\ref{lem:round} on a
\emph{buffered} region; the same verification will also be used for (T1).
The compact set $\cK$ above is uniformly transverse and has positive
distance from both the ambient boundary and the tangency set.  Cover it by
finitely many coordinate balls whose doubled balls still avoid those sets.
In each doubled ball $A$ and $B$ are single smooth graph sheets, their
normal Jacobian has determinant bounded away from zero, and there are no
other portions of either surface.  The last assertion follows from
embeddedness and compactness: a smaller ball about each point of
$A\cap B$ excludes the complements of the selected graph patches.
The functions defining $A_t$ converge to those of $A$ in every smooth
norm, since the perturbation uses fixed smooth cut-offs.  The uniform
implicit function theorem therefore gives, on smaller concentric balls,
a single intersection arc of $A_t$ and $B$, with uniformly bounded
derivatives through any fixed finite order.  These balls cover every
point of
\[
 V_t:=\Gamma_t\cap G'\cap\{r\ge r_1/2\}
\]
for small $t$: otherwise a sequence of missed points would converge to
a point of $\cK$ and hence eventually lie in one of the balls.
The buffer between $G',\{r\ge r_1/2\}$ and the deleted sets
$\bigcup_pB_{\rho_2/2}(p),\{r<r_1/4\}$ is fixed and positive.

This gives a uniform \emph{globally admissible} tube radius near $V_t$,
not only a curvature bound for an individual component.  To see the
injectivity point, choose a radius smaller than the preceding coordinate
ball radii and their buffers, and cap the radii on the rest of the
intersection by this same small number.  If two normal fibres over points of
$\Gamma_t$ met in that protected tube, both base points would lie in
one of the doubled balls: any base point outside its buffer is farther
away than the sum of the two chosen radii.  Within that ball the entire
intersection, across all components, is the one graph arc just described.
The normal map of this arc is injective at a uniformly positive radius,
by its uniform derivative bounds and the inverse function theorem.
The same balls exclude unrelated portions of $A_t\cup B$.  Thus neither
a remote branch on the same component nor a different component can
enter the protected tube.  The extension to the whole intersection can be
made explicitly.  Choose a slightly larger protected region $V_t^+$
inside the same fixed buffer, and prove the preceding local estimates
there as well.  Let $R>0$ be a common small radius on this larger region,
and let $0<r_t<R/2$ be a radius on which the normal map of the entire
compact $\Gamma_t$ is injective.  For this fixed $t$, decrease it also
to exclude every portion of $A_t\cup B$ outside the chosen sheet
collars; compact embeddedness permits this additional decrease.
Choose a smooth cut-off equal to one near
$V_t$ and supported in $V_t^+$.  Interpolate the radius between $r_t$
and $R$ with that cut-off, keeping every radius at most $R$.  If neither
of two fibres has been enlarged, injectivity follows from the choice of
$r_t$.  If one has been enlarged, its base lies in $V_t^+$, and any
potentially colliding base lies within the fixed buffer, where the
single-arc argument applies.  Thus the variable-radius tube is globally
injective.  The same argument excludes unrelated surface pieces.
Subsequent straightening radii are decreased, if necessary, so that
their images remain inside this normal tube; on the protected region
a uniform decrease suffices.  Hence the resulting globally valid adapted
tube has a positive radius bound near $V_t$ independent of $t$.

Apply the fibre-preserving straightening in the proof of
Lemma~\ref{lem:round} in this tube.  The defining functions have uniform
$C^4$ bounds on the doubled balls, their normal Jacobians have a uniform
inverse bound, and the normal frames have uniformly bounded derivatives
there.  Differentiating the inverse-function identity twice gives uniform
$C^2$ bounds for the straightening and its inverse on a smaller fixed
tube.  It therefore gives a constant $K_1$ bounding both the normal
derivative of the pulled-back metric and $|m'|$ \emph{on this region}.
The same argument applies to each of the four sectors; take the maximum
of these $K_1$'s and the minimum radii.  Choose continuous data $K_t$
and $\varrho_t$ in the global tube with
\[
 K_t\le K_1,\qquad \varrho_t\ge\varrho_1>0\quad\hbox{on }V_t.
\]
Such choices use the slightly larger good region for interpolation; the
data outside it can depend on $t$.  In particular no supremum of $|m'|$
over the degenerating parts of $\Gamma_t$ occurs.  Finally
$d_t\ge d_1:=\min\{1,r_1/(4C_{\mathrm{met}}')\}>0$ on $V_t$.
It follows from \eqref{eq:etalower} that we may take
\[
 \eta_1^{\mathrm{geo}}=E(\theta_1,\varrho_1,d_1,K_1)>0,
 \qquad \kappa_1=H(\theta_1)>0.
\]
The same bounds hold for all the sector pairings used in disc exchange.
For use in comparing different positive parameters, make the adapted
coordinate construction coherently in $t>0$.  Orient the intersection
circles along their smooth intersection track; use signed-distance defining
functions for $A_t,B$ and the wedge bisector frame.  Normal exponential
maps and the normalized inverse-function construction then depend smoothly
on $t$ wherever they are defined.  Choosing smaller positive continuous
radii on the track gives one such family of charts; the buffered bounds
just proved can be retained on $V_t$.  The local metric majorants can
likewise be chosen continuous on the track.  On any compact interval of
positive parameters the whole intersection track is compact, so the
admissible widths in this family have a positive common lower bound.
\end{proof}

\begin{lem}[Smooth widths with a fixed plateau]\label{lem:plateau-width}
Let a smooth compact intersection set $\Gamma\subset M$ carry the common positive
bounds $\eta_{\max},\kappa$ of Lemma~\ref{lem:round}.  Suppose a fixed
smooth function $\zeta:M\to[0,1]$ has $|d\zeta|\le L$ and that
$\eta_{\max}\ge a>0$, $\kappa\ge b>0$ on
$\Gamma\cap\operatorname{supp}\zeta$.  Put
\[
 \eta_1=\min\{a/2,b/(2\max\{1,L\})\},\qquad
 \eta_2=\min\{\eta_1/2,\tfrac12\min_\Gamma\eta_{\max}\}.
\]
Then $\eta=\eta_2+(\eta_1-\eta_2)\zeta|_\Gamma$ is a positive smooth
admissible width, equal to $\eta_1$ wherever $\zeta=1$.
\end{lem}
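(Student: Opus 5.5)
The proof is a direct check of the three requirements of \eqref{eq:width}: positivity and smoothness, the upper bound $\eta\le\eta_{\max}$, and the slope bound $|\eta'|\le\kappa$. Each check splits according to whether a point of $\Gamma$ lies in $\operatorname{supp}\zeta$ or not. No geometry enters; only the numerical definitions of $\eta_1,\eta_2$ are used.

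\emph{Positivity, smoothness and the plateau.} Since $\Gamma$ is compact and $\eta_{\max}$ is continuous and positive on it, $\min_\Gamma\eta_{\max}>0$. Also $a,b>0$, so $\eta_1>0$ and therefore $\eta_2>0$. By construction $\eta_2\le\eta_1/2<\eta_1$. Thus $\eta=(1-\zeta)\eta_2+\zeta\eta_1$ is a pointwise convex combination of two positive constants, and
\[
 \eta_2\le\eta\le\eta_1 .
\]
It is smooth as the restriction of a smooth function on $M$ to the smooth curves $\Gamma$. It equals $\eta_1$ exactly where $\zeta=1$.

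\emph{Upper bound.} At a point of $\Gamma\cap\operatorname{supp}\zeta$ we have $\eta\le\eta_1\le a/2<a\le\eta_{\max}$. At a point of $\Gamma$ outside $\operatorname{supp}\zeta$ we have $\zeta=0$, so $\eta=\eta_2\le\tfrac12\min_\Gamma\eta_{\max}<\eta_{\max}$.

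\emph{Slope bound.} Parametrise each component of $\Gamma$ by arclength $s$, so the tangent $\dot\gamma$ is a unit vector. Then
\[
 \eta'(s)=(\eta_1-\eta_2)\,d\zeta(\dot\gamma),\qquad |\eta'|\le\eta_1L .
\]
On the open set $\Gamma\setminus\operatorname{supp}\zeta$, the function $\zeta$ vanishes identically, so $\eta'=0\le\kappa$. On $\operatorname{supp}\zeta$ we use $\eta_1\le b/(2\max\{1,L\})$, which gives $|\eta'|\le\eta_1L\le b/2<b\le\kappa$. These two sets cover $\Gamma$, so the bound holds everywhere. (At boundary points of the support, $d\zeta=0$ by smoothness in any case.)

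Hence $\eta$ satisfies \eqref{eq:width} and is admissible. There is no real obstacle here. The only point needing care is that the hypotheses on $\eta_{\max}$ and $\kappa$ hold only on $\operatorname{supp}\zeta$. This is exactly why $\eta_2$ is capped by $\tfrac12\min_\Gamma\eta_{\max}$ off the support, and why the slope term vanishes there.
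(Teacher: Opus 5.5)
Your proposal is correct and is essentially the paper's proof: a pointwise check that $\eta\le\eta_1\le a/2\le\eta_{\max}$ and $|\eta'|\le\eta_1L\le b/2\le\kappa$ where the hypotheses hold, and that $\eta'=0$, $\eta=\eta_2\le\tfrac12\eta_{\max}$ elsewhere. The only cosmetic difference is that you split $\Gamma$ into $\operatorname{supp}\zeta$ and its open complement, whereas the paper splits into $\{\zeta>0\}$ and $\{\zeta=0\}$ and uses non-negativity of $\zeta$ to get $d\zeta=0$ on the latter; both splits work.
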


\begin{proof}
Where $\zeta>0$, $\eta\le\eta_1\le a/2\le\eta_{\max}$ and
$|\eta'|\le\eta_1L\le b/2\le\kappa$.  Where $\zeta=0$,
non-negativity and smoothness imply $d\zeta=0$, so $\eta'=0$ and
$\eta=\eta_2\le\eta_{\max}/2$.  The plateau assertion is exact and
requires no smoothing of a minimum or of a distance function.
\end{proof}

\begin{proof}[Proof of Theorem~\ref{thm:exchange}, configuration \textup{(T2)}]
Fix a smooth ambient cutoff $\zeta$ with
\[
 \zeta:E(K)\to[0,1],\qquad
 \zeta=1\ \hbox{on }G\cap\{r\ge r_1\},\qquad
 \operatorname{supp}\zeta\subset G'\cap\{r>r_1/2\},
 \qquad |d\zeta|\le L_\zeta<\infty.
\]
For instance multiply cut-offs which are zero on
$\overline{B_{5\rho_2/4}(p)}$ and one outside $B_{2\rho_2}(p)$ by a
collar cut-off zero for $r\le5r_1/8$ and one for $r\ge r_1$.
Smooth cut-offs between these nested closed and open sets exist in the
fixed ambient manifold; the finitely many factors have a finite derivative
bound.  All these choices depend only on the previously fixed geometry,
not on $t$.  Apply Lemma~\ref{lem:plateau-width} with
$a=\eta_1^{\mathrm{geo}}$, $b=\kappa_1$, and put
$\eta_1:=\min\{\eta_1^{\mathrm{geo}}/2,
\kappa_1/(2\max\{1,L_\zeta\})\}$ and
\begin{equation}\label{eq:epsstar}
   \varepsilon_*:=g_0(\theta_1)\cdot\eta_1\cdot c_\star>0 ,
\end{equation}
with $c_\star$ the constant of Lemma~\ref{lem:collar-empty},
which depends on $A,B$ and on the once-chosen perturbation scheme and on
nothing later.

\emph{No disc patch, uniformly in $t$.}  Suppose cutting $A_t$ and $B$ along
$A_t\cap B$ produces a disc patch.  Every closed component $\gamma$ of
$A_t\cap B$ satisfies, by Corollary~\ref{cor:uniform},
Lemma~\ref{lem:collar-empty}(iii) and the slicewise non-negativity in
Lemma~\ref{lem:round},
\[
   \cG(\gamma)\ \ge\ g_0(\theta_1)\,\eta_1\,
      \mathrm{length}\bigl(\gamma\cap G\cap\{r\ge r_1\}\bigr)
   \ \ge\ \varepsilon_* ,
\]
the gain being computed with the width function \eqref{eq:ramp} constructed
below, which equals $\eta_1$ on $G\cap\{r\ge r_1\}$, and the rest of the curve
being discarded.  So \eqref{eq:twogains} gives
$2\varepsilon_*\le\cG(a)+\cG(b)\le e(t):=\Area(A_t)-\Area(A)$, and $e(t)\to0$
by smooth convergence.  So there is
$\delta_3\le\min\{\delta_2,\delta_\star,\delta^{(1)}\}$ with $e(t)<\varepsilon_*$,
hence no disc
patch, \emph{for every} $t\in(0,\delta_3]$.  The quantifier order matters:
$\varepsilon_*$ is pinned down before $t$, since the patch exists only after
$t$ and both the length of its frontier and the angle along it vary with $t$.

\emph{The output vertices do not depend on $t$.}  For $t$ in a compact
subinterval of $(0,\delta^{(1)}]$ the surfaces $A_t$ are transverse to $B$ with
disjoint boundaries and disjoint from $S$, so there is an ambient isotopy
$H_t$ with $H_{t_0}=\mathrm{id}$, $H_t(A_{t_0})=A_t$, $H_t(B)=B$ and
$H_t|_S=\mathrm{id}$, preserving $\bd E(K)$: take $X_t=\bd_t j_t$ for
embeddings $j_t$ with image $A_t$, correct it along the compact intersection
track by a tangential term so that it becomes tangent to $B$ there --- possible
because transversality makes the relevant map onto $T_pE(K)/T_pB$ surjective
and the metric supplies a smooth right inverse --- and extend by Hadamard's
lemma and a partition of unity to a time-dependent field tangent to $B$ and to
$\bd E(K)$ and vanishing on $S$, whose flow is the required isotopy.  Lift
$H_t$ to the infinite cyclic cover starting at the identity.  It fixes $S_0$
and $S_1$ and the fundamental domain between them, commutes with the deck
generator, and cannot interchange the two sides of a wall, since the upper side
is the one containing $S_1$; so it carries the \emph{cornered} frontiers
\eqref{eq:frontiers} at $t_0$ to those at $t$.
To compare the selected smooth frontiers, use the coherent adapted charts
constructed after Corollary~\ref{cor:uniform}(3).  On the compact parameter
interval choose a positive constant width $\bar\eta$ below every pointwise
upper bound.  Its derivative along every intersection circle is zero.
The fixed smooth profile with this width and the unmodified moving
patches form a smooth compact track in $E(K)\times[t_0,t_1]$:
they agree as subsets on an open annulus at each profile edge, and the
projection to $t$ is a submersion, also on the lateral boundary track.  Choose a
smooth horizontal vector field on this track whose $t$-component is one
and which is tangent to the track of the surface boundaries, using local product charts and a
partition of unity.  Integrating it supplies a smooth family of neat
embeddings; no agreement between the parametrizations supplied by $H_t$
and by the adapted charts is required.  At either
endpoint, clause (6) of Lemma~\ref{lem:round} joins its quantitative width
to $\bar\eta$ in that same endpoint chart.  Consequently the rounded
frontiers have the same smooth ambient isotopy classes at both endpoints,
by smooth isotopy extension.  The push-off comparison in
Lemma~\ref{lem:frontier} preserves those classes.  No assertion that
$H_t$ carries a separately optimized rounding exactly to another is needed,
and the possibly non-smooth function $\eta_2(t)$ is not used as an isotopy
parameter.  Hence the two coarse classes are independent of $t>0$.  Fix once
and for all, at one value of $t$, a compression sequence from each coarse
surface to an incompressible descendant with boundary, giving
$w_\uparrow,w_\downarrow$; transporting the coarse surfaces and the discs along
the isotopy gives the same two vertices at every $t>0$.  \emph{This is what
makes (b) hold in the order stated}: given $z$ with minimiser $Z$ disjoint from
$A\cup B$, one may choose $t_z>0$ after $Z$ is known so that
$Z\cap A_{t_z}=\emptyset$.  Clause (2) of Lemma~\ref{lem:frontier}
then decreases both the rounding width and the push-off scale at $t_z$
to avoid $Z$.  Transport the fixed compression sequences along the smooth
isotopies just described and apply Corollary~\ref{cor:robust}.  The classes,
and therefore the vertices, are the ones already fixed; $t_z$ and the two
smaller geometric parameters change only representatives.

\emph{The area estimate.}  Since $\bd A_t\cap\bd B=\emptyset$, the set
$\Gamma_t:=A_t\cap B$ is a finite union of closed curves, and it is non-empty,
for otherwise $A_t$ and $B$ would be disjoint representatives of $w$ and $u$.
Corollary~\ref{cor:uniform} and Lemma~\ref{lem:collar-empty} apply to every
component, and the
slicewise gain is non-negative everywhere, so rounding the two cornered
frontiers along $\Gamma_t$ gains at least $\varepsilon_*$ in total.  The order
of choices is
\begin{align*}
   A,B&\Rightarrow(\text{perturbation scheme, including }\varsigma)
   \Rightarrow\rho_0\Rightarrow r_1\Rightarrow\rho_2\\
   &\Rightarrow(\theta_1,\varrho_1,K_1,d_1)\Rightarrow
   (\eta_1^{\mathrm{geo}},\kappa_1,\zeta,L_\zeta,c_\star)
   \Rightarrow\eta_1\Rightarrow\varepsilon_*\\
   &\Rightarrow\delta\Rightarrow t\Rightarrow\eta_2
   \Rightarrow(\widehat B_\uparrow,\widehat B_\downarrow)
   \Rightarrow C\Rightarrow\sigma ,
\end{align*}
each step using only what precedes it; in particular $t$ appears in the chain,
after $\delta$ and before the rounded surfaces.  Choose
\begin{equation}\label{eq:delta}
\begin{gathered}
   \delta\le\min\{\delta^{(1)},\delta_1,\delta_2,\delta_\star,\delta_3\}
   \quad\text{and small enough that}\\
   \Area(A_t)<\Area(A)+\varepsilon_*/2\ \text{ for }t\in(0,\delta] ,
\end{gathered}
\end{equation}
and fix such a $t$.  Choose
\[
 \eta_2(t)=\min\{\eta_1/2,\tfrac12\min_{\Gamma_t}\eta_{\max,t}\}>0,
\]
and use the smooth width
\begin{equation}\label{eq:ramp}
 \eta_t(s)=\eta_2(t)+(\eta_1-\eta_2(t))\zeta(x(s)).
\end{equation}
Lemma~\ref{lem:plateau-width} verifies its pointwise admissibility on all
of $\Gamma_t$.  In particular it is exactly $\eta_1$ on the region where
the gain is collected, while its derivative is zero on the bad region,
however small the local allowance $\kappa_t$ there becomes.  Its smoothness
and its plateau are part of the formula, and no approximation step is
being left implicit.  Only the positive floor $\eta_2(t)$ depends on $t$;
the gain on $G\cap\{r\ge r_1\}$ is the fixed number $\varepsilon_*$.
This construction is available for every sufficiently small $t$ and hence
also justifies its earlier use in the no-disc-patch argument before the
final $t$ was chosen.

Now obtain $C$ from Lemma~\ref{lem:pushoff}, and choose
$C\sigma<\varepsilon_*/4$.  Running steps (a)--(d) of the transverse case with
$(A_t,B)$ in place of $(S_-,S_+)$ and $\varepsilon_0$ replaced by
$\varepsilon_*$ gives
\[
\begin{aligned}
   \Area(P_\uparrow)+\Area(P_\downarrow)
   &\le\Area(A_t)+\Area(B)-\varepsilon_*+C\sigma\\
   &<A(w)+A(u)-\varepsilon_*/4 ,
\end{aligned}
\]
whence (d).  The genus computation is topological and is transported by the
isotopy, so (c) is unchanged, and (a) follows as before.
\end{proof}

\subsection{Configuration \textup{(T1)}: disjoint boundaries, interior
tangencies}\label{ssec:T1}

This is the easier of the two non-transverse configurations, and it is treated
by the same device: perturb $A$ to a transverse competitor and bound the area
lost against the area gained.  What makes it easier is that the intersection
stays away from $\bd E(K)$ altogether, so that the collar apparatus of
\S\ref{ssec:T2} --- the sign $\varsigma$ and the arc $I$, the length estimate
inside the collar, the third scale of the width ramp --- is not needed and is
replaced by a single positive number.

Assume $\bd A\cap\bd B=\emptyset$ and $\mathcal{T}\ne\emptyset$.  Since
$A\cap\bd E(K)=\bd A$ and $B\cap\bd E(K)=\bd B$ we have
$A\cap B\cap\bd E(K)=\bd A\cap\bd B=\emptyset$, and $A\cap B$ is compact, so
\begin{equation}\label{eq:d0}
   d_0:=\dist\bigl(A\cap B,\ \bd E(K)\bigr)>0 .
\end{equation}

\emph{The perturbation.}  By Proposition~\ref{prop:collar}(1) each of the
finitely many $p\in\mathcal{T}$ carries a disc $D_p$, inside the graph chart of
\eqref{eq:Luint}, on which $p$ is the only critical point of $f_p$ and
$f_p(p)=0$.  Shrinking, take the $D_p$ pairwise disjoint and contained in
$\{d(\cdot,\bd E(K))>d_0/2\}$, choose concentric smaller discs
$D_p'\Subset D_p$, and take bumps $\psi_p$ supported in $D_p$ and equal to
$1$ on $D_p'$.  Let $A_t$ be obtained from $A$ by replacing, over each $D_p$,
the graph $\{y=f_p\}$ by
\begin{equation}\label{eq:perturbT1}
   \{y=f_p-t\psi_p\} .
\end{equation}
Then $\bd A_t=\bd A$, still a leaf of $J$ disjoint from $\bd B$; $A_t$ is
smooth, neatly embedded and normally isotopic to $A$, hence incompressible and
a representative of $w$; and $A_t=A$ outside a compact subset of $\Int E(K)$,
so $A_t\cap S=\emptyset$ for small $t$ and $A_t\to A$ in $C^\infty$.  As in
\eqref{eq:perturb}, the perturbation is of subtract-a-constant type on $D_p'$,
and this is what the second case of Lemma~\ref{lem:T1-bounds}(1) consumes.

Transversality holds for all small $t>0$: on $D_p'$ one has
$A_t\cap B=\{f_p=t\}$, and $t\ne0$ is a regular value because the only
critical point of $f_p$ in $D_p$ is $p$, with critical value $0$; on
$D_p\setminus D_p'$ one has $|df_p|\ge c>0$, hence
$|d(f_p-t\psi_p)|\ge c/2$ for $t$ small; elsewhere $A_t=A$, which meets $B$
transversally away from $\mathcal{T}$.  Fix $\delta^{(1)}>0$ accordingly.  For
$t\in(0,\delta^{(1)}]$ the set $\Gamma_t:=A_t\cap B$ is therefore a finite
union of disjoint smooth closed curves --- closed, because an arc would have
its endpoints in $\bd A_t\cap\bd B=\emptyset$ --- lying in
$\{d(\cdot,\bd E(K))\ge d_0/2\}$; and it is non-empty, for otherwise $A_t$ and
$B$ would be disjoint representatives of $w$ and $u$, contradicting
$\dist(u,w)=2$.  Finally
\begin{equation}\label{eq:etT1}
   e(t):=\Area(A_t)-\Area(A)\longrightarrow0\qquad(t\downarrow0),
\end{equation}
and $\Area(A)=A(w)$, $A$ being a minimiser.

\begin{lem}[Bounds independent of $t$, tangency case]\label{lem:T1-bounds}
There are $\rho_0>0$, $\rho_2>0$, $\theta_1\in(0,\pi/2]$,
$\eta_1^{\mathrm{geo}}>0$, $\kappa_1>0$ and $\delta_2>0$, depending only on
$A$, $B$ and the perturbation scheme, such that, writing
$P:=\bigcup_pB_{2\rho_2}(p)$, $G:=E(K)\setminus P$ and
$G':=E(K)\setminus\bigcup_pB_{\rho_2}(p)$, the following hold for every
$t\in(0,\delta_2]$ and every closed component $\gamma$ of $\Gamma_t$:
\begin{enumerate}
\item $\operatorname{diam}(\gamma)\ge\rho_0$ and
      $\mathrm{length}(\gamma\cap G)\ge3\rho_0/8$;
\item the opening angle $\alpha$ satisfies the two-sided bound
      $\min\{\alpha,\pi-\alpha\}\ge\theta_1$ along $\gamma\cap G'$;
\item along $\gamma\cap G'$ the functions of Lemma~\ref{lem:round}, read
      pointwise, satisfy $\eta_{\max}\ge\eta_1^{\mathrm{geo}}$ and
      $\kappa\ge\kappa_1$.
\end{enumerate}
\end{lem}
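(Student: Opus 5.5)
The plan is to copy the (T2) argument with the collar removed. The three clauses are proved in the order (1), then the choice of $\rho_2$, then (2) and (3).

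\emph{Clause (1).} First I prove the diameter bound $\operatorname{diam}(\gamma)\ge\rho_0$ by the contradiction argument of Lemma~\ref{lem:no-small}. Suppose $t_n\to0$ and closed components $\gamma_n$ of $\Gamma_{t_n}$ have diameters tending to $0$. Choose points $x_n\in\gamma_n$; a subsequence converges to some $x\in A\cap B$.

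There are only two cases now, since $A\cap B$ lies at distance at least $d_0$ from $\bd E(K)$ by \eqref{eq:d0}.
\begin{itemize}
\item[(i)] If $x$ is a transverse point, then in a fixed ball about $x$ the intersection is a single arc crossing the ball. $C^1$ stability of $A_t\to A$ keeps this true for small $t$, so no small closed curve fits inside the ball.
\item[(ii)] If $x=p\in\mathcal T$, then for large $n$ the curve $\gamma_n$ bounds a disc $D_n\subset D_p$. On $D_n$ the function $f_p-t_n\psi_p$ vanishes on $\bd D_n$. It cannot vanish identically on $D_n$, because its only critical point in $D_p$ is $p$. So it has a strict interior extremum, which must occur at $p$. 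But on $D_p'$ this function differs from $f_p$ by the constant $-t_n$, and by \eqref{eq:N5} $p$ is not a local extremum of $f_p$. This is a contradiction.
\end{itemize}

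Next I fix $\rho_2$. I take the balls $B_{2\rho_2}(p)$ pairwise disjoint, with $4N\rho_2<\rho_0/8$ and $2\rho_2<d_0/2$. The length bound $\mathrm{length}(\gamma\cap G)\ge3\rho_0/8$ then follows by the quotient-pseudometric argument of Corollary~\ref{cor:uniform}(1). Collapsing each component of $P$ to a point costs at most $4N\rho_2$ in distance. An arc of $\gamma$ joining two points at distance at least $\rho_0/2$ then gives $\rho_0/2\le\mathrm{length}(\gamma\cap G)+\rho_0/8$.

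\emph{Clause (2).} Let $G'':=E(K)\setminus\bigcup_pB_{\rho_2/2}(p)$ and $\cK:=A\cap B\cap G''$. This set is compact. By Proposition~\ref{prop:collar}(1), $A$ and $B$ are transverse at every point of $\cK$, because $\mathcal T$ has been excised. Hence $\min\{\alpha,\pi-\alpha\}$ has a positive minimum $2\theta_1$ on $\cK$. For small $t$, $\Gamma_t\cap G'$ lies in a $\rho_2/4$-neighbourhood of $\cK$, and $A_t\to A$ in $C^1$. So the two-sided angle bound $\theta_1$ persists along $\gamma\cap G'$.

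\emph{Clause (3).} This is the main obstacle: the tube data must be \emph{globally admissible}, not merely bounded on each curve. I would repeat verbatim the buffered construction in the proof of Corollary~\ref{cor:uniform}(3), with the set $\{r\ge r_1/2\}$ deleted.
\begin{itemize}
\item Cover $\cK$ by finitely many coordinate balls whose doubles avoid $\mathcal T$ and $\bd E(K)$. In each double, $A$ and $B$ are single graph sheets with normal Jacobian bounded away from zero.
\item Apply the uniform implicit function theorem to get a single intersection arc of $A_t$ and $B$ in each ball, with uniformly bounded derivatives.
\item Interpolate a variable tube radius between a uniform radius on the protected region and a $t$-dependent small radius elsewhere. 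This makes the tube globally injective and excludes unrelated sheets.
\item Straighten the sheets as in Lemma~\ref{lem:round}, which gives uniform bounds $K_1$ and $\varrho_1$ on $\Gamma_t\cap G'$.
\end{itemize}
The boundary clearance satisfies $d\ge d_1:=\min\{1,d_0/2\}$. Then \eqref{eq:etalower} gives $\eta_1^{\mathrm{geo}}=E(\theta_1,\varrho_1,d_1,K_1)$ and $\kappa_1=H(\theta_1)$. Finally, take $\delta_2$ smaller than every threshold used above.
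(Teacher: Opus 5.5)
Your proposal is correct and follows the paper's proof essentially step for step: the diameter argument of Lemma~\ref{lem:no-small} with the collar case made vacuous by \eqref{eq:d0}, the same choice of $\rho_2$ with the quotient-pseudometric length bound, the compact transverse set $\cK$ for the two-sided angle bound, and the buffered-tube construction of Corollary~\ref{cor:uniform}(3) for the width data. The only cosmetic difference is your boundary clearance $d_1=\min\{1,d_0/2\}$ where the paper takes $\min\{1,d_0/4\}$. Both are admissible, since $\Gamma_t\subset\{d(\cdot,\bd E(K))\ge d_0/2\}$.
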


\begin{proof}
Each clause is the corresponding statement of \S\ref{ssec:T2} with the collar
deleted, and we say what the deletion is.

\emph{Diameter.}  Run the proof of Lemma~\ref{lem:no-small}.  Suppose
$t_n\to0$ and closed components $\gamma_n$ with
$\operatorname{diam}(\gamma_n)\to0$; points $x_n\in\gamma_n$ subconverge to
some $x\in A\cap B$.  Two cases now exhaust the possibilities, the third case
of that proof being vacuous because $\Gamma_t$ misses the collar by
\eqref{eq:d0}.  If $x$ is a transverse point, a fixed ball at $x$ meets $A\cap
B$ in a single arc crossing it, and $C^1$ stability keeps that true for small
$t$, so $\gamma_n$ cannot lie in the ball.  If $x=p\in\mathcal{T}$, then for
large $n$ the curve $\gamma_n$ bounds a disc $D_n\subset D_p$ on which
$f_p-t\psi_p$ vanishes on the boundary; it cannot vanish identically, since
then its differential would vanish on $D_n$ while $p$ is its only critical
point in $D_p$; so it attains a strict interior extremum at a critical point,
necessarily $p$ --- but on $D_p'$ it differs from $f_p$ by the constant $t$,
and by \eqref{eq:N5} $p$ is not a local extremum of $f_p$.  This gives
$\rho_0>0$, valid for all sufficiently small $t$.  Now choose $\rho_2>0$ with the balls
$B_{2\rho_2}(p)$ pairwise disjoint, $4N\rho_2<\rho_0/8$ and
$2\rho_2<d_0/2$; the collapsing argument of
Corollary~\ref{cor:uniform}(1), which uses nothing but $\rho_0$ and this choice
of $\rho_2$, gives $\mathrm{length}(\gamma\cap G)\ge3\rho_0/8$.

\emph{Angle.}  Put $G'':=E(K)\setminus\bigcup_pB_{\rho_2/2}(p)$ and
$\cK:=A\cap B\cap G''$, a compact set on which $A$ meets $B$ transversally,
$\mathcal{T}$ having been excised.  The restriction $\{r\ge r_1/4\}$ of
Corollary~\ref{cor:uniform}(2) is not needed and has no analogue here: there it
removed the tangencies along $L$, and here $\bd A\cap\bd B=\emptyset$.  So
$\min\{\alpha,\pi-\alpha\}$ has a positive infimum $2\theta_1$ on $\cK$, and
for $t$ small the two-sided bound persists with $\theta_1$ along
$\gamma\cap G'$.

\emph{Widths.}  Apply the buffered-tube construction in
Corollary~\ref{cor:uniform}(3) with protected set
$V_t=\Gamma_t\cap G'$ and buffer
$E(K)\setminus\bigcup_pB_{\rho_2/2}(p)$.  The collar restriction is
omitted, since \eqref{eq:d0} already separates the entire intersection
from the ambient boundary.  Compactness of $\cK$, the angle
bound, and the uniform smooth bounds for $A_t,B$ give the same single-arc
charts, exclusion of all other sheet portions, global variable-radius
normal tubes, and uniform straightening estimates on this buffer.
Thus the common local data satisfy
$\varrho_t\ge\varrho_1>0$, $K_t\le K_1$ on $V_t$; outside the buffer
they may depend on $t$.  Take
$d_1=\min\{1,d_0/4\}$ and
$\eta_1^{\mathrm{geo}}=E(\theta_1,\varrho_1,d_1,K_1)$,
$\kappa_1=H(\theta_1)$ in \eqref{eq:etalower}.  This proves (3)
without taking a supremum of the slope derivative near any tangency.
\end{proof}

\begin{proof}[Proof of Theorem~\ref{thm:exchange}, configuration \textup{(T1)}]
Choose a fixed smooth $\zeta:E(K)\to[0,1]$ which is one on $G$ and
whose support lies in $G'$, by multiplying cut-offs zero on
$\overline{B_{5\rho_2/4}(p)}$ and one outside $B_{2\rho_2}(p)$.
Let $L_\zeta=\sup|d\zeta|<\infty$, and put
$\eta_1:=\min\{\eta_1^{\mathrm{geo}}/2,
\kappa_1/(2\max\{1,L_\zeta\})\}$ and
\begin{equation}\label{eq:epsstarT1}
   \varepsilon_*:=g_0(\theta_1)\cdot\eta_1\cdot\tfrac38\rho_0>0 ,
\end{equation}
fixed before $t$, in the order
$A,B\Rightarrow$ perturbation scheme $\Rightarrow\rho_0\Rightarrow\rho_2
\Rightarrow\theta_1\Rightarrow(\eta_1^{\mathrm{geo}},\kappa_1,\zeta,L_\zeta)
\Rightarrow\eta_1\Rightarrow\varepsilon_*\Rightarrow\delta\Rightarrow t$.

\emph{The width function.}  For each sufficiently small $t>0$ put
$\eta_2(t)=\min\{\eta_1/2,\tfrac12\min_{\Gamma_t}\eta_{\max,t}\}$
and $\eta_t=\eta_2(t)+(\eta_1-\eta_2(t))\zeta|_{\Gamma_t}$.
Lemma~\ref{lem:plateau-width} gives a smooth admissible width, exactly
$\eta_1$ on $\Gamma_t\cap G$.  Hence the slicewise estimate in
Lemma~\ref{lem:round} gives, on every closed component $\gamma$,
\[
 \cG(\gamma)\ge g_0(\theta_1)\eta_1\,
                  \mathrm{length}(\gamma\cap G)\ge\varepsilon_*.
\]
The derivative of the width vanishes wherever the uniform good-region
bounds are unavailable.  The positive floor, but not $\varepsilon_*$,
may depend on $t$.

\emph{No disc patch, uniformly in $t$.}  If cutting $A_t$ and $B$ along
$\Gamma_t$ produced a disc patch, Lemma~\ref{lem:no-disc} --- applicable
because $A$ is a minimiser, $A_t$ is normally isotopic to it with $\bd A_t$ a
leaf, and $A_t\pitchfork B$ with $\bd A_t\cap\bd B=\emptyset$ --- would give
closed curves $a,b$ of $\Gamma_t$ with
$2\varepsilon_*\le\cG(a)+\cG(b)\le e(t)$.  By \eqref{eq:etT1} there is
$\delta_3\le\min\{\delta_2,\delta^{(1)}\}$ with $e(t)<\varepsilon_*$, hence no disc patch, for every
$t\in(0,\delta_3]$.

\emph{Conclusion.}  Choose
$\delta\le\min\{\delta^{(1)},\delta_2,\delta_3\}$, small enough in
addition that $\Area(A_t)<\Area(A)+\varepsilon_*/2$ for $t\in(0,\delta]$, and
fix such a $t$.  Lemma~\ref{lem:frontier} applies to the pair $(A_t,B)$ and
yields $P_\uparrow,P_\downarrow$; rounding the two cornered frontiers along
$\Gamma_t$ with the width $\eta$ gains at least $\varepsilon_*$ in total, and
Lemma~\ref{lem:pushoff} then costs $C\sigma$, which we take below
$\varepsilon_*/4$.  Exactly as in configuration \textup{(T2)}, an ambient
isotopy carrying $A_{t_0}$ to $A_t$, preserving $B$ and fixing $S$ pointwise,
and preserving
$\bd E(K)$, shows that the two coarse classes, and hence the two output
vertices $w_\uparrow,w_\downarrow$, do not depend on $t$; the construction of
that isotopy used only transversality, the disjointness of the boundaries and
the disjointness from $S$, all of which hold here.  Then
\[
\begin{aligned}
   \Area(P_\uparrow)+\Area(P_\downarrow)
   &\le\Area(A_t)+\Area(B)-\varepsilon_*+C\sigma\\
   &<A(w)+A(u)-\varepsilon_*/4 ,
\end{aligned}
\]
which is (d); the genus computation is topological and transported by the
isotopy, giving (c); and (a) follows as in the transverse case.  For (b) the
$t$-independence just established is what is needed, exactly as in
configuration \textup{(T2)}: given $z$ with minimiser $Z$ disjoint from
$A\cup B$, choose $t_z>0$ after $Z$ is known so that $Z\cap A_{t_z}=\emptyset$
and apply clause (2) of Lemma~\ref{lem:frontier}, decreasing both the
rounding width and the push-off scale at $t_z$ to avoid $Z$.  Transport
the fixed compression sequences and apply Corollary~\ref{cor:robust}.
The coarse classes and hence the vertices remain fixed; these choices
change only representatives and are separate from the area competitors.
\end{proof}

The three configurations \textup{(T0)}, \textup{(T1)}, \textup{(T2)} exhaust
the possibilities, so Theorem~\ref{thm:exchange} is proved.

\subsection{Axiom \textup{(N2)} for \texorpdfstring{$\IS(K)$}{IS(K)}}

\begin{prop}[Apices of a two-interval]\label{prop:apex}
Let $K$ be a non-trivial knot.  Let $u,w$ be vertices
of $\IS(K)$ with $\dist(u,w)=2$.  Then
\begin{enumerate}
\item $\cI(u,w)$ has an apex; indeed both outputs of
      Theorem~\ref{thm:exchange}, applied to $u,w$ and any common neighbour,
      are apices;
\item some apex $x$ of $\cI(u,w)$ satisfies $c(x)<\max\{c(u),c(w)\}$;
\item if $g(u),g(w)\le\ell$ then the apex in \textup{(2)} has $g(x)\le\ell$.
\end{enumerate}
Consequently $\bigl(\IS(K),c\bigr)$ is an exchange complex in the sense of
Definition~\ref{dfn:exchange}.
\end{prop}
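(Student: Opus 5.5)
The plan is to read everything off Theorem~\ref{thm:exchange}, applied once to $u,w$ and a chosen common neighbour. Since $\dist(u,w)=2$, the set $\cI(u,w)$ is non-empty. I fix $x_0\in\cI(u,w)$ and apply Theorem~\ref{thm:exchange} to $(x_0,u,w)$, which yields $w_\uparrow,w_\downarrow$.

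For (1), I would first show that each output lies in $\cI(u,w)$. By clause (a), each output is equal or adjacent to $u$ and to $w$. It cannot equal $u$, since then $u$ would be equal or adjacent to $w$, contradicting distance $2$; symmetrically it cannot equal $w$. So each output is a common neighbour. Next, clause (b) says every $z\in\cI(u,w)$, being adjacent to both $u$ and $w$, is equal or adjacent to both outputs. This is exactly the apex condition, and it holds for each output separately.

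For (2), I would use clauses (c) and (d) to show that the lexicographic sum strictly drops, meaning that either $g(w_\uparrow)+g(w_\downarrow)<g(u)+g(w)$, or the genus sums are equal and $A(w_\uparrow)+A(w_\downarrow)<A(u)+A(w)$. Write $m=\max\{c(u),c(w)\}$ and suppose, for contradiction, that both $c(w_\uparrow)\ge m$ and $c(w_\downarrow)\ge m$.

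\begin{itemize}
\item Since $c(w_\uparrow)\ge m$, we have $g(w_\uparrow)\ge\max\{g(u),g(w)\}$, and likewise for $w_\downarrow$. Adding gives $g(w_\uparrow)+g(w_\downarrow)\ge 2\max\{g(u),g(w)\}\ge g(u)+g(w)$.
\item With (c) this forces equality throughout, so $g(u)=g(w)=g(w_\uparrow)=g(w_\downarrow)=:h$.
\item The comparison is now in the second coordinate: $A(w_\uparrow),A(w_\downarrow)\ge\max\{A(u),A(w)\}$. Their sum is then at least $A(u)+A(w)$, contradicting (d).
\end{itemize}

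Hence some output $x$ has $c(x)<m$, and by (1) it is an apex.

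For (3), the output chosen in (2) has $c(x)<\max\{c(u),c(w)\}$. Comparing first coordinates lexicographically gives $g(x)\le\max\{g(u),g(w)\}\le\ell$.

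For the final sentence, I would verify the hypotheses of Definition~\ref{dfn:exchange}:

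\begin{itemize}
\item $\IS(K)$ is connected by Proposition~\ref{prop:connected}.
\item It is flag by Proposition~\ref{prop:flag}.
\item $c$ takes values in $W$, which is well-ordered by Lemma~\ref{lem:wf}(iii).
\item (N1) is item (1) and (N2) is item (2).
\end{itemize}

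The only delicate point is (1). Clause (b) must be used with the outputs fixed before $z$ is chosen, which is what makes a single output an apex for every member of $\cI(u,w)$ simultaneously. Theorem~\ref{thm:exchange} states (b) in exactly this quantifier order, so no further work should be needed.
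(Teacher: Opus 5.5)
Your proposal is correct, and it reaches item (2) by a more direct route than the paper does.

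The paper proves (2) in two stages. First comes a ``local descent'' step: if a common neighbour $x$ has $c(x)\ge c(u)$ and $c(x)\ge c(w)$, then one output of Theorem~\ref{thm:exchange} applied at $x$ has complexity strictly below $c(x)$. The paper then takes an apex $x_*$ of least complexity, which exists by Lemma~\ref{lem:wf}(iii). If $c(x_*)\ge\max\{c(u),c(w)\}$, local descent applied at $x_*$ yields a lower apex, a contradiction.

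You instead compare the two outputs directly with $\max\{c(u),c(w)\}$, using (c) and (d). This shows that a single application of Theorem~\ref{thm:exchange}, at any common neighbour, already produces an apex below the maximum. Your route removes the minimisation over apices and the single-instantiation step that Remark~\ref{rem:no-selection} has to justify in $\mathsf{ZF}$. Well-ordering of $W$ is then needed only for the final sentence.

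The paper's descent phrasing buys nothing extra here, because (c) and (d) compare the outputs with $u$ and $w$ rather than with the chosen neighbour. It would only be needed if the exchange lemma controlled the outputs merely relative to $c(x)$.

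One small omission: for the final sentence, Definition~\ref{dfn:exchange} also requires $\IS(K)$ to be non-empty, which is Proposition~\ref{prop:nonempty}.
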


\begin{proof}
(1) Let $x_0\in\cI(u,w)$, which is non-empty since $\dist(u,w)=2$, and let
$y$ be either output of Theorem~\ref{thm:exchange} applied to $(x_0;u,w)$.  By
(a), $y$ is equal or adjacent to $u$ and to $w$; it cannot equal $u$, since it
would then be adjacent to $w$, contradicting $\dist(u,w)=2$, and likewise it
cannot equal $w$; so $y\in\cI(u,w)$.  By (b), every $z$ equal or adjacent to
both $u$ and $w$ --- in particular every member of $\cI(u,w)$ --- is equal or
adjacent to $y$.  So $y$ is an apex.

\emph{Local descent.}  Suppose $x\in\cI(u,w)$ satisfies $c(x)\ge c(u)$ and
$c(x)\ge c(w)$, and let $y_\uparrow,y_\downarrow$ be the outputs of
Theorem~\ref{thm:exchange} applied to $(x;u,w)$.  Lexicographic comparison
gives $g(x)\ge g(u)$ and $g(x)\ge g(w)$, so by (c)
$g(y_\uparrow)+g(y_\downarrow)\le g(u)+g(w)\le2g(x)$ and the minimum of the
two genera is at most $g(x)$.  If it is strictly smaller, the corresponding
output $y$ has $c(y)<c(x)$ and $g(y)\le g(x)$.  If it equals $g(x)$, then both
genera equal $g(x)$, the sum is $2g(x)$, and (c) is an equality with
$g(u)=g(w)=g(x)$; then $A(x)\ge A(u)$ and $A(x)\ge A(w)$, so (d) gives
$A(y_\uparrow)+A(y_\downarrow)<A(u)+A(w)\le2A(x)$ and the output of smaller
area has $g(y)=g(x)$, $A(y)<A(x)$, hence again $c(y)<c(x)$ and $g(y)\le g(x)$.

(2) The set of complexities of apices of $\cI(u,w)$ is a non-empty subset of
$W$, so by Lemma~\ref{lem:wf}(iii) it has a least element; let $x_*$ be an apex
attaining it.  Suppose $c(x_*)\ge\max\{c(u),c(w)\}$.  An apex is a common
neighbour, so local descent applies to $(x_*;u,w)$ and produces an output $y$
with $c(y)<c(x_*)$.  By the argument in (1), $y$ is itself an apex of
$\cI(u,w)$, contradicting the minimality of $c(x_*)$.  Hence
$c(x_*)<\max\{c(u),c(w)\}$, which is (2).

(3) From $c(x_*)<\max\{c(u),c(w)\}$ and the lexicographic order,
$g(x_*)\le\max\{g(u),g(w)\}\le\ell$.

For the last sentence: $\IS(K)$ is non-empty by
Proposition~\ref{prop:nonempty}, connected by
Proposition~\ref{prop:connected}, and flag by
Proposition~\ref{prop:flag}; $W$ is well-ordered by
Lemma~\ref{lem:wf}(iii); (N1) is clause (1) and (N2) is clause (2).
\end{proof}

\begin{rem}[What is and is not chosen]\label{rem:no-selection}
Theorem~\ref{thm:exchange} produces its two outputs from minimisers chosen one
class at a time, and different common neighbours may produce different apices;
no map $(u,w)\mapsto x$ is defined anywhere above, and none is needed, because
Definition~\ref{dfn:exchange} asks only for existence.  This is exactly the
point at which the obstruction identified in \cite{PS12} is bypassed rather
than solved.  The passage in (2) to an apex of least complexity is a single
instantiation inside a proof by contradiction, legitimate in $\mathsf{ZF}$ by
Remark~\ref{rem:wf-zf}.
\end{rem}

\section{Exchange complexes}\label{sec:exchange-complexes}

From this section to the end of \S\ref{sec:contractibility} no three-dimensional
topology is used.  Nothing is quoted from \S\S\ref{sec:geometric-inputs}--%
\ref{sec:exchange-lemma} except Definition~\ref{dfn:exchange} itself.

\subsection{The axioms}

Let $X$ be a connected flag simplicial complex with vertex set $X^{(0)}$, and
write $\dist$ for the edge-path metric on $X^{(0)}$.  For $u,w$ with
$\dist(u,w)=2$ the \emph{two-interval} is
\[
   \cI(u,w)\;=\;N(u)\cap N(w),
\]
the set of common neighbours; it is non-empty by definition of the distance.  A
vertex $x\in\cI(u,w)$ is an \emph{apex} of the interval if $x$ is equal or
adjacent to every member of $\cI(u,w)$; equivalently, since $X$ is flag, the
full subcomplex on $\cI(u,w)$ is a cone with apex $x$.\footnote{The word
``apex'' is used in a different sense by Chepoi, Chalopin, Hirai and Osajda
\cite{CCHO} for a vertex of a graph adjacent to all others.  We keep it here
because it is the natural word for a cone point, but the two usages should not
be conflated.}

\begin{dfn}[Exchange structure]\label{dfn:exchange}
Let $W$ be a well-ordered set --- one in which every non-empty subset has a
least element.  An \emph{exchange structure} on $X$ is a map
$c\colon X^{(0)}\to W$ such that
\begin{itemize}
\item[\textup{(N1)}] every two-interval has an apex; and
\item[\textup{(N2)}] whenever $\dist(u,w)=2$, \emph{some} apex $x$ of
$\cI(u,w)$ satisfies $c(x)<\max\{c(u),c(w)\}$.
\end{itemize}
A non-empty connected flag complex carrying such a $c$ is an \emph{exchange
complex}.
\end{dfn}

\begin{rem}[\textup{(N1)} is implied by \textup{(N2)}]\label{rem:N1-implied}
Axiom (N2) asserts that \emph{some apex} of $\cI(u,w)$ has a certain property,
and in particular that an apex exists; so (N1) follows from it.  We keep both
because the two are used for different purposes --- (N1) alone in the
manufacture of the cone structure, (N2) in the descent --- and because in
\S\ref{sec:applications} they are verified by different arguments.
\end{rem}

Three further comments on the definition, each of which is load-bearing.

\begin{rem}[No apex is selected]\label{rem:no-choice}
The axioms assert that an apex \emph{exists}; they never assert that one can be
\emph{chosen}, and no map $(u,w)\mapsto x$ is part of the data.  This does not
solve the difficulty identified in \cite{PS12} --- that a projection cannot be
made well defined on isotopy classes --- it avoids it.  Everything below is
written so that no such choice is ever needed.
\end{rem}

\begin{rem}[Non-emptiness cannot be dropped]\label{rem:nonempty}
The induction of Theorem~\ref{thm:main-comb} is based at ``order type $1$'',
and the empty complex, of order type $0$, is not contractible.
\end{rem}

\begin{rem}[The choice-theoretic dependence, stated once]\label{rem:choice}
For a total order, ``every non-empty subset has a least element'' implies ``no
infinite strictly descending sequence'' in $\mathsf{ZF}$; the converse needs
dependent choice.  We use the well-ordering theorem, hence the axiom of choice,
in Lemma~\ref{lem:ties}, to well-order the fibres of the complexity.  That is
the only use in this paper, and we flag it here rather than repeatedly.  In
particular \S\ref{sec:contractibility} does not add one: it treats $|X|$ as a CW
complex with one open cell per simplex, and never needs the total ordering of
the vertices that a $\Delta$-complex structure would require
\cite[p.~107]{Hatcher}.  For the complexity actually used
in \S\ref{sec:applications} the descending-chain form is what
Lemma~\ref{lem:wf} proves directly, and the passage to least elements is
available there in $\mathsf{ZF}$; see the remark following that lemma.
\end{rem}

\subsection{Breaking ties}

\begin{lem}[Ties may be broken]\label{lem:ties}
If $X$ carries an exchange structure, then it carries one whose complexity is
injective and well-orders $X^{(0)}$.
\end{lem}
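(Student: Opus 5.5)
The plan is to refine $c$ lexicographically by a well-order of each fibre. For each value $\alpha\in W$ the fibre $c^{-1}(\alpha)\subseteq X^{(0)}$ is a set. By the well-ordering theorem (the single use of choice flagged in Remark~\ref{rem:choice}) we choose a well-order $\prec_\alpha$ on each fibre. Using the axiom of choice once more on the family of fibres, we fix a choice function $\alpha\mapsto\prec_\alpha$.

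Define $W'$ to be the set $X^{(0)}$, ordered by
\[
   v<'v' \iff c(v)<c(v')\ \text{or}\ \bigl(c(v)=c(v')\ \text{and}\ v\prec_{c(v)}v'\bigr),
\]
and let $c'\colon X^{(0)}\to W'$ be the identity. This is the ordered sum $\sum_{\alpha\in W}(c^{-1}(\alpha),\prec_\alpha)$ indexed by the well-order $W$, and an ordered sum of well-orders over a well-order is a well-order. To see this directly, take a non-empty subset $T$. Let $\alpha_0$ be the least element of $c(T)$ in $W$; then the least element of $T\cap c^{-1}(\alpha_0)$ under $\prec_{\alpha_0}$ is the least element of $T$. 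Hence $c'$ is injective and well-orders $X^{(0)}$, as the statement requires.

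Next we check that the axioms survive. Note first that the notions of two-interval and apex depend only on $X$ and not on the complexity, so (N1) transfers unchanged. For (N2), let $\dist(u,w)=2$. By (N2) for $c$ there is an apex $x$ with $c(x)<\max\{c(u),c(w)\}$; say the maximum is $c(u)$. The order $<'$ refines $<$: if $c(x)<c(u)$ then $x<'u$. Since $\max'\{u,w\}\ge' u$, we get $c'(x)<'\max'\{c'(u),c'(w)\}$, with the same apex $x$ as a witness. The complex $X$ itself is unchanged, so it remains non-empty, connected and flag.

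I do not expect a real obstacle in any of these steps. The only delicate point is the strictness in (N2), and it is safe here: the hypothesis gives a strict inequality in $c$, which passes to $<'$ by the lexicographic definition, so ties are never needed to produce descent.
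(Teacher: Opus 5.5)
Your proof is correct and is the same as the paper's. Both well-order each fibre, order $X^{(0)}$ lexicographically (first by $c$, then within the fibre), and observe that (N1) does not involve $c$, while the strict inequality in (N2) survives because the new order extends the old strict order. One small simplification: a single well-ordering of $X^{(0)}$, restricted to each fibre, supplies all the $\prec_\alpha$ at once, so you do not need a separate choice function over the family of fibres.
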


\begin{proof}
The set of attained values is well-ordered.  Well-order each fibre $c^{-1}(a)$
and order $X^{(0)}$ lexicographically, first by the old value and then within
the fibre.  A lexicographic sum of well-orders indexed by a well-order is a
well-order, and the new complexity is injective.  Axiom (N1) does not mention
$c$.  If the old (N2) had a witness $x$ with $c(x)<\max\{c(u),c(w)\}$, then the
value block of $x$ strictly precedes the block of the larger endpoint, so the
strict inequality survives the refinement.
\end{proof}

\emph{From here on the complexity is assumed injective.}

\subsection{The ordering theorem}\label{ssec:ordering}

An injective exchange structure well-orders the vertices, and axiom (N2) then
in particular implies that any two vertices at distance $2$ have a common neighbour
strictly below their maximum.  That hypothesis --- without the word apex,
without flagness, and without (N1) --- is already enough for the following.

\begin{cor}\label{cor:initial-isometric}
Let $(X,c)$ be an exchange complex with $c$ injective and let $D\subseteq X^{(0)}$
be non-empty and downward closed for $c$.  Then the full subcomplex $X_D$ on $D$ is
isometrically embedded in $X$, and is connected.  In particular, for any two
vertices $p,q$ there is a geodesic from $p$ to $q$ all of whose vertices $z$
satisfy $c(z)\le\max\{c(p),c(q)\}$.
\end{cor}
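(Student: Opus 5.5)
We prove the geodesic clause first and derive the rest from it.  Fix $p,q$ and put $m:=\max\{c(p),c(q)\}$.  Since $X$ is connected, $n:=\dist(p,q)$ is finite.  Among all geodesics $p=z_0,z_1,\dots,z_n=q$ in the $1$-skeleton, assign to each the finite multiset of complexities $\{c(z_1),\dots,c(z_{n-1})\}$ of its interior vertices, and compare such multisets by the multiset extension of the well-order on $W$.  That extension of a well-order is again well-founded (the Dershowitz--Manna ordering).  To stay close to Remark~\ref{rem:choice}, one can instead compare the sequences of interior complexities sorted in decreasing order, lexicographically; all sequences have the same length $n-1$, so this is a well-order and avoids any choice.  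Choose a geodesic $\gamma$ that is minimal in this order.

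We claim every vertex of $\gamma$ satisfies $c(z_i)\le m$.  Suppose not, and let $z_i$ be the interior vertex of largest complexity; injectivity of $c$ makes it unique.  Then $c(z_i)>m\ge c(p),c(q)$, and since $z_i$ is the maximum of the whole path, also $c(z_i)>\max\{c(z_{i-1}),c(z_{i+1})\}$.  Because $\gamma$ is a geodesic, $\dist(z_{i-1},z_{i+1})=2$.  Axiom (N2) therefore supplies some $x\in\cI(z_{i-1},z_{i+1})$ with
\[
c(x)<\max\{c(z_{i-1}),c(z_{i+1})\}<c(z_i).
\]
Replacing $z_i$ by $x$ yields a path of the same length $n$ between $p$ and $q$, hence again a geodesic.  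Its interior multiset is obtained by removing the strict maximum $c(z_i)$ and inserting a strictly smaller value.  This is a strict decrease in both orderings above, contradicting the minimality of $\gamma$.  Hence every vertex has $c\le m$.

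Now let $D$ be non-empty and downward closed, and let $p,q\in D$.  The geodesic just produced has all vertices of complexity at most $\max\{c(p),c(q)\}$, so by downward closure all its vertices lie in $D$.  It is therefore a path in the full subcomplex $X_D$, which gives $\dist_{X_D}(p,q)\le\dist_X(p,q)$.  The reverse inequality is automatic because $X_D\subseteq X$.  So $X_D$ is isometrically embedded, and in particular connected.

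The only delicate point is well-foundedness of the comparison between geodesics.  The sorted-sequence lexicographic order settles it directly, since the set of geodesics from $p$ to $q$ is non-empty and a well-order on fixed-length sequences from $W$ admits a least element.  Note that only the existence half of (N2) is used; flagness and the apex property are not needed here, in line with the remark before the statement.
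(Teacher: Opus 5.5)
Your proof is correct and follows the paper's argument. Both proofs choose a minimising length-$n$ path between the endpoints, exchange its unique top interior vertex for the lower common neighbour supplied by \textup{(N2)}, and then invoke downward closure. The only difference is the quantity minimised: the paper uses just the maximum interior complexity $M=\max\{c(v_i):0<i<n\}$. That already suffices, because injectivity makes the maximum unique, so the exchange strictly lowers it. Your sorted-sequence (or multiset) refinement is therefore harmless but unnecessary.
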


\begin{proof}
Let $u,w\in D$ and let $n:=\dist_X(u,w)$, finite because $X$ is connected.  If
$n\le1$ there is nothing to prove, so assume $n\ge2$.  Among all paths
$v_0=u,v_1,\dots,v_n=w$ of length $n$ in $X$ --- a non-empty set --- consider
\[
   M(v_\bullet):=\max\{c(v_i):0<i<n\}\ \in W .
\]
The order on $W$ is a well-order, so the set of attained values has a least
element; fix a path attaining it.

Suppose some interior vertex has $c(v_i)>\max\{c(u),c(w)\}$, and let $v_i$ be
the interior vertex attaining $M$, unique because $c$ is injective.  Then
$c(v_{i-1})<c(v_i)$ and
$c(v_{i+1})<c(v_i)$: each of $v_{i\pm1}$ is either an endpoint, and then its
complexity is below $c(v_i)$ by assumption, or an interior vertex, and then its
complexity is at most the maximum, with equality excluded because $c$ is
injective and $v_{i\pm1}\ne v_i$.  The vertices $v_{i-1}$ and $v_{i+1}$ are
neither equal nor adjacent, since a path of length $n$ between vertices at
distance $n$ admits no shortcut; hence $\dist(v_{i-1},v_{i+1})=2$, and (N2)
supplies $x\in\cI(v_{i-1},v_{i+1})$ with
\[
   c(x)<\max\{c(v_{i-1}),c(v_{i+1})\}<c(v_i).
\]
Replace $v_i$ by $x$.  The result is a walk of length $n$ from $u$ to $w$; it
is a path, for a repeated vertex would allow a loop to be excised and produce a
walk of length $<n$, contradicting $\dist_X(u,w)=n$.  Every interior vertex of
it other than $x$ had complexity strictly below $c(v_i)$, by injectivity, and
$c(x)<c(v_i)$; so $M$ has strictly dropped, contradicting minimality.

So every vertex of the chosen path satisfies $c(v_i)\le\max\{c(u),c(w)\}$, and
$D$ being downward closed with $u,w\in D$, the whole path lies in $X_D$.  Hence
$\dist_{X_D}(u,w)\le n=\dist_X(u,w)$, and the reverse inequality is automatic;
$X_D$ is isometrically embedded, and connected because these distances are
finite.  The last sentence of the statement is the displayed property of the
chosen path, with $D=X^{(0)}$.
\end{proof}

\begin{rem}[Where this sits in the literature]\label{rem:what-was-cut}
A considerably more general statement is \cite[Lemma 9.13, p.~150]{CCHO}: for a
well-order $\preceq$ on the vertex set of a graph such that any $u,w$ at
distance $2$ admit $v\in\cI(u,w)\setminus\{u,w\}$ with
$v\prec\max\{u,w\}$, every level set $L_v=\{u:u\preceq v\}$ induces an
isometric subgraph.  Their hypotheses are strictly weaker than ours --- no
apex, no flagness, no local finiteness --- and the argument above is theirs:
they too take a shortest path whose largest vertex is least in the well-order
and exchange at that vertex, which is what is done here with $c$ in place of
$\preceq$.  We
have nevertheless given it, so that \S\S\ref{sec:exchange-complexes}--%
\ref{sec:contractibility} are self-contained and no citation stands between
the axioms and Theorem~\ref{thm:main-comb}.
\end{rem}

\subsection{Heredity}

\begin{lem}[Heredity]\label{lem:heredity}
Let $(X,c)$ be an exchange complex with $c$ injective.  For a vertex $v$ let
$X_{<v}$ be the full subcomplex on $\{u:c(u)<c(v)\}$ and let $L(v)$ be the full
subcomplex on the \emph{descending link}
$L_c(v)=\{u\in N(v):c(u)<c(v)\}$.  Then
\begin{enumerate}
\item every non-empty full subcomplex $X_D$ on a downward closed set
      $D\subseteq X^{(0)}$ is an exchange complex; in particular this holds
      for $X_{<v}$ when it is non-empty;
\item if non-empty, $L(v)$ is an exchange complex, and it is non-empty unless
      $v$ is the least vertex.
\end{enumerate}
\end{lem}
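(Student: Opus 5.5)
The plan is to verify the three ingredients of Definition~\ref{dfn:exchange} (non-emptiness, connectedness with flagness, and (N2)) directly for each subcomplex. The complexity is just $c$ restricted, with values in the same well-ordered $W$. Flagness is automatic in both cases, because a full subcomplex of a flag complex is flag: a clique in $X_D$ or $L(v)$ is a clique in $X$, so it spans a simplex of $X$, and that simplex lies in the full subcomplex. Axiom (N1) need not be checked separately, since by Remark~\ref{rem:N1-implied} it follows from (N2). The work is therefore connectedness and (N2), and for both the key point is the same: the (N2)-apex supplied in $X$ already lies in the subcomplex.

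For (1), Corollary~\ref{cor:initial-isometric} gives at once that $X_D$ is connected and isometrically embedded. Hence for $u,w\in D$ we have $\dist_{X_D}(u,w)=2$ exactly when $\dist_X(u,w)=2$. The two-interval computed in $X_D$ is $\cI(u,w)\cap D$. Axiom (N2) in $X$ supplies an apex $x$ of $\cI(u,w)$ with $c(x)<\max\{c(u),c(w)\}$. Since $D$ is downward closed, $x\in D$, so $x\in\cI(u,w)\cap D$. Moreover $x$ is equal or adjacent to every member of $\cI(u,w)$, hence to every member of the smaller set, and it is therefore an apex there too. This gives (N2) for $X_D$. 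Taking $D=\{u:c(u)<c(v)\}$ gives the case $X_{<v}$.

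For (2), I would first show that $L(v)$ has diameter at most $2$ and that $L(v)$-distance $2$ coincides with $X$-distance $2$. Let $u,w\in L_c(v)$ be distinct and non-adjacent. They share the neighbour $v$, so $\dist_X(u,w)=2$. Axiom (N2) gives an apex $x$ of $\cI(u,w)$ with $c(x)<\max\{c(u),c(w)\}<c(v)$. Now $v\in\cI(u,w)$, and $x\ne v$ because $c(x)<c(v)$, so $x$ is adjacent to $v$. Thus $x\in L_c(v)$, and $x$ is a common neighbour of $u,w$ inside $L(v)$. This proves connectedness and the distance claim simultaneously. Since the two-interval computed in $L(v)$ is $\cI(u,w)\cap L_c(v)$, the same $x$ lies in it and is an apex of it, which is (N2) for $L(v)$.

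The only step needing a little care, and the one I expect to be the main obstacle, is non-emptiness of $L(v)$ when $v$ is not the least vertex. Let $v_0$ be the least vertex, which exists because $c$ well-orders $X^{(0)}$ after Lemma~\ref{lem:ties}. By the last clause of Corollary~\ref{cor:initial-isometric}, there is a geodesic from $v$ to $v_0$ all of whose vertices have complexity at most $\max\{c(v),c(v_0)\}=c(v)$. Its second vertex is a neighbour of $v$ distinct from $v$, so by injectivity its complexity is strictly below $c(v)$. It therefore lies in $L_c(v)$. Conversely, if $v$ is least then $L_c(v)$ is evidently empty.
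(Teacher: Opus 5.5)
Your proposal is correct and follows the paper's proof essentially step for step. Flagness comes from fullness, connectedness of $X_D$ and non-emptiness of $L(v)$ come from Corollary~\ref{cor:initial-isometric}, and (N2) in each subcomplex comes from the fact that the apex supplied in $X$ already lies in it: by downward closure in (1), and by $c(x)<c(v)$ together with $v\in\cI(u,w)$ in (2). The only cosmetic difference is that you use the isometric embedding to match distance-$2$ pairs in (1), whereas the paper checks non-adjacency and a common neighbour directly.
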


\begin{proof}
A full subcomplex of a flag complex is flag.  For (1), connectedness of
$X_D$ is Corollary~\ref{cor:initial-isometric}.  If $u,w\in D$ have distance
$2$ in $X_D$, they are non-adjacent in $X$ and have a common neighbour there,
so also have distance $2$ in $X$.  By (N2) there is an apex $x$ of
$\cI_X(u,w)$ with $c(x)<\max\{c(u),c(w)\}$.  Downward closure gives
$x\in D$.  Every common neighbour in $X_D$ is a common neighbour in $X$,
so $x$ remains an apex in $X_D$ and satisfies (N2) there.

For (2), let $u,w\in L_c(v)$ be non-adjacent.  The vertex $v$ is a common
neighbour in $X$, so (N2) supplies an apex $x$ with
\begin{equation}\label{eq:heredity}
 c(x)<\max\{c(u),c(w)\}<c(v).
\end{equation}
Since $v\in\cI_X(u,w)$, the apex $x$ is equal or adjacent to $v$; the strict
inequality excludes equality.  Thus $x\in L_c(v)$, and it is an apex of
the smaller interval there.  This proves (N1) and (N2) in $L(v)$, and also
connectedness: any non-adjacent pair has this common neighbour.

Finally, if $v$ is not the least vertex $v_0$, apply
Corollary~\ref{cor:initial-isometric} to $v$ and $v_0$: the vertex adjacent to
$v$ on the resulting geodesic is a neighbour $z\ne v$ with $c(z)\le c(v)$, and
injectivity gives $c(z)<c(v)$, so $z\in L_c(v)$.
\end{proof}

\begin{rem}[The strict inequality is not decoration]\label{rem:strict}
The strict inequality in \eqref{eq:heredity} excludes $x=v$ in the
descending-link case.  The link therefore inherits the exchange axiom
with its vertex order strictly below $v$, as required by both the direct
induction and the Morse formulation in \S\ref{ssec:F-discussion}.
\end{rem}

\subsection{Formalisation status}

\begin{rem}\label{rem:lean}
No machine-checked proof of Theorem~\ref{thm:main-comb} is supplied, and
the theorem itself has not been formalised.  Earlier versions reported a
separate Lean~4 development for four fragments: the ordering statement,
the tie-breaking lemma, a reformulation of dismantlability, and the
seven-vertex example.  The present source distribution contains neither
those Lean sources nor a pinned toolchain, so these reports are not used
as proof certificates here.  In particular we make no reproducible claim
here about that development's line count or axiom dependencies.  The
mathematical arguments are given in the text, including the complete
finite verification in Proposition~\ref{prop:not-cone}.
\end{rem}

\section{Contractibility}\label{sec:contractibility}

\subsection{Two facts about CW complexes}

The geometric realisation $|X|$ of a simplicial complex carries the weak
topology: a set is closed exactly when its intersection with every closed
simplex is closed.  For a complex that is not locally finite this is
\emph{strictly finer} than the metric topology, and $\IS(K)$ is not locally
finite \cite[p.~231]{Kak92}.  All topological statements below refer to the
weak topology, under which $|X|$ is a CW complex with one open cell per
simplex.  We use two standard facts, both cited and neither reproved.

\begin{itemize}
\item[\textup{(A)}] A compact subspace of a CW complex is contained in a finite
      subcomplex \cite[Proposition A.1, p.~520]{Hatcher}.
\item[\textup{(B)}] If $(Y,A)$ is a CW pair and $A$ is contractible, then the
      quotient map $Y\to Y/A$ is a homotopy equivalence.  This is
      \cite[Proposition 0.17, p.~15]{Hatcher}, whose hypothesis is the homotopy
      extension property, combined with \cite[Proposition 0.16, p.~15]{Hatcher},
      which supplies that property for a CW pair.
\end{itemize}

Fact (B) is what replaces, in this version, the appeal to ``a cofibration which
is a homotopy equivalence is the inclusion of a strong deformation retract''.
That implication is true but was quoted in version~1 without proof and without
a checked reference; the argument below does not need it.

\subsection{The main combinatorial theorem}

\begin{thm}\label{thm:main-comb}
Every exchange complex is contractible.
\end{thm}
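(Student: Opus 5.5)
We prove Theorem~\ref{thm:main-comb} by transfinite induction on the order type of the well-ordered vertex set, after replacing $c$ by an injective complexity using Lemma~\ref{lem:ties}. The statement to be proved by induction is: every exchange complex whose vertex well-order has order type at most $\beta$ is contractible. Descending links $L(v)$ and initial segments $X_{<v}$ are again exchange complexes by Lemma~\ref{lem:heredity}, and their order types are strictly smaller than that of $X$. In the case of $X_{<v}$ this is because it is a proper initial segment; in the case of $L(v)$ it is because its vertices all lie below $v$. So the induction hypothesis applies to both. The base case, order type $1$, is a point. By Whitehead's theorem for CW complexes, it suffices to show that every homotopy group of $|X|$ vanishes, and by fact (A) every map of a sphere lands in a finite subcomplex.

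\emph{Successor stage.} Let $v$ be the largest vertex, so that $X=X_{<v}\cup\operatorname{St}(v)$. By flagness and fullness, $X_{<v}\cap\operatorname{St}(v)$ is the closed star of $v$ intersected with $X_{<v}$. That intersection is exactly the full subcomplex $L(v)$, because every simplex of the link below $v$ consists of descending neighbours. $X_{<v}$ is non-empty, since $v$ is not the least vertex, and is contractible by induction. $L(v)$ is non-empty by Lemma~\ref{lem:heredity}(2) and contractible by induction. Thus $X$ is obtained from the contractible $X_{<v}$ by attaching the cone $v * L(v)$ along the contractible $L(v)$. Using fact (B) twice, collapsing first $L(v)$ and then the contractible image of $X_{<v}$, shows that $X$ is contractible. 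Concretely, $X/X_{<v}\cong \operatorname{Cone}(L(v))/L(v)\simeq \Sigma L(v)\simeq *$.

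\emph{Limit stage.} Here we no longer have a largest vertex. Given a map $S^k\to|X|$, fact (A) puts its image in a finite subcomplex. The finitely many vertices involved have a maximum $v$, so the map lands in $X_{\le v}$, the full subcomplex on $\{u:c(u)\le c(v)\}$. $X_{\le v}$ is a downward closed, non-empty full subcomplex and hence an exchange complex by Lemma~\ref{lem:heredity}(1). Its order type is a successor below the limit, so it is contractible by the inductive hypothesis (or by the successor argument just given). The sphere is therefore null-homotopic in $X_{\le v}\subseteq X$. So all homotopy groups vanish, and Whitehead gives contractibility. This argument also covers arbitrary $X$ directly, without splitting into successor and limit cases: compactness reduces everything to the sets $X_{\le v}$, which have a top vertex. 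For that reason we can run the induction over vertices $v$ with the statement ``$X_{\le v}$ is contractible'', using induction on $c(v)$.

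The main obstacle is the identification of $X_{<v}\cap\operatorname{St}(v)$ with the full subcomplex $L(v)$, together with the verification that $L(v)$ is non-empty and satisfies the axioms, so that the induction applies to it. The non-emptiness, connectedness and (N2) for $L(v)$ are precisely the content of Lemma~\ref{lem:heredity}(2). There the strict inequality \eqref{eq:heredity} keeps the apex inside the descending link. The set-up is therefore in place, and the remaining care lies in handling the weak topology correctly when the complex is not locally finite. This is why the argument is run through facts (A) and (B) rather than through a metric deformation.
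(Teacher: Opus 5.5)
Your proof is essentially the paper's: an injective complexity via Lemma~\ref{lem:ties}, transfinite induction on order type with the hypothesis quantified over \emph{all} exchange complexes, the successor step via $X=X_{<v}\cup(v*L(v))$ with intersection $L(v)$ and two applications of fact (B), and the limit step via fact (A) and Whitehead's theorem. One caution: your closing suggestion, to induct on $c(v)$ inside a single $X$ for the statement ``$X_{\le v}$ is contractible'', would not suffice on its own, because $L(v)$ is not of the form $X_{\le u}$; its contractibility needs the hypothesis quantified over all exchange complexes of smaller order type, which your main argument (like the paper) does use.
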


\begin{proof}
By Lemma~\ref{lem:ties} we may assume the complexity $c$ is injective, so that
it well-orders the vertex set.  For an exchange complex $(Z,c)$ with $c$
injective, the \emph{order type} of $(Z,c)$ is the order type of $(Z^{(0)},c)$;
if it is $\alpha$ we write $Z^{(0)}=\{z_\beta\}_{\beta<\alpha}$ in increasing
order and let $Z_\gamma$ be the full subcomplex on $\{z_\beta:\beta<\gamma\}$,
so that $Z_0=\emptyset$ and $Z_\alpha=Z$.  By Lemma~\ref{lem:heredity}(1),
$Z_\gamma$ with $1\le\gamma\le\alpha$ is again an exchange complex with
injective complexity, of order type $\gamma$.

We prove by transfinite induction on $\alpha$ the statement
\begin{quote}
$(\ast_\alpha)$\quad \emph{Every} exchange complex with injective complexity
and order type at most $\alpha$ is contractible.
\end{quote}
The universal form is not a convenience but a necessity: the successor step
below applies the inductive hypothesis to a descending link, which is not one
of the subcomplexes $Z_\gamma$ of the complex being treated.
Lemma~\ref{lem:heredity}(2) is what makes that application legitimate --- the
descending link of the largest vertex has all its vertices below that vertex,
hence order type at most $\alpha$, which is strictly less than $\alpha+1$.

\smallskip
\emph{Base.}  Order type $1$: a single vertex.

\smallskip
\emph{Successor.}  Let $\alpha\ge1$, assume $(\ast_\alpha)$, and let $Z$ be an
exchange complex with injective complexity of order type $\alpha+1$.  Since
$\alpha+1$ is a successor, $Z$ has a $c$-largest vertex $v$; put $Y=Z$,
$A=Z_\alpha$ (the full subcomplex on the remaining vertices) and $L=L(v)$, the
descending link of $v$.  Since $Y$ is the full subcomplex on an initial segment
containing $v$, the neighbours of $v$ in $Y$ are exactly the vertices of $L$.  A
simplex of $Y$ either omits $v$, and then has all its vertices below $v$ and so
lies in $A$; or it is $\{v\}\cup\tau$ with $\tau$ a simplex of $Y$ contained in
$L_c(v)$, hence a simplex of $L$ because $L$ is a full subcomplex.  Conversely
$L\subseteq A$, again by fullness.  Hence
\[
   Y=A\cup(v*L),\qquad A\cap(v*L)=L ,
\]
the second equality because a simplex of $v*L$ lies in $A$ exactly when it omits
$v$.  The paragraph above gives the inclusion $Y\subseteq A\cup(v*L)$; the
reverse inclusion $v*L\subseteq Y$ is where flagness is used, and it should be
said, because it is easy to read the displayed equality as pure bookkeeping.  A
simplex of $v*L$ is $\{v\}\cup\tau$ with $\tau$ a simplex of $L$, and
$\tau\subseteq L_c(v)\subseteq N(v)$, so $\{v\}\cup\tau$ is a clique of the
$1$-skeleton; that it is a \emph{simplex} of $Y$ is exactly flagness.  This
costs nothing --- flagness is part of Definition~\ref{dfn:exchange}, so
$(\ast_\alpha)$ and Theorem~\ref{thm:main-comb} carry it --- but the hypothesis
is used here and not only in Lemma~\ref{lem:heredity}.
Both $A$ and $L$ are exchange complexes with injective complexity of order type
at most $\alpha$ (Lemma~\ref{lem:heredity}, using $\alpha\ge1$ for the
non-emptiness of $L$), so both are contractible by $(\ast_\alpha)$.

Now apply fact (B) twice.  First to the CW pair $(v*L,\,L)$: the cone $v*L$ is
contractible --- the straight-line homotopy towards the cone point is continuous
for the weak topology because $|v*L|\times I$ is again a CW complex, $I$ being
locally compact --- and $L$ is contractible, so $(v*L)/L$ is contractible.
Second to the CW pair $(Y,A)$: $A$ is contractible, so $Y\to Y/A$ is a homotopy
equivalence.  Collapsing $A$ collapses $L\subseteq A$, and no cell of $v*L$
outside $L$ lies in $A$, so
\[
   Y/A\;=\;(v*L)/L ,
\]
which we have just seen to be contractible.  Therefore $Y$ is contractible.

\smallskip
\emph{Limit.}  Let $\lambda$ be a limit ordinal, assume $(\ast_\alpha)$ for
every $\alpha<\lambda$, and let $Z$ be an exchange complex with injective
complexity of order type $\lambda$.  Then
$|Z|=\bigcup_{1\le\alpha<\lambda}|Z_\alpha|$, an increasing union of non-empty
subcomplexes, each contractible by $(\ast_\alpha)$, and $Z$ is connected by
Corollary~\ref{cor:initial-isometric}.  Let $n\ge0$
and let $f\colon S^n\to|Z|$ be continuous.  Its image is compact, hence
by fact (A) is contained in a finite subcomplex; that subcomplex has finitely
many vertices in total, and each has index $<\lambda$, so there is
$\alpha<\lambda$ with $f(S^n)\subseteq|Z_\alpha|$ --- here $\lambda$ being a
limit is used to find a single $\alpha$ strictly above the finitely many
indices.  Since $Z_\alpha$ is contractible, $f$ is null-homotopic in
$|Z_\alpha|$, hence in $|Z|$.  Thus $\pi_n(|Z|)=0$ for all
$n\ge0$, and $|Z|$ is a CW complex, so it is contractible by
Whitehead's theorem \cite[Theorem 4.5, p.~346, and the remark on
p.~348]{Hatcher}.

\smallskip
Applying $(\ast_\kappa)$, where $\kappa$ is the order type of $(X,c)$, to $X$
itself gives the theorem.
\end{proof}

\begin{rem}\label{rem:no-selection-comb}
The proof needs no coherent global apex-selection map.  The path and
heredity arguments use witnesses supplied by (N2), but the induction never
requires these choices to agree or transports a selected apex from one
stage to the next.  This is the form of the exchange axiom supplied by
the geometry of $\IS(K)$.
\end{rem}

\subsection{The given exchange order need not dismantle the graph}\label{ssec:not-dismantlable}

A vertex $u$ of a graph is \emph{dominated} by $w\ne u$ if $N[u]\subseteq N[w]$, and
a finite graph is \emph{dismantlable} if repeated deletion of dominated vertices
reduces it to a single vertex \cite[Definition 7.1, p.~1503]{PS12}.  Deleting a dominated
vertex is exactly deleting a vertex whose link is a cone, and it is by this
mechanism that dismantlability yields contractibility \cite{BarmakMinian}.

The supplied exchange order need not give such a deletion sequence.  The axioms
give, through Lemma~\ref{lem:heredity}(2), a descending link of diameter at most
$2$ --- contractible, by Theorem~\ref{thm:main-comb} applied to it --- and a
complex of diameter $2$ need not be a cone.

\begin{prop}\label{prop:not-cone}
There is a finite exchange complex $X$ on seven vertices, with complexity the
identity on $\{0,1,\dots,6\}$, whose descending link at the top vertex is
contractible but is not a cone.
\end{prop}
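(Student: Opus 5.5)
The plan is to take $X$ to be a cone: build a six-vertex flag complex $L$ on $\{0,\dots,5\}$ and let $X$ be the join of $L$ with a seventh vertex $6$, so that $6$ is adjacent to every other vertex. Two facts make this reduction work. First, $X$ is flag as soon as $L$ is, since a clique of $X$ is either a clique of $L$ or $\{6\}$ together with one. Second, $X$ is connected, and with $c$ the identity the descending link $L_c(6)$ is all of $\{0,\dots,5\}$, so $L(6)=L$.

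\emph{Choice of $L$.} Take the triangle $\{0,1,2\}$ with three ears: vertex $3$ adjacent to $1,2$; vertex $4$ adjacent to $0,2$; vertex $5$ adjacent to $0,1$. There are no other edges. Its flag completion has exactly the $2$-simplices $\{0,1,2\}$, $\{1,2,3\}$, $\{0,2,4\}$, $\{0,1,5\}$. Each ear triangle has a free edge (for example $\{1,3\}$), so an elementary collapse removes it. The three collapses leave the single triangle $\{0,1,2\}$, hence $L$ is contractible. $L$ is not a cone because no vertex is universal: $0$ misses $3$, $1$ misses $4$, $2$ misses $5$, and each ear has only two neighbours.

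\emph{Verification of (N1), (N2).} The pairs at distance $2$ in $X$ are exactly the non-adjacent pairs of $L$, because $6$ is adjacent to everything. For each such pair, $\cI_X(u,w)=\cI_L(u,w)\cup\{6\}$. An apex is therefore a vertex of $\cI_L(u,w)$ adjacent to the rest of $\cI_L(u,w)$, since adjacency to $6$ is automatic. The six non-adjacent pairs and their witnesses are:
\begin{itemize}
\item $(3,4)$: $\cI_L=\{2\}$, apex $2<4$.
\item $(3,5)$: $\cI_L=\{1\}$, apex $1<5$.
\item $(4,5)$: $\cI_L=\{0\}$, apex $0<5$.
\item $(0,3)$: $\cI_L=\{1,2\}$, apex $1<3$, using the edge $\{1,2\}$.
\item $(1,4)$: $\cI_L=\{0,2\}$, apex $0<4$.
\item $(2,5)$: $\cI_L=\{0,1\}$, apex $0<5$.
\end{itemize}
In every case the witness apex lies strictly below $\max\{c(u),c(w)\}$, which gives (N1) and (N2). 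Hence $X$ is a finite, connected, flag exchange complex with injective complexity.

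Contractibility of the descending link $L(6)$ can be read off from the collapse above. It also follows from Lemma~\ref{lem:heredity}(2) together with Theorem~\ref{thm:main-comb}, which is the point of the comparison: the axioms certify contractibility of $L(6)$ but not that it is a cone. The only real work is finding an $L$ that is a non-cone and also has every two-interval a cone with a low apex; once found, the check is the finite case list above.
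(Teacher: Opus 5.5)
Your proposal is correct and is essentially the paper's proof. The paper takes the same complex up to relabelling, namely the hexagon $0$--$1$--$2$--$3$--$4$--$5$ with central triangle $\{0,2,4\}$, coned off by $6$, with identity complexity, and checks the six non-adjacent pairs. Your labelling instead puts the central triangle $\{0,1,2\}$ below the three ears, and it works equally well.

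The only nitpick is the collapse count. Each ear needs two elementary collapses: the triangle through its free edge, then the leftover edge through its free vertex. This does not affect contractibility.
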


\begin{proof}
Let $X$ be the flag complex on $\{0,\dots,6\}$ generated by the hexagon
$0\text{--}1\text{--}2\text{--}3\text{--}4\text{--}5\text{--}0$, the three chords
$0\text{--}2$, $2\text{--}4$, $4\text{--}0$, and the six edges joining $6$ to
each of $0,\dots,5$.  Take $c=\mathrm{id}$.

The descending link of $6$ is the full subcomplex $L$ on $\{0,\dots,5\}$.  Its
maximal simplices are the triangles $\{0,1,2\}$, $\{2,3,4\}$, $\{4,5,0\}$ and
$\{0,2,4\}$: a triangulated hexagonal disc, hence contractible.  It is not a
cone, because a cone point would have to be adjacent to all five other vertices
of $L$, and none is: inside $L$ the vertices $1,3,5$ have exactly two
neighbours each, while $0$ misses $3$, $2$ misses $5$ and $4$ misses $1$.

The complete list of non-adjacent pairs, together with a lower apex of
their two-interval, is
\[
\begin{array}{c|cccccc}
\{u,w\}&\{0,3\}&\{1,3\}&\{1,4\}&\{1,5\}&\{2,5\}&\{3,5\}\\
x&2&2&0&0&0&4.
\end{array}
\]
In each case the common-neighbour set is a clique and contains the indicated
$x$, which is smaller than $\max\{u,w\}$.  Hence (N2), and therefore (N1), holds.

\end{proof}

\begin{rem}[What Proposition~\ref{prop:not-cone} does and does not say]
\label{rem:dismantle-scope}
The proposition shows that deleting vertices in decreasing order of the
given complexity need not be a dismantling: the first vertex to delete,
namely $6$, is not dominated.  The graph itself is dismantlable, since $6$
is adjacent to every other vertex and dominates each of them.  Thus this
example distinguishes the supplied exchange order from a dismantling order;
it does not rule out a different dismantling order, or an application of an
infinite dismantlability criterion.  The proof of
Theorem~\ref{thm:main-comb} uses contractible descending links and does not
require them to be cones.
\end{rem}

\subsection{The relation to the descending-link criterion}\label{ssec:F-discussion}

The remaining comparison is with Bestvina--Brady Morse theory in the form given
by Zaremsky, which applies to arbitrary simplicial complexes.

\begin{dfn}[{\cite[Definition 2.1]{Zar24}}]\label{dfn:morse}
A map $h\colon K\to\R$ on a simplicial complex $K$ is a \emph{Morse function}
if the image $h(K^{(0)})$ is a closed discrete subset of $\R$ and the
restriction of $h$ to any simplex takes distinct values on the vertices of that
simplex.  For $t\in\R$ let $K^{h\le t}$ denote the full subcomplex on
$\{v:h(v)\le t\}$, and let $\lk^{\downarrow}_h(v)$ denote the full subcomplex
of $\lk(v)$ on $\{u\in\lk(v):h(u)<h(v)\}$.
\end{dfn}

Since the heights are distinct on adjacent vertices, this
$\lk^{\downarrow}_h(v)$ agrees with the descending link defined using the
descending star in \cite[Definition 2.2]{Zar24}: its simplices are precisely
the simplices of $\lk(v)$ on which every vertex has height below $h(v)$.

\begin{thm}[{\cite[Lemma 2.3]{Zar24}}]\label{thm:zaremsky}
Let $h\colon K\to\R$ be a Morse function and let $s<t$ in $\R$.  If
$\lk^{\downarrow}_h(v)$ is contractible for every vertex $v$ with
$s<h(v)\le t$, then the inclusion $K^{h\le s}\hookrightarrow K^{h\le t}$ is a
homotopy equivalence.  If $\lk^{\downarrow}_h(v)$ is contractible for every
vertex $v$ with $h(v)>t$, then the inclusion $K^{h\le t}\hookrightarrow K$ is a
homotopy equivalence.
\end{thm}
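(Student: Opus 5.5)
The plan is the usual Bestvina--Brady argument, organised level by level. I would use that $h(K^{(0)})$ is closed and discrete, and that distinct vertices of one simplex have distinct heights. The basic step treats a single height value $a$: let $K^{h<a}$ be the full subcomplex on $\{v:h(v)<a\}$, and show that $K^{h<a}$ is a deformation retract of $K^{h\le a}$ whenever every vertex of height $a$ has contractible descending link. Since $h(K^{(0)})\cap(s,t]$ is finite, composing finitely many such steps proves the first assertion.

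For the basic step, let $V_a$ be the set of vertices of height exactly $a$. No simplex contains two vertices of $V_a$, because heights are distinct on each simplex. So a simplex of $K^{h\le a}$ not lying in $K^{h<a}$ has the form $\{v\}\cup\tau$ with $v\in V_a$ and $\tau$ on vertices of height $<a$. Then $\tau$ is a simplex of $\lk(v)$ all of whose vertices lie below $h(v)$, so $\tau\in\lk^{\downarrow}_h(v)$.

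This gives the decomposition
\[
 K^{h\le a}=K^{h<a}\cup\bigcup_{v\in V_a}\bigl(v*\lk^{\downarrow}_h(v)\bigr),
\]
with $\bigl(v*\lk^{\downarrow}_h(v)\bigr)\cap K^{h<a}=\lk^{\downarrow}_h(v)$. Distinct cones meet only inside $K^{h<a}$, as the sentence above shows. No flagness is needed here, unlike in the proof of Theorem~\ref{thm:main-comb}.

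Now fix $v\in V_a$. The cone $v*\lk^{\downarrow}_h(v)$ is contractible, and $\lk^{\downarrow}_h(v)$ is contractible by hypothesis, so the inclusion of this CW pair is a homotopy equivalence. By \cite[Corollary~0.20]{Hatcher}, which rests on the homotopy extension property of \cite[Proposition~0.16]{Hatcher}, the cone therefore deformation retracts onto the link. I perform these retractions simultaneously for all $v\in V_a$, and take the identity on $K^{h<a}$. The supports are pairwise disjoint outside $K^{h<a}$, and each retraction is stationary on its link. Because $|K^{h\le a}|\times I$ carries the weak topology, continuity can be checked cell by cell, so the combined map is a deformation retraction.

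The point I expect to need the most care is infinitely many vertices sharing the height $a$, since $\IS(K)$ is not locally finite. I would handle it by the cell-by-cell continuity just described, rather than by induction over the vertices of $V_a$.

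For the second assertion, the heights above $t$ form a closed discrete set, so the complexes $K^{h\le t'}$ for $t'>t$ exhaust $K$ through a sequence of levels. By the first part, each inclusion $K^{h\le t}\hookrightarrow K^{h\le t'}$ is a homotopy equivalence. Take any map $(D^n,S^{n-1})\to(|K|,|K^{h\le t}|)$. By fact~(A) its image lies in a finite subcomplex, hence in some $K^{h\le t'}$, and there it compresses into $K^{h\le t}$ rel boundary. Hence all relative homotopy groups vanish, so the inclusion is a weak equivalence, and Whitehead's theorem \cite[Theorem~4.5]{Hatcher} upgrades it to a homotopy equivalence.
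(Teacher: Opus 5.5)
Your proof is correct and is the standard Bestvina--Brady argument. The paper does not reprove this lemma; it imports it from \cite[Lemma~2.3]{Zar24}. Your two halves parallel the paper's own proof of Theorem~\ref{thm:main-comb}: the successor step there attaches a cone along a contractible descending link, and the limit step uses compactness via fact~(A) together with Whitehead's theorem.

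The one ingredient beyond that proof is your simultaneous retraction over a whole height level. It is needed because a Morse height need not be injective on vertices. You justify it correctly, using the non-adjacency of equal-height vertices and the weak topology on $|K^{h\le a}|\times I$.
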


The statement is relative, and necessarily so.  It concludes that an inclusion
is a homotopy equivalence, never that $K$ is contractible, and it could not
conclude the latter: for $K$ a disjoint union of two copies of the bi-infinite
line with $h$ the vertex index, every descending link is a single point while
$K$ is not even connected.  To obtain contractibility from
Theorem~\ref{thm:zaremsky} one must exhibit a level $t$ at which $K^{h\le t}$
is already contractible --- for example, the level of a unique least vertex.

There is no local finiteness hypothesis here.  Consequently the fact that
$\IS(K)$ fails to be locally finite is by itself no obstruction to applying
Theorem~\ref{thm:zaremsky}, and we make no novelty claim on that basis.

The given complexity need not take values in a closed discrete subset of
$\R$, so it cannot in general be substituted directly into this particular
version of the Morse lemma.  The issue is the chosen height, rather than
local finiteness.  We first discuss a bijective reindexing by $\N$ under a
finite-descending-link assumption.  We then remove countability from the
resulting Morse argument, and explain why the more general version in
\cite{ZarMorse} also supplies the attachment step for every countable exchange
complex, without that assumption.

\begin{dfn}\label{dfn:F}
An exchange complex $(X,c)$ satisfies \textup{(F)} if every descending link
$L_c(v)$ is finite.
\end{dfn}

Condition (F) is implied by local finiteness and by $c$ having order type at
most $\omega$ with finite fibres, but is weaker than either.

\begin{thm}[Reindexing under \textup{(F)}]\label{thm:reindex}
Let $(X,c)$ be a countably infinite exchange complex with $c$ injective and
satisfying \textup{(F)}.  Then there is a bijection $h\colon X^{(0)}\to\N$ such
that $\lk^{\downarrow}_h(v)=L_c(v)$ for every vertex $v$.
\end{thm}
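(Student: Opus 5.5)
We will build $h$ as a linear extension of the partial order generated by the descending-link relation. Define a relation on $X^{(0)}$ by $u\prec_0 v$ iff $u\in L_c(v)$, i.e.\ $u,v$ adjacent and $c(u)<c(v)$. Let $\prec$ be its transitive closure. Since $\prec$ is contained in the well-order $c$, it is a strict partial order. We want a bijection $h\colon X^{(0)}\to\N$ that is strictly increasing along $\prec$ and hence along $\prec_0$. Such an $h$ gives what we need. If $u,v$ are adjacent then $c(u)\neq c(v)$ by injectivity, so exactly one of $u\prec_0 v$, $v\prec_0 u$ holds. Consequently $h(u)<h(v)$ exactly when $c(u)<c(v)$ for adjacent pairs. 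Thus $\lk^\downarrow_h(v)$ and $L_c(v)$ have the same vertex set, and both are full subcomplexes of $\lk(v)$, so they coincide.

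The key finiteness input is that every principal down-set $\{u:u\preceq v\}$ is finite. We plan to prove this by well-founded induction on $c$. The down-set of $v$ is $\{v\}$ together with the union of the down-sets of the members of $L_c(v)$. That union is finite by (F), and each of its members has smaller complexity, so its down-set is finite by the induction hypothesis. This step uses only the well-ordering of $c$; dependent choice is avoided because we recurse on a well-order.

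Now construct $h$ greedily. Enumerate $X^{(0)}$ as $x_0,x_1,\dots$ using countability. At stage $n$, let $y$ be the first $x_k$ not yet assigned. Let $F$ be the finite down-set of $y$, and list the unassigned elements of $F$ in increasing $c$-order. Assign them the next consecutive natural numbers. This keeps the set of assigned vertices downward closed for $\prec$ at every stage. We check this by induction on the stage. A newly listed vertex $w$ has all its $\prec$-predecessors inside $F$. Those predecessors were either assigned earlier, hence with smaller values, or appear in the same batch before $w$, because $\prec$ refines into $c$-order within the batch.

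Each stage assigns at least $y$, so every $x_k$ is eventually assigned and $h$ is surjective onto $\N$. The map is injective by construction, and it is monotone along $\prec$ as just argued. The image is all of $\N$ because the values are consecutive and the process never terminates, $X$ being infinite.

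The main obstacle is the finiteness of the down-sets. Without (F) the descending closure of a single vertex can be infinite, and no bijection onto $\N$ can respect it. Everything else is bookkeeping. We must also remember that the monotonicity has to be checked only on edges, not on all of $\prec$, which is why the transitive closure is harmless.
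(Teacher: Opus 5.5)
Your proposal is correct and follows the paper's proof. Both orient edges by $c$, take the transitive closure, show each vertex has only finitely many predecessors, and build a bijective linear extension onto $\N$ greedily along an enumeration, listing each finite batch compatibly with $c$. The only variation is how finiteness of the down-sets is proved: you use well-founded induction on $c$, via $\mathrm{down}(v)=\{v\}\cup\bigcup_{u\in L_c(v)}\mathrm{down}(u)$, whereas the paper applies K\"onig's lemma to the tree of descending paths; both arguments are valid here.
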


\begin{proof}
Orient each edge $\{u,w\}$ of $X$ from the $c$-smaller to the $c$-larger end and
let $P$ be the reflexive transitive closure of this relation; $P$ is a partial
order because $c$ is strictly increasing along oriented edges.  Fix a vertex
$v$ and consider the tree $T_v$ of finite descending paths
$v=u_0,u_1,\dots,u_k$ with $u_{i+1}\in L_c(u_i)$.  Each node has finitely many
children by \textup{(F)}, and $T_v$ has no infinite branch because $c$ decreases
strictly along a branch and the values are well-ordered.  By K\"onig's lemma
$T_v$ is finite, so $v$ has finitely many $P$-predecessors.

A countable partial order in which every element has finitely many strict
predecessors admits a bijective linear extension onto $\N$.  To see this,
enumerate the vertices as $w_0,w_1,\ldots$.  At stage $n$, do nothing if
$w_n$ has already been assigned.  Otherwise list the as-yet unassigned
strict $P$-predecessors of $w_n$ in a linear extension of their finite
induced order, followed by $w_n$, and assign the consecutive least unused
natural numbers to this list.  Every strict predecessor of an element in
this list is either earlier in the list or already assigned.  Thus the
partial assignment remains $P$-increasing.  The vertex $w_n$ is assigned
by the end of stage $n$, so the assignment is defined on every vertex.
Infinitely many vertices receive distinct consecutive numbers with no
gaps, so its image is all of $\N$.

For a vertex $v$ and a neighbour $u$: if $c(u)<c(v)$ then $u\,P\,v$ and
$h(u)<h(v)$; if $c(u)>c(v)$ then $v\,P\,u$ and $h(u)>h(v)$.  Hence
$\lk^{\downarrow}_h(v)=L_c(v)$.
\end{proof}

\begin{cor}\label{cor:under-F}
Let $(X,c)$ be an exchange complex with $c$ injective and satisfying
\textup{(F)}.  Then $X$ is contractible, with no restriction on the
cardinality of its vertex set.  The proof uses
Theorem~\ref{thm:zaremsky}, Lemma~\ref{lem:heredity}, and induction for finite
exchange complexes; it does not use Theorem~\ref{thm:main-comb}.
\end{cor}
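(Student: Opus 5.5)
The plan is to let the vertex order play the role of the Morse height.  We will not build a Morse function on all of $X$ at once, because $X$ may be uncountable.  Instead we will prove contractibility by showing that every compact subset of $|X|$ is null-homotopic inside $|X|$.  First we dispose of the finite case.  A finite exchange complex has order type a natural number $n$.  We argue by induction on $n$: the descending link $L(v)$ of the top vertex is a non-empty exchange complex of smaller size by Lemma~\ref{lem:heredity}(2), so it is contractible by induction.  The same holds for $X_{<v}$ by Lemma~\ref{lem:heredity}(1).  Applying Theorem~\ref{thm:zaremsky} to the injective height $c$ (finitely many values, hence closed discrete), with $s$ just below $c(v)$ and $t=c(v)$, shows that $X_{<v}\hookrightarrow X$ is a homotopy equivalence.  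Hence $X$ is contractible.

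Next we reduce the general case to a countable one.  Let $f\colon S^n\to|X|$ be given.  By fact (A) its image lies in a finite subcomplex with vertex set $F_0$.  Let $F$ be the downward closure of $F_0$ under the relation $P$ from the proof of Theorem~\ref{thm:reindex}.  By the K\"onig argument there, which uses only (F) and well-foundedness of $c$, each vertex has finitely many $P$-predecessors, so $F$ is finite.  By construction $F$ contains $L_c(u)$ for every $u\in F$.

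The main obstacle is that the full subcomplex $X_F$ need not be an exchange complex.  The apex supplied by (N2) for a non-adjacent pair in $F$ may lie outside $F$, and $X_F$ may even be disconnected.  We therefore enlarge $F$ by closing under witnesses.  For each non-adjacent pair in the current set at distance $2$ in $X$, we add one (N2)-apex $x$, and then add the finite $P$-downward closure of $x$.  Pairs at distance greater than $2$ are handled by adding the vertices of a geodesic from Corollary~\ref{cor:initial-isometric}, together with their $P$-closures.  Iterating countably often, we take $F_\infty$ to be the union.  It is countable, and it is closed under descending links, under chosen apices, and under chosen geodesics.  The chosen apex $x$ remains an apex within $F_\infty$, since common neighbours in $X_{F_\infty}$ are common neighbours in $X$.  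It is lower than the larger endpoint, and connectivity holds by the geodesics.  So $X_{F_\infty}$ is a countable exchange complex satisfying (F), and its descending links coincide with those in $X$.

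Now Theorem~\ref{thm:reindex} (or the finite case, if $F_\infty$ is finite) gives a bijection $h\colon F_\infty\to\N$ with $\lk^{\downarrow}_h=L_c$.  The image $\N$ is closed and discrete, so $h$ is a Morse function in the sense of Definition~\ref{dfn:morse}.  Every descending link $L_c(u)$ is finite, and it is an exchange complex by Lemma~\ref{lem:heredity}(2) (its exchange data live inside $F_\infty$ by the closure property).  It is therefore contractible by the finite case.  The least vertex, at height $0$, is a point.  The second clause of Theorem~\ref{thm:zaremsky} with $t=0$ then makes $X_{F_\infty}$ contractible.

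Finally, $f$ lands in $|X_{F_\infty}|$, so it is null-homotopic in $|X|$.  Hence all homotopy groups of $|X|$ vanish, and Whitehead's theorem \cite[Theorem 4.5]{Hatcher} gives contractibility of $|X|$.  None of these steps invokes Theorem~\ref{thm:main-comb}.
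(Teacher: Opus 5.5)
Your argument is correct, but it takes a longer route than the paper's. The paper does build a single height on all of $X$, so no reduction to countable pieces is needed. It sets $h(v)$ to be the maximal length of a strictly $c$-descending edge path starting at $v$. This is finite by (F), well-foundedness and K\"onig's lemma. If $u\in L_c(v)$ then $h(v)\ge h(u)+1$, so $h$ orients every edge as $c$ does and $\lk^{\downarrow}_h(v)=L_c(v)$.

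This $h$ is not injective. However, Definition~\ref{dfn:morse} asks only for a closed discrete image (here a subset of $\N$) and distinct values on each simplex. So one application of Theorem~\ref{thm:zaremsky} at height $0$, where the sublevel is $\{v_0\}$, finishes the proof in every cardinality. Your premise that uncountability blocks a global Morse function conflates a Morse height with the bijective reindexing of Theorem~\ref{thm:reindex}. That is what forces your detour through compact images, the witness closure $F_\infty$, Theorem~\ref{thm:reindex} and Whitehead's theorem.

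The detour is sound. The $P$-closure stays finite at each stage by K\"onig and makes the descending links of $X_{F_\infty}$ equal those of $X$. The chosen apices remain apices in the full subcomplex. Hence $X_{F_\infty}$ is a countable exchange complex with (F).

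What your route buys is a reduction that does not depend on (F). Drop the $P$-closure and close a finite set only under chosen (N2)-apices and geodesics. Every compact subset of $|X|$ then lies in a countable sub-exchange complex, so Corollary~\ref{cor:countable-morse} would extend to all cardinalities.

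Three small points to tighten:
\begin{itemize}
\item In the finite case, use the rank of $c$ (or any order embedding into $\R$), since $c$ takes values in $W$, not in $\R$.
\item Justify that $h^{-1}(0)$ is the $c$-least vertex. Its $h$-descending link, hence its $L_c$, is empty, so Lemma~\ref{lem:heredity}(2) forces this. Without it, a vertex of positive height could have an empty, non-contractible descending link.
\item The geodesics are unnecessary. A non-empty $P$-closed set is already connected, because every vertex descends to $v_0$ inside it.
\end{itemize}
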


\begin{proof}
First consider finite exchange complexes, by induction on their number
$n$ of vertices.  The case $n=1$ is immediate.  For $n>1$, the rank of $c$
is a Morse height, its bottom sublevel is the least vertex, and each other
descending link is a non-empty exchange complex on fewer than $n$ vertices
by Lemma~\ref{lem:heredity}(2).  These links are contractible by induction,
so Theorem~\ref{thm:zaremsky} gives the result.

Now let $X$ have arbitrary cardinality.  For each vertex $v$, the tree of
strictly $c$-descending edge paths starting at $v$ is finite: it is finitely
branching by \textup{(F)} and has no infinite branch by well-foundedness,
so K\"onig's lemma applies.  Define
\[
 h(v)=\max\{k:\ v=u_0,u_1,\ldots,u_k
          \text{ is a strictly $c$-descending edge path}\}\in\N.
\]
If $u$ and $v$ are adjacent and $c(u)<c(v)$, a longest descending path
starting at $u$ extends by the initial edge from $v$ to $u$; hence
$h(v)\ge h(u)+1$.  Thus $h$ gives precisely the same orientation on every
edge as $c$, and $\lk_h^{\downarrow}(v)=L_c(v)$.  Its image is a subset of
$\N$ and its values are distinct on each simplex, so $h$ is a Morse
function after affine extension over simplices.  It need not be injective
on the entire vertex set.

Lemma~\ref{lem:heredity}(2) implies that $h(v)=0$ exactly at the least
vertex $v_0$.  Every other descending link is a non-empty finite exchange
complex, hence contractible by the finite case.  Applying
Theorem~\ref{thm:zaremsky} at height $0$ completes the proof.
\end{proof}

The preceding argument does not require the globally injective height of
Theorem~\ref{thm:reindex}.  Condition \textup{(F)} is nevertheless necessary
for that particular bijective conclusion:

\begin{prop}\label{prop:F-necessary}
The conclusion of Theorem~\ref{thm:reindex} implies \textup{(F)}.
\end{prop}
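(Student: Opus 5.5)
We argue directly from the conclusion of Theorem~\ref{thm:reindex}. Suppose $(X,c)$ is an exchange complex and there is a bijection $h\colon X^{(0)}\to\N$ with $\lk^{\downarrow}_h(v)=L_c(v)$ for every vertex $v$. We must show that every descending link $L_c(v)$ is finite.

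Fix a vertex $v$. By definition, every vertex $u$ of $\lk^{\downarrow}_h(v)$ is a neighbour of $v$ with $h(u)<h(v)$. Hence
\[
  L_c(v)=\lk^{\downarrow}_h(v)^{(0)}\subseteq h^{-1}\bigl(\{0,1,\dots,h(v)-1\}\bigr).
\]
Since $h$ is injective, the right-hand side has exactly $h(v)$ elements, so $|L_c(v)|\le h(v)<\infty$. As $v$ was arbitrary, \textup{(F)} holds.

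The argument uses only the injectivity of $h$ and the fact that its values lie in $\N$; surjectivity and the exchange axioms play no role. The one point needing care is that equality of the two descending links is an equality of complexes, so their vertex sets agree; this is immediate, because both are full subcomplexes of the link of $v$. I expect no real obstacle. If the paper wants a sharper remark, it can add that the same proof shows that any injective height with values in $\N$ forces \textup{(F)}. This explains why Corollary~\ref{cor:under-F} instead uses a non-injective height.
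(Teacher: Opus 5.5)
Your proof is correct and is the paper's argument: injectivity of $h$ puts $L_c(v)=\lk^{\downarrow}_h(v)^{(0)}$ into the $h(v)$-element set $h^{-1}(\{0,\dots,h(v)-1\})$, so $|L_c(v)|\le h(v)$. Your added observation, that only injectivity and $\N$-valuedness of a descending-link-preserving height are used, matches the paper's remark that Corollary~\ref{cor:under-F} avoids a globally injective height.
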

\begin{proof}
If $h\colon X^{(0)}\to\N$ is injective and preserves descending links,
then $L_c(v)$ injects into $\{0,\ldots,h(v)-1\}$ for every $v$.
\end{proof}

Thus, for a countably infinite exchange complex, \textup{(F)} characterises
the existence of a bijection to $\N$ preserving all descending links.
It is not a necessary condition for a Morse-theoretic proof.

\begin{rem}\label{rem:residue}
An infinite descending link does not force a Morse height to be unbounded
below.  Take vertices $a,v,b_0,b_1,\ldots$, with edges $av$, $ab_i$ and
$vb_i$, and order them as $a<b_0<b_1<\cdots<v$.  For the only type of
distance-two pair, $b_i,b_j$, the two-interval is the edge $\{a,v\}$ and
$a$ is a lower apex.  This is an exchange complex and $L_c(v)$ is infinite.
Nevertheless $h(a)=0$, $h(b_i)=1$ and $h(v)=2$ is a Morse height with the
same descending links and with a single-vertex bottom sublevel.
\end{rem}

There is also a general comparison for countable complexes.  The
descending-type Morse functions of \cite[Definition 1.1]{ZarMorse} allow
non-discrete heights; in particular an affine pair $(h,0)$ is allowed when
$h$ is non-constant on edges and there are no descending edge rays.
The attachment result and its connectivity consequence are
\cite[Lemma 1.8 and Corollary 1.11]{ZarMorse}.

\begin{cor}[The countable Morse formulation]\label{cor:countable-morse}
For countable exchange complexes, Theorem~\ref{thm:main-comb} can also be
deduced from \cite[Corollary 1.11]{ZarMorse},
Lemma~\ref{lem:heredity}, and transfinite induction on the vertex order type,
without assuming \textup{(F)}.
\end{cor}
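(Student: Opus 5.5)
We follow the proof of Theorem~\ref{thm:main-comb}: transfinite induction on the order type, with the successor (attachment) step replaced by \cite[Lemma~1.8 / Corollary~1.11]{ZarMorse}. First apply Lemma~\ref{lem:ties} to make $c$ injective. Let $X$ be countable; then $X^{(0)}$ is a countable well-order of some countable ordinal type $\kappa$. We prove $(\ast_\alpha)$ for countable $\alpha$: every exchange complex with injective complexity of countable order type at most $\alpha$ is contractible. The base case and the limit case are as before, using facts (A) and (B) and Whitehead's theorem. The Morse input is needed only where an attachment argument is wanted. Its cleanest use is to treat a whole sublevel at once.

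\emph{Height.} A countable ordinal $\alpha$ embeds order-preservingly into $[0,1)\subset\R$ by the standard recursion: successor maps to a point, and limit blocks are compressed into subintervals. Composing with the vertex well-order gives an injective bounded height $h\colon X^{(0)}\to[0,1)$ inducing the same orientation on every edge as $c$. Extend $h$ affinely over simplices. It is non-constant on edges because it is injective. There are no descending edge rays $u_0,u_1,\dots$ with $h(u_{i+1})<h(u_i)$, since such a ray would be a strictly $c$-descending sequence in a well-order. So $(h,0)$ is a descending-type Morse function in the sense of \cite[Definition~1.1]{ZarMorse}, and its descending links are exactly the $L(v)$ of Lemma~\ref{lem:heredity}(2).

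\emph{Induction.} At stage $\alpha$, each descending link $L(v)$ with $v$ not least is a non-empty exchange complex with injective complexity whose vertex order type is strictly below the index of $v$. Its vertices lie below $v$, it is countable, and its order type is less than or equal to that of $\{u:c(u)<c(v)\}$, which is less than $\alpha$. By $(\ast_\beta)$ for smaller $\beta$ it is contractible. \cite[Corollary~1.11]{ZarMorse} then says that the inclusion of the sublevel at the least vertex $v_0$, a single point, into $X$ is a homotopy equivalence, or at least that it is $n$-connected for all $n$. Whitehead's theorem then gives contractibility of the CW complex $|X|$.

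\emph{Main obstacle.} The delicate step is matching the hypotheses of \cite{ZarMorse} exactly. Two points need checking. First, non-discrete bounded heights must be permitted. Second, ``no descending edge rays'' must be precisely the condition their attachment lemma uses, and must replace closed discreteness. We would also confirm that \cite[Corollary~1.11]{ZarMorse} yields connectivity of the pair $(X,X^{h\le h(v_0)})$ in every degree from contractible, rather than merely highly connected, descending links, so that all homotopy groups vanish. If their corollary is phrased only for finite connectivity bounds, we would apply it for each $n$ separately.
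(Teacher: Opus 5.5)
Your proposal is correct and is essentially the paper's argument. It uses an injective bounded real height strictly increasing in $c$ with no descending edge rays, so the descending links are exactly $L_c(v)$. These links are contractible by the universal inductive hypothesis; their order type is at most, not strictly below, the index of $v$, but still $<\alpha$, as you go on to say. The proof then concludes with \cite[Corollary~1.11]{ZarMorse} in every degree from $\{v_0\}$ to $X$ and Whitehead's theorem.

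The differences are cosmetic. The paper writes the height explicitly as $h(v)=\sum_{c(w)\le c(v)}2^{-j(w)-2}$ for an injection $j\colon X^{(0)}\to\N$, instead of embedding the ordinal into $[0,1)$. It also needs no separate limit case, since the Morse step handles every $\alpha>1$ uniformly.
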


\begin{proof}
After Lemma~\ref{lem:ties}, assume $c$ injective.  We argue by transfinite
induction, universally over countable exchange complexes of the given
vertex order type $\alpha$.  The one-vertex case is immediate.  For
$\alpha>1$, choose an injection $j\colon X^{(0)}\to\N$ and set
\[
   h(v)=\sum_{c(w)\le c(v)}2^{-j(w)-2}.
\]
The positive series converges and has total at most $1/2$, so
$0<h(v)<1$.  If $c(u)<c(v)$, the summation set for $u$ is a proper subset
of that for $v$ and the latter includes the positive term indexed by $v$;
therefore $h(u)<h(v)$.  Extend $h$ affinely over simplices.  A strictly
$h$-descending edge ray would be a strictly $c$-descending sequence, which
is impossible.  Consequently $(h,0)$ is a descending-type Morse function
in the stated sense, and its descending links are exactly $L_c(v)$.

For every vertex other than the least vertex $v_0$, the link $L(v)$ is a
non-empty countable exchange complex by Lemma~\ref{lem:heredity}(2).
Its inherited vertex order type is at most the index of $v$, hence
strictly less than $\alpha$.  These links are contractible by the universal
inductive hypothesis.  Since $X^{h\le h(v_0)}=\{v_0\}$ and $X^{h\le1}=X$,
\cite[Corollary 1.11]{ZarMorse}, applied for every connectivity degree,
makes the inclusion of $v_0$ a weak homotopy equivalence.  Whitehead's
theorem for CW complexes makes it a homotopy equivalence, proving the claim.
\end{proof}

This identifies an existing Morse attachment theorem in the countable
argument; it does not assert that \cite{ZarMorse} already contains the
exchange theorem.  The exchange-specific work is the heredity lemma and
the induction establishing that its descending links are contractible.
The bounded real encoding above uses countability.  An arbitrary
uncountable well-order need not admit such an encoding; nevertheless
Corollary~\ref{cor:under-F} covers every cardinality under \textup{(F)}.
We draw no conclusion here about the applicability of other Morse or
dismantlability arguments to the remaining uncountable cases.

\begin{rem}[Where our application sits]\label{rem:residue-application}
We do not know whether $\IS(K)$ satisfies \textup{(F)}.  For its complexity
$c=(g,A)$ the fibres are finite by Lemma~\ref{lem:wf}(i), so this question
does not depend on how those fibres are tie-broken.  Equivalently, for
every vertex $v$, are there only finitely many neighbours $u$ with
$c(u)\le c(v)$?  The same-genus part is finite by Lemma~\ref{lem:wf}(i),
so the question is whether there can be infinitely many neighbours of
strictly smaller genus.  This question is not needed for the Morse
attachment comparison: $\IS(K)$ is countable by Lemma~\ref{lem:wf}(iv),
and Corollary~\ref{cor:countable-morse} applies independently of
\textup{(F)}.  The direct proof of Theorem~\ref{thm:main-comb} above remains
valid without any cardinality restriction.
\end{rem}

\clearpage
\section{The Kakimizu complex, and what the method reaches}\label{sec:applications}

\subsection{Proof of Theorem A}

\begin{proof}[Proof of Theorem~A]
Let $K$ be a non-trivial knot and choose the warped-collar metric and
longitudinal foliation of \S\ref{sec:geometric-inputs}.
Theorem~\ref{thm:fixed-smooth} and the sliding-boundary argument give
\textup{(E)} and \textup{(R1)} for these data.  We check that
$\IS(K)$, with the complexity
$c(v)=(g(v),A(v))\in\N\times\R_{>0}$ of \S\ref{sec:complexity} ordered
lexicographically, is an exchange complex in the sense of
Definition~\ref{dfn:exchange}.

$\IS(K)$ is non-empty, and it is a flag complex by
Proposition~\ref{prop:flag}: a set of classes admits simultaneously pairwise
disjoint representatives as soon as it does so pairwise.  It is connected by
Kakimizu's Theorem~A \cite[p.~226]{Kak92}, in the incompressible case of that
statement, and its edge-path metric is the distance $d$
\cite[Proposition 3.1(1), p.~231]{Kak92}.

The complexity takes values in a well-ordered set: this is
Lemma~\ref{lem:wf}, which is where \textup{(E)} and the finiteness argument of
\S\ref{sec:complexity} are consumed.  Axiom (N1) then holds because $\IS(K)$ is
connected --- two vertices at distance $2$ have a common neighbour --- together
with Proposition~\ref{prop:apex}, which produces an apex of the interval from
the flagness of $\IS(K)$ and the exchange lemma.  Axiom (N2) is
Theorem~\ref{thm:exchange}, whose proof consumes \textup{(U)}, \textup{(E)},
\textup{(R1)} --- that is, Theorem~\ref{thm:R1} --- and the collar conditions
\eqref{eq:N4}, \eqref{eq:N5} of \S\ref{ssec:tangential}.

Theorem~\ref{thm:main-comb} now applies, and $\IS(K)$ is contractible.
\end{proof}

\subsection{Truncations: proof of Theorem C}

\begin{proof}[Proof of Theorem~C]
Fix the metric and foliation chosen in Theorem A and use the smooth-area
complexity of Definition~\ref{dfn:complexity}.  Its proof gives an exchange
structure on $\IS(K)$.  Break ties as in Lemma~\ref{lem:ties}; every set
downward closed for the original complexity remains downward closed after
this refinement.  Fix $\ell\ge g(K)$ and let $D_\ell=\{v:g(v)\le\ell\}$.  Since the order on
$\N\times\R_{>0}$ is lexicographic with genus in the first coordinate,
$c(u)\le c(v)$ and $g(v)\le\ell$ force $g(u)\le\ell$; so $D_\ell$ is downward
closed.  It is non-empty because $\ell\ge g(K)$.  By
Corollary~\ref{cor:initial-isometric} the full subcomplex $\IS_\ell(K)$ on
$D_\ell$ is isometrically embedded in $\IS(K)$ and connected, and by
Lemma~\ref{lem:heredity}(1) it is an exchange complex; contractibility is
Theorem~\ref{thm:main-comb}.

For the second family, fix in addition $a_0>0$ and let
$D_{\ell,a_0}=\{v:g(v)<\ell\}\cup\{v:g(v)=\ell,\ A(v)\le a_0\}$.  This is again
downward closed for the lexicographic order, and non-empty as soon as
$\ell>g(K)$, or $\ell=g(K)$ and $a_0$ is at least the least smooth relative area in
genus $g(K)$.  For each non-empty such set, Corollary~\ref{cor:initial-isometric} and
Lemma~\ref{lem:heredity}(1) again apply, followed by
Theorem~\ref{thm:main-comb}.  There is no contractibility assertion for an
empty sublevel.
\end{proof}

Taking $\ell=g(K)$ gives another proof of the conclusion of
\cite[Theorem 1.1, p.~1490]{PS12}, proved there for
the complex of spanning surfaces of minimal Thurston norm; we do not claim
that case as new.
For the truncations with $\ell>g(K)$ and the area-refined family, the
proof adds no geometric hypothesis to Theorem~\hyperref[thm:A]{A}.
Their vertex sets are downward closed for the complexity, so heredity
and the same combinatorial theorem give contractibility.

\begin{rem}\label{rem:chen-shen}
The \emph{connectedness} of $\IS_\ell(K)$ is part of
Theorem~\hyperref[thm:C]{C} and is obtained in the proof above from
Corollary~\ref{cor:initial-isometric}, hence ultimately from the connectedness
of $\IS(K)$ itself; it also has a direct source,
\cite[Proposition 5.15, p.~9 of v4]{ChenShen}, which we do not use.
\end{rem}

\subsection{Links: proof of Theorem D}\label{ssec:links}

Kakimizu's complexes are defined for links, and it is natural to ask how much of
the above survives.  The obstacle is not the combinatorics but the definition of
a vertex.

Before the definition, one point of vocabulary has to be settled, because for
$n\ge2$ the boundary slope of a spanning surface is not the preferred
longitude.  For $n\ge2$ the intersection $\lambda_i:=\Sigma\cap\bd N(L_i)$ of a
spanning surface with the $i$-th boundary torus is the
\emph{surface-framed longitude}: since $\Sigma\subset E(L)$ misses $L_i$, its
boundary bounds in the complement of $L_i$, so $\lk(\bd\Sigma,L_i)=0$ and
therefore
\begin{equation}\label{eq:sfl}
   \lk(\lambda_i,L_i)\;=\;-\sum_{j\ne i}\lk(L_j,L_i),
\end{equation}
a slope depending only on $L$.  The foliation $J_i$ of $T_i$ used in
\S\ref{sec:geometric-inputs} is taken by curves of this slope.  The total
linking number defines a homomorphism $\varphi\colon H_1(E(L))\to\Z$ sending
every meridian to $1$, and, $[c]=\sum_j\lk(c,L_j)\,[m_j]$ for a curve $c$ in
$E(L)$,
\[
   \varphi(\lambda_i)=\lk(\lambda_i,L_i)+\sum_{j\ne i}\lk(L_i,L_j)=0
\]
by \eqref{eq:sfl}.  So each $\lambda_i$ lifts to a closed curve in the infinite
cyclic cover determined by $\varphi$, and the lifting argument of
Lemma~\ref{lem:frontier} applies verbatim.

\begin{dfn}\label{dfn:link-indecomp}
A link $L=L_1\cup\dots\cup L_n$ in $S^3$ is \emph{linking-indecomposable} if for
every partition $\{1,\dots,n\}=I\sqcup I^{c}$ into two non-empty parts there is
an index $m$ with $\lk(L_m,L_J)\ne0$, where $J$ is the part not containing $m$
and $L_J=\bigcup_{j\in J}L_j$.  For $n=1$ the condition is vacuous.
\end{dfn}

\begin{lem}\label{lem:link-connected}
Let $L$ be linking-indecomposable.  Then every spanning surface for $L$ in
$E(L)$ --- that is, every properly embedded orientable surface with no closed
component whose boundary is the union of the $n$ longitudes --- is connected, and has exactly $n$
boundary curves.  Moreover the property is inherited by the surfaces produced by
the frontier construction of \S\ref{sec:exchange-lemma}.
\end{lem}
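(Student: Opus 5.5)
The plan is to read connectedness off the homological dual of a spanning surface.  Let $\Sigma\subset E(L)$ be a spanning surface with no closed component and $\bd\Sigma=\lambda_1\cup\dots\cup\lambda_n$.  Suppose $\Sigma$ is disconnected, and write $\Sigma=\Sigma_1\sqcup\Sigma_2$ with each $\Sigma_k$ a non-empty union of components.  Every component has non-empty boundary, so each $\Sigma_k$ carries some of the longitudes.

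The first step is to show that the boundary curves are not split between components on a single torus.  Each torus $T_i$ meets $\Sigma$ in a single curve $\lambda_i$ of the surface-framed slope, by the spanning-surface convention.  Hence the index set splits as $I\sqcup I^c$, both non-empty, according to which $\Sigma_k$ contains $\lambda_i$.

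The second step computes intersection numbers with meridians.  The surface $\Sigma_1$ is a properly embedded oriented surface whose boundary is $\bigcup_{i\in I}\lambda_i$.  Cap it off in $S^3$ by annuli in $N(L_I)$ to get a Seifert-type surface $\widehat\Sigma_1$ for the sublink $L_I$.  This surface misses $L_{I^c}$, because $\Sigma_1\subset E(L)$ and the capping annuli lie in $N(L_I)$.  Therefore $\lk(L_j,L_I)=L_j\cdot\widehat\Sigma_1=0$ for every $j\in I^c$.  Symmetrically, $\Sigma_2$ gives $\lk(L_i,L_{I^c})=0$ for every $i\in I$.  This contradicts linking-indecomposability for the partition $I\sqcup I^c$, because the index $m$ supplied by the definition lies in one of the two parts.

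One care point is that $\widehat\Sigma_1$ must have boundary exactly $L_I$, with the capping annuli running from $\lambda_i$ to $L_i$.  This holds since each $\lambda_i$ is a longitude, isotopic to $L_i$ in $N(L_i)$.  So $\Sigma$ is connected.  It has exactly $n$ boundary curves, since its boundary is one curve on each torus.

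For the inheritance clause, I will check that the frontier outputs of Lemma~\ref{lem:frontier} are again spanning surfaces in the same sense.  The counting argument there, run torus by torus, shows that on each $T_i$ the frontier $\widetilde B_\uparrow$ contains exactly one lifted boundary circle: the upper of the two lifts of the leaves, $\bd S^\pm_0\cap p^{-1}(T_i)$.  Here $p^{-1}(T_i)$ is an annulus because $\varphi(\lambda_i)=0$, as computed above.  The output therefore has one oriented longitude on each torus.  Discarding closed components leaves a surface with no closed component, and the first part applies to make it connected.  Compressions preserve the boundary, so their outputs are covered too.

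I expect the main obstacle to be the inheritance clause.  The difficulty is confirming that the frontier has exactly one boundary circle on each torus and that their orientations are consistent.  The lift order on different tori may differ, so $\widetilde B_\uparrow$ may take its boundary from $S_+$ on some tori and from $S_-$ on others.  This is harmless, since only one circle per torus is needed.
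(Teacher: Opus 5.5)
Your proof is correct and follows the paper's argument. You split the surface into two non-empty unions of components and use each half as a bounding chain for its sublink, obtaining $\lk(L_j,L_I)=0$ for $j\in I^c$ and $\lk(L_i,L_{I^c})=0$ for $i\in I$, which contradicts linking-indecomposability. The inheritance clause follows because the frontier outputs, after closed components are discarded, are again spanning surfaces. The paper phrases the vanishing as $[L_I]=[\bd F']=0$ in $H_1(S^3\setminus L_{I^c})$ rather than as an intersection number with your capped-off surface, but this is the same computation.
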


\begin{proof}
Suppose $F$ is a spanning surface with a decomposition $F=F'\sqcup F''$ into
non-empty unions of components.  A spanning surface has no closed component, so
every component of $F$ carries at least one longitude; let $I$ be the set of
indices whose longitude lies in $\bd F'$ and $I^{c}$ the set for $F''$.  These
are non-empty and partition $\{1,\dots,n\}$, the components of $F$ being
disjoint.

Write $L_I=\bigcup_{i\in I}L_i$.  The chain $F'$ lies in $E(L)$, hence misses
$L$, and $\bd F'$ is the union of the longitudes $\lambda_i$, $i\in I$, each of
which is isotopic to $L_i$ inside the solid torus $N(L_i)$ by an isotopy
disjoint from every other component.  Therefore
\[
   [L_I]=[\bd F']=0 \quad\text{in } H_1(S^3\setminus L_{I^{c}}).
\]
For $j\in I^{c}$ the linking number $\lk(L_I,L_j)$ is the image of $[L_I]$
under $H_1(S^3\setminus L_j)\cong\Z$, and the inclusion
$S^3\setminus L_{I^{c}}\hookrightarrow S^3\setminus L_j$ carries the vanishing
class to it; so $\lk(L_j,L_I)=\lk(L_I,L_j)=0$ for every $j\in I^{c}$.  The same
argument applied to $F''$ gives $\lk(L_m,L_{I^{c}})=0$ for every $m\in I$.  As
$I$ and $I^{c}$ are the two parts of a partition into non-empty sets, this says
$\lk(L_m,L_J)=0$ for \emph{every} index $m$, with $J$ the part not containing
$m$, contradicting linking-indecomposability.

Two things about this deserve emphasis, since each is a place where a shorter
argument would fail.  First, the conclusion needs \emph{both} halves: $F'$
alone yields $\lk(L_j,L_I)=0$ for $j\in I^{c}$ and nothing about the indices
$m\in I$, and it is the second application, to $F''$, that supplies those.
Only with the two together does one have $\lk(L_m,L_J)=0$ for every $m$, which
is the negation of linking-indecomposability.  Second, note which factor
carries the surface: the bounding chain $F'$ controls the
class of the \emph{union} $L_I$, and it is paired with a \emph{single}
component on the other side; the pairing in the opposite direction is not
available, since a component of $F'$ may carry several longitudes and then
bounds only their sum.

So $F$ is connected, its boundary consists of the $n$ longitudes, and there are
exactly $n$ of them.  The frontier construction outputs surfaces which are
again spanning surfaces for $L$, so the same argument applies to them.
\end{proof}

The compression step of Lemma~\ref{lem:euler} needs a companion in the link
setting, for the same reason: ``keep the component with boundary'' has to be
unambiguous.  We number it beside Lemma~\ref{lem:link-connected}, which it
uses.

\begin{lemlinkprime}[Compression of a spanning surface for a link]
Let $L$ be linking-indecomposable and let $F$ be a connected spanning surface
for $L$, with $n$ boundary curves.  Compressing $F$ along an essential circle
that separates $F$ produces two disjoint surfaces $F_1,F_2$.  Exactly one of
them carries boundary, and it carries all $n$ boundary curves; so ``keep the
component with boundary'' is unambiguous, and the genus of that component
strictly decreases.  Compression along a non-separating circle keeps the
surface connected and again lowers the genus.
\end{lemlinkprime}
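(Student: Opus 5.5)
The plan is to reduce Lemma~8.3$'$ to Lemma~\ref{lem:link-connected} together with an Euler characteristic count, arguing separately for separating and non-separating compressing circles.

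\emph{Separating case.} Let $\gamma\subset F$ be an essential separating circle and $D$ a compressing disc. Cutting $F$ along $\gamma$ and capping with two parallel copies of $D$ produces two disjoint closed-up pieces $F_1,F_2$. Each boundary curve of $F$ lies in exactly one of them, since $\gamma$ is interior. Suppose both $F_1$ and $F_2$ carried boundary. Then $F_1\sqcup F_2$ would be a properly embedded orientable surface in $E(L)$ with no closed component whose boundary is the union of the $n$ longitudes. In other words it would be a disconnected spanning surface, which contradicts Lemma~\ref{lem:link-connected}. So exactly one of them, say $F_1$, carries all $n$ curves. (At least one carries boundary, because $F$ has $n\ge1$ boundary curves.) The other piece $F_2$ is closed. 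It has positive genus: if it were a sphere, $\gamma$ would bound the disc $F_2\setminus D$ on $F$, against essentiality.

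\emph{Genus drop.} Write $\chi(F)=2-2g(F)-n$. Compression gives $\chi(F_1)+\chi(F_2)=\chi(F)+2$, with $\chi(F_2)=2-2g(F_2)$ and $g(F_2)\ge1$. Hence $\chi(F_1)=\chi(F)+2g(F_2)$. Since $F_1$ has the same $n$ boundary curves, this gives $g(F_1)=g(F)-g(F_2)<g(F)$.

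\emph{Non-separating case.} Cutting along a non-separating $\gamma$ leaves a connected surface with two new boundary circles. Capping these with the discs keeps it connected, and $\chi$ rises by $2$ with $n$ unchanged. Therefore the genus drops by exactly one. This case also follows from Lemma~\ref{lem:link-connected}, since the output is a spanning surface.

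\emph{Main obstacle.} The only delicate point is that the separating output $F_1\sqcup F_2$ is admissible input for Lemma~\ref{lem:link-connected}. This requires checking that it is properly embedded after pushing the disc copies apart, that it is orientable, and that it has no closed component in the case where both pieces carry boundary. Each of these is immediate from the construction.
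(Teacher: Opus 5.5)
Your proposal is correct and matches the paper's proof essentially step for step. In both, a disconnected output $F_1\sqcup F_2$ would contradict Lemma~\ref{lem:link-connected}, essentiality of the circle rules out a sphere for the closed piece, and an Euler characteristic count gives the genus drop, with the non-separating case being the computation of Lemma~\ref{lem:euler}. Your exact identity $g(F_1)=g(F)-g(F_2)$ is simply a sharper form of the paper's inequality $\chi(F_1)\ge\chi(F)+2$.
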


\begin{proof}
If both $F_1$ and $F_2$ carried boundary curves, then $F_1\sqcup F_2$ would be
a disconnected spanning surface for $L$ --- it is properly embedded,
orientable, and its boundary is the union of the $n$ longitudes, compression
having altered $F$ only in the interior --- contradicting
Lemma~\ref{lem:link-connected}.  So exactly one side, say $F_1$, carries
boundary, and it carries all $n$ curves; $F_2$ is closed and is discarded.  The
compressing circle being essential, $F_2$ is not a sphere, so $\chi(F_2)\le0$
and $\chi(F_1)=\chi(F)+2-\chi(F_2)\ge\chi(F)+2$; with $n$ fixed and
$\chi=2-2g-n$ this makes the genus drop by at least one.  The non-separating
case is that of Lemma~\ref{lem:euler} verbatim.
\end{proof}

Lemma~\ref{lem:link-connected} is what makes the link case tractable, and it
does so by removing a discrepancy rather than by proving something about
surfaces.  Kakimizu's vertices are isotopy classes of spanning surfaces that are
allowed to be disconnected and are required only to be incompressible
componentwise; ours are connected.  On a linking-indecomposable link the two
notions coincide, and with them the two edge conventions: the convention of
\cite[(1.3)(b)]{Kak92}, that disjointness is equivalent to distance $1$, is
false for links in general precisely because of disconnected surfaces, and here
that failure cannot occur.

\begin{proof}[Proof of Theorem~D]
Let $L$ be non-split, linking-indecomposable and not the unknot.  Choose
a flat metric on each boundary torus, its foliation by surface-framed
longitudes, and the exact warped collar \eqref{eq:collar}; extend the metric
smoothly over $E(L)$.
By
Lemma~\ref{lem:link-connected} the vertex set of $\IS(L)$ is the set of isotopy
classes of connected incompressible spanning surfaces.  Connectedness of
$\IS(L)$ is Kakimizu's Theorem~A for a non-split link \cite[p.~226]{Kak92} ---
this is where non-splitness is used, through the irreducibility of $E(L)$ ---
while flagness is Proposition~\ref{prop:flag}, whose proof consumes only
\textup{(U)} and \textup{(E)} and runs unchanged once every spanning surface is
known to be connected.  The two are separate inputs and neither is a corollary
of the other.

The complexity is again $(g,A)$, now with $g$ the genus of the connected
spanning surface and $A$ the infimum over its smooth neat ambient-isotopy
representatives with one boundary leaf on each torus.
The geometric inputs are available in $n$-component form under the
hypotheses just stated.  Universal disjointness \textup{(U)} is supplied by
\cite[Theorem~4 and Theorem~A.5]{Schultens}.  Fixed-boundary attainment is
Theorem~\ref{thm:fixed-smooth}, applied to the entire prescribed tuple of
boundary leaves.  The $n$-component sliding-boundary argument following
Theorem~\ref{thm:R1} gives \textup{(E)}(ii) and \textup{(R1)}.
Kapovich's compactness theorem requires connected surfaces meeting each
boundary torus in one curve \cite[p.~897]{Schultens}.  These conditions
hold by Lemma~\ref{lem:link-connected}; the foliations are taken in the
surface-framed slopes described above.  Lemma~\ref{lem:R2} is local to each
boundary torus and gives transversality.  Thus no new existence theorem is
being inferred merely by replacing a knot with a link.
The argument of \S\ref{sec:exchange-lemma} then runs with three modifications:
\begin{enumerate}
\item \emph{The exchange lemma on $n$ boundary tori.}  Everything in
\S\S\ref{ssec:tangential}--\ref{ssec:T1} is local to a single boundary torus
apart from finitely many global constants, and the bookkeeping is as follows.

On each torus $T_i$ the curves $\bd_iA$ and $\bd_iB$ are leaves of $J_i$, hence
equal or disjoint; let $E\subseteq\{1,\dots,n\}$ be the set of indices at which
they are equal.  The trichotomy of \S\ref{ssec:tangential} becomes: $E=\emptyset$
and $A\pitchfork B$, which is \textup{(T0)}; $E=\emptyset$ with interior
tangencies, which is \textup{(T1)} and needs no change whatever, the proof
there never mentioning the boundary except through \eqref{eq:d0}; and
$E\ne\emptyset$, which is \textup{(T2)}.

In the last case Proposition~\ref{prop:collar}(1) is unchanged, its proof being
local at an interior tangency point.  Clause (2) is applied on each $T_i$ with
$i\in E$ separately, in the collar chart $(s,r_i,y)$ of that torus, the collars
being taken disjoint; it yields $r_0^{(i)}>0$ and
$\mathcal{T}\subseteq\bigcap_{i\in E}\{r_i\ge r_0^{(i)}\}$, and clause (3) then
gives $\mathcal{T}$ finite as before.  Lemma~\ref{lem:bdry-sign} gives on each
such torus a sign $\varsigma_i$, an arc $I_i\subset L_i$ and constants
$a_0^{(i)},r_1^{(i)}$.  The perturbation \eqref{eq:perturb} becomes
\[
   A_t=\Bigl\{\,y=f+t\sum_{i\in E}\varsigma_i\,\chi_i(r_i)
        \;-\;t\sum_{p\in\mathcal{T}}\psi_p\,\Bigr\},
\]
with $\chi_i$ supported in the collar of $T_i$; on a torus with $i\notin E$
nothing is done, $\bd_iA_t=\bd_iA$ being already disjoint from $\bd_iB$.  The
sign of the interior bumps is immaterial, \eqref{eq:N5} being a statement about
both signs at once, and that is what lets the boundary signs be chosen
independently torus by torus.

Lemma~\ref{lem:no-small} and Lemma~\ref{lem:collar-empty}(i),(ii) are proved
one collar at a time.  In Lemma~\ref{lem:collar-empty}(iii) the one genuinely
new point is that a closed component $\gamma$ of $\Gamma_t$ may visit several
collars, so the three cases must be read per torus.  If $\gamma$ meets no set
$\{r_i<r_1^{(i)}\}$, Case A applies and gives
$\mathrm{length}(\gamma\cap G)\ge3\rho_0/8$.  Otherwise fix an $i$ with
$\gamma\cap\{r_i<r_1^{(i)}\}\ne\emptyset$.  If $\gamma$ reaches
a level $r_i>r_0^{(i)}/2$ before leaving that collar, Case B applies
for that torus; in particular this includes every curve which leaves
the collar.  Otherwise $\gamma$ is contained in
$\{r_i\le r_0^{(i)}/2\}$ and Case C applies.  This also classifies a
curve which stays in the collar but rises above the half-depth level.  Both cases use only the behaviour of $\gamma$ inside that one collar,
so the per-torus statement suffices, with $c_\star$ the minimum of $3\rho_0/8$,
of the numbers $r_0^{(i)}/2$ and of the arclengths $\ell(I_i)$, $i\in E$,
divided by the largest of the $n$ bi-Lipschitz constants of the collar charts.
For Corollary~\ref{cor:uniform}, use the compact transverse set outside
the interior tangency balls and the smaller collar bands
$\{r_i<r_1^{(i)}/4\}$, $i\in E$.  On the remaining boundary tori the
two boundary curves are disjoint, so compactness supplies a positive
boundary clearance for their nearby intersections.  The finitely many
buffered charts give common lower bounds for angle, tube radius and
boundary clearance, and a common upper bound for the local constant $K$
of Lemma~\ref{lem:round}.  Thus its width bounds are uniform on the good
region, without using a derivative supremum near any excluded tangency.

For the smooth width construction, multiply the interior-ball cut-offs by
one collar cut-off for each $i\in E$, zero when
$r_i\le5r_1^{(i)}/8$ and one when $r_i\ge r_1^{(i)}$, extended by one
outside that collar.  Their product $\zeta$ is smooth, equals one on the
region where the preceding length estimate is collected, and has support
in the buffered good region.  The finite product has a fixed derivative
bound $L_\zeta$.  Lemma~\ref{lem:plateau-width} now gives
$\eta_t=\eta_2(t)+(\eta_1-\eta_2(t))\zeta|_{\Gamma_t}$, with $\eta_1>0$
fixed before $t$ and with $\eta_2(t)>0$ chosen afterwards.  On a band in
one collar all the other collar factors equal one, since the collars are
disjoint; hence each of the three length cases above supplies gain on
this same plateau.  Equation~\eqref{eq:epsstar} and the no-disc and area
arguments therefore apply with one fixed positive gain.  The
positive-width isotopies in Lemma~\ref{lem:frontier}(2) also work
simultaneously at the finitely many boundary tori, so the output vertices
remain fixed when both rounding and push-off scales are reduced for a
subsequently specified neighbour.

Lemma~\ref{lem:no-disc} needs no change: of $A$ and $B$ its proof uses only
that each is connected, which is Lemma~\ref{lem:link-connected}, that each has
non-empty boundary on $\bd E(L)$, and that $E(L)$ is irreducible.  In
Lemma~\ref{lem:frontier} the boundary count is made torus by torus: over $T_i$
the lifted boundary is an annulus containing the two disjoint circles
$\bd_iS^+_0$ and $\bd_iS^-_0$, one above the other, and the argument given
there sends them to opposite frontiers.  Which frontier receives $\bd_iS^+_0$
may depend on $i$, and nothing requires it not to; either way each frontier
meets each $T_i$ in exactly one curve, hence is a spanning surface for $L$
once its closed components are discarded, hence is connected by
Lemma~\ref{lem:link-connected} and carries all $n$ boundary curves.
\item Lemma~\ref{lem:link-connected} is consumed at four places, and it is
worth listing them rather than gesturing at them:
\begin{enumerate}
\item[\textup{(i)}] in Proposition~\ref{prop:disjoint}, where the alternative
``coincide'' is turned into $x=u$ --- a step that needs the surface to be
connected, since otherwise two distinct vertices may share a component;
\item[\textup{(ii)}] in Lemma~\ref{lem:frontier}, where each frontier output
must have exactly one component with boundary and that component must carry all
$n$ boundary curves;
\item[\textup{(iii)}] in Lemma~8.3$'$, where a separating compression is not
allowed to separate the boundary curves, so that ``keep the component with
boundary'' names a single surface;
\item[\textup{(iv)}] in aligning our vertex set and edge convention with
Kakimizu's, as explained after Lemma~\ref{lem:link-connected}.
\end{enumerate}
\item The knot argument invokes, through Lemma~\ref{lem:frontier}, the
assertion that a Seifert surface represents a primitive generator of
$H_2(E(K),\bd E(K))$.  For a link $H_2(E(L),\bd E(L))\cong\Z^{n}$ and the
assertion is \emph{false} as stated.  It is replaced by the following, which is
what the argument actually needs: the total linking number defines a
homomorphism $\pi_1(E(L))\to\Z$, and a spanning surface for $L$ is Poincar\'e
dual to a primitive class, namely to the image of the generator under the
induced map on $H^1$.  Primitivity holds because the homomorphism is onto, each
meridian mapping to $1$.
\end{enumerate}
Axiom (N1) and axiom (N2) are then obtained exactly as in the proof of
Theorem~\hyperref[thm:A]{A}, and Theorem~\ref{thm:main-comb} gives the result.
\end{proof}

\begin{rem}[The hypothesis is load-bearing]\label{rem:indecomp-needed}
Linking-indecomposability is not a convenience.  It is used in
Lemma~\ref{lem:link-connected}, and through it at the four points listed in the
proof of Theorem~\hyperref[thm:D]{D}; removing it breaks each of them
separately.  The last of the four is what aligns the two vertex definitions and
the two edge conventions.  For
connected sums, the structure of $\IS$ is governed by
\cite{Banks} rather than by anything proved here.
\end{rem}

\subsection{Where the method stops}\label{ssec:general-links}

\emph{General links.}  For a link which is not linking-indecomposable the
failing step is not the connectedness of $\IS(L)$, which Kakimizu proves for
every non-split link, nor the combinatorics of
\S\S\ref{sec:exchange-complexes}--\ref{sec:contractibility}.  It is the
hypothesis $S\cap(S_+\cup S_-)=\emptyset$ of Lemma~\ref{lem:frontier}, supplied
for knots by \textup{(U)} through Proposition~\ref{prop:disjoint}.  For a link,
two distinct vertices may share the isotopy class of one component of the
surface; the minimisers then \emph{coincide} on that component rather than
being disjoint from it.  These inputs therefore no longer justify the
opening sentence of the proof of Theorem~\ref{thm:exchange}.  This is an obstruction to \emph{this
proof} only: we have constructed no example realising the failure, and we have
not checked whether the appendix of \cite{Schultens} yields the stronger
dichotomy ``disjoint or identical'', which would repair the step.

\emph{Equivariance.}\label{ssec:equivariant}  Let $G$ be a finite group
of symmetries of $K$ acting on $\IS(K)$.  The analogous question concerns
the geometric fixed-point set $|\IS(K)|^G$; \cite[\S8]{PS12} treats
symmetries in the minimal-genus setting.  Our argument does not establish
contractibility of this fixed-point set.  First, the chosen metric and
longitudinal foliation need not be $G$-invariant.  An equivariant argument
would require invariant geometric data within the flat-torus warped-collar
scope of Theorem~\ref{thm:fixed-smooth}.

Even if such invariant geometric data are supplied,
an additional argument is needed.  For two $G$-fixed vertices at distance
$2$, the least-complexity apices form a finite $G$-invariant clique
(finiteness follows from Lemma~\ref{lem:wf}(i)).  Flagness makes it a
simplex.  Its vertices need not be fixed individually, but its barycentre
\emph{is} a fixed point in the geometric realisation.  Consequently the
absence of a fixed vertex is not an obstruction to the existence of a
geometric fixed point.  A proof for the fixed-point set, or for the
corresponding subcomplex after barycentric subdivision, would have to
establish the required connectivity and exchange properties there; this
paper supplies no such proof.

Contractibility of the ambient complex alone is insufficient: Floyd and
Richardson \cite[\S3, pp.~74--75]{FR59} construct a simplicial $A_5$-action
without fixed points on a finite complex which is acyclic and simply
connected, hence contractible by Hurewicz and Whitehead; see also
\cite{Oliver}.  This general example does not rule out a special theorem
for $\IS(K)$, and we claim no impossibility of an equivariant extension.

\emph{Haken manifolds.}\label{ssec:haken}  For a Haken manifold $M$ with a
boundary pattern one may define $\IS(M,\gamma,\alpha)$ in a similar way.
The abstract combinatorial theorem applies if its exchange axioms and
connectedness are verified.  This paper does not supply that verification:
in particular, it does not establish the required connectedness.
A general boundary pattern need not satisfy the flat-torus collar and
one-boundary-curve-per-torus hypotheses of
Theorem~\ref{thm:fixed-smooth}.  Even when these geometric hypotheses hold,
the connectedness and exchange inputs still require verification.
Kakimizu's connectedness proof used here is for links in $S^3$.  We make no
extension claim without those inputs and the hypotheses of the geometric
lemmas.


\begin{thebibliography}{99}

\bibitem{ArmstrongCollars}
M.~A. Armstrong,
\emph{Collars and concordances of topological manifolds},
Comment. Math. Helv. \textbf{45} (1970), 119--128.
Theorem~2, pp.~123--124, is the collar uniqueness theorem used here.

\bibitem{Banks}
J.~E. Banks,
\emph{The Kakimizu complex of a connected sum of links},
Trans. Amer. Math. Soc. \textbf{365} (2013), no.~11, 6017--6036;
\texttt{arXiv:1109.0965}.

\bibitem{BarmakMinian}
J.~A. Barmak and E.~G. Minian,
\emph{Strong homotopy types, nerves and collapses},
Discrete Comput. Geom. \textbf{47} (2012), no.~2, 301--328.

\bibitem{CavaliereTransirico}
P.~Cavaliere and M.~Transirico,
\emph{The Dirichlet problem for elliptic equations in the plane},
Comment. Math. Univ. Carolin. \textbf{46} (2005), no.~4, 751--758.
Lemma~3.4, pp.~756--757, with the hypotheses on pp.~753 and~755,
supplies the planar nondivergence Dirichlet $W^{2,p}$ theorem used here.

\bibitem{CCHO}
J.~Chalopin, V.~Chepoi, H.~Hirai and D.~Osajda,
\emph{Weakly modular graphs and nonpositive curvature},
Mem. Amer. Math. Soc. \textbf{268} (2020), no.~1309.
Lemma~9.13 is cited by its number and page in the Memoir; it carries the same
number in \texttt{arXiv:1409.3892v4}.

\bibitem{ChenShen}
X.~Chen and W.~Shen,
\emph{A linear bound on the diameter of the Kakimizu complex for hyperbolic
knots},
preprint, \texttt{arXiv:2508.03353v4}.
The version matters: v1 does not contain $\IS_\ell$.

\bibitem{CerfEmbeddings}
J.~Cerf,
\emph{Topologie de certains espaces de plongements},
Bull. Soc. Math. France \textbf{89} (1961), 227--380.
The comparison in III, \S3.2.1, Theorem~8 and Corollary~3,
pp.~365--367, is conditional on the Smale conjecture; every use here includes
Hatcher's theorem \cite{HatcherSmale}.  The interface in that comparison is
the separating interface of a regular decomposition.

\bibitem{FarbMargalit}
B.~Farb and D.~Margalit,
\emph{A Primer on Mapping Class Groups},
Princeton Mathematical Series \textbf{49},
Princeton University Press, Princeton, NJ, 2012.
Theorem~1.13, p.~42, concerns replacing a surface homeomorphism by a
diffeomorphism in its isotopy class.

\bibitem{FR59}
E.~E. Floyd and R.~W. Richardson,
\emph{An action of a finite group on an $n$-cell without stationary points},
Bull. Amer. Math. Soc. \textbf{65} (1959), 73--76.

\bibitem{FHS}
M.~Freedman, J.~Hass and P.~Scott,
\emph{Least area incompressible surfaces in $3$-manifolds},
Invent. Math. \textbf{71} (1983), no.~3, 609--642.

\bibitem{GT}
D.~Gilbarg and N.~S. Trudinger,
\emph{Elliptic Partial Differential Equations of Second Order},
2nd ed., Grundlehren der mathematischen Wissenschaften \textbf{224},
Springer, Berlin, 1983.
All numbers and pages cited here refer to this edition; Chapters~8 and~9 differ
from the 1977 first edition.

\bibitem{HassScott}
J.~Hass and P.~Scott,
\emph{The existence of least area surfaces in $3$-manifolds},
Trans. Amer. Math. Soc. \textbf{310} (1988), no.~1, 87--114.

\bibitem{HamstromHomeotopy}
M.-E.~Hamstrom,
\emph{Homotopy in homeomorphism spaces, TOP and PL},
Bull. Amer. Math. Soc. \textbf{80} (1974), no.~2, 207--230.
Theorem~1.5.2, p.~217, gives the homotopy groups of the identity component
of the homeomorphism group of the torus; only its fundamental group is used.

\bibitem{Hatcher}
A.~Hatcher,
\emph{Algebraic Topology},
Cambridge University Press, Cambridge, 2002.

\bibitem{Hatcher3M}
A.~Hatcher,
\emph{Notes on Basic $3$-Manifold Topology}, online notes,
\url{https://pi.math.cornell.edu/~hatcher/3M/3M.pdf}.
Corollary~3.3, p.~48, is the Loop Theorem criterion for two-sided surfaces.

\bibitem{HatcherSmale}
A.~E. Hatcher,
\emph{A proof of the Smale conjecture, $\operatorname{Diff}(S^3)\simeq O(4)$},
Ann. of Math. (2) \textbf{117} (1983), no.~3, 553--607.

\bibitem{HvdM}
S.~Hildebrandt and H.~von der Mosel,
\emph{Conformal representation of surfaces, and Plateau's problem for Cartan
functionals},
Riv. Mat. Univ. Parma (7) \textbf{4}$^{*}$ (2005), 1--43.
Theorem~4.1 is on p.~18.

\bibitem{Lickorish}
W.~B.~R. Lickorish,
\emph{An Introduction to Knot Theory},
Graduate Texts in Mathematics, vol.~175,
Springer-Verlag, New York, 1997.

\bibitem{Kak92}
O.~Kakimizu,
\emph{Finding disjoint incompressible spanning surfaces for a link},
Hiroshima Math. J. \textbf{22} (1992), no.~2, 225--236.



\bibitem{Oliver}
R.~Oliver,
\emph{Fixed-point sets of group actions on finite acyclic complexes},
Comment. Math. Helv. \textbf{50} (1975), 155--177.

\bibitem{PS12}
P.~Przytycki and J.~Schultens,
\emph{Contractibility of the Kakimizu complex and symmetric Seifert surfaces},
Trans. Amer. Math. Soc. \textbf{364} (2012), no.~3, 1489--1508.

\bibitem{Schultens}
J.~Schultens,
\emph{The Kakimizu complex is simply connected},
with an appendix by M.~Kapovich,
J. Topol. \textbf{3} (2010), no.~4, 883--900.

\bibitem{Vekua}
I.~N. Vekua,
\emph{Generalized Analytic Functions},
Pergamon Press, Oxford, 1962.

\bibitem{Wald68}
F.~Waldhausen,
\emph{On irreducible $3$-manifolds which are sufficiently large},
Ann. of Math. (2) \textbf{87} (1968), 56--88.
Cited only in Remark~\ref{rem:alt-route} and in
\S\ref{ssec:definitions-used}, for the statement that homotopic two-sided
incompressible and $\bd$-incompressible properly embedded surfaces in an
irreducible, $\bd$-irreducible $3$-manifold are ambient isotopic.  No internal
number is quoted: we have not checked one against the source, and nothing in
this paper rests on the statement.

\bibitem{Wall2016}
C.~T.~C. Wall,
\emph{Differential Topology},
Cambridge University Press, Cambridge, 2016.

\bibitem{ZarMorse}
M.~C.~B.~Zaremsky,
\emph{Bestvina--Brady discrete Morse theory and Vietoris--Rips complexes},
Amer. J. Math. \textbf{144} (2022), no.~5, 1177--1200.
\texttt{doi:10.1353/ajm.2022.0026}.

\bibitem{Zar24}
M.~C.~B. Zaremsky,
\emph{Contractible Vietoris--Rips complexes of $\mathbb{Z}^n$},
Proc. Amer. Math. Soc. \textbf{154} (2026), no.~2, 503--508,
\texttt{doi:10.1090/proc/17468};
\texttt{arXiv:2410.11993}.
Definition~2.1, Definition~2.2 and Lemma~2.3 carry the same numbers in the
arXiv version and in the published one.

\end{thebibliography}
\end{document}